\newtheorem{Thm}{Theorem}[section]    % theorem like environments
\newtheorem{Lem}[Thm]{Lemma}
\newtheorem{Pro}[Thm]{Proposition}
\newtheorem{Cor}[Thm]{Corollary}
\newtheorem{Th}{Theorem}
\theoremstyle{definition}
\newtheorem{Def}[Thm]{Definition}
\newtheorem{varrem}[Thm]{}
\newtheorem{Ex}[Thm]{Example}
\newtheorem{Exs}[Thm]{Examples}
\theoremstyle{remark}
\newtheorem{Rem}[Thm]{Remark}
\newtheorem{Rems}[Thm]{Remarks}
\DeclareMathOperator{\ad}{ad}
\DeclareMathOperator{\id}{id}
\DeclareMathOperator{\Aut}{Aut}
\DeclareMathOperator{\nil}{nil}
\DeclareMathOperator{\im}{im}
\DeclareMathOperator{\GL}{GL}
\DeclareMathOperator{\rk}{rk}
\newcommand{\R}{\mathbb{R}}        %  real numbers
\newcommand{\N}{\mathbb{N}}        %  natural numbers 
\newcommand{\Z}{\mathbb{Z}}        %  integer numbers 
\newcommand{\C}{\mathbb{C}}        %  complex numbers
\newcommand{\Q}{\mathbb{Q}}        %  rational numbers
\newcommand{\T}{\mathbb{T}}        %  
\newcommand{\Cal}{\mathcal}
\newcommand{\fr}{\mathfrak}
\begin{document}
\title[]{On the Structure of Groups with Polynomial Growth III}
\author{Viktor  Losert}
\address{Fakult\"at f\"ur Mathematik, Universit\"at Wien, Strudlhofg.\ 4,
  A 1090 Wien, Austria}
\email{Viktor.Losert@UNIVIE.AC.AT}
\date{4 February 2020}
\subjclass[2010]{Primary 22D05; Secondary 22E25, 22E30, 20F19, 20G20}
%--------------------Abstract---------------------------------------------

\begin{abstract}
We show that a compactly generated locally compact group of polynomial growth
having no non-trivial compact normal subgroups can be embedded as a co-compact
subgroup into a semidirect product of a connected, simply connected,
nilpotent Lie group and a compact group. There is also a uniqueness statement
for this extension.
\end{abstract}
\maketitle

\baselineskip=1.3\normalbaselineskip
\setcounter{section}{-1}
%---------------------------Anfang------------------------

\section{Introduction and Main results} % Sec.0
\smallskip
Let $G$ be a locally compact (l.c.), compactly generated group.
$\lambda$ denotes a Haar measure on $G$ and  $V$ a compact neighbourhood
of the identity $e$, generating $G$. The group $G$ is said to be of {\it
polynomial growth}, if there exists $d\in \N$ such that
$\lambda (V^n)=O(n^d)$ \,for $n\in \N$\,. The group $G$ is called
{\it almost nilpotent}, if it has a nilpotent subgroup $H$ such that 
$G/H$ is compact. A classical result of Gromov \cite{Gr} asserts that a
finitely generated discrete group has polynomial growth
if and only if it is almost nilpotent. Any almost nilpotent group has
polynomial growth, but it is well known that the converse is
no longer true in the non-discrete case (see \cite{Lo2}\;1.4.3 for explicit
examples). Nevertheless, it turns out that there are
very close relations between the two classes and this will be the main
object of the present paper.

If $G$ is any compactly generated l.c.\;group of polynomial growth, it has a
maximal compact normal subgroup $C$ \,(\cite{Lo2}\;Prop.\,1).
Therefore, we will formulate the main theorems for groups having no
non-trivial compact normal subgroups. $G/C$ is always a Lie group
(\cite{Lo1}\;Th.\,2). Any compactly generated Lie group $G$ of polynomial
growth has a maximal nilpotent normal subgroup $N$, the (non-connected)
nilradical of $G$, denoted by $N = \nil (G)$ \,(\cite{Lo2}\;Prop.\,3). In
the discrete case, this is
called the Fitting subgroup (\cite{Se}\;p.\,15).
\begin{Th}  \label{th1}
Let $G$ be a compactly generated l.c.\;group of polynomial growth having no
non-trivial compact normal subgroups. $N = \nil(G)$ shall be
its (non-connected) nilradical. Then there exists a closed subgroup $L$ of
$G$ such that $G = NL$ \,and \,$L/\nil(L)$ is compact \,(in particular,
$L$ is almost nilpotent).
\end{Th}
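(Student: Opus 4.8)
Since $G$ has no non-trivial compact normal subgroup we have $C=\{e\}$, so $G$ is a Lie group by \cite{Lo1}\,Th.\,2; write $\fr g$ for its Lie algebra. As the identity component $G_0$ is a connected Lie group of polynomial growth, $\fr g$ is of type (R): every $\ad X$, $X\in\fr g$, has purely imaginary eigenvalues. Fix a Levi decomposition $\fr g=\fr r\rtimes\fr s$ with $\fr r$ the solvable radical; type (R) forces $\fr s$ to be semisimple of compact type. Put $\fr n=\nil(\fr g)$, the nilradical of $\fr g$; then $\fr n=\nil(\fr r)$, $[\fr g,\fr r]\subseteq\fr n$, and $\fr r/\fr n$ is abelian with trivial $\fr s$-action.

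I would first settle the connected case $G=G_0$. Let $S_0$ be the (compact) analytic subgroup of Lie algebra $\fr s$; since a connected compact group of automorphisms of a Lie algebra always stabilises some Cartan subalgebra, we may choose a Cartan subalgebra $\fr c$ of $\fr r$ with $\operatorname{Ad}(S_0)\fr c=\fr c$. Then $\fr c$ is nilpotent, $\fr r=\fr n+\fr c$, and $[\fr s,\fr c]\subseteq\fr c\cap[\fr g,\fr r]\subseteq\fr c\cap\fr n$, so $\fr l:=\fr c+\fr s$ is a subalgebra with $\fr c\triangleleft\fr l$, $\fr l/\fr c\cong\fr s$, and $\fr g=\fr n+\fr l$. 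The crucial point is that $\fr l$ is \emph{self-normalising} in $\fr g$: combining $N_{\fr r}(\fr c)=\fr c$ with $[\fr g,\fr r]\subseteq\fr n$ one checks that $N_{\fr g}(\fr l)=\fr l$. Consequently the analytic subgroup $L_0\le G_0$ with Lie algebra $\fr l$ coincides with its own closure --- that closure is connected and normalises $L_0$, so its Lie algebra lies in $N_{\fr g}(\fr l)=\fr l$ --- and is therefore closed in $G_0$. Inside $L_0$, the closure of the analytic subgroup of Lie algebra $\fr c$ is a closed connected nilpotent normal subgroup whose quotient is connected with compact-type semisimple Lie algebra $\fr s$, hence compact; thus $L_0$ is almost nilpotent and $L_0/\nil(L_0)$ is compact. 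Finally $G_0=N_0L_0$, where $N_0$ is the analytic subgroup of Lie algebra $\fr n$, because $N_0$ is normal and $\fr g=\fr n+\fr l$; since $\nil(G_0)=N_0$ here, the theorem holds when $G$ is connected.

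For the general case, $G/G_0$ is a finitely generated discrete group of polynomial growth, hence virtually nilpotent by Gromov's theorem \cite{Gr}; fix a finite-index torsion-free nilpotent subgroup $\bar\Delta\le G/G_0$. One then enlarges $L_0$ by lifting $\bar\Delta$. Working along the lower central series of $\bar\Delta$, at each stage one adjoins to the subgroup built so far finitely many elements of $G$ projecting onto a basis of the next (finitely generated free abelian) quotient, corrected by elements of $N$ so that the enlarged group stays closed and stays nilpotent modulo a compact subgroup; the latter is possible because polynomial growth makes the automorphisms induced on the compact factor have spectrum on the unit circle, hence virtually unipotent, so after passing to a suitable further finite-index subgroup the extension being assembled is nilpotent-by-compact. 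Absorbing the remaining finite quotient yields a closed subgroup $L\supseteq L_0$ with $L/\nil(L)$ compact, and $G=NL$ follows since $L_0$ already surjects onto $(G/N)_0$ and the lifts cover the remainder of $G/N$.

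The hard part is this disconnected step. Unlike $N_0$, the complement $L_0$ is not normal in $G$, and $\fr c$ and $\fr s$ need not be $\operatorname{Ad}(G)$-invariant, so conjugating $\fr l$ by a lift of an element of $G/G_0$ yields a different --- though $\exp(\ad\fr n)$-conjugate --- complement; one has to control these corrections so that the growing subgroup remains closed, compactly generated and almost nilpotent while still surjecting onto $G/N$, and one must also deal with the torsion in $G/G_0$ and with the (minor) discrepancy between $\nil(G)$ and $N_0$. The connected case, by contrast, comes down to the self-normalisation identity $N_{\fr g}(\fr l)=\fr l$.
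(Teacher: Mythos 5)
Your connected case is sound and, in substance, close to what the paper's machinery specializes to: the subgroup $L_0$ you build from a Cartan subalgebra $\fr c$ of the radical (stabilized by the compact Levi part) is exactly the group $L_{\Cal C}$ of Proposition\;\ref{pro215} in the connected situation (compare Remark\;\ref{rem221}\,(g), where it is noted that the Lie algebra of $L_{\Cal C}$ is a Cartan subalgebra), and the self-normalization identity $N_{\fr g}(\fr l)=\fr l$ does give closedness. Two small inaccuracies there: the $S_0$-invariance of a Cartan subalgebra of $\fr r$ is asserted without proof (it is true, but needs the conjugacy of Cartan subalgebras under $\exp(\ad\fr n)$ plus a fixed-point argument, which is essentially what Proposition\;\ref{pro220} supplies); and $\nil(G^0)=N_0$ is false in general (for $G=\C\rtimes\R$ with $t\circ z=e^{it}z$ one has $\nil(G)=\C\times 2\pi\Z\supsetneq\C$, see Examples\;\ref{ex412}\,(a)), though this is harmless for the inclusion $G^0=N_0L_0\subseteq NL_0$.

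The genuine gap is the disconnected step, which you yourself flag as ``the hard part'' and then only describe as a plan. The inductive lifting of a finite-index nilpotent subgroup $\bar\Delta$ of $G/G^0$, ``corrected by elements of $N$ so that the enlarged group stays closed and stays nilpotent modulo a compact subgroup,'' is precisely the assertion that needs proof: the obstruction to choosing such corrections is a conjugacy/vanishing statement, and no argument for it is given. Moreover, ``passing to a suitable further finite-index subgroup'' and then ``absorbing the remaining finite quotient'' is not a legitimate move for this theorem --- a splitting of a finite-index subgroup does not in general extend to the whole group (this is exactly the phenomenon behind Example\;\ref{ex32}, where a finite-index subgroup splits but $G$ does not), so the finite quotient cannot simply be absorbed. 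The paper closes this gap by a different mechanism: it fixes a co-compact normal subgroup $G_1\supseteq N$ of the radical with $G_1/R^0$ nilpotent, applies the splitting theory of Section\;2 to the pair $(G_1,N)$ to obtain the nilpotent ``Cartan subgroup'' $L_1=G_{\Cal C}=L_{\Cal C}$ with $G_1=N^0L_1$ (Proposition\;\ref{pro215}), and then takes $L=N_G(L_1)$; the conjugacy theorem for maximal commuting sets of semisimple parts (Proposition\;\ref{pro220}, conjugating by elements of $[G_1,N^0]\subseteq N^0$) yields $G=N^0L$ by a Frattini-type argument, and $L/L_1\cong G/G_1$ is automatically compact. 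That normalizer-plus-conjugacy device is exactly the ingredient your outline would need to control the corrections and to avoid passing to finite index; without it, the induction cannot be closed. To complete your proof you would have to either establish an analogue of Proposition\;\ref{pro220} (uniqueness of the complement up to $N^0$-conjugacy) or find a substitute for the Frattini argument.
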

For discrete polycyclic groups there is a similar result about nilpotent
almost-supplements for the Fitting subgroup (\cite{Se}\;Sec.\,3C; see our 
Remark\;\ref{rem31} for further discussion).
\begin{Th}  \label{th2}
Let $G$ be a compactly generated l.c.\;group of polynomial growth having no
non-trivial compact normal subgroup. Then $G$ can be embedded as a closed
subgroup into a semidirect product \,$\widetilde G = \widetilde N\rtimes K$
such that $K$ is compact, $\widetilde N$ is a connected, simply connected
nilpotent Lie group, $K$ acts faithfully on $\widetilde N$ and
\,$\widetilde G/G$ is compact.
\end{Th}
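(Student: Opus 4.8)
\smallskip
\noindent\emph{Strategy of proof.}
We shall deduce Theorem~\ref{th2} from Theorem~\ref{th1}. Since $G$ has no non-trivial compact normal subgroup, its maximal compact normal subgroup is trivial, so $G$ is a Lie group by \cite{Lo1}, Th.~2, and Theorem~\ref{th1} gives a closed decomposition $G=NL$ with $N=\nil(G)$ and $L/\nil(L)$ compact. Two preliminary observations: the identity component $N_0$ is a connected nilpotent Lie group whose maximal compact subgroup is a central torus, unique and hence characteristic in $N_0$; as $N_0$ is characteristic in $G$ this torus is normal in $G$, hence trivial, so $N_0$ is connected, simply connected nilpotent. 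And $N$ is torsion-free: its torsion subgroup is a characteristic subgroup which injects into the finite torsion subgroup of the finitely generated nilpotent group $N/N_0$ (since $N_0$ is torsion-free), so it is finite, hence --- being normal in $G$ --- trivial.

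The plan has three stages. \emph{(i)} Using Mal'cev's embedding theorem for finitely generated torsion-free nilpotent groups, glued along $N_0$ and combined with the functoriality of the rational Mal'cev completion, embed $N$ as a closed cocompact subgroup of a connected, simply connected nilpotent Lie group $\widetilde N_0$ so that the conjugation action of $G$ on $N$ extends to an action of $G$ on $\widetilde N_0$ by automorphisms. \emph{(ii)} Here polynomial growth is decisive: this action is of type R, and we invoke the nilshadow construction. The semisimple parts (multiplicative Jordan decomposition in $\Aut(\widetilde N_0)$) of the automorphisms coming from elements of $G$ lie in a compact group $K$, recorded by a homomorphism $\nu$ on a suitable (solvable) part of $G$; combining the corresponding unipotent parts with $\widetilde N_0$, and then enlarging by finitely many central ($\R$- or $\Z$-completing) directions, produces a connected, simply connected nilpotent Lie group $\widetilde N\supseteq\widetilde N_0$ on which $K$ acts. \emph{(iii)} Embed $G$ into $\widetilde N\rtimes K$ essentially by $g\mapsto(\text{nilshadow class of }g,\,\nu(g))$; the added central directions serve to hold the nilshadow images of the elements of the almost nilpotent complement $L$, so that the image of $G$ becomes closed rather than dense. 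Cocompactness of $\widetilde G/G$ then follows from $L/\nil(L)$ being compact (Theorem~\ref{th1}) together with the compactness of $K$.

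The main obstacle is stage \emph{(iii)}: making the homomorphism $G\to\widetilde N\rtimes K$ \emph{injective with closed image}. Injectivity can fail because passing to automorphisms collapses the centralizer of $N$; closedness can fail because a discrete (or one-parameter) part of $L$ may map onto a dense subgroup of $K$ --- typical cases being the group $\R^2\rtimes_\theta\Z$ (irrational rotation) and the group $\R^2\rtimes\R$ ($\R$ acting by rotation), the former embedding densely in the Euclidean motion group $\R^2\rtimes\T$ and the latter covering it. Both defects must be removed at once by the extra central directions in $\widetilde N$: one routes $L$ --- more precisely $\nil(L)$, whose near-toral action on $\widetilde N_0$ is the source of the trouble --- diagonally into $K$ together with those directions, so that its image is a discrete, hence closed, cocompact subgroup there; that this can be done while keeping the map injective rests on self-centralizing properties of the nilradical $N=\nil(G)$, available because $G$ has no normal tori. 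In the write-up this is organised by first treating $G$ connected (where $G=R\mathcal{K}$ with $R$ the solvable radical, necessarily of type R, and $\mathcal{K}$ a compact Levi subgroup, and $\widetilde N$ is built from the nilshadow of $R$ together with the action of $\mathcal{K}$), then the discrete case $G/G_0$ (finitely generated of polynomial growth, hence almost nilpotent by Gromov, handled via Mal'cev), and finally assembling along $G_0\trianglelefteq G$, at each stage passing to a controlled cocompact or finite-index subgroup for which the hypotheses of the theorem persist.

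Finally, to arrange that $K$ acts faithfully on $\widetilde N$, note that the kernel $K'$ of this action is a compact normal subgroup of $\widetilde N\rtimes K$; it meets $\widetilde N$ trivially (a connected simply connected nilpotent Lie group has no non-trivial compact subgroup) and meets the image of $G$ trivially (that image inherits the property of having no non-trivial compact normal subgroup), so passing to $(\widetilde N\rtimes K)/K'$ keeps the image of $G$ closed and cocompact, leaves $\widetilde N$ unchanged up to isomorphism, and replaces $K$ by $K/K'$, which acts faithfully. This gives the asserted embedding $G\hookrightarrow\widetilde G=\widetilde N\rtimes(K/K')$.
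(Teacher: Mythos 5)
Your strategy coincides, at the level of ideas, with the paper's: Mal'cev completion of $N$, the fact that polynomial growth forces the semisimple (Jordan) parts of the inner automorphisms to generate a relatively compact abelian group, a ``diagonal'' nilshadow map into $\widetilde N\rtimes K$, and a final quotient by the kernel of the action to force faithfulness (this last step is sound, and your preliminary reductions --- triviality of the maximal torus of $N^0$ and torsion-freeness of $N$ --- are correct).

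The gap is stage \emph{(iii)}, which you flag as the main obstacle but then only describe. Two mechanisms are missing. First, the device that actually produces a \emph{closed} embedding: the paper does not embed $G$ directly into $\widetilde N\rtimes K$, but first splits a finite-index $G$-invariant subgroup $G_1$ of the radical with $[G_1,G_1]\subseteq N\subseteq G_1$ and $G_1/R^0$ nilpotent (so that all of $G_1$, not just $G^0$, falls under the splitting machinery of Section~2), forms $G_2=G_1\rtimes K_1$ with $K_1=\overline{\Cal C}$ and identifies $G_2\cong N_2\rtimes K_1$ via the explicit nilshadow $N_2=\{(x,\beta(x^{-1}))\}$ of Corollary~\ref{cor215a}, and then reattaches the rest of $G$ by letting it act on $G_2$ and invoking the pasting Proposition~\ref{pro14}; closedness of $G$ in the resulting group comes from this construction, not from a delicate ``routing of $L$ diagonally into $K$''. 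Your alternative organisation --- connected case, discrete case, assembly along $G^0$ --- leaves the assembly unaddressed: the extension of $G/G^0$ by $G^0$ need not split and the two partial embeddings need not be compatible, so something playing the role of Proposition~\ref{pro14} is unavoidable. Second, before one can pass to a connected, simply connected $\widetilde N$ one must kill the compact normal subgroup that the construction itself introduces (the paper's $P_3$, identified explicitly in the discussion after Corollary~\ref{cor48}): the nilpotent group $N_2$ is in general not torsion-free even when $G$ has no non-trivial compact normal subgroups (Remark~\ref{rem215b}), so ``enlarging by central directions'' / forming the real completion is only legitimate after this quotient, and one must then verify (Corollary~\ref{cor35}) that the completion $(G_4)_{\R}$ does not reintroduce compact normal subgroups. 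These are not cosmetic points; they are where the proof actually lives.
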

Then $\widetilde G$ is also a Lie group; but $G$ need not be normal in
$\widetilde G$ \,(see Example\;\ref{ex412}\,(c), (f)\,). Thus, although $G$
need not be almost nilpotent, it is always contained as a co-compact subgroup
in an almost nilpotent (and almost connected) group $\widetilde G$\,. For
$G$ connected, this was shown in \cite{Ab2}\;Th.\,3.6 (see also
Remark\;\ref{rem411}\,(b)\,).

It follows that any group $G$ as in Theorem\;\ref{th2} has a faithful linear
representation (Corollary\;\ref{cor36}), $G$ is isomorphic to a distal
linear group (as considered in \cite{Ab1}). $\widetilde G$~is isomorphic to
a real-algebraic linear group which is (for $\widetilde G$ minimal) an
algebraic hull of
$G$ in the sense of \cite{Ra}\;Def.\,4.39 \,(see Remark\;\ref{rem411}\,(a) for
further discussion).
\vskip 1mm
It turns out that the minimal extensions $\widetilde G$ as above (or
more specifically, with $K$ chosen minimal) are
determined uniquely up to isomorphism.
\begin{Th}  \label{th3}
Let $G,\;\widetilde G,\; \widetilde G'$ be l.c.\;groups,
$j\!: G \to \widetilde G,\;j'\!: G \to \widetilde G'$
shall be continuous, injective homomorphisms such
that $j(G),\:j'(G)$ are closed, $\widetilde G/j(G),\;\widetilde G'/j'(G)$
compact, $\widetilde G = \widetilde N \rtimes K\,,\ \,
\widetilde G' = \widetilde N' \rtimes K'$ with $K,\,K'$
compact, $\widetilde N,\,\widetilde N'$ connected,
simply connected nilpotent, $\widetilde Nj(G)$ dense in $\widetilde G$ \,
and $K'$ acting faithfully on $\widetilde N'$.
\\
Then there exists a unique continuous homomorphism
\,$\Phi\!: \widetilde G \to \widetilde G'$ such that
\,$\Phi \circ j = j'$. \,$\Phi$ is surjective iff \ $ \widetilde N'j'(G)$
is dense in $\widetilde G'$.
\,$\Phi$ is injective iff  $K$ acts faithfully on $\widetilde N$\,.
\vspace{-2mm}
\end{Th}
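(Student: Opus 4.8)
Identify $G$ with $j(G)$; the task is to produce a continuous homomorphism $\Phi\colon\widetilde G\to\widetilde G'$ with $\Phi|_G=j'\circ j^{-1}=:\rho$, to show it is unique, and to settle the two equivalences. The plan is to work in the algebraic category: $\widetilde N$ is unipotent via the Malcev correspondence, the compact group $K$ acts on it algebraically, so $\widetilde G=\widetilde N\rtimes K$ is the group of real points of a real algebraic group, and likewise $\widetilde G'$. I would record at the outset two facts. First, $G$ is \emph{Zariski dense} in $\widetilde G$: $\widetilde N\,G$ being (Hausdorff, hence Zariski) dense makes the Zariski closure of $G$ surject onto $\widetilde G/\widetilde N=K$; and the same density forces $\widetilde N/(G\cap\widetilde N)$ to be precompact, so --- a unipotent group having no non-trivial compact quotient --- $G\cap\widetilde N$ is dense in $\widetilde N$, hence the Zariski closure of $G$ contains $\widetilde N$ and equals $\widetilde G$. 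Second, since $K'$ acts faithfully on $\widetilde N'$, a short computation with the lower central series gives $\widetilde N'=\nil(\widetilde G')$; so $\widetilde N'$ is intrinsic to $\widetilde G'$ and $\widetilde G'$ has no non-trivial compact normal subgroup.

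Uniqueness is then the soft half: a continuous homomorphism between groups of this type is automatically algebraic (polynomial on the unipotent parts by Campbell--Hausdorff, algebraic on the compact parts), so if $\Phi_1,\Phi_2$ both extend $\rho$ the subgroup on which they agree is Zariski closed and contains the Zariski dense $G$, hence is all of $\widetilde G$.

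Existence is the main obstacle. The plan is to recognise $\widetilde G$, via the construction used for Theorem~\ref{th2}, as an algebraic hull of $G$ in the sense of \cite{Ra}, Def.~4.39 (up to the compact normal kernel of the action of $K$ on $\widetilde N$), and to extend $\rho$ by the universal property of that construction. Concretely one can form the Zariski closure $\Gamma$ of the graph $\{(g,\rho(g)):g\in G\}$ in $\widetilde G\times\widetilde G'$ and show that $\pi_1\colon\Gamma\to\widetilde G$ is an isomorphism of algebraic groups, whence $\Phi:=\pi_2\circ\pi_1^{-1}$ works (continuity following also from the open mapping theorem, all groups being $\sigma$-compact). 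Surjectivity of $\pi_1$ is immediate from Zariski density of $G$; the delicate points are that $\ker\pi_1$ --- a finite normal subgroup of $\pi_2(\Gamma)$ meeting $G$ trivially, by a dimension count --- is trivial (using polynomial growth of $G$ and the absence of small normal subgroups that follows from $\widetilde N'=\nil(\widetilde G')$ and the structure of $\nil(G)$), and, underneath this, that the homomorphism $\rho|_{G\cap\widetilde N}$ on the \emph{dense but non-closed} subgroup $G\cap\widetilde N$ of $\widetilde N$ genuinely extends to $\widetilde N$ --- which fails for arbitrary dense subgroups and rests on the Malcev rigidity of simply connected nilpotent Lie groups together with the equivariance forced by normality of $\widetilde N$ in $\widetilde G$ --- and that this extension has values in $\widetilde N'$. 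I expect most of the work, and the real use of the structure theory behind Theorems~\ref{th1} and \ref{th2}, to be in this last point.

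Finally the two equivalences. Granting $\Phi(\widetilde N)\subseteq\widetilde N'$ (obtained along the way), $\im\Phi$ contains $j'(G)$, hence is cocompact, and contains $\Phi(\widetilde N)$, so using $\widetilde G=\overline{\widetilde N\,G}$ one gets that $\overline{\im\Phi}=\overline{\widetilde N'\,j'(G)}$; thus $\Phi$ is onto exactly when $\widetilde N'\,j'(G)$ is dense. For injectivity, $\ker\Phi$ is a closed normal subgroup of $\widetilde G$ meeting $G$ trivially, hence --- as $\widetilde G/G$ is compact --- a compact normal subgroup of $\widetilde G$; one shows $\widetilde G$ has a non-trivial compact normal subgroup precisely when $K$ does not act faithfully on $\widetilde N$, and that such a subgroup is then killed by $\Phi$ since $\widetilde G'$ has none. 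Hence $\Phi$ is injective iff $K$ acts faithfully on $\widetilde N$.
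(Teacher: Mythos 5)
Your outline goes by a genuinely different route (Zariski closures and the algebraic-hull formalism of \cite{Ra}, rather than the paper's splitting machinery of Section~2 and Propositions~\ref{pro44}--\ref{pro47}), and it is a route the author himself hints at in Remark~\ref{rem37}. But as written it is a plan with the hard steps deferred, not a proof. Two concrete problems. First, your opening reduction is wrong as stated: from ``$\widetilde N/(G\cap\widetilde N)$ precompact'' you conclude that $G\cap\widetilde N$ is dense in $\widetilde N$ --- false even for $\Z\subseteq\R$ with $K$ trivial; what you actually need is that the \emph{Zariski} closure of a co-compact subgroup of a unipotent group is everything, and even the co-compactness of $G\cap\widetilde N$ in $\widetilde N$ is not immediate from the hypotheses (since $\widetilde N\,j(G)$ is only dense, not closed, one cannot identify $\widetilde N/(\widetilde N\cap G)$ with a closed subset of $\widetilde G/j(G)$); this is exactly the content of Proposition~\ref{pro44}\,(a), which the paper has to prove by induction on the nilpotency class. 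Second, and more seriously, everything you flag as ``the delicate points'' is precisely where the theorem lives: that $j^{-1}(\widetilde N)=j'^{-1}(\widetilde N')=\nil(G)$, so that $\rho$ carries $G\cap\widetilde N$ into $\widetilde N'$ (Proposition~\ref{pro47}\,(c) in the paper, itself resting on the whole of Section~4); that the dimension count forcing $\ker\pi_1$ to be finite holds (this is the ``$\dim\widetilde N=\rk(G)$'' / fullness issue of \cite{Ra}\;Def.\,4.37, discussed in Remark~\ref{rem411}\,(a)); and that the compact parts $K$, $K'$ can be aligned, which in the paper requires the conjugacy statement Proposition~\ref{pro220} for maximal commuting sets of semisimple automorphisms (equivalently, conjugacy of maximal reductive subgroups) applied after conjugating by an element of $N^0$. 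You acknowledge these points but do not supply arguments for any of them, so the existence half of the theorem is not established.

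For the record, the paper proceeds quite differently: it first reduces to $K,K'$ connected abelian and faithful, uses Corollary~\ref{cor45} to transport $K$ and $K'$ to two maximal commuting sets of semisimple automorphisms of $G$, conjugates one onto the other by Proposition~\ref{pro220}, defines $\Phi$ explicitly on $j(G)K$ by $\Phi(j(x)k)=j'(x)\,\alpha'^{-1}\!\circ\alpha(k)$, and extends to $\widetilde N$ by the rigidity theorem \cite{Ra}\;Th.\,2.11 applied to the co-compact subgroup $j(G)K\cap\widetilde N$; uniqueness and the two equivalences then come out of Proposition~\ref{pro47}\,(c) applied to the closed image $\Phi(\widetilde G)$. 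Your treatment of the final equivalences is essentially right in spirit, but note that your injectivity argument needs $\Phi(\widetilde G)$ (not $\widetilde G'$) to have no non-trivial compact normal subgroup, which again requires knowing $\Phi(\widetilde G)$ is closed and invoking the structure theory for it.
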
\noindent
Thus, if \,$\widetilde G,\,\widetilde G'$ are given as in Theorem\;\ref{th2}
and \,$\widetilde Nj(G),\,\widetilde N'j'(G)$ are both dense (which can
always be attained by minimizing $K,K'$), then $\Phi$ is an isomorphism.
Due to this uniqueness, we call a group $\widetilde G$ as in
Theorem\;\ref{th2} with $\widetilde NG$ dense in $\widetilde G$,
the {\it algebraic hull} of $G$ and
\,$\widetilde N = \nil(\widetilde G)$ the {\it connected
nil-shadow} of $G$.\vspace{3mm plus .5mm}

Theorem\;\ref{th1}\;and\;\ref{th2} are based on the splitting techniques 
introduced by Malcev and developed further by Wang, Mostow and Auslander
\,(see also \cite{Au}). We build upon \cite{Wan} and extend it in
Section\;2 for
our purpose (see \ref{di21}, Remark\;\ref{rem37} and Remarks\;\ref{rem411} for
further discussion). Section\;3 contains the proofs of
Theorem\;\ref{th1}\;and\;\ref{th2}.
In Section\;4 the proof of Theorem\;\ref{th3} is given, based on various
structural properties of subgroups of semidirect products like those
appearing in Theorem\;\ref{th2}. Examples\;\ref{ex412} contains various
examples for the algebraic hull and related objects.
\medskip
%=============================
\section{Notations and auxiliary results} % Sec.1
\smallskip
\begin{varrem}\label{di11}
If $\Cal B$ is a group acting on $G$ by automorphisms, then $G$ is said to
be an \linebreak $FC^-_{\Cal B}$-group if the orbits
\,$\{\alpha (x)\! : \alpha \in \Cal B\}$ are relatively compact in $G$ for
all $x \in G$\,. For $\Cal B$ the inner automorphisms,
$G$ is called an $FC^-$-group. $G$ is called a {\it generalized
$\overline{FC}$-group} if there exists a series 
\,$G = G_0 \supseteq  G_1 \supseteq\dots\supseteq  G_n = (e)$ \,of closed
normal subgroups of $G$ such that
$G_i/G_{i + 1}$ is an $FC^-$-group and compactly generated for
$i = 0,\dots,n - 1$ \,(see \cite{Lo2}\;1.2.1 for further
discussion and references). Any compactly generated group of polynomial
growth is a generalized $\overline{FC}$-group. As worked
out in \cite{Lo2}, generalized $\overline{FC}$-groups have some nice
algebraic properties, the class contains all discrete polycyclic
groups, connected solvable groups and compact groups (thus it should allow
unified formulations for some results of \cite{Ra}\;Ch.\,III
that are developed there separately for the discrete and the connected case).
Conversely, every generalized $\overline{FC}$-group can be built
up from members of these subclasses.
\end{varrem}
\begin{varrem}\label{di12}
We refer to \cite{Ba} and \cite{War} for basic results on the algebraic
theory of nilpotent groups. If $G$ is a connected nilpotent
Lie group, then $G$ is simply connected iff it is torsion free
(\cite{Va}\;Th.\,3.6.1). If $\fr g$ denotes the Lie algebra of $G$, then 
the exponential function \,$\exp\!:\fr g \to G$ is always
surjective and if $G$ is simply connected, then \,$\exp$
defines a homeomorphism.

If $N$ is any compactly generated, torsion free nilpotent group, it can
always be embedded as a closed subgroup into a connected, simply
connected nilpotent Lie group $N_{\R}$ such that $N_{\R}/N$ is compact
\,(but $N$ need not be normal). $N_{\R}$ is called the
{\it (real) Malcev\,-\,completion} of $G$
(see \cite{Ba}\;Ch.\,4, \cite{War}\;Sec.\,11,12,
\cite{Au}\;Ch.\,II and \cite{Ra}\;Ch.\,II). $N_{\R}$ is determined
uniquely up to isomorphism. If $\varphi\!: N \to G$ is any continuous
homomorphism into a connected, simply connected nilpotent Lie group~$G$\,,
it has a unique extension \,$\varphi_{\R}\!: N_{\R} \to  G$\,.
\end{varrem}
\begin{varrem}\label{di13}
{\it Semidirect products:} \;$G = H \rtimes K$ means that $H,K$ are closed
subgroups of the l.c.\,group $G$, \,$H$ normal, \,$G = HK,\
\,H \cap K = \{e\}$
\,(``internal product"). In most cases we will follow \cite{HR}\;(2.6)
\,(i.e., the left factor is normal). The restrictions of the inner
automorphisms define a continuous action of $K$ on $H$, we write
\,$k\circ h = k\,h\,k^{-1}$.
Conversely, if l.c.\;groups $H, K$ are given and a continuous action of
$K$ on $H$ \,(i.e., a continuous homomorphism \,$K \to \Aut(H)$ -
compare \cite{Ho}\;III.3) one can define the (``external") semidirect
product $H\rtimes K$ by considering the cartesian product $H\times K$ of
the topological spaces with group multiplication 
\,$(h_1,k_1)\,(h_2, k_2) = \linebreak
(h_1 (k_1 \circ h_2), k_1 k_2)$. Then $H$ is
isomorphic to the closed normal subgroup 
\,$\{(h,e) : h \in H\}$, similarly for $K$. For $\sigma$-compact l.c.\;groups
both viewpoints are equivalent (respectively, they lead to isomorphic
groups) and we will not distinguish further on.\vspace{-1mm}
\end{varrem}
Next, we give a result on combining two group extensions
(``pasting of two groups along a common subgroup"). This is probably
known, but we could not find a reference.\vspace{-3mm}
\begin{Pro}  \label{pro14}
Let $G, H_1$ be l.c.\;groups such that $H = G\cap H_1$
(with induced topology
from $G$) is a closed normal subgroup of $G$ and a subgroup of $H_1$
\,for which the inclusion \,$H\to H_1$ is continuous. Assume that $G$
acts continuously on $H_1$ by automorphisms (see \ref{di13}) such that
\,$x\circ h = x\,h\,x^{-1}$ whenever
\,$(x,h)\in (H\times H_1)\cup (G\times H)$. \\
Then there exists a l.c.\;group $G_1$ and continuous homomorphisms
\,$j\!: G \to G_1\,,\linebreak
j_1\!: H_1 \to G_1$ such that $j_1(H_1)$ is a closed normal subgroup of 
$G_1\,,\ j = j_1 \text{ on } H\,,\linebreak
G_1 = j_1(H_1)\,j(G),\
j(H)=j_1(H_1)\cap j(G),\ j_1(x\circ h) = j(x)\,j_1(h)\,j(x)^{-1} 
\text{ for all }\linebreak
x \in G\,,\,h \in H_1$\,. \;If $H$ is closed in $H_1$\,,
then $j(G)$ is closed in $G_1$\,.\vspace{-2.5mm}
\end{Pro}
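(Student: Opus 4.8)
The natural construction is to take the abstract group generated by $G$ and $H_1$ amalgamated over $H$, realize it as a quotient of a semidirect product, and then equip it with a suitable locally compact topology. Concretely, form the external semidirect product $P = H_1 \rtimes G$ using the given action of $G$ on $H_1$ (here $H_1$ is the normal factor), and let $D = \{(h^{-1}, h) : h \in H\}$. One checks, using the compatibility hypothesis $x\circ h = xhx^{-1}$ for $(x,h)\in H\times H_1$ together with normality of $H$ in $G$, that $D$ is a \emph{normal} subgroup of $P$: for $(a,x)\in P$ and $h\in H$ one computes $(a,x)(h^{-1},h)(a,x)^{-1} = ((\,\cdot\,)^{-1}, x h x^{-1})$ with the first coordinate forced to be the inverse of $xhx^{-1}\in H$, where the key cancellation uses precisely $x\circ(\text{something in }H) = x(\cdot)x^{-1}$ and $a\circ(\text{something in }H)=a(\cdot)a^{-1}$. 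Set $G_1 = P/\overline{D}$ and define $j\colon G\to G_1$, $j_1\colon H_1\to G_1$ as the compositions of the canonical inclusions $x\mapsto (e,x)$, $h\mapsto(h,e)$ with the quotient map. Then $j_1(H_1)=\text{image of }H_1\rtimes\{e\}$ is normal in $G_1$ because $H_1\rtimes\{e\}$ is normal in $P$; the relation $j=j_1$ on $H$ holds because $(h^{-1},h)\in D$ forces $(h,e)$ and $(e,h)$ to have the same image; $G_1 = j_1(H_1)j(G)$ is immediate from $P = (H_1\rtimes\{e\})(\{e\}\rtimes G)$; and the conjugation formula $j_1(x\circ h)=j(x)j_1(h)j(x)^{-1}$ follows from the multiplication rule in $P$.

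The two remaining assertions — that $j(H) = j_1(H_1)\cap j(G)$, and that $j(G)$ is closed when $H$ is closed in $H_1$ — are where the real work lies, and they are intertwined with the choice of topology, so I would handle the topology carefully first. The subtlety is that $H$ carries the topology induced from $G$, which may be \emph{finer} than the topology it inherits as a subgroup of $H_1$; hence $D$ need not be closed in $P$ a priori and one cannot simply quotient by $D$ itself. I would argue that $\overline{D}\cap (H_1\rtimes\{e\}) = \overline{D}\cap(\{e\}\rtimes G) = \{e\}$: an element of $\overline{D}$ lying in $H_1\rtimes\{e\}$ is a limit of pairs $(h_n^{-1},h_n)$ with $h_n\to e$ in $G$, so $h_n\to e$ in $H_1$ too (continuity of $H\to H_1$), forcing the limit to be $(e,e)$; symmetrically for the other factor. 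This simultaneously gives that $j$ and $j_1$ are injective and that $j_1(H_1)\cap j(G)$, pulled back to $P$, equals $(H_1\rtimes\{e\})\cdot\overline D \cap (\{e\}\rtimes G)\cdot\overline D$, which one then identifies with the image of $H$; a short diagram chase using $D\subseteq H\times H$ closes this.

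For the closedness of $j(G)$ when $H$ is closed in $H_1$: in that case $\{e\}\rtimes G$ together with $\overline D$ generate a set whose image is $j(G)$, and I would show $(\{e\}\rtimes G)\cdot\overline D$ is closed in $P$ by checking that $\overline D \subseteq H\times H$ with $H$ now closed in $H_1$ (so the induced topology issue is contained), and that multiplication by the closed subgroup $\{e\}\rtimes G$ along the compact-in-suitable-sense fibers of $D$ is proper enough; concretely, $(\{e\}\rtimes G)\overline D$ is the preimage under $P\to H_1\rtimes\{e\}$ (projection to first coordinate, which is continuous) of $\overline{\{h^{-1}:h\in H\}} = H$ (closed), intersected appropriately — here one uses that the first coordinate of $(\{e\}\rtimes G)\overline D$ lands in $H_1$ and is closed. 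Passing to the quotient $G_1 = P/\overline D$, a set is closed iff its saturation is, and $(\{e\}\rtimes G)\overline D$ is exactly the saturation of $\{e\}\rtimes G$.

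\textbf{Main obstacle.} The crux is the mismatch of topologies on $H$: everything formal about the amalgam works at the level of abstract groups, but making $G_1$ a genuine locally compact group with $j,j_1$ \emph{continuous} and simultaneously $j$ \emph{closed} requires quotienting by the closure $\overline D$ rather than $D$, and then proving $\overline D$ is small enough (meets each canonical factor trivially and stays inside $H\times H$) that no information is lost. Verifying $\overline D\subseteq H\times H$ — equivalently, that a net $(h_\alpha^{-1},h_\alpha)$ converging in $P$ has both limits landing in (the closure of) $H$ in the respective ambient groups and these closures are compatibly identified — is the delicate point, and it is exactly here that the hypothesis "$H$ closed in $H_1$'' is consumed to get the final closedness conclusion. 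I expect the rest to be routine bookkeeping with semidirect-product multiplication rules.
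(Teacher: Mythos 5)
Your construction is essentially the paper's: form $H_1\rtimes G$ and factor by the antidiagonal $H^*=\{(h^{-1},h):h\in H\}$, which is normal by exactly the computation you indicate, and the verifications of $j=j_1$ on $H$, of $G_1=j_1(H_1)\,j(G)$, of $j(H)=j_1(H_1)\cap j(G)$ and of the conjugation formula proceed as you sketch. The only (harmless) detour is your passage to the closure $\overline D$: since $H$ is closed in $G$ and the inclusion $H\to H_1$ is continuous, a convergent net $(h_\alpha^{-1},h_\alpha)$ already has its limit in $D$, so $D$ is closed in $H_1\rtimes G$ and $\overline D=D$ --- the ``delicate point'' you flag does not arise, and the hypothesis that $H$ be closed in $H_1$ is consumed only where you use it last, namely in showing that the saturation $(\{e\}\rtimes G)\,D=H\times G$ of $\{e\}\rtimes G$ is closed, which gives closedness of $j(G)$.
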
\noindent
If $H$ is closed in $H_1$\,, and the topologies of $G$ and $H_1$ coincide
on $H$\,, 
it will result from the proof that $j$ defines a topological isomorphism of
$G$ and $j(G)$, \,similarly for~$j_1$\,. For this reason, we will skip
$j\,,\,j_1$ in general and consider $G,\,H_1$ as subgroups of $G_1$\,. Then
the properties amount to \,$G_1 = H_1\,G\,,\ H_1 \triangleleft G_1\,,\
x \circ h = x\,h\,x^{-1} \text{ for all }\linebreak x \in G,\;h \in H_1$\,.
\begin{proof}
Put \,$G^* = H_1 \rtimes G$ \,(with respect to the given action, see
\ref{di13})
and \,$H^* = \{(x^{-1},x) : x \in H\}$. Then by easy computations, it 
follows from the properties of the action that $H^*$ is a closed normal
subgroup of $G^*$ \,(e.g., the subgroup property follows from
$(x^{-1},x)\,(y^{-1},y) = (x^{-1} (x \circ y^{-1}), xy) =
(y^{-1} x^{-1}, xy) \text{ \,for } x, y \in H$, since by assumption, 
$x \circ y^{-1} = x\,y^{-1} x^{-1} \text{ for } x \in H$). Put 
\,$G_1 = G^*/H^*,\ j(x)=(e,x)H^* \text { for } x \in G,\
j_1(h) = (h,e)H^* \text{ for } h \in H_1$\,.
This satisfies the properties stated above.
\vspace{-2mm}\end{proof}
\begin{varrem}\label{di15}
See \cite{HR}\; for basic properties of l.c.\;groups and \cite{Va} for Lie
groups. $e$ will always denote the unit element of a group $G$. $Z(G)$
stands for the centre of $G$, $\Aut(G)$ will denote the group of
topological automorphisms of $G$ with its standard topology 
(see \cite{HR}\;(26.3)). $G^0$ denotes the connected component of
the identity.
\end{varrem}
\smallskip
%===========================
\section{The splitting technique} % Sec.2
\smallskip
\begin{varrem}\label{di21} In this section, we use the following setting.
$G$ shall be a
(not necessarily connected) Lie group whose topological commutator group
${[G, G]}^-$ is compactly generated, nilpotent, torsion free and such
that $G/G^0$ is nilpotent (\;$^0$ denoting the connected component of
the identity). In particular, $G$ is an extension of a nilpotent group
by an abelian group and therefore solvable.

In addition, we consider a fixed closed subgroup $H \supseteq {[G,G]}^-$
such that $H$ is compactly generated, nilpotent and torsion free. Then
$G/H^0$ is nilpotent (observe that $G^0/{[G,G]}^{-0}$ is central in
$G/{[G,G]}^{-0}$, hence $G/{[G,G]}^{-0}$ is nilpotent). In principle, the
proofs could also be done without specifying such an $H$, but this
approach makes it easier to use the results of \cite{Wan}. If $G$ is
compactly generated and has no non-trivial compact normal subgroups,
one can always take $H=N=\nil(G)$ (the nilradical - see 2.8), then
$\Aut_H (G)$, defined below, coincides with $\Aut (G)$. In the terminology of
\cite{Au}, $H$ is a CN-group. If $H$ is connected and open in $G$ and
$G/H$ is finitely generated, this coincides with the class of solvable
groups $G$ considered in \cite{Wan}\;sec.\,6 (contained in the class
of $\Cal S$-groups defined in \cite{Wan}\;sec.\,10). More generally,
if $H$ is connected and
$G/H$ compactly generated, one gets the $\epsilon$-category of \cite{To}.
The main results will be Proposition\;\ref{pro215} and
Corollary\;\ref{cor215a} on existence of the splitting
(containing \cite{Wan}\;(10.2)\,) and Proposition\;\ref{pro220} on uniqueness
up to conjugacy.

$\Aut(G)$ will denote the group of topological automorphisms of $G$
\,(with its
standard topology, \cite{HR}\;(26.3)\,). For $x\in G$, $\iota_x(y) = xyx^{-1}$
denotes the corresponding inner automorphism of $G$,
\,$\iota\!:G\to\Aut (G)$ is a homomorphism,
\,$\iota_{\theta(x)}=\theta \circ \iota_x \circ \theta^{-1}$ for
$\theta\in\Aut(G)$. As in \cite{Wan}\;p.\,2, we say that
$\theta\in\Aut(G)$ is $\it{unipotent}$, if there exists an integer $n > 0$
such that $(\ad \theta)^n$ is the identity on $G$, where
$(\ad \theta)(x) = \theta (x)\,x^{-1}$ \ (if $G$ is connected, this is
equivalent to the statement that $d\mspace{.5mu}\theta - \id$ is nilpotent on
the Lie algebra of~$G$\;--\;recall that $G$ is solvable). $H/H^0$ is finitely
generated, nilpotent and torsion free, $G/H$ abelian.
By well known results (compare \cite{War}\;9.3,\;9.5) nilpotency of the
group $G/H^0$ is equivalent to unipotency of the automorphisms of $H/H^0$
induced by $\iota_x\ (x \in G)$. We put 
$\Aut_H (G) = \{\theta\in\Aut(G): H\ \text{is}\ \theta\text{-invariant}\}$
\ (\,$\theta|H$ will denote the
restriction of the mapping) and (extending \cite{Wan}\;p.\,8)\vspace{1mm}
\begin{equation*}\begin{split}
\Aut_1(G) = \{\theta \in \Aut_H (G) : \;
\theta \ &\text{induces the identity on}\ G/H \ \\[-1mm]
&\text{and a unipotent automorphism of} \ H/H^0\}\ .
\end{split}\end{equation*}
Clearly, this depends on $H$, so we will sometimes write more
precisely $\Aut_{1, H}(G)$. Note that if $H$ is not connected,
$\Aut_1(G)$ need not be a subgroup, but it is always
$\Aut_H(G)$-invariant. The assumptions on $G, H$ imply that
$\iota_x\in\Aut_1(G)$ for all $x\in G$. For $\theta\in\Aut(G)$, we write
\,$G_{\theta} = \{x \in G : \theta(x) = x\}$. If $G$ is connected, we
call \,$\theta \in \Aut(G)$ $\it{semisimple}$ if the corresponding
linear transformation $d\theta$ of the Lie algebra $\fr g$ of
$G$ is semisimple (i.e., it diagonalizes after suitable extension of
the base field). Recall that any $\theta\in\Aut(\R ^n)$ has a
unique decomposition
\,$\theta = \theta_s \circ \theta_u = \theta_u\circ\theta_s$\,,
where $\theta_s$ is semisimple, $\theta_u$
unipotent \,(multiplicative Jordan decomposition -- see
\cite{Bo1}\;VII,\,Th.\,1,\,p.\,42). $\theta_s$ is a polynomial of $\theta$,
hence any
\text{$\theta$-invariant} subspace is also $\theta_s$-invariant. If
$\theta$ is an automorphism of a Lie algebra, the same is true for
$\theta_s, \theta_u$ (an easy consequence of
\cite{Bo2}\;VII,\,Prop.\,12,\,p.\,16). This carries over to automorphisms of
connected, simply connected Lie groups.\vspace{1.5mm}

If $G, H$ are given as above, we can consider the Malcev completion
$H_{\R}$ of $H$ and by \ref{di12} and Proposition\;\ref{pro14}, we can
consider $G$ and $H_{\R}$ as
closed subgroups of a (uniquely determined) Lie group $G_{\R}$ such that
$H_{\R}$ is normal in $G_{\R}\,,\ G\cap H_{\R} = H$ and
$G_{\R} = H_{\R}G$\,. Then $G_{\R}/H_{\R}\cong G/H$\,, in particular, the pair
$G_{\R}, H_{\R}$ satisfies again our general requirements and
$G_{\R}/G$ \,(being homeomorphic to $H_{\R}/H$) is compact \,(but be aware
that $G_{\R}$ may have non-trivial compact normal subgroups, even if
$G$ does not, see also Corollary\;\ref{cor35}; furthermore, $G_\R$ depends
on $H$ and it need not be connected). Any $\theta \in \Aut_H(G)$ has
a unique extension
$\theta_{\R} \in \Aut_{H_{\R}} (G_{\R})$\,, $\theta \in \Aut_{1, H} (G)$
implies $\theta_{\R} \in \Aut_{1, H_{\R}} (G_{\R})$\,. If $\theta$ is
unipotent, the same is true for~$\theta_{\R}$\,.
\vspace{-4mm}\end{varrem}
\begin{Lem}\label{le22}
For $\theta \in \Aut_1(G)$, the following statements are
equivalent:
\begin{enumerate}
\item[(i)] \ $\theta|\,G^0$ is semisimple, \;$G = G^0\,G_{\theta}$\,.
\item[(ii)] \ $\theta|\,H^0$ is semisimple, \;$G = H^0\,G_{\theta}$\,.
\item[(iii)] \ $\theta_{\R}|H_{\R}$ is semisimple, \;
$G_{\R} = H\,(G_{\R})_{\theta_{\R}}$\,.
\end{enumerate}
\vspace{-3mm}\end{Lem}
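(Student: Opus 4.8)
The plan is to prove the three equivalences by passing back and forth between $G$, its identity component $G^0$, the nilpotent piece $H^0$, and the Malcev completion $G_\R$, using the fact that semisimplicity of an automorphism of a connected (solvable) Lie group is detected on the Lie algebra, where it behaves well under passage to invariant subspaces, quotients, and direct sums. The key structural input is that $H^0$ is a closed normal connected subgroup of $G^0$ with $G^0/H^0$ central in $G/{[G,G]}^{-0}$ and hence a vector group (a connected simply connected abelian Lie group, since $G$ has the stated nilpotency properties and we may work modulo compact pieces inside $G^0$); correspondingly the Lie algebra $\fr g^0$ sits in an exact sequence $0 \to \fr h^0 \to \fr g^0 \to \fr g^0/\fr h^0 \to 0$ on which $d\theta$ acts, being the identity on the quotient because $\theta$ induces the identity on $G/H \supseteq G^0/H^0$.

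First I would show (i) $\Leftrightarrow$ (ii). Since $d\theta$ is the identity on $\fr g^0/\fr h^0$, the automorphism $d\theta$ of $\fr g^0$ is semisimple if and only if its restriction to $\fr h^0$ is semisimple \emph{and} the extension of $d\theta|\fr h^0$ by the identity splits, i.e. there is a $d\theta$-invariant complement to $\fr h^0$. But $d\theta - \id$ maps $\fr g^0$ into $\fr h^0$ (again because the induced map on the quotient is trivial), so if $d\theta|\fr h^0$ is semisimple one can write $\fr h^0 = \ker(d\theta - \id)|\fr h^0 \oplus \im(d\theta-\id)|\fr h^0$ and lift this to produce the complement; conversely semisimplicity of $d\theta|\fr g^0$ clearly forces semisimplicity on the invariant subspace $\fr h^0$. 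This algebraic statement translates, via $\exp$ and the correspondence between $d\theta$-invariant subspaces/fixed vectors and $\theta$-invariant connected subgroups/connected fixed subgroups, into: $\theta|G^0$ semisimple $\iff$ $\theta|H^0$ semisimple. For the condition $G = G^0 G_\theta$ versus $G = H^0 G_\theta$: one inclusion is trivial ($H^0 \subseteq G^0$); for the other, modulo $G_\theta$ the group $G^0$ is generated by $H^0$ together with a set of representatives of $G^0/H^0$, and since $d\theta$ is the identity on $\fr g^0/\fr h^0$ one can adjust any representative by an element of $\ker(d\theta - \id)$ modulo $\im(d\theta-\id) \subseteq \fr h^0$ — this uses semisimplicity to get $\fr g^0 = \ker(d\theta-\id) + \fr h^0$ — so that $G^0 G_\theta = H^0 G_\theta$.

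Next I would handle (ii) $\Leftrightarrow$ (iii). Here one uses that $H^0$ is cocompact in $H_\R$ (so $H^0$ generates $H_\R$ as a connected group and $\fr h^0 = \fr h_\R$), that $\theta_\R$ is the unique extension of $\theta$ with $\theta_\R \in \Aut_{1,H_\R}(G_\R)$ and restricts to $\theta$ on $H$, and that $d\theta_\R|\fr h_\R = d\theta|\fr h^0$; hence $\theta|H^0$ semisimple $\iff$ $d\theta|\fr h^0$ semisimple $\iff$ $d\theta_\R|\fr h_\R$ semisimple $\iff$ $\theta_\R|H_\R$ semisimple, the last step because $H_\R$ is connected and simply connected so semisimplicity is exactly semisimplicity of the differential. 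For the factorization conditions: $G_\R = H_\R G$ by construction, so $G_\R = H_\R (G_\R)_{\theta_\R}$ would follow once we know $G = H^0 G_\theta$ together with $(G_\R)_{\theta_\R} \supseteq G_\theta$ and $H_\R \supseteq H^0$; the subtlety is to reach the sharper statement $G_\R = H\,(G_\R)_{\theta_\R}$ (with $H$, not $H_\R$), and conversely to descend from (iii) to (ii). For the forward direction one writes $x \in G_\R$ as $x = h\,g$ with $h \in H_\R$, $g \in G$, notes that one may correct $h$ into $H$ at the cost of multiplying $g$ by an element of $H$, and uses semisimplicity of $\theta_\R|H_\R$ to split off the fixed part: since $H_\R = (H_\R)_{\theta_\R}\cdot \im(\ad\theta_\R|H_\R)$ and (by unipotence of $\theta_\R$ modulo the semisimple part, i.e. $\theta_\R \in \Aut_1$) the relevant cocycle obstruction vanishes, one can absorb the $H_\R$-part into $(G_\R)_{\theta_\R}$ up to an element of $H$. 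For the converse, intersect with $G$: $G = G_\R \cap G = (H\,(G_\R)_{\theta_\R}) \cap G$, and since $H \subseteq G$ and $G \cap (G_\R)_{\theta_\R} = G_\theta$, provided the decomposition can be chosen compatibly — which again is the semisimple/unipotent splitting — one gets $G = H\,G_\theta \subseteq H^0 G_\theta \cdot (H/H^0)$, and since $H/H^0$ is a finitely generated discrete quotient one checks $H\,G_\theta = H^0 G_\theta$ using that $\theta$ acts unipotently on $H/H^0$ so that $(H/H^0)_\theta$ already spans $H/H^0$ modulo $(\ad\theta)(H/H^0)$.

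I expect the main obstacle to be the last point: passing between the \emph{non-connected} groups $H$, $G$ and their connected/completed versions while keeping the factorizations sharp (with $H$ or $H^0$ rather than $H_\R$ on the outside). The connected-group algebra over $\R$ is clean, but the interplay with the finitely generated nilpotent quotients $H/H^0$ and $G/H$ — and the need to invoke unipotence of $\theta$ there (which is exactly what $\theta \in \Aut_1(G)$ buys us, via \cite{War}\,9.3,\,9.5 as cited in \ref{di21}) to guarantee that fixed points are "large enough" modulo the image of $\ad\theta$ — is where the argument needs care. A cohomological reformulation ("$H^1(\langle\theta\rangle, -)$ vanishes on the semisimple part, is computed by $\ad\theta$ on the unipotent part") may be the cleanest way to package all three factorization equivalences uniformly.
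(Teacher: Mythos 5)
Your overall strategy (reduce everything to the Lie algebra, use that $d\theta$ is the identity on $\fr g/\fr h$, pass to $G_\R$) matches the paper's, but two of your key reductions are false, and both concern exactly the non-connected phenomena the lemma is designed to handle. The most serious one is in (ii)$\Leftrightarrow$(iii): you assert that $H^0$ is cocompact in $H_\R$, so that $\fr h^0=\fr h_\R$ and semisimplicity transfers for free. This fails whenever $H$ is not connected --- for $H=\Z^n$ one has $H^0=\{e\}$ and $H_\R=\R^n$. It is $H$, not $H^0$, that is cocompact in $H_\R$, and bridging $\fr h$ (the Lie algebra of $H^0$) to $\fr h_\R$ is precisely the content of this equivalence. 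The paper does it by first extracting $H=H^0H_\theta$ from $G=H^0G_\theta$, using $(H_\theta)_\R\subseteq(H_\R)_{\theta_\R}$ to get $H_\R=H^0(H_\R)_{\theta_\R}$, and only then reading off semisimplicity of $d\theta_\R$ on $\fr h_\R$ from semisimplicity on $\fr h$ plus a fixed complementary piece; in the converse direction it uses the factorization $G_\R=H\,(G_\R)_{\theta_\R}$ to show $\ad\theta_\R(H_\R)\subseteq H$, hence (by connectedness of $H_\R$) $\subseteq H^0$, which is what your ``intersect with $G$ and choose the decomposition compatibly'' leaves open. Your fallback claim that $H\,G_\theta=H^0G_\theta$ because the fixed points of a unipotent automorphism of $H/H^0$ span modulo $(\ad\theta)(H/H^0)$ is also false: for $H/H^0=\Z^2$ and a single Jordan block, the fixed subgroup and $\im(\ad\theta)$ are the same rank-one subgroup.

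The second problem is in (i)$\Leftrightarrow$(ii), where you present ``$\theta|G^0$ semisimple iff $\theta|H^0$ semisimple'' as a self-contained algebraic fact, with the invariant complement ``lifted'' from the eigenspace decomposition of $\fr h^0$. The lift is not automatic: for $d\theta=\left(\begin{smallmatrix}1&1\\0&1\end{smallmatrix}\right)$ on $\fr g=\R^2$ with $\fr h$ the fixed line, $d\theta|\fr h$ is semisimple and $d\theta$ is the identity on $\fr g/\fr h$, yet $d\theta$ is not semisimple. What makes the lift work is the factorization hypothesis $G=H^0G_\theta$, which forces $\fr g=\fr h+\fr g_\theta$ and hence $(d\theta-\id)(\fr g)\subseteq(d\theta-\id)(\fr h)$; the semisimplicity statements and the factorization statements in (i)--(iii) cannot be decoupled the way your plan does. (The paper instead runs a cycle (i)$\Rightarrow$(ii)$\Rightarrow$(iii)$\Rightarrow$(i): from semisimplicity on $\fr g$ it gets $\fr g=\fr g_\theta+\fr h$ and concludes $G^0\subseteq H^0G_\theta$ because $H^0G_\theta^0$ is open; to close the cycle it recovers $G^0=H^0G_\theta^0$ from $G=H^0G_\theta$ by an open-mapping argument and only then deduces semisimplicity of $d\theta$ on all of $\fr g$.) The plan is repairable along these lines, but as written both halves that involve the non-connected groups rest on false reductions.
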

\begin{proof} \;
(i) $\Rightarrow$ (ii)\,: it will be enough to show that
$G^0 \subseteq H^0 G_{\theta}$\,. The assumption \linebreak
$\theta\in\Aut_1(G)$
implies $(\ad \theta) (G^0) \subseteq H^0$, consequently $d \theta$
induces the
identity on $\fr g/\fr h$ \,(where $\fr h$ denotes the Lie
algebra of $H$). $d\theta$ being semisimple, it follows that
$\fr g = \fr g_{\theta} + \fr h$ \,(where
$\fr g_{\theta} = \{X \in \fr g : d\theta (X) = X\}$). Clearly,
$\fr g_{\theta}$ is the Lie algebra of $G_{\theta}$ and it follows
(as in \cite{Va}\;L.\,3.18.4) that $H^0 G_{\theta}^0$ is open in $G$.
\\[0mm plus .1mm]
(ii) $\Rightarrow$ (iii)\,: We have $H = H^0H_{\theta}$\,. It is easy to
see that $(H_{\theta})_{\R} \subseteq (H_{\R})_{\theta_{\R}}$, thus
$H_{\R} = H^0 (H_{\R})_{\theta_{\R}}$\,. Then $G_{\R} = G H_{\R}$
implies $G_{\R} = H^0 (G_{\R})_{\theta_{\R}}$\,. In addition, we get a
decomposition of the Lie algebra $\fr h_{\R}$ of $H_{\R}$ into a sum
(similar as above) and then semisimplicity of
$d\theta|\,\fr h\ (= d\theta_{\R}|\,\fr h)$ implies semisimplicity of
$d\theta_{\R}$\,.
\\[0mm plus .1mm]
(iii) $\Rightarrow$ (i)\,: We have
\,\,$\ad\theta_{\R}(H_{\R})\subseteq H^0$ \,and this implies that
$d\theta_{\R}$ induces the identity on
$\fr h_{\R}/\fr h$\,. As in the first step, we get that
$H_{\R} = H^0 (H_{\R})_{\theta_{\R}}$, hence $H = H^0 H_{\theta}$ and
$G_{\R} = H^0 (G_{\R})_{\theta_{\R}}$\,. Since
$(G_{\R})_{\theta_{\R}} \cap G = G_{\theta}$\,, this gives
\,$G = H^0 G_{\theta} \subseteq  G^0 G_{\theta}$\,. We get a surjective
homomorphism from $G^0 \cap G_{\theta}$ to $G^0/H^0$ and since
$G^0 \cap G_{\theta}$ is \linebreak
$\sigma$-compact, this is an open mapping
(\cite{HR}\;Th.\,5.29). Hence the mapping
has to remain surjective on $(G^0 \cap G_{\theta})^0 = G_{\theta}^0$
\,and it follows that $G^0 = H^0 G_{\theta}^0$\,. As in the second step,
this gives a decomposition of $\fr g$ and semisimplicity of $d\theta$\,.
\end{proof}
\begin{Def}  \label{def23}
$\theta \in \Aut (G)$ is called {\it semisimple} if it satisfies
condition (i) of Lemma\;\ref{le22}.

\noindent Note that if $H$ is any subgroup of $G$ as in \ref{di21} such that
$\theta\in \Aut_{1,H}(G)$ holds, then $\theta$~satisfies
Lemma\;\ref{le22}\,(ii),\,(iii)
as well. In particular (by (iii)), $\theta_{\R} \in \Aut (G_{\R})$ is
again semisimple. But the converse is not true in general (take e.g.,
$G = \Z,\ \theta (n) = -n$, then $\theta_{\R}$ is semisimple but
$\theta$ is not). If $H$ is connected and open, $G/H$ finitely
generated ($G, H$ as in \ref{di21}), $\theta \in \Aut_1(G)$, our
Definition is equivalent to that of \cite{Wan}\;sec.\,6. If \,(for general
$G, H$ as in \ref{di21}) \,$\theta', \theta'' \in \Aut_1(G)$ are commuting
semisimple automorphisms, it follows as in the proof of
\cite{Wan}\;(8.8) \,(see also Corollary\;\ref{cor26} below with
$L = G_{\theta}$) \,that $\theta' \circ \theta''$ is again semisimple and
$\theta' \circ \theta'' \in \Aut_1(G)$.
\end{Def}
\begin{Lem}   \label{le24}
Assume that \,$G,H$ are given as in \ref{di21}, $\theta \in \Aut_1(G)$
and let $\rho$ be the semisimple part of \;$\theta|H^0$. Then there exists
a unique semisimple automorphism $\theta_s \in \Aut_1(G)$ which
extends $\rho$ and commutes with $\theta$\,.
\\
If $H$ is connected, we have
\,$G_{\theta_s} = \{x \in G : \theta (x)\,x^{-1} \in H_{\rho}\,\}$.
For general $H$, we have
\,$G_{\theta_s} = \{x \in G : (\ad \theta)^n (x) = e
\text{ for some } n > 0\}$ and
\,$\theta_s = (\theta_{\R})_s|\,G$\,. In particular
$G_{\theta} \subseteq G_{\theta_s}$\,.
\end{Lem}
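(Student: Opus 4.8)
The plan is to leverage the passage to the Malcev completion $G_{\R}$, where everything becomes a question about connected, simply connected Lie groups and their automorphisms, and then descend back to $G$. First I would treat the case that $H$ is connected as the core case. Here $\theta|H^0 = \theta|H$ has a multiplicative Jordan decomposition $\theta|H = \rho \circ \nu$ with $\rho$ semisimple, $\nu$ unipotent, commuting; since $H$ is connected simply connected (torsion-free nilpotent Lie group by \ref{di12}), $\rho$ is a polynomial in $\theta|H$, so any $\theta|H$-invariant subalgebra of $\fr h$ is $\rho$-invariant. To extend $\rho$ to all of $G$, I would use that $\theta$ induces the identity on $G/H$ (since $\theta \in \Aut_1(G)$), so $(\ad\theta)(G)\subseteq H$, and define $\theta_s$ on $G$ by prescribing $\theta_s = \rho$ on $H$ and $\theta_s(x) = $ (suitable correction of) $x$ on coset representatives; more precisely, I would work inside $G_{\R} = H_{\R}\rtimes$-type extension and take $\theta_s := (\theta_{\R})_s|\,G$, after checking $G$ is $(\theta_{\R})_s$-invariant. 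The point that $G$ is stable under $(\theta_{\R})_s$ is exactly where the hypothesis $\theta \in \Aut_1(G)$ (rather than merely $\Aut_H(G)$) is used: one needs $(\ad\theta_{\R})(G)\subseteq H^0\subseteq H$ and that $\theta_{\R}$ acts unipotently on $H_{\R}/H^0$ — but since $H$ is connected this quotient is trivial, and the semisimple part $(\theta_{\R})_s$ then satisfies $(\ad (\theta_{\R})_s)(x) \in H^0$ for $x \in G$, forcing $(\theta_{\R})_s(G)\subseteq G\cdot H^0 = G$. I expect this verification of $G$-invariance, together with checking $\theta_s \in \Aut_1(G)$, to be the main obstacle.

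Next I would identify $G_{\theta_s}$ in the connected-$H$ case. Since $\theta_s$ and $\theta$ commute and $\theta = \theta_s \circ (\text{unipotent part})$ after restriction, an element $x$ is fixed by $\theta_s$ iff $\theta(x)\,x^{-1}$ lies in the fixed behaviour of the unipotent remainder; unwinding, $x \in G_{\theta_s}$ iff $\theta_s(x) = x$ iff the $H$-component of $(\ad\theta)(x)$ lies in the range where $\rho$ acts trivially, which is $H_\rho$. Concretely: $(\ad\theta_s)(x) = \theta_s(x)x^{-1}$; using $\theta = \theta_s\nu$ with $\nu$ the unipotent part (which fixes $x$ modulo deeper terms), one shows $(\ad\theta)(x) \in H_\rho \iff (\ad\theta_s)(x) = e$. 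This yields $G_{\theta_s} = \{x : \theta(x)x^{-1} \in H_\rho\}$, and since $\theta(x)=x \Rightarrow \theta(x)x^{-1}=e\in H_\rho$, we get $G_\theta \subseteq G_{\theta_s}$.

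For general (non-connected) $H$, I would pass to $G_{\R}$ and $H_{\R}$, where $H_{\R}$ is connected, apply the connected case to get $(\theta_{\R})_s$, then set $\theta_s = (\theta_{\R})_s|\,G$ — this requires checking $G$ is $(\theta_{\R})_s$-invariant, which again is where $\theta \in \Aut_1(G)$ enters (now one genuinely needs the unipotency of $\theta$ on $H/H^0$ so that $(\theta_{\R})_s$ acts trivially on $H_{\R}/H^0$, hence $(\ad (\theta_{\R})_s)(G)\subseteq H^0$ and $(\theta_{\R})_s(G)\subseteq G$). The description $G_{\theta_s} = \{x \in G : (\ad\theta)^n(x) = e \text{ for some } n>0\}$ then follows from the connected-$H$ formula applied in $G_{\R}$: $(\theta_{\R})_s$ fixes $x$ iff $(\ad\theta_{\R})(x) \in (H_{\R})_{\rho}$, and since $\theta_{\R}$ acts unipotently modulo the semisimple part on the relevant nilpotent pieces, this is equivalent to $(\ad\theta_{\R})^n(x)=e$ for large $n$; intersecting with $G$ and using $\theta_s = (\theta_{\R})_s|\,G$ gives the claim. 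Uniqueness of $\theta_s$ follows from uniqueness of the Jordan decomposition in $\Aut(\fr h_{\R})$ extended to $\Aut(G_{\R})$ (as recalled in \ref{di21}) together with the requirement that $\theta_s$ commute with $\theta$ and restrict to $\rho$ on $H^0$, which pins down $(\theta_{\R})_s$ and hence its restriction to $G$.
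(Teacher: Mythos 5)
Your overall architecture matches the paper's: treat connected $H$ first, characterize $G_{\theta_s}$ as $\{x : \theta(x)x^{-1}\in H_\rho\}$ to get uniqueness, and handle general $H$ by passing to $G_{\R}$, $H_{\R}$ and restricting $(\theta_{\R})_s$ to $G$ after verifying invariance. But there is a genuine gap at the foundation: your construction of $\theta_s$ in the connected case is circular. When $H$ is connected it is already a connected, simply connected nilpotent Lie group (connected, nilpotent, torsion free), so $H_{\R}=H$ and hence $G_{\R}=H_{\R}G=G$; the prescription ``take $\theta_s:=(\theta_{\R})_s|\,G$'' therefore defines nothing, because $(\theta_{\R})_s$ \emph{is} the automorphism of $G$ whose existence is being asserted. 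The Jordan decomposition is only automatic on the connected simply connected piece $H$ (equivalently on $\fr h$); the substance of the existence claim is extending $\rho$ from $H$ across the possibly non-connected quotient $G/H$ (e.g.\ $G/H\cong\Z^k$) to a genuine automorphism of $G$ lying in $\Aut_1(G)$ and commuting with $\theta$ --- and, as the examples in \ref{def23} and \ref{di25} show ($\theta(n)=-n$ on $\Z$; the integer matrix whose semisimple part does not preserve $\Z^3$), such an extension is not formal. The paper gets it from \cite{Wan}\,(8.1) for $H$ open connected with $G/H$ finitely generated, then removes the finite-generation hypothesis by a direct-limit argument over open subgroups $G_1\supseteq H$ (legitimate only because uniqueness is proved first), and finally supplies a separate continuity argument when $H$ is not open, using the closedness of $G_{\theta_s}$ and the presentation of $G$ as a quotient of $H\rtimes G_{\theta_s}$. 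Your ``suitable correction of $x$ on coset representatives'' is exactly this missing construction.

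Two smaller points. First, in the converse direction of the identity $G_{\theta_s}=\{x:\theta(x)x^{-1}\in H_\rho\}$ you need a concrete input --- the paper writes $x=yz$ with $y\in H$, $z\in G_{\theta_s}$ and invokes \cite{Wan}\,(5.1) to conclude $y\in H_\rho$ from $\theta(y)\,\bigl(\theta(z)z^{-1}\bigr)\,y^{-1}\in H_\rho$; your appeal to ``$\nu$ fixes $x$ modulo deeper terms'' does not substitute for this. Second, your uniqueness argument (``commuting with $\theta$ and restricting to $\rho$ pins down $(\theta_{\R})_s$'') is not self-contained: agreeing with $\rho$ on $H^0$ says nothing a priori about the values off $H^0$; what pins $\theta_s$ down is precisely the fixed-point formula together with $G=H^0G_{\theta_s}$ from Lemma\;\ref{le22}\,(ii), which is why the paper proves that formula first. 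Your treatment of the non-connected case (invariance of $G$ under $(\theta_{\R})_s$ via $G_{\R}=H^0(G_{\R})_{(\theta_{\R})_s}$, and the identification $(\theta_s)_{\R}=(\theta_{\R})_s$ via unipotence on $H_{\R}/H^0$) is essentially the paper's argument and is fine once the connected case is actually in place.
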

\begin{proof}
First, assume that $H$ is connected. To prove uniqueness, it will be
enough (by Lemma\;\ref{le22}\,(ii)\,) to verify the first formula
for $G_{\theta_s}$ (then $G_{\theta} \subseteq G_{\theta_s}$
follows as well in this case). If $x \in G_{\theta_s}$\,, then (using
$\theta \circ \theta_s = \theta_s \circ \theta$ and
$\theta \in \Aut_1 (G)$\,) we get
\,$\theta (x) x^{-1} \in G_{\theta_s} \cap H = H_{\rho}$\,. For the converse,
take $x \in G$\,, then we have (Lemma\;\ref{le22}\,(ii)\,)
\,$x = yz$ with $y \in H,\ z \in G_{\theta_s}$\,.
If $\theta (x) x^{-1} \in H_{\rho}$\,, we can (since
$\theta (x) x^{-1} = \theta (y) \theta (z) z^{-1} y^{-1}$) apply
\cite{Wan}\;(5.1) with $w = y,\ v = \theta (z) z^{-1}$ \;(note a misprint
in \cite{Wan}: it should read $\theta (w) vw^{-1}$ instead of
$\rho (w)v w^{-1}$) \,and conclude that
$y \in H_{\rho} \subseteq G_{\theta_s}$\,. This gives
$x \in G_{\theta_s}$\,. Furthermore,
concerning the second formula for $G_{\theta_s}$\,, \cite{Wan}\;(5.1)
shows that for $w\in H,\ (\ad\theta)(w)\in H_{\rho}$ implies
$w\in H_{\rho}$\,. Thus (by induction) \,$(\ad \theta)^n (x) = e$ for some
$n > 0$ \,implies
\,$(\ad\theta)(x)\in H_{\rho}$\,, i.e., $x \in G_{\theta_s}$\,. The other
inclusion follows from the fact that $\theta|H_{\rho}$ is
unipotent.
\\[.3mm plus .8mm]
Concerning existence of $\theta_s$\,, this follows from \cite{Wan}\;(8.1)
if $H$ is open (and still connected): \ he assumes that $G/H$ is
finitely generated, but (recall that $G/H$ is abelian) any finite
subset of $G$ is contained in an open subgroup $G_1$ of $G$ such
that $G_1/H$ is finitely generated; since uniqueness has already
been proved, this allows to define the automorphism $\theta_s$
unambiguously on all of $G$; since $H$ is open and
$\theta_s$-invariant, continuity holds automatically. If $H$ is not
open, we can refine the topology to make it open (observe that on $H$
the two topologies coincide). Then the argument above produces a
unique extension $\theta_s$ of $\rho$ for the refined topology. To
prove continuity of $\theta_s$ for the original topology, we may
assume that $G$ is $\sigma$-compact (since $\theta \in \Aut_1(G)$,
any subgroup $G_1$ containing $H$ is automatically
$\theta$-invariant, the same for $\theta_s$). The description of
$G_{\theta_s}$ that was demonstrated above shows that
$G_{\theta_s}$ is closed in the original topology. By
Lemma\;\ref{le22}\;(ii) and \cite{HR}\;Th.\,5.29, $G$~is
topologically isomorphic to a quotient of the semidirect product
$H\rtimes G_{\theta_s}$\,. On $H \rtimes G_{\theta_s}$\,, the mapping 
$\theta (y, z) = (\rho (y), z)$ is a group automorphism (since
$\theta_s$ is known, to be a group automorphism) and $\theta$ is clearly
continuous, hence the same is true for the induced mapping $\theta_s$
on the quotient group. This finishes the proof when $H$ is
connected.
\\[0mm plus .3mm]
If $H$ is not connected, we consider the extension $\theta_{\R}$ to
$G_{\R}$ (see \ref{di21}). Put
$\rho_{\R} = (\theta_{\R}|H_{\R})_s$\,. Since $H^0$ is
$\theta_{\R}$-invariant,
it is also $\rho_{\R}$-invariant. Hence, by uniqueness of Jordan
decomposition, $\rho = \rho_{\R}|H^0$. Since $\theta \in \Aut_1 (G)$,
it induces a unipotent transformation on $H/H^0$. Its extension to
$H_{\R}/H^0$ coincides \,(by uniqueness) with the transformation induced
by \,$\theta_{\R}|H_{\R}$\,. Thus \,$\theta_{\R}|H_{\R}$ induces a unipotent
transformation on $H_{\R}/H^0$, hence $\rho_{\R}$ induces the identity
on $H_{\R}/H^0$ which %\linebreak
implies
\,$H_{\R}\subseteq  H^0\,(H_{\R})_{\rho_{\R}}$\,. Let $\theta_s$ be any
extension of $\rho$ as in the Lemma. From
\,$\theta_s\in\Aut_1(G)$, it follows as above that
\,$(\theta_s)_{\R}|H_{\R}$ induces the identity on $H_{\R}/H^0$. As an
easy consequence, $\theta_{\R} \circ (\theta_s)^{-1}_{\R}$ is
unipotent on $H_{\R}$\,, thus uniqueness of the Jordan decomposition implies
$(\theta_s)_{\R}|H_{\R} = \rho_{\R}$\,. As observed after
Definition\;\ref{def23},
$(\theta_s)_{\R} \in \Aut_{1, H_{\R}}(G_{\R})$ is semisimple, hence
uniqueness in the connected %\pagebreak
case implies
\,$(\theta_s)_{\R} = (\theta_{\R})_s$\,, thus
$\theta_s = (\theta_{\R})_s|\,G$\,. This
proves uniqueness in the general case. \;For existence, it suffices to
show that $G$ is $(\theta_{\R})_s$-invariant. But semisimplicity
implies \,$G_{\R} = H_{\R}\,(G_{\R})_{(\theta_{\R})_s}$ and we already
know that \,$H_{\R} = H^0\,(H_{\R})_{\rho_{\R}}$\,, thus
\,$G_{\R} = H^0\,(G_{\R})_{(\theta_{\R})_s}$ which implies invariance of
any subgroup containing $H^0$ \,(and also that
\,$(\theta_{\R})_s|\,G \in \Aut_1(G)$\,). Finally, since
\,$G_{\theta_s} = G \cap (G_{\R})_{(\theta_{\R})_s}$ and
\,$\ad\theta = (\ad\theta_{\R})|\,G$\,,
the formula for $G_{\theta}$ follows from the connected case. 
\vspace{-4mm plus.5mm}
\end{proof}
\begin{varrem}	       \label{di25}
For $\theta \in \Aut_1 (G)$, we write \,$s(\theta) = \theta_s$ \,
(defined by Lemma\;\ref{le24}), $\theta_u = \theta \circ \theta^{-1}_s$.
It follows easily \pagebreak
that $\theta_u \in \Aut_1(G)$ is unipotent,
$\theta = \theta_s \circ \theta_u = \theta_u \circ \theta_s$\,,
and (by the corresponding result for operators on vector spaces and
Lemma\;\ref{le24}) this is the only such decomposition in $\Aut_1(G)$ for
which the factors commute with $\theta$\,. Lemma\;\ref{le22}\,(i) implies that
$d\theta_s$ is the semisimple part of $d \theta$ (on the Lie algebra
$\fr g$). Combined with the formula for $G_{\theta_s}$\,, it follows
that $\theta_s$ (and hence $\theta_u$ as well) does not depend on
the choice of $H$, as long as there exists {\it some} $H$ for which
$\theta \in \Aut_{1, H} (G)$ \ (note that $\theta \in \Aut_{1,H}(G)$ holds
iff for $G_1 = G \rtimes \Z$ with the action defined by $\theta$, the
pair $G_1, H$ satisfies the assumptions of \ref{di21}; in particular, by
\ref{di21}, existence of such an $H$ can be characterized by the conditions
that the closed subgroup generated by $[G,G]$ and $(\ad \theta) (G)$
should be compactly generated, nilpotent and torsion free and $\theta$
should induce a unipotent transformation on $G/G^0$; if $G$ is
compactly generated and has no non-trivial compact normal subgroup,
one can always take $H = N$, as defined in Remark\;\ref{rem28}). By
uniqueness, we have
\,$s(\psi\circ\theta\circ\psi^{-1}) = \psi\circ s(\theta)\circ\psi^{-1}$
\,for $\psi \in \Aut (G),\ \theta \in \Aut_1(G)$ \;(note
that $\psi \circ \theta \circ \psi^{-1} \in \Aut_{1, \psi (H)} (G)$\,).
In particular, if $\psi \in \Aut (G)$ commutes with $\theta$, it
commutes also with $\theta_s, \theta_u$ \,(see also \cite{Wan}\;(8.6)).
For $\theta = \iota_x\ \,(x \in G)$ we just write $s (x)\ (= s(\iota_x)$).
Note that in this case the inclusion $G_{\theta} \subseteq G_{\theta_s}$
implies that $\sigma = s(x)$ satisfies $\sigma (x) = x$\,.
Furthermore, we put\vspace{-1mm}
\[\Cal S =\{ s(x) : x \in G\}\,.\vspace{-1mm}\]
\vspace{.5mm}
The example after Definition\;\ref{def23} shows (for $G = \R^n, L = \Z^n$)
that if $\theta$
is semisimple on $G$ and $L$ is a general $\theta$-invariant subgroup,
then $\theta|L$ need not be semisimple in the sense of
Definition\;\ref{def23}. \,Furthermore, if $\theta$ is given by the matrix 
$\left(\begin{smallmatrix}
\,0 & \;0 & \ 1 \\
\,1 & \;0 & \ 1 \\
\,0 & \;1 & -1
\end{smallmatrix}\right)$,
then $L = \Z^3$ is not invariant under $\theta_{s}$\,.\vspace{.5mm}

Observe that if $G, H$ are as in \ref{di21} and $L$ is a closed subgroup of
$G$ such that $L\cap H^0$ is connected, then $L, L\cap H$ satisfy
again the assumptions of \ref{di21} \,(using that $(L\cap H)^0 = L\cap H^0$
and algebraically $L/(L \cap H^0) \cong  LH^0/H^0 \subseteq G/H^0$
holds; furthermore, since $H$ is nilpotent, any closed subgroup of $H$
is compactly generated - compare \cite{Lo2}\;Prop.\,2). If
$\theta \in\Aut_1 (G)$, $L$ is as above and $\theta$-invariant, then
\,$\theta|L\in \Aut_1(L)$.
%\vspace{-2mm}
\end{varrem}
\begin{Cor}\label{cor26}
Assume that $L$ is a closed subgroup of $G$ such that \,$L \cap H^0$ is
\linebreak connected, $\theta \in \Aut_1 (G)$ and $L$ is $\theta$-invariant.
\;Then $L$ is $\theta_s$-invariant and \linebreak
$\theta_s|\,L = (\theta|L)_s\,,\quad \theta_u|\,L = (\theta|L)_u$\,.
\\
In particular, if $\theta$ is semisimple, then $\theta|\,L$ is semisimple.
\end{Cor}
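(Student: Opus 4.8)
The plan is to reduce the statement to the uniqueness clause in Lemma~\ref{le24} applied to the subgroup $L$. First I would verify the hypotheses of~\ref{di21} for the pair $L, L\cap H$: as noted in the paragraph preceding the corollary, connectedness of $L\cap H^0$ gives $(L\cap H)^0 = L\cap H^0$, the algebraic embedding $L/(L\cap H^0)\hookrightarrow G/H^0$ forces $L/(L\cap H)^0$ to be nilpotent (hence the induced automorphisms are unipotent, and ${[L,L]}^-\subseteq H$ is compactly generated, nilpotent, torsion free as a closed subgroup of the nilpotent group $H$), and $\theta|L\in\Aut_1(L)$. Thus $(\theta|L)_s\in\Aut_1(L)$ is defined by Lemma~\ref{le24}, and it is the unique semisimple automorphism of $L$ lying in $\Aut_1(L)$, extending $(\theta|L^0)_s$, and commuting with $\theta|L$.

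Next I would produce a candidate for $(\theta|L)_s$ from the global data, namely $\theta_s|L$. For this I must check that $L$ is $\theta_s$-invariant. Here I would pass to the Malcev completion: by Lemma~\ref{le24} one has $\theta_s=(\theta_\R)_s|G$, and $(\theta_\R)_s$ is a polynomial in $\theta_\R$ (multiplicative Jordan decomposition, as recalled in~\ref{di21}), so every $\theta_\R$-invariant subgroup of $G_\R$ is $(\theta_\R)_s$-invariant. Since $L$ is $\theta$-invariant and closed in $G$, it is $\theta_\R$-invariant as a subgroup of $G_\R$ (the extension $\theta_\R$ restricts to $\theta$ on $G$), hence $(\theta_\R)_s$-invariant, hence $L$ is $\theta_s$-invariant and $\theta_s|L$ is a well-defined automorphism of $L$. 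That $\theta_s|L$ is semisimple in the sense of Definition~\ref{def23} (relative to $L\cap H$) follows from Lemma~\ref{le22}: semisimplicity of $d\theta_s$ on $\fr g$ restricts to semisimplicity of its restriction to the subalgebra $\fr l = \mathrm{Lie}(L)$, which by Lemma~\ref{le22}(i) (applied to $L$, using that $\theta_s|L\in\Aut_1(L)$ since $\theta_s\in\Aut_1(G)$ and $L$ is $\theta_s$-invariant) is exactly what is needed. Clearly $\theta_s|L$ commutes with $\theta|L$ (restriction of commuting automorphisms) and extends $\rho|L^0 = (\theta|L^0)_s$ by uniqueness of the Jordan decomposition on $L^0$, since $L^0$ is $\theta_s$-invariant, $\theta_s|L^0$ is semisimple and $\theta_u|L^0$ is unipotent.

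Having exhibited $\theta_s|L$ as a semisimple element of $\Aut_1(L)$ extending $(\theta|L^0)_s$ and commuting with $\theta|L$, the uniqueness assertion of Lemma~\ref{le24} forces $\theta_s|L = (\theta|L)_s$. Then $\theta_u|L = (\theta\circ\theta_s^{-1})|L = (\theta|L)\circ(\theta_s|L)^{-1} = (\theta|L)_u$, and the final sentence is the special case $\theta=\theta_s$, where $G=G^0 G_\theta$ gives $L = L\cap G = (L\cap G^0)(L\cap G_\theta)\supseteq L^0\,L_{\theta|L}$ after intersecting with $L$, so condition (i) of Lemma~\ref{le22} holds for $\theta|L$ on $L$.

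The main obstacle I anticipate is the $\theta_s$-invariance of $L$: a priori $\theta_s$ is only a limit/polynomial construction on the Lie-algebra level of $G_\R$, and the warning in~\ref{di25} (the $3\times3$ matrix example where $\Z^3$ fails to be $\theta_s$-invariant) shows one cannot argue naively on $G$ itself — the passage through $G_\R$, where $(\theta_\R)_s$ is genuinely a polynomial in $\theta_\R$, is essential, and one must be careful that $L$ (closed in $G$, but perhaps not closed in $G_\R$) is still $\theta_\R$-invariant, which holds because $\theta_\R$ restricts to $\theta$ on $G\supseteq L$.
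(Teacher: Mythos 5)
Your overall strategy --- construct $(\theta|L)_s$ intrinsically on $L$ via Lemma~\ref{le24} and identify it with $\theta_s|L$ by the uniqueness clause --- is reasonable, and your verification that $L,\,L\cap H$ satisfy the hypotheses of \ref{di21} is fine. But the two facts you need about $\theta_s|L$ are not correctly established. The central one is the $\theta_s$-invariance of $L$: you argue that $(\theta_\R)_s$ is a polynomial in $\theta_\R$, ``so every $\theta_\R$-invariant subgroup of $G_\R$ is $(\theta_\R)_s$-invariant''. That inference is false. The polynomial statement lives at the level of linear maps and yields invariance of $\theta$-invariant \emph{subspaces} (hence of closed connected subgroups, via their Lie algebras), not of arbitrary closed subgroups. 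The $3\times3$ example in \ref{di25} that you yourself cite is a counterexample to your claimed principle already inside a Malcev completion: $\Z^3$ is a closed $\theta$-invariant subgroup of $\R^3=(\Z^3)_\R$ that is not $\theta_s$-invariant. (That example has $\theta\notin\Aut_1$, so it does not contradict the Corollary, but it does refute the argument you offer for it; passing to $G_\R$ removes nothing, since $L$ remains non-connected there.) A second, smaller gap: even granting invariance, to apply uniqueness in Lemma~\ref{le24} you must show $\theta_s|L$ is semisimple in the sense of Definition~\ref{def23}, which is a two-part condition --- semisimplicity of $d(\theta_s|L)$ on $\fr l$ \emph{and} the group-level splitting $L=L^0L_{\theta_s|L}$. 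You check only the Lie-algebra half; the splitting is precisely the non-trivial content and does not follow from restricting $d\theta_s$.

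The paper's proof handles both points simultaneously by a different mechanism. It uses the intrinsic description from Lemma~\ref{le24}, $G_{\theta_s}=\{x\in G:(\ad\theta)^n(x)=e\ \text{for some}\ n>0\}$, which is manifestly compatible with restriction and gives $L_{(\theta_L)_s}=L\cap G_{\theta_s}$ for $\theta_L=\theta|L$. Combining this with $(\theta_L)_s=\theta_s$ on $L^0$ (the connected case, where the polynomial argument \emph{is} valid) and with the decomposition $L=L^0L_{(\theta_L)_s}$ coming from Lemma~\ref{le22}\,(i) applied to $(\theta_L)_s$ (which is semisimple on $L$ by construction), one sees that $\theta_s$ and $(\theta_L)_s$ agree on all of $L$; this yields the invariance of $L$ and the identification $\theta_s|L=(\theta|L)_s$ in one stroke. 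To repair your write-up, replace the polynomial step by this fixed-point characterization.
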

\begin{proof}
Put \,$\theta_L = \theta|L\,,\;H_L = L \cap H$\,. $(\theta_L)_s$ is
defined by Lemma\;\ref{le24} (see above) and we have
$L_{(\theta_L)_s} = L \cap G_{\theta_s}$\,.
$L^0$ is $\theta$-invariant and it follows from
the properties of Jordan decomposition (\ref{di21}) that $L^0$ is
$\theta_s$-invariant and that $(d \theta_L)_s$ is the
restriction of $(d \theta)_s = d \theta_s$\,. Thus
\,$(\theta_L)_s= \theta_s$ on $L^0$ and Lemma\;\ref{le22}\,(i)
(for $\theta_L$) gives the result.\vspace{-2.5mm}
\end{proof}
\begin{Lem}    \label{le27}
Assume that $\Cal C$ is a subgroup of $\Aut(G)$, \;$\Cal C_1$ is a
normal subgroup of $\Cal C$ such that
\,$\Cal C_1\subseteq \Aut_1(G),\ \Cal C_1$ is nilpotent and
$[\Cal C, \Cal C_1]$
consists of unipotent transformations. \,Then $s$ is a group
homomorphism on $\Cal C_1\,,\ s(\Cal C_1)$ is commutative and
centralizes $\Cal C$\,.\vspace{-2mm}
\end{Lem}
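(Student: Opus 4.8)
The plan is to exploit the Jordan-decomposition machinery already set up, together with the key additivity statement in Definition 2.19 (that a product of two commuting semisimple automorphisms in $\Aut_1(G)$ is again semisimple and lies in $\Aut_1(G)$). First I would show that $s$ is a homomorphism on $\Cal C_1$. Take $\theta', \theta'' \in \Cal C_1$. Since $\Cal C_1$ is nilpotent, the commutator $[\theta', \theta'']$ lies in $\Cal C_1$, and by hypothesis it is unipotent; combined with $[\Cal C,\Cal C_1]$ consisting of unipotent transformations, one gets enough commuting relations among the semisimple parts. The standard trick (cf.\ \cite{Wan}\;(8.6) and the remark after Definition\;\ref{def23}) is: $s(\theta')$ commutes with $\theta'$, hence $s(\theta') \theta'' s(\theta')^{-1} = s(\theta' \theta'' \theta'^{-1})$ has semisimple part conjugate to $s(\theta'')$; one shows $s(\theta')$ and $s(\theta'')$ commute because their non-commutation would be detected on the Lie algebra $\fr g$ as a failure of the semisimple parts of $d\theta', d\theta''$ to commute, which is impossible once $\theta' \theta'' \theta'^{-1} (\theta'')^{-1}$ is unipotent (its differential is unipotent, so $d\theta'$ and $d\theta''$ have commuting semisimple parts by the theory of the Jordan decomposition in $\GL(\fr g)$). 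Then $s(\theta')$ and $s(\theta'')$ are commuting semisimple elements of $\Aut_1(G)$, so by Definition\;\ref{def23} their product $s(\theta') s(\theta'')$ is semisimple and in $\Aut_1(G)$; since it also commutes with $\theta' \theta''$ and has the correct differential (the semisimple part of $d(\theta'\theta'')$, using that $d\theta', d\theta''$ commute modulo unipotents), uniqueness in Lemma\;\ref{le24} (via Remark\;\ref{di25}) forces $s(\theta'\theta'') = s(\theta') s(\theta'')$.

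Once $s$ is a homomorphism on $\Cal C_1$, commutativity of $s(\Cal C_1)$ is immediate: $\Cal C_1$ is nilpotent, so its commutator subgroup $[\Cal C_1,\Cal C_1]$ lies in $\Cal C_1$ and consists of unipotent transformations (being contained in $[\Cal C,\Cal C_1]$), hence $s$ kills it, so $s([\theta',\theta''])=e$, i.e.\ $s(\theta')$ and $s(\theta'')$ commute for all $\theta',\theta''\in\Cal C_1$. For the final claim, that $s(\Cal C_1)$ centralizes $\Cal C$: take $\psi\in\Cal C$ and $\theta\in\Cal C_1$. By the conjugation formula from Remark\;\ref{di25}, $\psi \circ s(\theta)\circ\psi^{-1} = s(\psi\circ\theta\circ\psi^{-1})$. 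Now $\psi\theta\psi^{-1} = \theta\cdot(\theta^{-1}\psi\theta\psi^{-1})$, and the second factor lies in $[\Cal C_1,\Cal C] = [\Cal C,\Cal C_1]$, hence is unipotent and (being in $\Cal C_1$ by normality) lies in $\Aut_1(G)$; so $\psi\theta\psi^{-1}$ and $\theta$ differ by a unipotent element commuting appropriately, and one argues as in the first paragraph (or directly via $d\theta$ and uniqueness of semisimple parts) that $s(\psi\theta\psi^{-1}) = s(\theta)$. Thus $\psi \circ s(\theta)\circ\psi^{-1} = s(\theta)$, which is exactly the assertion.

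The main obstacle I expect is the first step — proving $s$ is a homomorphism — and specifically the careful handling of the \emph{non-connected} case, where $\Aut_1(G)$ need not even be a subgroup and $s(\theta)$ is defined through the Malcev completion $G_\R$ rather than directly. The clean route is to pass to $G_\R$: there $H_\R$ is connected and simply connected, $\Aut_{1,H_\R}(G_\R)$ behaves well, and the differentials act on the finite-dimensional Lie algebra $\fr g_\R$, so the whole argument reduces to the multiplicative Jordan decomposition in $\GL(\fr g_\R)$ — where the statement "semisimple parts of commuting-mod-unipotent elements commute and multiply correctly" is classical (\cite{Bo1}\;VII). One then transports back using $\theta_s = (\theta_\R)_s|G$ from Lemma\;\ref{le24} and the fact, noted there, that $(G_\R)_{(\theta_\R)_s}\cap G = G_{\theta_s}$ determines $\theta_s$. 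I would also need to check that the hypotheses ($\Cal C_1$ nilpotent, $[\Cal C,\Cal C_1]$ unipotent) are inherited by the extensions to $G_\R$ — but this follows since extension $\theta\mapsto\theta_\R$ is an injective homomorphism preserving unipotency (Remark\;\ref{di21}).
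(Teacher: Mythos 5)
Your overall strategy (reduce everything to the multiplicative Jordan decomposition via Lemma~\ref{le24} and the conjugation formula of \ref{di25}) is the right one, but the step you lean on to make it work is not a valid general principle, and it is exactly where the real content of the lemma lies. You claim that unipotence of $\theta'\theta''\theta'^{-1}\theta''^{-1}$ forces the semisimple parts of $d\theta'$ and $d\theta''$ to commute ``by the theory of the Jordan decomposition in $\GL(\fr g)$''. This is false for general linear operators: in $\GL(2,\R)$ take $A=\left(\begin{smallmatrix}2&0\\0&1/2\end{smallmatrix}\right)$ and $B=\left(\begin{smallmatrix}2&1\\0&1/2\end{smallmatrix}\right)$; both are semisimple, $ABA^{-1}B^{-1}=\left(\begin{smallmatrix}1&6\\0&1\end{smallmatrix}\right)$ is unipotent, yet $A=A_s$ and $B=B_s$ do not commute. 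What rules this out in the lemma is precisely the nilpotency of $\Cal C_1$, and extracting commutativity of the semisimple parts from it requires an actual argument. The paper does this by induction along the ascending central series $(e)=\Cal C^{(0)}\subseteq\dots\subseteq\Cal C^{(k)}=\Cal C_1$, augmented by $\Cal C^{(k+1)}=\Cal C$: for $\sigma\in\Cal C_1$ and $\tau\in\Cal C^{(i)}$ one writes $\tau\sigma\tau^{-1}=\sigma_s(\sigma_u\tau')$ with $\tau'=[\sigma^{-1},\tau]\in\Cal C^{(i-1)}$, uses the inductive hypothesis to know that $\tau'$ already commutes with $\sigma_s$, and uses \cite{Wan}\,(2.2) (simultaneous triangularization of the nilpotent group $\Cal C_1$ on $\fr h$) to know that $\sigma_u\tau'$ is unipotent; only then does uniqueness in Lemma~\ref{le24} give $\tau\sigma_s\tau^{-1}=\sigma_s$. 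Note that this establishes ``$s(\Cal C_1)$ centralizes $\Cal C$'' \emph{first} and deduces the homomorphism property afterwards, whereas you argue in the opposite order and thereby assume commutation relations you have not yet proved.

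A second, related gap: both in your homomorphism step (``has the correct differential, the semisimple part of $d(\theta'\theta'')$'') and in your final step ($s(\theta\tau')=s(\theta)$ for $\tau'$ unipotent) you implicitly need a product of unipotent parts, such as $\theta'_u\theta''_u$ or $\theta_u\tau'$, to be unipotent. A product of two unipotent operators need not be unipotent; here this is supplied only by the common triangularization of $\Cal C_1$ on $\fr h$ coming from \cite{Wan}\,(2.2), which guarantees that the group generated by the unipotent parts consists of unipotent transformations. Your closing remarks about passing to $G_\R$ address the easier bookkeeping in the non-connected case, not these two missing ingredients.
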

\begin{proof}
We use the notation for commutators \,$[\sigma, \tau] = \sigma
\tau \sigma^{-1} \tau^{-1}$ as in \cite{Lo3}\,p.129. We consider the
ascending central series
\,$(e) = \Cal C^{(0)}\subseteq\dots\subseteq\Cal C^{(k)} = \Cal C_1$
for $\Cal C_1$ \;(i.e.,
$\Cal C^{(i+1)}/\Cal C^{(i)}$ is the centre of
$\Cal C_1/\Cal C^{(i)}$) and put $\Cal C^{(k+1)} = \Cal C$\,. Take
$\sigma \in \Cal C_1$\,. By induction, we want to show that if
$\tau \in\Cal C^{(i)}$, then $\sigma_s$ commutes with $\tau$. This is
trivial
for $i = 0$, so we assume that the statement holds for $i-1$ (where
$i \geq  1$). We have $\tau' = [\sigma^{-1}, \tau] \in \Cal C^{(i-1)}$
(for $i = k+1$ use that $\sigma \in \Cal C_1$), hence
$\tau'$ commutes with $\sigma_s$ and by assumption, $\tau'$ is
unipotent. Observe that $\tau \sigma \tau^{-1} = \sigma \tau' =
\sigma_s \sigma_u \tau'$. Let $\fr h$ be the Lie algebra of $H$.
By \cite{Wan}\;(2.2), $\Cal C_1$ induces a triangular group of
transformations on $\fr h$ and then the same is true for the
unipotent parts of these transformations and the group generated by
them. It follows that the group generated by
$\{\xi_u : \xi \in\Cal C_1\}$ contains just unipotent transformations
on $H^0$, in
particular, $\sigma_u \tau'$ is unipotent on $H^0$. Consequently,
$\sigma_s|H^0$ is the semisimple part of \,$(\tau \sigma\tau^{-1})|H^0$ and
then uniqueness in Lemma\;\ref{le24} implies
\,$\sigma_s = s(\tau \sigma \tau^{-1}) = \tau \sigma_s \tau^{-1}$,
providing the induction step.
\\[1.3mm]
In particular, $\sigma_s$ commutes with $\Cal C_1$, hence (see
\ref{di25}), for any $\tau \in \Cal C_1$, it commutes also with
$\tau_s$ and $\tau_u$\,. Thus
\,$\sigma \tau = \sigma_s \tau_s \sigma_u \tau_u$ and
(recall that by \ref{def23} \,$\sigma_s \tau_s$
is semisimple) as above, we get
\,$s(\sigma \tau) = s(\sigma)\,s(\tau)$\,.
\end{proof}
\begin{Rem}    \label{rem28}
As in the previous proof (using the Lie algebra $\fr h_{\R}$ of
$H_{\R}$ instead of~$\fr h$), it follows from \cite{Wan}\;(2.3) that
the group $\Cal N$ generated by $\{(\iota_x)_u : x \in G\}$ is nilpotent,
contained in $\Aut_1 (G)$ and consists of unipotent transformations.
In particular,
$N = \nil (G) = \{x \in G : \iota_x\text{ unipotent}\}$ is a nilpotent
characteristic subgroup of $G$
containing $H$. It is the biggest nilpotent normal subgroup of $G$ (in
particular closed). We call it the (non-connected) $\it{nilradical}$
of $G$. If $G$ is compactly generated, existence of $N$ follows also
from \cite{Lo2}\;Prop.\,3, see also \cite{Wan}\;(9.1).

Take $x \in G$ and put $\sigma = s(x)$. It is an easy consequence that
for $y \in G,\linebreak  s(y) = \sigma$ holds iff
\;$y \in (N \cap G_{\sigma})\,x$\,.

If $\Cal C \subseteq \Aut_H(G)$, we put
\,$G_{\Cal C} =\bigcap_{\theta \in \Cal C}\,G_{\theta}$\,. Note that
$G_{\Cal C}\cap H^0$ is connected (since the fixed points correspond to
a linear
subspace of the Lie algebra $\fr h$ of $H$). Hence (see \ref{di25}),
$G_{\Cal C},\,G_{\Cal C}\cap H$ satisfy the conditions of \ref{di21}. \,We
put \,$L_{\Cal C} = \nil (G_{\Cal C})$\,. Then
\,$L_{\Cal C} \cap H^0 = G_{\Cal C} \cap H^0$. It is easy to see that
if $\langle{\Cal C}\rangle$
denotes the subgroup generated by $\Cal C$, then
$G_{\Cal C} = G_{\langle{\Cal C}\rangle}$\,.
\end{Rem}
\begin{Cor}	  \label{cor29}
Let $L$ be a nilpotent subgroup of $G$, $\sigma \in \Aut(G)$ such that
$L$ is $\sigma$-invariant and \,$\sigma(x)\,x^{-1}\in N$ for all $x\in L$\,.
Then $\sigma$ commutes with $s(L)$\,.\vspace{-2mm}
\end{Cor}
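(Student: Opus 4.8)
The plan is to deduce this from Lemma~\ref{le27}. I would take $\Cal C_1 = \iota(L) = \{\iota_x : x \in L\}$ and $\Cal C = \langle \sigma,\iota(L)\rangle \subseteq \Aut(G)$. Then $\Cal C_1$ is nilpotent, being a homomorphic image of the nilpotent group $L$, and $\Cal C_1 \subseteq \Aut_1(G)$ since $\iota_x \in \Aut_1(G)$ for every $x \in G$ (see \ref{di21}). Because $L$ is a $\sigma$-invariant subgroup, $\iota_y\iota_x\iota_y^{-1} = \iota_{yxy^{-1}} \in \Cal C_1$ for $y \in L$ and $\sigma\iota_x\sigma^{-1} = \iota_{\sigma(x)} \in \Cal C_1$, so $\Cal C_1$ is normal in $\Cal C$; hence $\Cal C = \langle\sigma\rangle\,\Cal C_1$ and every element of $\Cal C$ can be written $\sigma^n\iota_x$ with $n \in \Z,\ x \in L$. (Note that $\sigma$ itself need not lie in $\Aut_1(G)$; this is harmless, since $s$ will only be applied on $\Cal C_1$.)

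The heart of the argument will be to verify the last hypothesis of Lemma~\ref{le27}, namely that $[\Cal C,\Cal C_1]$ consists of unipotent transformations; in fact I expect $[\Cal C,\Cal C_1] \subseteq \iota(N)$, and each $\iota_n$ with $n \in N$ is unipotent by the very definition of $N = \nil(G)$ in Remark~\ref{rem28}. For this I would first record two facts. Since $G$ is solvable with ${[G,G]}^- \subseteq H \subseteq N$ (\ref{di21} and Remark~\ref{rem28}), one has $[L,L] \subseteq [G,G] \subseteq N$. And since $N$ is characteristic we have $\sigma(N) = N$, so the hypothesis $\sigma(x)x^{-1} \in N$ for $x \in L$, together with $\sigma(L) = L$, propagates by an easy induction (using $\sigma^{m+1}(w)w^{-1} = \sigma(\sigma^m(w)w^{-1})\cdot(\sigma(w)w^{-1})$, and the analogous step in the negative direction) to give $\sigma^n(w)w^{-1} \in N$ for all $w \in L$ and $n \in \Z$. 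A generic commutator in $[\Cal C,\Cal C_1]$ then computes as
\[ [\sigma^n\iota_x,\iota_z] = \iota_{\sigma^n(xzx^{-1})}\iota_z^{-1} = \iota_m, \qquad m = \sigma^n(xzx^{-1})z^{-1} = \big(\sigma^n(w)w^{-1}\big)\,[x,z] \in N, \]
with $w = xzx^{-1} \in L$, so $[\Cal C,\Cal C_1] \subseteq \iota(N)$ as claimed, and all its elements are unipotent.

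With all hypotheses in place, Lemma~\ref{le27} yields that $s(\Cal C_1)$ centralizes $\Cal C$; since $s(\Cal C_1) = \{s(\iota_x) : x \in L\} = s(L)$ and $\sigma \in \Cal C$, this says precisely that $\sigma$ commutes with $s(L)$, and one also gets for free that $s(L)$ is commutative. The main obstacle is the inclusion $[\Cal C,\Cal C_1] \subseteq \iota(N)$: it is exactly the point where solvability of $G$ (via $[L,L] \subseteq N$) and the characteristic property of $N$ (to carry $\sigma(x)x^{-1}\in N$ through all powers of $\sigma$, and through conjugates inside $L$) both have to be used. The remaining steps — checking that $\Cal C_1$ is a nilpotent normal subgroup of $\Cal C$ lying in $\Aut_1(G)$, and identifying $s(\Cal C_1)$ with $s(L)$ — are routine.
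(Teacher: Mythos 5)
Your proof is correct and follows essentially the same route as the paper: the paper's own proof also sets $\Cal C_1=\{\iota_x : x\in L\}$, lets $\Cal C$ be the group generated by $\Cal C_1$ and $\sigma$, notes that $[G,G]\subseteq H\subseteq N$ together with $\sigma(x)x^{-1}\in N$ forces $[\Cal C,\Cal C_1]\subseteq\iota(N)$, and then invokes Lemma~\ref{le27}. You have merely written out in more detail (the induction on powers of $\sigma$ and the explicit commutator computation) what the paper leaves as a one-line verification.
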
\noindent
Observe that $\sigma(x)\,x^{-1} \in N$ holds for every $x$,
whenever $\sigma$ induces the
identity on $G/N$\,, in particular if $\sigma \in \Aut_1(G)$\,.
\vspace{-1mm}
\begin{proof}
Put \,$\Cal C_1 = \{\iota_x : x \in L\}$ and let $\Cal C$ be the
group generated by $\Cal C_1$ and $\sigma$\,. Then $\Cal C_1$ is
nilpotent, contained in $\Aut_1(G)$ and normal in $\Cal C$ (since
$L$ is $\sigma$-invariant).  $[G,G] \subseteq H \subseteq N$ and
$\sigma (x) x^{-1} \in N$ for $x \in L$ imply that
\,$[\Cal C,\Cal C_1] \subseteq \{\iota_y : y \in N\}$, \linebreak
hence $[\Cal C,\Cal C_1]$ consists of
unipotent transformations. Now Lemma\;\ref{le27} shows that
$\Cal C$~centralizes \,$s(\Cal C_1) = s(L)$\,. 
\end{proof}
\begin{Lem}	   \label{le210}
Assume that $\Cal C$ is a subgroup of $\Aut_1(G)$ containing only
semisimple transformations. If $\Cal C_0$ is a normal subgroup of
$\Cal C$ such that
\,$G_{\Cal C_0} \cap H^0 = G_{\Cal C} \cap H^0$, then
\,$G_{\Cal C_0} = G_{\Cal C}$\,.\vspace{-2mm}
\end{Lem}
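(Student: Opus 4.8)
The statement to prove is Lemma 2.12 (numbered \ref{le210} in the source): if $\Cal C \subseteq \Aut_1(G)$ consists of semisimple transformations, and $\Cal C_0 \triangleleft \Cal C$ satisfies $G_{\Cal C_0}\cap H^0 = G_{\Cal C}\cap H^0$, then $G_{\Cal C_0} = G_{\Cal C}$. Since obviously $G_{\Cal C}\subseteq G_{\Cal C_0}$, the content is the reverse inclusion: every point fixed by all of $\Cal C_0$ is fixed by all of $\Cal C$. The plan is to reduce, via the Malcev-completion machinery of \ref{di21}, to the connected case, and there to exploit that a semisimple automorphism is determined on a co-compact subgroup by its behaviour on the connected part together with a fixed-point decomposition $G = H^0 G_\theta$ coming from Lemma\;\ref{le22}.

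**Key steps.** First I would pass to $G_\R, H_\R$ as in \ref{di21}: each $\theta\in\Cal C$ extends uniquely to $\theta_\R\in\Aut_{1,H_\R}(G_\R)$, still semisimple (by the remark after Definition\;\ref{def23}, using Lemma\;\ref{le22}\,(iii)), and $G_{\theta} = G\cap (G_\R)_{\theta_\R}$, so $G_{\Cal C} = G\cap (G_\R)_{\Cal C_\R}$ and likewise for $\Cal C_0$; also $(G_\R)_{\theta_\R}\cap H_\R$ is connected (fixed points of a semisimple operator on $\fr h_\R$ form a subspace), and $H^0 = H_\R^0$. Thus it suffices to prove $(G_\R)_{\Cal C_{0,\R}} = (G_\R)_{\Cal C_\R}$ under the hypothesis $(G_\R)_{\Cal C_{0,\R}}\cap H_\R^0 = (G_\R)_{\Cal C_\R}\cap H_\R^0$ — i.e. we may assume from the start that $H$ is connected. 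Second, under that assumption, fix $x\in G_{\Cal C_0}$ and $\theta\in\Cal C$; I want $\theta(x) = x$. Since $\theta$ is semisimple and $\Cal C_0\triangleleft\Cal C$, the subgroup $G_{\Cal C_0}$ is $\theta$-invariant (for $\psi\in\Cal C_0$ and $y\in G_{\Cal C_0}$, $\psi(\theta(y)) = \theta((\theta^{-1}\psi\theta)(y)) = \theta(y)$ because $\theta^{-1}\psi\theta\in\Cal C_0$). Now $G_{\Cal C_0}\cap H^0$ is connected (being the fixed-point set of a family of semisimple operators on the Lie algebra $\fr h$), so by \ref{di25} the pair $G_{\Cal C_0}, G_{\Cal C_0}\cap H$ satisfies \ref{di21}, and $\theta|G_{\Cal C_0}\in\Aut_1(G_{\Cal C_0})$ is semisimple by Corollary\;\ref{cor26}. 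Third, apply Lemma\;\ref{le22}\,(ii) to $\theta|G_{\Cal C_0}$: it gives $G_{\Cal C_0} = (G_{\Cal C_0}\cap H^0)\,(G_{\Cal C_0})_{\theta}$. Write $x = hz$ with $h\in G_{\Cal C_0}\cap H^0$ and $z\in G_{\Cal C_0}$, $\theta(z) = z$. By hypothesis $G_{\Cal C_0}\cap H^0 = G_{\Cal C}\cap H^0\subseteq G_\theta$, so $\theta(h) = h$; hence $\theta(x) = \theta(h)\theta(z) = hz = x$. Since $x\in G_{\Cal C_0}$ and $\theta\in\Cal C$ were arbitrary, $G_{\Cal C_0}\subseteq G_{\Cal C}$, finishing the proof.

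**The main obstacle.** The delicate point is verifying that $\theta$ preserves the subgroup $G_{\Cal C_0}$ and that $\theta|G_{\Cal C_0}$ is genuinely semisimple in the sense of Definition\;\ref{def23} (not merely that $d\theta$ restricted to the relevant Lie algebra is a semisimple operator): this is exactly what Corollary\;\ref{cor26} delivers, but only after one checks its hypothesis that $G_{\Cal C_0}\cap H^0$ is connected — which rests on $\Cal C_0\subseteq\Aut_1(G)$ consisting of semisimple transformations so that the fixed sets in $\fr h$ are linear subspaces. A secondary subtlety is the reduction to connected $H$: one must confirm that the hypothesis $G_{\Cal C_0}\cap H^0 = G_{\Cal C}\cap H^0$ transfers verbatim to $G_\R$ (it does, since $H_\R^0 = H^0$ and $G_{\theta}\cap H^0 = (G_\R)_{\theta_\R}\cap H^0$), and that semisimplicity is preserved under extension (the remark after Definition\;\ref{def23}). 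Once these bookkeeping points are in place, the argument is a one-line application of the fixed-point decomposition in Lemma\;\ref{le22}\,(ii).
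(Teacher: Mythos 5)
Your core argument (steps two through six) is exactly the paper's proof: normality of $\Cal C_0$ makes $G_{\Cal C_0}$ $\theta$-invariant, Corollary\;\ref{cor26} gives semisimplicity of $\theta|G_{\Cal C_0}$, and Lemma\;\ref{le22}\,(ii) applied to the pair $G_{\Cal C_0},\,G_{\Cal C_0}\cap H$ forces $\theta=\id$ on $G_{\Cal C_0}$ once it is the identity on $G_{\Cal C_0}\cap H^0=G_{\Cal C}\cap H^0$. One caveat: your preliminary reduction to connected $H$ rests on the assertion $H^0=H_{\R}^0$, which is false whenever $H$ is disconnected (the Malcev completion $H_{\R}$ is connected, so $H_{\R}^0=H_{\R}\supsetneq H^0$); fortunately this step is superfluous, since the remainder of your argument only needs connectedness of $G_{\Cal C_0}\cap H^0$, which holds for general $H$ (fixed-point sets in the simply connected group $H^0$ always correspond to linear subspaces of $\fr h$ via $\exp$, as noted in Remark\;\ref{rem28}), so the proof survives with the first paragraph deleted.
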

\begin{proof}
Take $\theta \in \Cal C$\,. Normality of $\Cal C_0$ implies that
$G_{\Cal C_0}$ is $\theta$-invariant. By Corollary\;\ref{cor26} and
Lemma\;\ref{le22}, \,$\theta|\,G_{\Cal C_0} \cap H^0 = \id$ \,implies
\,$\theta|G_{\Cal C_0} = \id$\,.
\end{proof}
\begin{Cor}	  \label{cor211}
Let $\Cal C$ be a commuting subset of $\Aut_1(G)$ consisting of
semisimple transformations. Then we have
\,$G = H^0 G_{\Cal C} = NG_{\Cal C}$ and there exists a finite subset
$\Cal C_0$ of $\Cal C$ such that \,$G_{\Cal C_0} = G_{\Cal C}$\,.\vspace{-2mm}
\end{Cor}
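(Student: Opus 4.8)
The plan is to first settle the finiteness assertion by a dimension count in the Lie algebra $\fr h$ of $H$, then promote it to $G_{\Cal C_0}=G_{\Cal C}$ via Lemma~\ref{le210}, and finally obtain $G=H^0G_{\Cal C}$ by induction on $|\Cal C_0|$, deducing $G=NG_{\Cal C}$ from $H^0\subseteq N$.

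\emph{Step 1 (finiteness).} By Corollary~\ref{cor26}, each $\theta\in\Cal C$ restricts to a semisimple automorphism of $H^0$, so $d\theta$ is semisimple on $\fr h$ and the fixed space $\fr h_\theta=\{X\in\fr h:d\theta(X)=X\}$ is a linear subspace. Under the homeomorphism $\exp\!:\fr h\to H^0$ (see \ref{di12}; $H^0$ is torsion free, hence simply connected) one has $\theta(\exp X)=\exp(d\theta X)$, so for any subset $\Cal C'\subseteq\Cal C$ the set $G_{\Cal C'}\cap H^0$ corresponds to $\bigcap_{\theta\in\Cal C'}\fr h_\theta$. Since $\fr h$ is finite-dimensional, the decreasing family of subspaces $\bigcap_{\theta\in\Cal C'}\fr h_\theta$ ($\Cal C'\subseteq\Cal C$ finite) attains its minimum at some finite $\Cal C_0\subseteq\Cal C$, i.e. $G_{\Cal C_0}\cap H^0=G_{\Cal C}\cap H^0$.

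\emph{Step 2 ($G_{\Cal C_0}=G_{\Cal C}$).} Since $\Cal C$ commutes, $\langle\Cal C_0\rangle\subseteq\langle\Cal C\rangle$ are abelian, so $\langle\Cal C_0\rangle$ is a normal subgroup of $\langle\Cal C\rangle$; moreover, by the remark following Definition~\ref{def23}, a product of finitely many commuting semisimple elements of $\Aut_1(G)$ is again a semisimple element of $\Aut_1(G)$, so $\langle\Cal C\rangle$ is a subgroup of $\Aut_1(G)$ consisting of semisimple transformations. By Remark~\ref{rem28}, $G_{\Cal C_0}=G_{\langle\Cal C_0\rangle}$ and $G_{\Cal C}=G_{\langle\Cal C\rangle}$, hence $G_{\langle\Cal C_0\rangle}\cap H^0=G_{\langle\Cal C\rangle}\cap H^0$; now Lemma~\ref{le210}, applied to $\langle\Cal C\rangle$ with its normal subgroup $\langle\Cal C_0\rangle$, gives $G_{\langle\Cal C_0\rangle}=G_{\langle\Cal C\rangle}$, i.e. $G_{\Cal C_0}=G_{\Cal C}$.

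\emph{Step 3 ($G=H^0G_{\Cal C}=NG_{\Cal C}$).} Write $\Cal C_0=\{\theta_1,\dots,\theta_m\}$ and prove by induction on $j$ that $G=H^0\,G_{\{\theta_1,\dots,\theta_j\}}$, the case $j=0$ being trivial. For the step, set $L=G_{\{\theta_1,\dots,\theta_{j-1}\}}$ and $\theta=\theta_j$. Then $L$ is closed, $L\cap H^0$ is connected (Remark~\ref{rem28}), and $L$ is $\theta$-invariant since $\theta$ commutes with $\theta_1,\dots,\theta_{j-1}$; thus by \ref{di25} the pair $L,\,L\cap H$ satisfies the assumptions of \ref{di21} and $\theta|L\in\Aut_1(L)$. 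By Corollary~\ref{cor26}, $\theta|L$ is semisimple, so Lemma~\ref{le22}\,(i)$\Leftrightarrow$(ii) applied to $L$ (with $(L\cap H)^0=L\cap H^0$) yields $L=(L\cap H^0)\,L_{\theta|L}=(L\cap H^0)\,G_{\{\theta_1,\dots,\theta_j\}}\subseteq H^0\,G_{\{\theta_1,\dots,\theta_j\}}$; combined with the inductive hypothesis $G=H^0L$ this gives $G=H^0\,G_{\{\theta_1,\dots,\theta_j\}}$. For $j=m$ and using Step~2 this reads $G=H^0G_{\Cal C_0}=H^0G_{\Cal C}$. Finally $H\subseteq N$ (Remark~\ref{rem28}) forces $H^0\subseteq N$, so $G=H^0G_{\Cal C}\subseteq NG_{\Cal C}\subseteq G$, completing the proof.

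I expect the main obstacle to be the bookkeeping around non-connectedness: one must repeatedly check that the candidate subgroups $L=G_{\Cal C'}$ meet $H^0$ in a connected set, so that the apparatus of \ref{di21} and Lemma~\ref{le22} transfers to $L$; and one must exploit the fact that, although $\Aut_1(G)$ is not a subgroup in general, the commuting semisimple elements generate a well-behaved abelian subgroup of $\Aut_1(G)$, which is exactly what is needed to invoke Lemma~\ref{le210} in Step~2.
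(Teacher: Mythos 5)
Your proof is correct and follows essentially the same route as the paper: choose a finite $\Cal C_0$ minimizing $\dim(G_{\Cal C_0}\cap H^0)$, pass to the generated abelian groups and apply Lemma\;\ref{le210} to get $G_{\Cal C_0}=G_{\Cal C}$, then prove $G=H^0G_{\Cal C_0}$ by induction on $|\Cal C_0|$ via Corollary\;\ref{cor26} and Lemma\;\ref{le22}\,(ii). The extra checks you flag (connectedness of $G_{\Cal C'}\cap H^0$, semisimplicity of products of commuting semisimple elements) are exactly the points the paper also invokes.
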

\begin{proof}
Choose a finite subset $\Cal C_0$ of $\Cal{C}$ so that
\,$\dim(G_{\Cal C_0} \cap H^0)$ is minimal. Then Lemma\;\ref{le210} \,(applied
to the groups generated by $\Cal C_0$ and $\Cal C$) implies
\,$G_{\Cal C_0} = G_{\Cal C}$\,. The equation \,$G = H^0 G_{\Cal C_0}$
follows from Lemma\;\ref{le22}\,(ii) by induction on the cardinality of
$\Cal C_0$ \,(recall that $G_{\theta},\,G_{\theta} \cap H$ also
satisfy the assumptions of \ref{di21} and for
$\theta \in \Cal C_0\,,\linebreak G_{\theta}$ is $\Cal C_0$-invariant and
the restrictions are
semisimple by Corollary\;\ref{cor26}) -- compare \cite{Wan}\;(8.8).
\vspace{-2mm}\end{proof}
\begin{Lem}	   \label{le212}
Let $\Cal C$ be a subset of $\Aut(G)$ satisfying \,$G = N G_{\Cal C}$\,.
Then the following statements are equivalent: 
\begin{enumerate}
\item[(i)] $\sigma \in s (G_{\Cal C})$.
\item[(ii)] $\sigma \in \Cal S$ and it commutes with $\Cal C$\,.
\item[(iii)] $\sigma \in \Cal S$ and $G_{\Cal C}$ is
$\sigma$-invariant.
\end{enumerate}\vspace{-3mm}
\end{Lem}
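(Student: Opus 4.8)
The plan is to establish the cycle (i) ⟹ (ii) ⟹ (iii) ⟹ (i), using the earlier results on the map $s$ and its behaviour under conjugation. First I would prove (i) ⟹ (ii). Suppose $\sigma = s(x)$ with $x \in G_{\Cal C}$. By definition $\sigma \in \Cal S$. For commutativity with $\Cal C$: for each $\theta \in \Cal C$, since $x \in G_{\Cal C}$ is a fixed point of $\theta$, we have $\theta(x) = x$, so $\theta \circ \iota_x \circ \theta^{-1} = \iota_{\theta(x)} = \iota_x$; that is, $\theta$ commutes with $\iota_x$ in $\Aut(G)$. But then the formula $s(\theta \circ \iota_x \circ \theta^{-1}) = \theta \circ s(\iota_x) \circ \theta^{-1}$ from \ref{di25} gives $\theta \circ \sigma \circ \theta^{-1} = s(\iota_x) = \sigma$, so $\sigma$ commutes with $\theta$. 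Hence $\sigma$ commutes with all of $\Cal C$.

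Next, (ii) ⟹ (iii) is immediate: if $\sigma$ commutes with every $\theta \in \Cal C$, then $\sigma$ maps $G_\theta$ into $G_\theta$ for each $\theta$ (if $\theta(y) = y$ then $\theta(\sigma(y)) = \sigma(\theta(y)) = \sigma(y)$), hence $\sigma$ leaves $G_{\Cal C} = \bigcap_\theta G_\theta$ invariant.

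The substantial step is (iii) ⟹ (i). Write $\sigma = s(z)$ for some $z \in G$, using (iii); we must produce $x \in G_{\Cal C}$ with $s(x) = \sigma$. Using the hypothesis $G = N G_{\Cal C}$, write $z = ny$ with $n \in N$ and $y \in G_{\Cal C}$. The idea is that modifying $z$ by an element of $N$ does not change $s(z)$ too much — more precisely, by Remark\;\ref{rem28}, $s(y) = s(z) = \sigma$ holds iff $y \in (N \cap G_\sigma)\, z$, which need not hold directly, so one has to work a little harder. The better route is: since $\sigma = s(z)$, we know $\sigma(z) = z$ (noted at the end of \ref{di25}), so $z \in G_\sigma$; combining with $G = N G_{\Cal C}$ and the fact (to be exploited) that $\sigma$ leaves $G_{\Cal C}$ invariant, one shows $G_\sigma = (N \cap G_\sigma)(G_{\Cal C} \cap G_\sigma)$, so we may choose the decomposition $z = n'y'$ with $n' \in N \cap G_\sigma$ and $y' \in G_{\Cal C} \cap G_\sigma$. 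Then again by Remark\;\ref{rem28}, since $z \in (N \cap G_\sigma) y'$, we get $s(y') = s(z) = \sigma$, and $y' \in G_{\Cal C}$, so $\sigma \in s(G_{\Cal C})$, as required. The main obstacle is precisely justifying the refined decomposition $G_\sigma = (N \cap G_\sigma)(G_{\Cal C} \cap G_\sigma)$: this should follow by applying Corollary\;\ref{cor211} (or Lemma\;\ref{le22}\,(ii)) inside the group $G_\sigma$, using that $\sigma$-invariance of $G_{\Cal C}$ together with Corollary\;\ref{cor29} makes the relevant restricted automorphisms commute and act semisimply, so that $G_\sigma$ — which satisfies the hypotheses of \ref{di21} by Remark\;\ref{rem28} — decomposes as the product of its nilradical part and the common fixed-point set of $\Cal C$ restricted to it. One has to check that $N \cap G_\sigma$ absorbs $\nil(G_\sigma)$ and that $G_{\Cal C} \cap G_\sigma = (G_\sigma)_{\Cal C}$, both of which are routine from the definitions.
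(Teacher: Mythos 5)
Your (i)$\Rightarrow$(ii) and (ii)$\Rightarrow$(iii) are correct and coincide with the paper's (the first is exactly the conjugation identity from \ref{di25}, the second is trivial). The gap is in (iii)$\Rightarrow$(i), in the justification of the key decomposition. You propose to prove $G_\sigma = (N\cap G_\sigma)(G_{\Cal C}\cap G_\sigma)$ by applying Corollary\;\ref{cor211} (or Lemma\;\ref{le22}\,(ii)) to ``$\Cal C$ restricted to $G_\sigma$''. This inverts the roles of $\sigma$ and $\Cal C$ in a way the hypotheses do not support: (iii) gives that $G_{\Cal C}$ is $\sigma$-invariant, \emph{not} that $G_\sigma$ is $\Cal C$-invariant, so $\Cal C$ need not even restrict to $G_\sigma$; and $\Cal C$ here is an arbitrary subset of $\Aut(G)$ --- no commutativity, no semisimplicity, not even $\Cal C\subseteq\Aut_1(G)$ --- so neither Corollary\;\ref{cor211} nor Lemma\;\ref{le22} applies to it. Your auxiliary claim that ``$N\cap G_\sigma$ absorbs $\nil(G_\sigma)$'' is also false: for $G=\C\rtimes\Z$ with $n\circ z=\alpha^n z$, $|\alpha|=1$ not a root of unity, and $\sigma=s\bigl((0,1)\bigr)$, one has $G_\sigma\cong\Z$ nilpotent, hence $\nil(G_\sigma)=G_\sigma\not\subseteq N=\C$. (It turns out this claim is not needed anyway.)

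The paper's argument dualizes your step: restrict $\sigma$ to $G_{\Cal C}$ rather than $\Cal C$ to $G_\sigma$. With $H={[G,G]}^-$ one has $\Cal C\subseteq\Aut_H(G)$, so by Remark\;\ref{rem28} the pair $G_{\Cal C}$, $G_{\Cal C}\cap H$ satisfies the assumptions of \ref{di21}; hypothesis (iii) makes $G_{\Cal C}$ $\sigma$-invariant, Corollary\;\ref{cor26} then gives that $\sigma|G_{\Cal C}$ is semisimple, and Lemma\;\ref{le22}\,(ii) applied to this \emph{single} automorphism of $G_{\Cal C}$ yields $G_{\Cal C}=(G_{\Cal C}\cap H^0)(G_{\Cal C}\cap G_{\sigma})$, hence $G=NG_{\Cal C}=N(G_{\Cal C}\cap G_{\sigma})$. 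From there your endgame goes through verbatim: write $x=zy$ with $z\in N$, $y\in G_{\Cal C}\cap G_{\sigma}$, observe $z=xy^{-1}\in N\cap G_{\sigma}$ since $x\in G_{\sigma}$, and conclude $s(x)=s(y)$ by Remark\;\ref{rem28}. Your identity $G_\sigma=(N\cap G_\sigma)(G_{\Cal C}\cap G_\sigma)$ is then a \emph{consequence} of this decomposition, not an available input.
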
\noindent
In particular, by Corollary\;\ref{cor211}, this applies to any commuting subset
$\Cal C$ of $\Aut_1(G)$ consisting of semisimple transformations.
\begin{proof}
(i) $\Rightarrow$ (ii) follows from \ref{di25}. \
(ii) $\Rightarrow$ (iii) is trivial.
\\
(iii) $\Rightarrow$ (i): We put $H = {[G,G]}^-$, then
$\Cal C\subseteq \Aut_H(G)$, hence by Remark\;\ref{rem28},
$G_{\Cal C},\,G_{\Cal C} \cap H$ satisfy the assumptions of \ref{di21}. If
$G_{\Cal C}$ is
$\sigma$-invariant, then by Corollary\;\ref{cor26}, \,$\sigma|G_{\Cal C}$ is
semisimple and then by Lemma\;\ref{le22}\,(ii)
\,$G_{\Cal C} = (G_{\Cal C} \cap H^0) (G_{\Cal C} \cap G_{\sigma})$\,.
This implies \,$G = N (G_{\Cal C}\cap G_{\sigma})$\,.
Take $x \in G$ such that \,$\sigma = s(x)$\,. Then
$x\in G_{\sigma}$ and $x = zy$ with
$y \in G_{\Cal C} \cap G_{\sigma}\,,\ z \in N$\,. It follows
that $z \in  N\cap G_{\sigma}$\,,
hence by Remark\;\ref{rem28}, $s(x) = s(y)$.
\end{proof}
\begin{Lem}	     \label{le213}
Let $\Cal C$ be a subset of $\Aut(G)$ satisfying \,$G = NG_{\Cal C}$\,.
Then the following statements are equivalent:
\begin{enumerate}
\item[(i)] $\sigma \in s(L_{\Cal C})$\,.
\item[(ii)] $\sigma \in \Cal{S},\ \sigma (t) = t$ for all
$t \in G_{\Cal C}$\,.
\item[(iii)] $\sigma \in \Cal S$, it commutes with $\Cal C$ and
$\sigma (t) = t$ for all $t \in G_{\Cal C}\cap H^0$.
\end{enumerate}\vspace{-3mm}
\end{Lem}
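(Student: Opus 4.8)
The plan is to prove the cycle (i)$\,\Rightarrow\,$(ii)$\,\Rightarrow\,$(iii)$\,\Rightarrow\,$(i), resting on Lemma~\ref{le212} and on the following observation. By Remark~\ref{rem28}, $G_{\Cal C}$ and $G_{\Cal C}\cap H$ again satisfy the assumptions of \ref{di21}, with $G_{\Cal C}\cap H^0 = L_{\Cal C}\cap H^0$ connected, so that inner automorphisms of $G_{\Cal C}$ and their Jordan decompositions are defined. For $x\in G_{\Cal C}$ the automorphism $\iota_x\in\Aut_1(G)$ leaves the subgroup $G_{\Cal C}$ invariant, whence Corollary~\ref{cor26} shows that $s(x) = (\iota_x)_s$ also leaves $G_{\Cal C}$ invariant and that $s(x)|\,G_{\Cal C} = (\iota_x|\,G_{\Cal C})_s$ is the semisimple part of the inner automorphism of $G_{\Cal C}$ given by $x$; by \ref{di25} it lies in $\Aut_1(G_{\Cal C})$ and, by Corollary~\ref{cor26}, is semisimple there. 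Consequently $s(x)$ fixes $G_{\Cal C}$ pointwise if and only if that inner automorphism is unipotent, i.e.\ if and only if $x\in\nil(G_{\Cal C}) = L_{\Cal C}$ (Remark~\ref{rem28}).

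Granting this, (i)$\,\Rightarrow\,$(ii) is immediate: if $\sigma = s(x)$ with $x\in L_{\Cal C}$, then $\sigma\in\Cal S$ and, by the observation, $\sigma(t) = t$ for all $t\in G_{\Cal C}$. For (ii)$\,\Rightarrow\,$(iii): if $\sigma\in\Cal S$ fixes $G_{\Cal C}$ pointwise, then $G_{\Cal C}$ is $\sigma$-invariant, so the implication (iii)$\,\Rightarrow\,$(ii) of Lemma~\ref{le212} shows that $\sigma$ commutes with $\Cal C$; and $\sigma$ certainly fixes $G_{\Cal C}\cap H^0\subseteq G_{\Cal C}$ pointwise.

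The substantive step is (iii)$\,\Rightarrow\,$(i). Since $\sigma\in\Cal S$ commutes with $\Cal C$, the implication (ii)$\,\Rightarrow\,$(i) of Lemma~\ref{le212} gives $\sigma = s(x)$ for some $x\in G_{\Cal C}$. By the observation, $G_{\Cal C}$ is $\sigma$-invariant and $\sigma|\,G_{\Cal C}$ is a semisimple element of $\Aut_1(G_{\Cal C})$; hence Lemma~\ref{le22}\,(ii), applied to the pair $G_{\Cal C},\,G_{\Cal C}\cap H$, yields $G_{\Cal C} = (G_{\Cal C}\cap H^0)\,(G_{\Cal C}\cap G_\sigma)$. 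By hypothesis $\sigma$ fixes $G_{\Cal C}\cap H^0$ pointwise, so $G_{\Cal C}\cap H^0\subseteq G_{\Cal C}\cap G_\sigma$ and therefore $G_{\Cal C} = G_{\Cal C}\cap G_\sigma$; that is, $\sigma$ fixes all of $G_{\Cal C}$ pointwise. By the observation this forces $x\in L_{\Cal C}$, so $\sigma = s(x)\in s(L_{\Cal C})$.

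The one real obstacle is the core of (iii)$\,\Rightarrow\,$(i), namely upgrading ``$\sigma$ is the identity on $G_{\Cal C}\cap H^0$'' to ``$\sigma$ is the identity on $G_{\Cal C}$''. This is made possible by first relocating the preimage $x$ inside $G_{\Cal C}$ (Lemma~\ref{le212}) and then invoking Corollary~\ref{cor26} to realize $\sigma|\,G_{\Cal C}$ as the semisimple part of an inner automorphism of $G_{\Cal C}$, which legitimizes the splitting $G_{\Cal C} = (G_{\Cal C}\cap H^0)(G_{\Cal C}\cap G_\sigma)$; the remaining bookkeeping (closedness of $G_{\Cal C}$, connectedness of $G_{\Cal C}\cap H^0$, the relevant $\Aut_1$-memberships) is supplied by Remark~\ref{rem28} and the standing hypotheses of \ref{di21}.
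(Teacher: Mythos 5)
Your proof is, in substance, the paper's own: the same ingredients (Lemma~\ref{le212}, Corollary~\ref{cor26} applied to $L=G_{\Cal C}$, the splitting $G_{\Cal C}=(G_{\Cal C}\cap H^0)(G_{\Cal C}\cap G_\sigma)$ from Lemma~\ref{le22}\,(ii), and the characterization of $L_{\Cal C}=\nil(G_{\Cal C})$ from Remark~\ref{rem28}) assembled in essentially the same way; only the orientation of the implication cycle differs, the paper proving (i)$\Rightarrow$(ii),(iii), then (iii)$\Rightarrow$(ii), then (ii)$\Rightarrow$(i).

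One step is not justified as written. The lemma assumes only $\Cal C\subseteq\Aut(G)$, but Remark~\ref{rem28} yields connectedness of $G_{\Cal C}\cap H^0$ (and hence that the pair $G_{\Cal C},\,G_{\Cal C}\cap H$ satisfies \ref{di21}) only under the hypothesis $\Cal C\subseteq\Aut_H(G)$; without $H$-invariance this can fail (Remark~\ref{rem221}\,(f): for $G=\R\times\T$, $H=\R\times(0)$, $\sigma(x,y)=(x,y+x)$ one has $G_\sigma\cap H^0\cong\Z$). Since Corollary~\ref{cor26} and Lemma~\ref{le22}\,(ii) both require $L\cap H^0$ connected, your ``observation'' and the core of (iii)$\Rightarrow$(i) depend on exactly this point, and your closing remark that the connectedness ``is supplied by Remark~\ref{rem28}'' does not actually cover it. The paper's proof opens by putting $H={[G,G]}^-$, which is characteristic and hence invariant under every element of $\Cal C$, so that Remark~\ref{rem28} applies; the statement for the originally fixed $H$ then follows because (i) and (ii) do not mention $H$, and (iii) for the larger $H$ is sandwiched between (ii) and the corresponding condition for ${[G,G]}^-$. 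Add that one-line reduction and your argument is complete.
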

\begin{proof}
Again we put $H = {[G,G]}^-$.
\\
(i) $\Rightarrow$ (ii),(iii): By Lemma\;\ref{le212}, $\sigma$ commutes with
$\Cal C$\,. Take $x \in L_{\Cal C}$ such that $\sigma = s(x)$. Then
$\iota_x$ is unipotent on $G_{\Cal C}$ (see \ref{rem28}) and
Corollary\;\ref{cor26}
(for $L = G_{\Cal C}$) implies that $\sigma = s(x)$ is the identity
on $G_{\Cal C}$\,.
\\
(iii) $\Rightarrow$ (ii): If $\sigma$ commutes with $\Cal C$, then
$G_{\Cal C}$ is $\sigma$-invariant. By Corollary\;\ref{cor26},\linebreak
$\sigma|G_{\Cal C}$ is semisimple and by Lemma\;\ref{le22}\,(ii) \,(with
$G_{\Cal C} \cap H^0$ instead of $H^0$), $\sigma$ is the identity on
$G_{\Cal C}$\,.
\\
(ii) $\Rightarrow$ (i): $G_{\Cal C}$ is $\sigma$-invariant, hence by
Lemma\;\ref{le212} \,$\sigma \in s (G_{\Cal C})$. Take $x \in G_{\Cal C}$ such
that $\sigma = s(x)$. Then $G_{\Cal C}$ is $\iota_x$-invariant and
by assumption, $s(x) = \sigma$ is the identity on $G_{\Cal C}$.
Hence Corollary\;\ref{cor26} implies that $\iota_x$ is unipotent on
$G_{\Cal C}$\,, i.e., $x \in L_{\Cal C}$\,.
\end{proof}
\begin{Lem}	 \label{le214}
{\rm (i)} Let $L$ be a nilpotent subgroup of $G$ and put \,$\Cal C = s(L)$.
\,Then $\Cal C$~is an abelian group contained in $\Cal S$ and we have
\,$L\subseteq L_{\Cal C}$\,.
\\
{\rm (ii)} Let $\Cal C$ be any subset of $\Aut (G)$ satisfying
\,$G = N G_{\Cal C}$ and put \,$\Cal C_1 = s (L_{\Cal C})$. \,Then
\,$s(L_{\Cal C_1}) =\Cal  C_1$\,.
\\
If in addition $\Cal C \subseteq \Cal S$ holds, then
$\Cal C \subseteq \Cal C_1$ \,(in
particular, the elements of $\Cal C$ commute and the generated subgroup is
contained in $\Cal S$),
$G_{\Cal C} = G_{\Cal C_1}\,,\ L_{\Cal C} = L_{\Cal C_1}$\,.
\\
{\rm (iii)} If $\Cal C$ is a commuting subset of $\Cal S$ then $L_{\Cal C}$ is
a maximal nilpotent subgroup of \,$G$.\vspace{-2mm}
\end{Lem}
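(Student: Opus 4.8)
The plan is to prove (i), (ii), (iii) in that order, each resting on the earlier ones. First I would fix $H={[G,G]}^-$: this is always admissible in \ref{di21}, it makes $\Cal C\subseteq\Aut_H(G)$ automatic for every $\Cal C\subseteq\Aut(G)$, and by \ref{di25} and \ref{rem28} the objects $s$, $\Cal S$ and $L_{\Cal C}=\nil(G_{\Cal C})$ are unaffected. Two facts will be used throughout: the description $G_{\theta_s}=\{x\in G:(\ad\theta)^n(x)=e$ for some $n>0\}$ from Lemma\;\ref{le24} (for $\theta\in\Aut_1(G)$), and the elementary computation that for $w,x$ in a nilpotent group $M$ of class $c$ the iterate $(\ad\iota_w)^k(x)$ equals the $k$-fold commutator $[w,[w,\dots,[w,x]\dots]]$, which vanishes once $k\ge c$. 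Combining them: if $M$ is nilpotent and $w\in M$, then $\iota_w|M$ is unipotent and $M\subseteq G_{s(w)}$; call this fact $(\ast)$.

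For (i): that $\Cal C=s(L)$ is an abelian group follows from Lemma\;\ref{le27} applied with $\Cal C=\Cal C_1=\{\iota_x:x\in L\}$, which is nilpotent, lies in $\Aut_1(G)$, and has unipotent commutators $\iota_{[a,b]}$ (since $[a,b]\in N$); that lemma also gives that $\Cal C$ centralizes $\{\iota_x:x\in L\}$, and $\Cal C\subseteq\Cal S$ is immediate. For $L\subseteq L_{\Cal C}$ I would use $(\ast)$ to get $L\subseteq\bigcap_{w\in L}G_{s(w)}=G_{\Cal C}$, and then, for $x\in L$, observe that $s(x)\in\Cal C$ fixes $G_{\Cal C}$ pointwise while $(\iota_x)_u$ commutes with $\Cal C$ (by \ref{di25}, since $\Cal C$ centralizes $\iota_x$) and so preserves $G_{\Cal C}$; hence $\iota_x|G_{\Cal C}=(\iota_x)_u|G_{\Cal C}$ is unipotent and $x\in\nil(G_{\Cal C})=L_{\Cal C}$ (Remark\;\ref{rem28}). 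For (ii), put $\Cal C_1=s(L_{\Cal C})$; by (i) this is a commuting set of semisimple elements of $\Aut_1(G)$, so $G=NG_{\Cal C_1}$ by Corollary\;\ref{cor211} and Lemma\;\ref{le213} applies to $\Cal C_1$ as well. One has $G_{\Cal C}\subseteq G_{\Cal C_1}$, because each $s(w)$ with $w\in L_{\Cal C}=\nil(G_{\Cal C})$ fixes $G_{\Cal C}$ pointwise (Lemma\;\ref{le24}, $\iota_w|G_{\Cal C}$ being unipotent). Applying (i) to the nilpotent group $L_{\Cal C}$ gives $L_{\Cal C}\subseteq L_{\Cal C_1}$ and hence $\Cal C_1\subseteq s(L_{\Cal C_1})$, while conversely an element of $s(L_{\Cal C_1})$ lies in $\Cal S$ and fixes $G_{\Cal C_1}\supseteq G_{\Cal C}$ pointwise, so lies in $s(L_{\Cal C})=\Cal C_1$ by Lemma\;\ref{le213}; thus $s(L_{\Cal C_1})=\Cal C_1$. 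If moreover $\Cal C\subseteq\Cal S$, each $\sigma\in\Cal C$ is in $\Cal S$ and fixes $G_{\Cal C}$, so $\sigma\in\Cal C_1$ by Lemma\;\ref{le213}; then $\Cal C\subseteq\Cal C_1$ forces commutativity and $\langle\Cal C\rangle\subseteq\Cal S$, and together with $G_{\Cal C}\subseteq G_{\Cal C_1}$ it gives $G_{\Cal C}=G_{\Cal C_1}$, hence $L_{\Cal C}=L_{\Cal C_1}$.

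For (iii): $L_{\Cal C}=\nil(G_{\Cal C})$ is nilpotent, $G=NG_{\Cal C}$ by Corollary\;\ref{cor211}, and by (ii) I may replace $\Cal C$ by $s(L_{\Cal C})$ without altering $G_{\Cal C}$ or $L_{\Cal C}$, so assume $\Cal C=s(L_{\Cal C})$. Let $M\supseteq L_{\Cal C}$ be nilpotent of class $c$. Applying $(\ast)$ inside $M$ to each $w\in L_{\Cal C}\subseteq M$ gives $M\subseteq G_{s(L_{\Cal C})}=G_{\Cal C}$. Then, for $x\in M$ and $t\in G_{\Cal C}\cap H^0=L_{\Cal C}\cap H^0\subseteq M$ (Remark\;\ref{rem28}), the commutator bound in $M$ gives $(\ad\iota_x)^c(t)=e$, so $s(x)(t)=t$ by Lemma\;\ref{le24}; since $s(x)$ also commutes with $\Cal C=s(L_{\Cal C})\subseteq s(M)$, the latter abelian by (i), condition (iii)\,$\Rightarrow$\,(i) of Lemma\;\ref{le213} forces $s(x)\in\Cal C$, so $s(x)$ fixes $G_{\Cal C}$ pointwise. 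Finally, $(\iota_x)_u$ commutes with $\Cal C$ (Lemma\;\ref{le27}: $s(M)$ centralizes $\{\iota_y:y\in M\}$, so $s(w)$ commutes with $\iota_x$ and hence with $(\iota_x)_u$ for $w\in L_{\Cal C}$), so it preserves $G_{\Cal C}$; thus $\iota_x|G_{\Cal C}=(\iota_x)_u|G_{\Cal C}$ is unipotent and $x\in\nil(G_{\Cal C})=L_{\Cal C}$. Hence $M=L_{\Cal C}$.

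The step I expect to be the main obstacle is the maximality in (iii). Attacking $M\subseteq L_{\Cal C}$ head-on is circular: proving $x\in\nil(G_{\Cal C})$ reduces to ``$s(x)$ fixes $G_{\Cal C}$'', which is essentially the conclusion. The loop is broken by the observation that $G_{\Cal C}\cap H^0$ already lies inside $L_{\Cal C}\subseteq M$, so $\iota_x$ (for $x\in M$) is forced to act unipotently there purely by the lower-central bound in $M$; Lemma\;\ref{le24} promotes this to $s(x)$ fixing $G_{\Cal C}\cap H^0$, condition (iii) of Lemma\;\ref{le213} turns that into $s(x)\in\Cal C$, and then the argument closes exactly as in (i).
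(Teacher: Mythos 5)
Your proof is correct; all three parts go through, and the overall architecture (part (ii) in particular) matches the paper's. The divergence is in the mechanism used at the two places where one must show that certain elements of $\Cal S$ fix $G_{\Cal C}$ and that certain inner automorphisms act unipotently there. In (i) the paper first replaces $L$ by $(L\cap H^0)_{\R}\mspace{1mu}L$ so that $L\cap H^0$ becomes connected, invokes Corollary\;\ref{cor26} to see that $s(x)$ is the identity on $L$, and then extracts $x\in L_{\Cal C}$ via Lemma\;\ref{le213}\,(ii) and the coset description of Remark\;\ref{rem28}; you bypass both the Malcev-completion reduction and Corollary\;\ref{cor26} by combining the fixed-point formula $G_{\theta_s}=\{x:(\ad\theta)^n(x)=e \text{ for some } n\}$ of Lemma\;\ref{le24} with the vanishing of iterated commutators in a nilpotent group (your fact $(\ast)$), and close by writing $\iota_x|G_{\Cal C}=(\iota_x)_u|G_{\Cal C}$ directly. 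In (iii) the paper introduces $\Cal C'=s(L)$, deduces $L\subseteq L_{\Cal C'}\subseteq G_{\Cal C}$, and obtains unipotence of $\iota_x$ on $G_{\Cal C}$ by combining unipotence on $L_{\Cal C}\cap H^0=G_{\Cal C}\cap H^0$ with unipotence on $G/H^0$; you instead normalize $\Cal C=s(L_{\Cal C})$ via (ii), verify the hypotheses of condition (iii) of Lemma\;\ref{le213} for $s(x)$ to conclude $s(x)\in\Cal C$, and repeat the $(\iota_x)_u$ argument of (i). Your route is more uniform and avoids the connectedness reduction, at the cost of leaning harder on the explicit description of $G_{\theta_s}$ and on the full equivalence in Lemma\;\ref{le213}; the paper's last step of (iii) is marginally more elementary. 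Your closing remark correctly identifies the point that breaks the apparent circularity in (iii), namely $G_{\Cal C}\cap H^0=L_{\Cal C}\cap H^0\subseteq M$, which is also what the paper's own argument pivots on.
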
\noindent
Thus the maximal nilpotent subgroups of $G$ are all of the form
$L_{\Cal C}$\,, where $\Cal C$ is an abelian group contained in
$\Cal S$. In particular (for $\Cal C=\{\id\}$), $N=\nil(G)$ is a maximal
nilpotent subgroup of $G$ \,(but this follows also directly from the
definition in Remark\;\ref{rem28}).\vspace{-1mm}
\begin{proof}
(i): $H^0$ being nilpotent and torsion free, we can identify
$(L \cap H^0)_{\R}$ with a subgroup of $H^0$ which is $L$-invariant. Thus
$L' = (L \cap H^0)_{\R}\mspace{1mu}L$ \,is still nilpotent. By
Lemma\;\ref{le27} (with
$\Cal C_1 = \{\iota_x : x \in L'\},\ \Cal C' = \Cal C_1$ in
place of $\Cal C$\,), $s$ is a homomorphism on $L'$. Hence
$s(L') = s(L)$ is always a group and (replacing $L$ by $L'$) it is no
restriction to assume that $L \cap H^0$ is connected. Furthermore
(again by Lemma\;\ref{le27}) \,$\Cal C = s(L)$ is commutative.
\\
If $x\in L$, then $\iota_x$ is unipotent on $L$\,, hence by
Corollary\;\ref{cor26}, $\sigma = s(x)$ is the identity on $L$. This
implies $L\subseteq G_{\Cal C}$\,. $\sigma \in \Cal C$ implies that $\sigma$
is the identity on $G_{\Cal C}$\,, hence by Lemma\;\ref{le213}\;(ii),
$\sigma \in s(L_{\Cal C})$. Take $y \in L_{\Cal C}$ such that
$\sigma = s(x) = s(y)$.
By Remark\;\ref{rem28}, \,$y\,x^{-1}\in N\cap G_{\Cal C}\subseteq L_{\Cal C}$
\,and it follows that $x \in L_{\Cal C}$\,.
\\
(ii): By Lemma\;\ref{le213}\,(ii), $G_{\Cal C} \subseteq G_{\Cal C_1}$\,.
Put $\Cal C_2 = s (L_{\Cal C_1})$\,. If $\sigma \in \Cal C_2$\,, then by
Lemma\;\ref{le213}\,(ii), $\sigma$ is the identity on
$G_{\Cal C_1} \supseteq G_{\Cal C}$\,. Thus
$\sigma \in s(L_{\Cal C}) = \Cal C_1$\,, proving
that $\Cal C_2 \subseteq \Cal C_1$\,. By (i) (with $L_{\Cal C}$
in place of $L$), we have $L_{\Cal C} \subseteq L_{\Cal C_1}$\,,
hence
$\Cal C_1 = s (L_{\Cal C}) \subseteq s (L_{\Cal C_1}) = \Cal C_2$
\,which gives $\Cal C_2 = \Cal C_1$\,.
\\
With the additional assumption $\Cal C \subseteq \Cal S$,
Lemma\;\ref{le213}\,(ii) again implies that any $\sigma \in \Cal C$ belongs
to $s(L_{\Cal C}) = \Cal C_1$\,, i.e., $\Cal C \subseteq \Cal C_1$\,.
In particular, by (i), $\Cal C$ is contained in an abelian subgroup of
$\Cal S$\,. \,$\Cal C \subseteq \Cal C_1$ implies that
$G_{\Cal C_1} \subseteq G_{\Cal C}$ and by Lemma\;\ref{le213}\,(ii),
$G_{\Cal C} \subseteq G_{\Cal C_1}$\,. Thus
$G_{\Cal C}= G_{\Cal C_1}$ and then
$L_{\Cal C} = L_{\Cal C_1}$\,.
\\
(iii): Assume that $L_{\Cal C} \subseteq L$, where $L$ is a nilpotent
subgroup of $G$ and put \,$\Cal C' = s(L)$. Then by (ii) (using
$\Cal C \subseteq \Cal S$ and Corollary\;\ref{cor211}),
$\Cal C \subseteq \Cal C_1 = s(L_{\Cal C}) \subseteq s(L) = \Cal C'$\,.
Consequently $L_{\Cal C'} \subseteq  G_{\Cal C'} \subseteq G_{\Cal C}$
and by
(i), $L_{\Cal C} \subseteq L \subseteq L_{\Cal C'}$\,. Since
$L_{\Cal C}$ is normal in $G_{\Cal C}$ (Remark\;\ref{rem28}), it follows
that $L_{\Cal C}$ is $L$-invariant. Take $x \in L$, then ($L$ being
nilpotent) $\iota_x$ is unipotent on $L_{\Cal C}$\,. Recall that
$L_{\Cal C} \cap H^0 = G_{\Cal C} \cap H^0$ and that $\iota_x$
induces a unipotent transformation on $G/H^0$. Combined, we see that
$\iota_x$ is unipotent on $G_{\Cal C}$\,. Thus $x \in L_{\Cal C}$\,.
This proves that $L = L_{\Cal C}$\,.
%\vspace{-1.3mm}
\end{proof}
\begin{Pro}	  \label{pro215}
Let $\Cal C_0$ be a commuting subset of $\Cal S$ such that the dimension
of \,$G_{\Cal C_0}\cap H^0$ is minimal (among all such subsets). Put
\,$L= G_{\Cal C_0}$\,, $\Cal C = s(L)$\,. Then the following properties
hold.
\\
{\rm (i)} $L$ is a maximal nilpotent subgroup of $G$ \,(in particular, $L$ is
closed), $L \cap H^0$ is connected, $G = H^0L$\,,
\,$L = L_{\Cal C} = L_{\Cal C_0} = G_{\Cal C}$\,. The dimension of
\,$L \cap H^0$ is minimal among the maximal nilpotent subgroups of $G$\,.
\\
{\rm (ii)} $\Cal C$ is a subgroup of $\Aut_1(G)$ consisting of semisimple
transformations. It is a maximal commuting subset of
$\Cal S\,,\ \Cal C_0 \subseteq \Cal C$\,.
\\
{\rm (iii)} $\beta (x y) = s(y)$ \,(where $x\in H^0,\ y\in L$) defines a
continuous surjective group homomorphism
$\beta\!: G \to \Cal C\,,\ \ker \beta = N\,,\ \beta (x)\,\iota^{-1}_x$ is
unipotent for all $x\in G$\,.
\end{Pro}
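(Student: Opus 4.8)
The plan is to analyse the group $L:=G_{\Cal C_0}$; once one knows that $L$ is nilpotent, the rest of the proposition is bookkeeping with the lemmas of this section, and the homomorphism $\beta$ is read off from $s|L$. First the easy facts: since $\Cal C_0\subseteq\Cal S\subseteq\Aut_1(G)$ is a commuting set of semisimple automorphisms, Corollary~\ref{cor211} gives $G=H^0L=NL$; by Remark~\ref{rem28} the set $L\cap H^0=G_{\Cal C_0}\cap H^0$ is connected, so $(L,L\cap H)$ again satisfies the hypotheses of \ref{di21}, $L_{\Cal C_0}:=\nil(L)$ is defined with $L_{\Cal C_0}\cap H^0=L\cap H^0$, and $N\cap L$ --- a nilpotent normal subgroup of $L$ --- is contained in $L_{\Cal C_0}$.

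The heart of the matter is to show $L=L_{\Cal C_0}$, i.e.\ that $L$ is nilpotent; I would argue by contradiction. Suppose $x\in L\setminus L_{\Cal C_0}$; then $\sigma:=s(x)\neq\id$, for $\sigma=\id$ would make $\iota_x$ unipotent, i.e.\ $x\in N\cap L\subseteq L_{\Cal C_0}$. For each $\theta\in\Cal C_0$, Corollary~\ref{cor29} --- applied with the cyclic (hence nilpotent) group $\langle x\rangle$, which $\theta$ fixes pointwise, in place of $L$ --- shows that $\theta$ commutes with $s(\langle x\rangle)\ni s(\iota_x)=\sigma$; hence $\sigma$ commutes with $\Cal C_0$, so $\Cal C_0\cup\{\sigma\}$ is a commuting subset of $\Cal S$ (its generated subgroup consisting of semisimple elements of $\Aut_1(G)$, by the remark after Definition~\ref{def23}). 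I claim $\dim\bigl(G_{\Cal C_0}\cap G_\sigma\cap H^0\bigr)<\dim(L\cap H^0)$, contradicting the minimality defining $\Cal C_0$. If this strict inequality failed, these two connected subgroups of $H^0$ would coincide, so Lemma~\ref{le210} --- applied to the abelian group generated by $\Cal C_0\cup\{\sigma\}$ and to its subgroup generated by $\Cal C_0$ --- would force $\sigma$ to fix $G_{\Cal C_0}=L$ pointwise; then Lemma~\ref{le213} gives $\sigma=s(z)$ for some $z\in L_{\Cal C_0}$, Remark~\ref{rem28} gives $xz^{-1}\in N\cap G_\sigma$, and since $x,z\in G_{\Cal C_0}$ also $xz^{-1}\in N\cap L\subseteq L_{\Cal C_0}$, whence $x\in L_{\Cal C_0}$ --- a contradiction. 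Thus $L=L_{\Cal C_0}$ is nilpotent; by Lemma~\ref{le214}(iii) it is a maximal nilpotent subgroup of $G$ (in particular closed), which, with the facts above, proves the first assertions of~(i).

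Put $\Cal C=s(L)$. As $L$ is nilpotent with $L\cap H^0$ connected, $\Cal C$ is an abelian group contained in $\Cal S$ (Lemma~\ref{le214}(i)); since every element of $\Cal S\subseteq\Aut_1(G)$ is semisimple (Lemma~\ref{le24}), $\Cal C$ is a subgroup of $\Aut_1(G)$ of semisimple transformations. Feeding $\Cal C_0\subseteq\Cal S$ into Lemma~\ref{le214}(ii), whose associated $s(L_{\Cal C_0})$ equals $s(L)=\Cal C$, yields $\Cal C_0\subseteq\Cal C$, $G_{\Cal C_0}=G_{\Cal C}$ and $L_{\Cal C_0}=L_{\Cal C}$; with $L=L_{\Cal C_0}=G_{\Cal C_0}$ this is the chain $L=L_{\Cal C}=L_{\Cal C_0}=G_{\Cal C}$ of~(i). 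Minimality of $\dim(L\cap H^0)$ among maximal nilpotent subgroups: each such $M$ equals $L_{s(M)}$ with $s(M)$ a commuting subset of $\Cal S$ (Lemma~\ref{le214}(i),(iii)), and $\dim(M\cap H^0)=\dim(G_{s(M)}\cap H^0)\geq\dim(G_{\Cal C_0}\cap H^0)=\dim(L\cap H^0)$ by the choice of $\Cal C_0$. Finally, $\Cal C$ is a maximal commuting subset of $\Cal S$: if $\tau\in\Cal S$ commutes with $\Cal C\supseteq\Cal C_0$ then $\tau$ commutes with $\Cal C_0$, so $\tau\in s(G_{\Cal C_0})=s(L)=\Cal C$ by Lemma~\ref{le212}. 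This gives (i) and (ii).

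For (iii): by Lemma~\ref{le214}(i) (through Lemma~\ref{le27}) the map $s|L\colon L\to\Cal C$ is a surjective homomorphism, and its kernel is $\{y\in L:\iota_y\text{ unipotent}\}=L\cap N$ (to invoke Lemma~\ref{le27} with $\Cal C_1=\iota(L)$ one uses $[L,L]\subseteq{[G,G]}^-\subseteq N$, so that $[\iota(L),\iota(L)]$ is unipotent). Since $G=NL$, $N\triangleleft G$ and $N=H^0(L\cap N)$ (because $H^0\subseteq N$), the rule $\beta(xy)=s(y)$ for $x\in H^0$, $y\in L$ is well defined --- if $x_1y_1=x_2y_2$, then $y_2y_1^{-1}=x_2^{-1}x_1\in H^0\cap L\subseteq L\cap N=\ker(s|L)$ --- and $\beta$ is a homomorphism, factoring through $G/N\cong L/(L\cap N)$, surjective, with $\ker\beta=H^0(L\cap N)=N$. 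Its continuity (it is trivial on $H^0$) reduces to continuity of $s|L$, which I would obtain by extending everything to $G_{\R}$, where $H_{\R}$ is simply connected, reducing to the $\sigma$-compact case and invoking the open mapping theorem, exactly as in the proof of Lemma~\ref{le24}. Lastly, writing $x=x_0y$ with $x_0\in H^0$, $y\in L$, and using $(\iota_y)_u=\iota_y\circ s(y)^{-1}$ (see \ref{di25}), one gets $\beta(x)\,\iota_x^{-1}=s(y)\,\iota_y^{-1}\,\iota_{x_0}^{-1}=(\iota_y)_u^{-1}\,\iota_{x_0}^{-1}$, a product of members of the unipotent group $\Cal N$ of Remark~\ref{rem28} (as $x_0\in N$, $\iota_{x_0}=(\iota_{x_0})_u\in\Cal N$), hence unipotent. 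The one genuinely delicate step is the nilpotency of $L=G_{\Cal C_0}$: it is where the global minimality of $\dim(G_{\Cal C_0}\cap H^0)$ is indispensable and where Lemmas~\ref{le210}, \ref{le212}, \ref{le213}, \ref{le214} and Corollary~\ref{cor29} must all interlock; continuity of $\beta$ is the only other point needing (routine) care.
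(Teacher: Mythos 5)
Your treatment of (i) and (ii) is correct and follows the paper's route: minimality of $\dim(G_{\Cal C_0}\cap H^0)$ forces $\sigma=s(x)$ to fix $G_{\Cal C_0}$ pointwise for every $x\in G_{\Cal C_0}$, whence $G_{\Cal C_0}=L_{\Cal C_0}$, and Lemma~\ref{le214} then does the bookkeeping. Your detour through Lemma~\ref{le210}, Lemma~\ref{le213} and Remark~\ref{rem28} (where the paper applies Corollary~\ref{cor26} and Lemma~\ref{le22}\,(ii) directly), and your use of Corollary~\ref{cor29} in place of Lemma~\ref{le212} to see that $s(x)$ commutes with $\Cal C_0$, are harmless variants. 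The algebraic part of (iii) --- well-definedness of $\beta$, $\ker\beta=N$, and unipotence of $\beta(x)\,\iota_x^{-1}$ via the group $\Cal N$ of Remark~\ref{rem28} --- is also right.

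The one genuine gap is the continuity of $\beta$, which you label ``routine'' and propose to obtain ``exactly as in the proof of Lemma~\ref{le24}''. That precedent does not transfer: in Lemma~\ref{le24} one proves continuity of a \emph{single, fixed} automorphism $\theta_s$ (the map $(y,z)\mapsto(\rho(y),z)$ on $H\rtimes G_{\theta_s}$ is continuous because $\rho$ is one fixed continuous automorphism), whereas here you must prove continuity of the \emph{assignment} $y\mapsto s(y)$ from $L$ into $\Aut(G)$. This cannot be waved away, since the Jordan decomposition $A\mapsto A_s$ is not a continuous map on $\GL(\fr h)$; some special structure must be exploited. The paper's proof uses the identity $\bigl(\exp(\ad_{\fr h}Y)\bigr)_s=\exp\bigl((\ad_{\fr h}Y)_s\bigr)$ to get continuity of $t\mapsto s(\exp(tY))|H^0$ along one-parameter subgroups, then invokes \cite{Ho}\;Th.\,IX.1.2 and \cite{Va}\;Th.\,2.11.2 to upgrade this to continuity of $y\mapsto s(y)|H^0$ on $G^0$, and only afterwards uses the quotient presentation of $G$ by $H^0\rtimes L$ (which is where your open-mapping remark enters). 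Without the one-parameter-subgroup step your reduction has no starting point, so this part of (iii) still needs to be supplied.
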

\begin{proof}
(i): If $\Cal C_1$ is any commuting set with
\,$\Cal C_0 \subseteq\Cal C_1 \subseteq \Cal S$\,, then minimality of
$\dim (G_{\Cal C_0}\cap H^0)$ implies
$G_{\Cal C_1} \cap H^0 = G_{\Cal C_0}\cap H^0$ \,
(recall that $G_{\Cal C_1} \cap H^0$ is always connected).
Take $x \in G_{\Cal C_0}$ and put $\sigma = s(x)$. By Lemma\;\ref{le212},
$\sigma$ commutes with $\Cal C_0$\,. Put
\,$\Cal C_1 = \Cal C_0\cup \{\sigma\}$. Then $\sigma$ is the identity on
$G_{\Cal C_1}\supseteq G_{\Cal C_0} \cap H^0$. \,$G_{\Cal C_0}$
being invariant under $\sigma$ and $\iota_x$\,, Corollary\;\ref{cor26} and
Lemma\;\ref{le22}\,(ii) imply
that $\sigma$ is the identity on $G_{\Cal C_0}$ and then that
$\iota_x$ is unipotent on $G_{\Cal C_0}$\,. Thus $x \in L_{\Cal C_0}$\,.
This proves that $G_{\Cal C_0} = L_{\Cal C_0}$\,.
By Lemma\;\ref{le214}\,(iii), $L$ is a maximal nilpotent
subgroup of $G$, Corollary\;\ref{cor211} shows that $G = H^0 L$\,.
Lemma\;\ref{le214}\,(ii) implies that
$G_{\Cal C_0} = G_{\Cal C}\,,\ L_{\Cal C_0} = L_{\Cal C}$\,. The
minimality statement about $\dim (L\cap H^0)$
results also from Lemma\;\ref{le214}.
\\[.5mm]
(ii): By Lemma\;\ref{le214}\,(i), $\Cal C$ is an abelian group contained in
$\Cal S$\,, $\Cal C_0 \subseteq \Cal C$ by Lem\-ma\;\ref{le214}\,(ii). If
$\Cal C_1$ is as at the beginning, the reasoning as above gives
\,$L_{\Cal C_1} = G_{\Cal C_1} \subseteq G_{\Cal C_0} =
L_{\Cal C_0}$\,. Then maximality of $L_{\Cal C_1}$
(Lemma\;\ref{le214}\,(iii)\,) implies $L_{\Cal C_1} = L_{\Cal C_0}$\,,
hence
\,$\Cal C_1\subseteq s(L_{\Cal C_1}) = s (L_{\Cal C_0}) = \Cal C$\,.
This proves maximality of $\Cal C$\,.
\\[.1mm plus .3mm]
(iii): By Lemma\;\ref{le27}, $s$ \,is a homomorphism on $L$ and clearly $s(y)$
is the identity for $y \in L \cap H^0 \subseteq N$\,. Since $H^0$ is
normal in $G$ and by (i), $G = H^0 L$\,, it follows easily that $\beta$
is well defined on $G$ and a surjective group homomorphism.
\\[.2mm plus .3mm]
If $\fr h$ denotes the Lie algebra of $H$, then by \ref{di25} and
Lemma\;\ref{le24}, $d(s(y)| H) =\linebreak
(d(\iota_y|H))_s$\,. If $y = \exp(Y)$
\,(where $Y \in \fr g$), then by \cite{Va}\;(2.13.6)\;and\;Th.\,2.13.2,
$d(\iota_y|H) = \exp (\ad_{\fr h}Y)$ \,(where $\ad_{\fr h}$
denotes the adjoint representation of $\fr g$ on~$\fr h$).
Furthermore, uniqueness of the Jordan decomposition implies
\,$\bigl(\,\exp(\ad_{\fr h}Y)\bigr)_s = \linebreak
\exp\bigl(\,(\ad_{\fr h}Y)_s\bigr)$ \,
(recall that $(\ad_{\fr h}Y)_s$ is also the semisimple part in the
additive Jordan decomposition of the operator $\ad_{\fr h}Y$ on $\fr h$.
It follows (using also \cite{Ho}\;Th.\,IX.1.2) that the mapping
$t\mapsto s\bigl(\exp (tY)\bigr)|\,H^0$ \,(from $\R$ to
$\Aut (H^0)$\,) is continuous, hence by \cite{Va}\;Th.\,2.11.2, the
mapping $y \mapsto s(y)|H^0$ from $G^0$ to $\Aut(H^0)$ is continuous.
Since by (i) $L =L_{\Cal C}$\,, we know that $s(y)$ is the identity on
$L$ for $y \in L$\,. As in the proof of Lemma\;\ref{le24}, $G$ is isomorphic
to a quotient of a semidirect product $H^0\rtimes L$ and it follows
easily that $y \mapsto s(y)$ from $L^0$ to $\Aut(G)$ in continuous (hence
the same is true on $L$) and then that $\beta$ is continuous.
\\
By definition (see also Remark\;\ref{rem28}) $z = xy \in\ker\beta$ \,(where
$x\in H^0,\ y \in L$) iff $y \in N$ and this is equivalent to $z \in N$.
Again by Remark\;\ref{rem28}, $\beta(x)\,\iota^{-1}_x$ is unipotent for all
$x\in G$\,.
\vspace{-1mm}\end{proof}
\begin{Cor}    \label{cor215a}	 % Cor.2.16
Let \,$\Cal C\,,\beta$ be as in Proposition\;\ref{pro215}.
Assume that $\Cal C'$ is a subgroup of $\Aut_H(G)$
with \,$\Cal C \subseteq Z(\Cal C')$. Put
\,$G' = G \rtimes \Cal C'\,,\ N' = \{(x, \beta  (x^{-1})) : x\in G\,\}$.
\\
Then $N'$ is nilpotent, closed and normal in
$G',\ \,G'= N'\,\Cal C',\ \,N'\cap \Cal C' = (e)$. Thus
\,$G'= N'\rtimes\Cal C'$ holds
algebraically and in fact topologically as well. If $\Cal C'$ consists
of semisimple transformations, then \,$N'=\nil(G')$\,.
\end{Cor}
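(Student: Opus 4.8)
The plan is to realise $N'$ as the kernel of a continuous retraction $q\!:G'\to\Cal C'$ — which will give at once that $N'$ is closed, normal, and complemented by $\Cal C'$ — and then to prove nilpotency of $N'$ by analysing its conjugation action on the normal subgroup $H^0\times\{e\}$. The technical heart of the first part is the identity $\beta(\gamma(g))=\beta(g)$ for all $\gamma\in\Cal C'$ and $g\in G$. Since $\Cal C\subseteq Z(\Cal C')$, every $\gamma\in\Cal C'$ commutes with all of $\Cal C$, so it leaves $G_{\Cal C}=L$ invariant (Proposition\;\ref{pro215}(i)); it also leaves $H^0$ invariant because $\Cal C'\subseteq\Aut_H(G)$. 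Writing $g=hy$ with $h\in H^0,\ y\in L$, Proposition\;\ref{pro215}(iii) gives $\beta(\gamma(g))=s(\gamma(y))$, and by \ref{di25} with $s(y)\in\Cal C\subseteq Z(\Cal C')$ one gets $s(\gamma(y))=s(\gamma\circ\iota_y\circ\gamma^{-1})=\gamma\circ s(y)\circ\gamma^{-1}=s(y)=\beta(g)$. Granting this, put $q(g,\gamma)=\beta(g)\,\gamma$; it is continuous, and the only non-obvious point for it to be a homomorphism is multiplicativity, which reduces to $\beta(\gamma_1(g_2))=\gamma_1\circ\beta(g_2)\circ\gamma_1^{-1}$ and follows from the identity above together with centrality of $\Cal C$ in $\Cal C'$. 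Since $q|\,\Cal C'=\id$, $q$ is a retraction with $\ker q=\{(g,\beta(g^{-1})):g\in G\}=N'$; hence $N'$ is closed and normal, $G'=N'\,\Cal C'$, $N'\cap\Cal C'=(e)$, and $g'\mapsto(g'\,q(g')^{-1},q(g'))$ inverts the multiplication map $N'\times\Cal C'\to G'$ continuously, so $G'=N'\rtimes\Cal C'$ topologically.

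For nilpotency I would first sharpen Proposition\;\ref{pro215}(iii) to the statement that $\beta(g)\,\iota_g^{-1}$ lies in the nilpotent group $\Cal N$ of unipotent automorphisms from Remark\;\ref{rem28}, for every $g\in G$: for $g=hy$ one computes $\beta(g)\,\iota_g^{-1}=s(y)\,\iota_y^{-1}\,\iota_h^{-1}=(\iota_y)_u^{-1}\,\iota_h^{-1}$, and both factors lie in $\Cal N$ — $\iota_h$ because $h\in H^0\subseteq N$ makes $\iota_h$ unipotent. Then $H^0\times\{e\}$ is a closed normal subgroup of $G'$ (hence of $N'$) isomorphic to the connected, simply connected nilpotent Lie group $H^0$ (\ref{di12}), and a direct conjugation computation shows that $(g,\beta(g^{-1}))\in N'$ acts on it by the automorphism $(\iota_g\circ\beta(g)^{-1})|\,H^0$, which lies in $\Cal N|\,H^0$; moreover $N'=(H^0\times\{e\})\cdot L'$ with $L'=\{(y,\beta(y^{-1})):y\in L\}$ a copy of $L$, so $N'/(H^0\times\{e\})\cong L/(L\cap H^0)$ is nilpotent.

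To assemble a central series for $N'$, take the upper central series $(e)=Z_0\subseteq\dots\subseteq Z_c=H^0$ of $H^0$; each $Z_i$ is characteristic in $H^0$, hence normal in $G'$. The conjugation action of $N'$ on $H^0$ factors through $\Cal N|\,H^0$, so on each vector group $Z_{i+1}/Z_i$ the group $N'$ acts through a group of unipotent linear transformations; by Kolchin's theorem this group stabilises a full flag of $Z_{i+1}/Z_i$ on whose successive quotients it acts trivially. Refining the $Z_i$ by the corresponding subgroups — again normal in $N'$ — yields a chain of normal subgroups of $N'$ from $(e)$ to $H^0\times\{e\}$ with central successive quotients; splicing this with a central series of the nilpotent quotient $N'/(H^0\times\{e\})$ gives a central series of $N'$, so $N'$ is nilpotent.

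Finally, if $\Cal C'$ consists of semisimple transformations, then $N'\subseteq\nil(G')$ (it is nilpotent and normal), and since $G'=N'\,\Cal C'$ with $N'\subseteq\nil(G')$ we get $\nil(G')=N'\,(\nil(G')\cap\Cal C')$; so it is enough to show $\nil(G')\cap\Cal C'=(e)$. If $\gamma$ lies in this intersection, its inner automorphism in $G'$ is unipotent (Remark\;\ref{rem28}), hence so is its restriction to the normal subgroup $G\times\{e\}$ — and that restriction is exactly $\gamma$ acting on $G$. Being also semisimple, $\gamma=\id$, so $\nil(G')=N'$. I expect the nilpotency of $N'$ to be the main obstacle: although each element of $N'$ acts unipotently on $H^0$, converting this into an honest central series requires the Kolchin refinement of the upper central series of $H^0$ together with the nilpotency of $N'/(H^0\times\{e\})$; the other point needing care — and the one where the hypothesis $\Cal C\subseteq Z(\Cal C')$ is genuinely used — is the equivariance identity $\beta\circ\gamma=\beta$ ($\gamma\in\Cal C'$) that makes $q$ a homomorphism.
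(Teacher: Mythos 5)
Your proof is correct and follows essentially the same route as the paper: the identity $\beta\circ\gamma=\beta$ (from $\Cal C\subseteq Z(\Cal C')$ and the conjugation rule for $s$ in \ref{di25}) yields normality and the topological splitting, the unipotent action of $N'$ on $H^0$ together with nilpotency of $N'/(H^0\times\{e\})$ gives nilpotency of $N'$ (where the paper simply cites \cite{War}\;(9.3) in place of your explicit Kolchin refinement), and the final step is the same semisimple-plus-unipotent-implies-identity argument. One small imprecision: Remark\;\ref{rem28} is not directly applicable to $G'$ (the hypotheses of \ref{di21} have not been verified for $G'$), but this is harmless, since $\gamma$ and $N\times\{e\}$ both lie in the nilpotent group $\nil(G')$, so $\gamma$ acts unipotently on $N$ and hence on $H^0$, and Lemma\;\ref{le22}\,(ii) then forces $\gamma=\id$ exactly as in the paper.
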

\begin{proof}
Take $\sigma\in\Cal C'$. Since it commutes with $\Cal C$, the group
$G_{\Cal C} = L_{\Cal C}$ is $\sigma$-invariant. By \ref{di21} and \ref{di25},
$s(\sigma(x)) = \sigma\circ s(x)\circ\sigma^{-1}$, hence
$s(\sigma(x)) = s(x)$ \,for $x \in L$\,. This implies 
$\beta \circ \sigma = \beta$ and then a short computation shows that $N'$ is a
normal subgroup of $G'$ (evidently closed).
\\
Commutativity of $\Cal C$ implies $[N',N'] \subseteq \ker\beta = N$. For
$x \in G$, the restriction of $\iota_{(x, \beta (x^{-1}))}$
to $G$ equals $\iota_x \circ \beta (x^{-1})$ which is unipotent and belongs
to $\Aut_1(G)$ (in fact, even to the nilpotent group $\Cal{N}$
of Remark\;\ref{rem28}). Hence $N'$ is nilpotent by \cite{War}\;(9.3).
It follows easily from continuity of $\beta$ that the isomorphism of
$G\rtimes \Cal C'$ and $N\rtimes \Cal C'$ is a homeomorphism.
Assume that $N''\supsetneqq N'$ is a nilpotent
subgroup of $G'$. Then there exists
$\sigma \in N'' \cap  \Cal C'$ with
$\sigma \not=\id$\,. Since $N\subseteq N',\ \sigma$ should
be unipotent on $N$\,. If $\sigma$ is semisimple, this would imply
$\sigma | N = \id$ \,and Lemma\;\ref{le22}\,(ii) would give
$\sigma = \id$\,. The remaining properties are clear.
\end{proof}
\begin{Rem}     \label{rem215b}       % Rem.2.17
It is easy
to see that $[G',N']\subseteq H$ \,(for $\Cal C'=\Cal C$ even $[G',G']=[G,G]$
holds). If $P'$ denotes the 
group of compact elements of $N'$, then one gets
\,$P' = \{(x, \beta (x^{-1})) : x \in P\}$, where $P$ denotes the group of
compact elements of $L_{\Cal C}$\,.
One has
$P'\subseteq Z(G')$. $P'$ is non-trivial (hence $N'$ is not torsion free)
whenever $G$ has non-trivial
compact (necessarily abelian) subgroups (even when $G$ has no non-trivial
compact normal subgroups). If $H$ is connected, one can show (similarly
as in \cite{Wan}\;(9.2)\,) that there exists a closed torsion free subgroup
$N''$ of $N'$ with $N''\supseteq H$ and $N'=N''P'$\,. Then
$G'=N''\rtimes(P'\times\Cal C')$  \;($P'$ acting trivially on $N''$;
but in general the complementary group $N''$ is not unique).\\
Take for example, $G=\C\rtimes\T$
with $\T=\R/\Z\,,\,t\circ z=e^{2\pi it}z$ for
$t\in\T\,,\,z\in\C\,,\,\Cal C=\Cal C'=\iota(\T)\cong\T$\,. Then
$N=\C\,,\,L_{\Cal C}=\T=P\cong P'\,,\,N'\cong\C\times\T$\,.
\\
If $\Cal C'\subseteq\Aut_1(G)$ is
locally compact, abelian, $\Cal C'\supseteq\Cal C$\,, then $G',H$ satisfy
again the assumptions of \ref{di21} and any semisimple $\sigma\in\Cal C'$
defines a semisimple automorphism of $G'$. 
\end{Rem}
\begin{Lem}	   \label{le216}   % L.2.18
If $\sigma \in\Aut_1(G)$ is semisimple, $x \in G$ and
\,$\sigma(x)\,x^{-1} \in Z(G)$ holds, then
\,$x \in (Z(G) \cap H^0)\,G_{\sigma}$\,.
Under the additional assumption $\sigma \in \Cal S$, we get
$x \in G_{\sigma}$, i.e., $\sigma(x) = x$\,.\vspace{-2.5mm}
\end{Lem}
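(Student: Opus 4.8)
The plan is to reduce the statement to the connected, simply connected setting by passing to the Malcev completion $G_\R$, where Jordan decomposition is well-behaved. First I would observe that since $\sigma\in\Aut_1(G)$ is semisimple, its extension $\sigma_\R\in\Aut_{1,H_\R}(G_\R)$ is again semisimple (see the remark after Definition~\ref{def23}), and $Z(G)\subseteq Z(G_\R)$ is not automatic, so instead I would work with the central element directly: set $c=\sigma(x)\,x^{-1}\in Z(G)$. Because $\sigma$ is semisimple, Lemma~\ref{le22}(i) gives $G=G^0\,G_\sigma$, so write $x=g\,z$ with $g\in G^0$, $z\in G_\sigma$; then $\sigma(x)\,x^{-1}=\sigma(g)\,g^{-1}$, so it is no restriction to assume $x\in G^0$. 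Then $c=\sigma(x)x^{-1}\in Z(G)\cap G^0\subseteq Z(G^0)$, so I may pass to the connected group $G^0$ and the semisimple automorphism $\sigma|G^0$.

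Next, on the Lie algebra $\fr g$ of $G^0$, the element $c$ lies in the centre, hence $\log c\in\fr z(\fr g)$ lies in the fixed space of $d\sigma$ restricted to the centre? No — one must be careful: $d\sigma$ need not fix $\fr z(\fr g)$ pointwise. The right approach: consider the semidirect product $G_1=G^0\rtimes\Z$ with $\Z$ acting by $\sigma$; on $G_1$ the automorphism $\sigma$ is realized as $\iota_t$ for the generator $t$ of $\Z$, and $(\ad\iota_t)(x)=\sigma(x)x^{-1}=c\in Z(G^0)$. Since $d\sigma$ is semisimple on $\fr g$ and induces the identity on $\fr g/\fr h$, decompose $\fr g=\ker(d\sigma-\id)\oplus W$ where $W=\im(d\sigma-\id)$ is the sum of the non-trivial eigenspaces; $W\subseteq\fr h$ because $d\sigma=\id$ on $\fr g/\fr h$. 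Now $\log c\in\fr z(\fr g)$, and I claim $\log c\in W$: indeed $\log c=\log\bigl(\sigma(x)x^{-1}\bigr)$, and since $c$ is central and $\sigma$ is semisimple, writing $X=\log x$ (working in $G^0$, but $x$ need not be in the image of $\exp$ if $G^0$ is not simply connected — so again pass to $G_\R^0$ where $\exp$ is a diffeomorphism), we get $c=\exp(d\sigma(X))\exp(-X)$ with $d\sigma(X)-X\in W$, and for central $c$ the BCH formula collapses to $\log c=d\sigma(X)-X\in W$. Then $\log c$ lies simultaneously in $W$ and in $\ker(d\sigma-\id)$ (being fixed, as a central element fixed by every semisimple... — actually one checks $d\sigma$ fixes $\log c$ because $c$ central forces $\sigma(c)=c$, which follows from $\sigma(x)x^{-1}$ central and $\sigma$ semisimple via $G_\sigma$), so $\log c\in W\cap\ker(d\sigma-\id)=0$, giving $c=e$ and $x\in G_\sigma$ — but only after the simply-connected reduction, so on $G$ itself we recover $x\in(Z(G)\cap H^0)G_\sigma$ from the kernel of the covering $H_\R^0\to\cdots$, namely the discrepancy lies in $Z(G)\cap H^0$.

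For the second assertion, with the stronger hypothesis $\sigma\in\Cal S$, write $\sigma=s(w)$ for some $w\in G$. By the first part $x=z_0\,z$ with $z_0\in Z(G)\cap H^0$ and $z\in G_\sigma$; it remains to show $z_0\in G_\sigma$, i.e.\ $\sigma(z_0)=z_0$. Here I would invoke Remark~\ref{rem28}: $z_0\in Z(G)\cap H^0\subseteq Z(G)\cap N$, and for $\sigma=s(w)$ the fixed-point group $G_\sigma$ contains $N\cap G_\sigma$ with $s$ constant on the coset $(N\cap G_\sigma)w$; combined with $\iota_{z_0}=\id$ (as $z_0$ is central) one gets $s(z_0 w)=s(w)=\sigma$, and since $z_0w$ and $w$ differ by a central element, the formula $x\in G_\sigma$ for elements of $\Cal S$ (the parenthetical remark after Definition~\ref{di25}, that $s(x)$ fixes $x$) applied appropriately forces $\sigma(z_0)=z_0$. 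The main obstacle I anticipate is the careful bookkeeping across the three groups $G$, $G^0$ (or $G_\sigma$-complement), and the Malcev completion $G_\R$, making sure the central element $c$ genuinely dies in the simply connected cover while the residual term is controlled to lie in $Z(G)\cap H^0$ and not merely in $Z(G_\R)\cap H_\R^0$; the BCH collapse for central $c$ is routine, and the eigenspace argument $W\cap\ker(d\sigma-\id)=0$ is immediate from semisimplicity.
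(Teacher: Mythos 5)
There is a genuine gap in your treatment of the first assertion. The pivotal step is your claim that $\sigma(c)=c$ for $c=\sigma(x)\,x^{-1}\in Z(G)$, which you justify by ``$c$ central forces $\sigma(c)=c$ \dots via $G_\sigma$''. This is false for a general semisimple $\sigma\in\Aut_1(G)$: the statement $Z(G)\subseteq G_\sigma$ is precisely the content of the \emph{second} assertion and genuinely needs $\sigma\in\Cal S$ (it is obtained in the paper from Corollary\;\ref{cor26}, since $\iota_w$ is trivial on $Z(G)$ and hence so is its semisimple part). Take $G=H=\R^2=Z(G)$, $\sigma(s,t)=(s,-t)$, $x=(0,1)$: then $\sigma\in\Aut_1(G)$ is semisimple, $c=(0,-2)$ is central but $\sigma(c)\neq c$, and $x\notin G_\sigma$. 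Consequently your conclusion $\log c\in W\cap\ker(d\sigma-\id)=0$, hence $c=e$ and $x\in G_\sigma$, is wrong even in the connected, simply connected, abelian case -- it would render the hypothesis $\sigma\in\Cal S$ in the second assertion superfluous. The correction you propose (``the discrepancy lies in the kernel of the covering'') cannot work: the factor $Z(G)\cap H^0$ in the conclusion is visible already for $G=\R^2$ above and has nothing to do with covering groups; it arises from the component of $x$ in $\im(d\sigma-\id)$. The actual mechanism (this is \cite{Wan}\,(5.6), which the paper simply cites after writing $x=vy$ with $v\in H^0$, $y\in G_\sigma$ via Lemma\;\ref{le22}\,(ii)) is to decompose $v$ -- not $c$ -- along $\fr h=\ker(d\sigma-\id)\oplus\im(d\sigma-\id)$ and observe that, $N_1=Z(G)\cap H^0$ being $\sigma$-invariant and $(d\sigma-\id)$ invertible on $\im(d\sigma-\id)$, the non-fixed component of $v$ already lies in $N_1$. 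Note also that the reduction must be to $H^0$ rather than $G^0$: your appeal to $\exp$ being a diffeomorphism on $G_\R^0$ fails because $G_\R^0$ is solvable but not nilpotent in general, whereas $H^0$ is connected, torsion free nilpotent, hence simply connected.

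Your argument for the second assertion, by contrast, is essentially sound and is a different route from the paper's: from $\iota_{z_0w}=\iota_w$ you get $s(z_0w)=s(w)=\sigma$, hence $\sigma(z_0w)=z_0w$ and $\sigma(w)=w$ (Remark after \ref{di25}), which forces $\sigma(z_0)=z_0$. This is a legitimate alternative to the paper's one-line use of Corollary\;\ref{cor26}, but it only yields the lemma once the first assertion is correctly established.
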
\noindent
$Z(G)$ denotes the centre of $G$. Thus for $\sigma \in\Cal S$, \,
$x \in G_{\sigma}$ is equivalent to \,
$[\iota_x, \sigma] = \id$\,.
\begin{proof}
By Lemma\;\ref{le22}\,(ii), $x = vy$ with $v \in H^0$, $y \in G_{\sigma}$\,.
Then $\sigma (x) x^{-1} = \sigma (v) v^{-1}$ and the first statement
follows from \cite{Wan}\;(5.6) \,(with $N_1 = Z(G)\cap H^0$\,). If
$\sigma \in \Cal S$, then by Corollary\;\ref{cor26}, $\sigma$ is the identity
on $Z(G)$, i.e., $Z(G)\subseteq G_{\sigma}$\,.
\end{proof}
\begin{Lem}	\label{le217}	% L.2.19
Assume that $\sigma \in \Aut(H^0)$ is semisimple,
$\theta \in \Aut(H^0)$, $\theta \circ \sigma^{-1}$ unipotent,
\,$[\theta, \sigma] \in\iota (H^0)$. \,Then
\,$H^0 = (\ad \theta)^2 (H^0)\,H^0_{\sigma}$\,.
\end{Lem}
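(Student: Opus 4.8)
The plan is to pass to the Lie algebra $\fr h$ of $H^0$ and to induct on the nilpotency class of $H^0$ (equivalently, on $\dim H^0$). Since $H$ is torsion free nilpotent (see \ref{di21}), $H^0$ is connected, simply connected and nilpotent, so by \ref{di12} the map $\exp\colon\fr h\to H^0$ is a diffeomorphism and $\theta,\sigma$ are determined by $T=d\theta$ and $S=d\sigma$ in $\Aut(\fr h)$; here $S$ is semisimple, $TS^{-1}$ is unipotent, and, writing $[\theta,\sigma]=\iota_c$ with $c\in H^0$, the operator $TST^{-1}S^{-1}=\Ad(c)$ is unipotent as well (because $H^0$ is nilpotent). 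When $H^0$ is abelian we identify it with $(\fr h,+)$; then $(\ad\theta)(\exp X)=\exp\big((T-I)X\big)$, so that $(\ad\theta)^2(H^0)=\exp\big((T-I)^2\fr h\big)$ and $H^0_\sigma=\exp\big(\ker(S-I)\big)$, and the abelian case of the Lemma becomes the linear statement $\fr h=(T-I)^2\fr h+\ker(S-I)$.

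I would treat the abelian case first. There $\iota(H^0)=\{\id\}$, so the hypothesis $[\theta,\sigma]\in\iota(H^0)$ forces $TS=ST$; as $TS^{-1}$ is unipotent and commutes with the semisimple $S$, uniqueness of the multiplicative Jordan decomposition gives $S=T_s$. Write $\fr h=\fr h_1\oplus\fr h'$, where $\fr h_1$ is the generalized $1$-eigenspace of $T$ and $\fr h'$ the sum of the other generalized eigenspaces. On $\fr h'$ the operator $T-I$ is invertible, hence so is $(T-I)^2$, giving $(T-I)^2\fr h\supseteq\fr h'$; on $\fr h_1$ one has $T_s=I$, hence $S=T_s=I$ there, so $\fr h_1\subseteq\ker(S-I)$. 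Adding the two inclusions gives $\fr h=(T-I)^2\fr h+\ker(S-I)$. The same argument applied to the triple $(C,\theta|C,\sigma|C)$ shows $C=(\ad\theta)^2(C)\,C_\sigma$ for any $\theta,\sigma$-invariant central subgroup $C$ of an arbitrary $H^0$; this is the form of the abelian case I will actually use in the induction.

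For the inductive step, let $C=Z(H^0)$: it is non-trivial, connected, closed and characteristic, hence $\theta$- and $\sigma$-invariant, and $\bar H=H^0/C$ is again connected, simply connected and nilpotent, of smaller dimension. The induced automorphisms satisfy the hypotheses of the Lemma ($\bar\sigma$ semisimple, $\bar\theta\bar\sigma^{-1}$ unipotent, $[\bar\theta,\bar\sigma]=\iota_{\bar c}$), so by induction $\bar H=(\ad\bar\theta)^2(\bar H)\,\bar H_{\bar\sigma}$. Given $x\in H^0$, write $\bar x=(\ad\bar\theta)^2(\bar y)\,\bar t$ with $\bar t\in\bar H_{\bar\sigma}$; lifting $\bar y,\bar t$ to $y,t_0\in H^0$ and using $(\ad\bar\theta)^2(\bar y)=\overline{(\ad\theta)^2(y)}$, one gets $x=(\ad\theta)^2(y)\,t_0\,z$ with $z\in C$ and $\sigma(t_0)t_0^{-1}\in C$. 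Since $\sigma$ is semisimple and $C$ is central and $\sigma$-invariant, \cite{Wan}\;(5.6) (applied exactly as in the proof of Lemma\;\ref{le216}, with $N_1=C$) gives $t_0\in C\,H^0_\sigma$, so after moving the central factors we may write $x=(\ad\theta)^2(y)\,t_1\,z_1$ with $t_1\in H^0_\sigma$ and $z_1\in C$. By the abelian case applied to $C$ we have $z_1=(\ad\theta)^2(c')\,t_2$ with $c'\in C$ and $t_2\in C_\sigma\subseteq H^0_\sigma$. Finally, because $C$ is central one checks directly that $(\ad\theta)(wc)=(\ad\theta)(w)\,(\ad\theta)(c)$ and $(\ad\theta)(c)\in C$ for $w\in H^0,\ c\in C$; iterating, $(\ad\theta)^2(y)\,(\ad\theta)^2(c')=(\ad\theta)^2(yc')$, and moving the central elements $(\ad\theta)^2(c')$ and $t_2$ past $t_1$ yields $x=(\ad\theta)^2(yc')\,(t_1t_2)$ with $t_1t_2\in H^0_\sigma$. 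This closes the induction.

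The role of the hypothesis $[\theta,\sigma]\in\iota(H^0)$, rather than mere commutativity, is only to let the hypotheses descend to $\bar H$; on an abelian group it is automatic. The points I expect to require the most care are the identification $S=T_s$ in the abelian case (this is exactly what makes $\ker(S-I)$ large enough for the linear decomposition), the appeal to \cite{Wan}\;(5.6) to lift $\sigma$-fixed points through $H^0\to\bar H$ (the one place where the non-abelianness of $H^0$ genuinely intervenes), and the bookkeeping with central elements that lets the two factors $(\ad\theta)^2(y)$ and $(\ad\theta)^2(c')$ be amalgamated into a single $(\ad\theta)^2(yc')$, so that the final expression really has the required form $(\ad\theta)^2(H^0)\,H^0_\sigma$ and not merely a product of several $(\ad\theta)^2$-images.
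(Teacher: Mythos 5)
Your proof is correct and follows essentially the same route as the paper's: the central case via the identification of $\sigma$ with the semisimple part of $\theta$ on $Z(H^0)$ and the splitting of the Lie algebra into the generalized $1$-eigenspace and its complement, then induction on the dimension through $H^0/Z(H^0)$ using \cite{Wan}\,(5.6) (the paper invokes its Lemma\;\ref{le216}, which is the same tool) to lift $\sigma$-fixed points. Your explicit bookkeeping showing that the two $(\ad\theta)^2$-images amalgamate into a single one via centrality is a welcome bit of extra care that the paper leaves implicit.
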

\begin{proof}[Proof (compare \cite{Wan}\;(5.2))]
First we treat the special case
$w \in Z(H^0)$. We have $Z(H^0)\cong \R^n$ \,(written additively).
$\theta|\,Z(H^0)$ is given by a matrix $A$\,, \,$\sigma|\,Z(H^0)$ by a
semisimple matrix $A_1$\,, where $A, A_1 \in \GL(\R^n)$. Then \,
$H^0_{\sigma}\cap Z(H^0) \cong \ker (A_1 -I)$, \,
$\ad \theta\,(Z (H^0)) \cong \im (A- I)$. Since \,$[\theta,\sigma]$ is the
identity on $Z(H^0)$, it follows %\linebreak
that $A, A_1$ commute, hence $A_1$ is the semisimple part of $A$. In
particular,\linebreak
$\R^n = \im (A_1 - I) \oplus \ker (A_1 - I),\ \,\im (A_1- I)$
is $A$-invariant and $A - I$ is invertible on \,$\im (A_1 - I)$.
Thus \,$\im (A_1 - I) \subseteq \im ((A - I)^2)$\,, proving
\,$w\in(\ad \theta)^2 (H^0)\,H^0_{\sigma}$\,.
\\[1.7mm]
In the general case, we use induction on \,$\dim H^0$. The special case
covers \linebreak
$\dim H^0\!=\!1$. For the induction step, we apply the
hypothesis to \,$H^0/Z (H^0)$. Thus, given $w\in H^0$, there exist
$u_0 \in (\ad\theta)^2 (H^0),\ v_0 \in H^0$ such that
$\sigma (v_0) v^{-1}_0\in Z (H^0)$ and $w = u_0 v_0 z_0$
for some $z_0 \in Z(H^0)$.
By Lemma\;\ref{le216}, $v_0 = z_1 v_1$ for some\linebreak
$z_1 \in Z (H^0),\ v_1 \in H^0_{\sigma}$\,. From the special case above,
we get
$u_2\in (\ad \theta)^2 (Z(H^0)),\linebreak v_2 \in H^0_{\sigma} \cap Z(H^0)$
such that $z_1 z_0 = u_2 v_2$\,. Then
$u = u_0 u_2\,,\ v = v_1v_2$ will satisfy our requirements.
\end{proof}
\begin{Lem}	    \label{le218}   % L.2.20
Let $\sigma \in \Aut_1(G)$ be semisimple and $x \in G$ such that
\,$\sigma\circ\iota^{-1}_x$ is unipotent. Then there exists
$u\in [G,H^0]$ such that \,$u x u^{-1} \in G_{\sigma}$\,.
\end{Lem}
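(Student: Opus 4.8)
The plan is to prove the statement by induction on $\dim H^0$.

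For $\dim H^0 = 0$ nothing needs to be conjugated, since then $[G,H^0] = \{e\}$, and I would show $x \in G_\sigma$ outright. Here $[G,G]^{-0} \subseteq H^0 = \{e\}$, so $[G,G]^-$ is discrete; hence $(\ad\iota_x)(G^0)$, being a connected subset of the discrete group $[G,G]^- \cap G^0$ through $e$, is trivial, i.e.\ $\iota_x|G^0 = \id$. Then $\sigma|G^0 = (\sigma\circ\iota_x^{-1})|G^0$ is simultaneously semisimple and unipotent, hence $\id$; combined with $G = G^0 G_\sigma$ (semisimplicity of $\sigma$) this gives $x \in G_\sigma$.

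For the inductive step I would pass to the quotient by $Z := Z(H^0)$. Since $H^0$ is connected, simply connected and nilpotent, $Z \cong \R^n$ with $n \ge 1$, and $Z$ is characteristic in $H^0$, so closed, normal in $G$ and $\sigma$-invariant; one checks as in \ref{di12} that $(G/Z, H/Z)$ again satisfies the assumptions of \ref{di21}, that the induced $\bar\sigma$ lies in $\Aut_1(G/Z)$ and is semisimple, and that $\bar\sigma\circ\iota_{\bar x}^{-1}$ is unipotent. As $\dim(H/Z)^0 = \dim H^0 - n < \dim H^0$, the induction hypothesis provides $\bar u_1 \in [G/Z,(H/Z)^0]$ conjugating $\bar x$ into $(G/Z)_{\bar\sigma}$; the image of $[G,H^0]$ in $G/Z$ is $[G/Z,(H/Z)^0]$, so I can lift $\bar u_1$ to $u_1 \in [G,H^0]$ and, after replacing $x$ by $u_1 x u_1^{-1}$, assume $\sigma(x)x^{-1} \in Z$. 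A routine point to record is that $\sigma\circ\iota_x^{-1}$ stays unipotent under this replacement: $\iota_{u_1}$ and $\iota_{\sigma(u_1)}$ are unipotent and lie in the nilpotent group $\Cal N$ of Remark~\ref{rem28}, and $\sigma\circ\iota_{u_1xu_1^{-1}}^{-1} = \iota_{\sigma(u_1)}\circ(\sigma\circ\iota_x^{-1})\circ\iota_{u_1}^{-1}$ lies in the subgroup of $\Aut(G)$ generated by $\Cal N$ and $\sigma\circ\iota_x^{-1}$, which is generated by unipotent transformations (the second factor normalizes $\Cal N$) and hence, by passing to its Zariski closure, consists of unipotent transformations.

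So I may assume $c := \sigma(x)x^{-1} \in Z$. Since $Z$ is abelian, $\iota_c$ acts trivially on $Z$, and from $[\sigma,\iota_x] = \iota_c$ it follows that $\sigma$ and $\iota_x$ commute on $Z$; as $\sigma|Z$ is semisimple and $(\sigma\circ\iota_x^{-1})|Z$ unipotent, $\sigma|Z$ is the semisimple part of $\iota_x|Z$. With $\theta := \iota_x|H^0$ one has $\theta\circ(\sigma|H^0)^{-1}$ unipotent and $[\theta,\sigma|H^0] = \iota_{c^{-1}}|H^0 \in \iota(H^0)$, so Lemma~\ref{le217} yields
\[
H^0 = (\ad\theta)^2(H^0)\,H^0_\sigma .
\]
Writing $x = vy$ with $v \in H^0$, $y \in G_\sigma$ (Lemma~\ref{le22}\,(ii)), so that $c = \sigma(v)v^{-1}$ and $\iota_y|H^0$ commutes with $\sigma|H^0$, a short computation gives $uxu^{-1} = \big(u\,v\,\iota_y(u)^{-1}\big)\,y$ for $u \in H^0$, whence $uxu^{-1} \in G_\sigma$ precisely when $u\,v\,\iota_y(u)^{-1} \in H^0_\sigma$. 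Using the displayed decomposition of $H^0$, the commutation of $\iota_x|Z$ with $\sigma|Z$, and the $\sigma$-eigenspace decomposition of the Lie algebra of $H^0$, I would solve this last relation with $u \in (\ad\theta)^2(H^0) \subseteq [G,H^0]$; composing with $u_1$ produces the required element of $[G,H^0]$.

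The hard part is this final solving step — the nilpotent counterpart of the linear problem disposed of in the special case of Lemma~\ref{le217} — where the class of $v$ modulo $H^0_\sigma$ has to be matched against $(\ad\theta)^2(H^0)$ and one genuinely needs both the decomposition $H^0 = (\ad\theta)^2(H^0)\,H^0_\sigma$ and the identification $\sigma|Z = (\iota_x|Z)_s$. The bookkeeping that the hypotheses survive the quotient-and-lift reduction is a secondary but real technical point.
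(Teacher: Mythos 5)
Your proposal stops exactly where the proof should end: the ``hard final solving step'' you defer is not hard at all --- it is precisely what Lemma~\ref{le217} already delivers, and leaving it unexecuted is a real gap. Write $x=yz$ with $y\in H^0$, $z\in G_{\sigma}$ (Lemma~\ref{le22}\,(ii)) and put $\theta=\iota_x$. Your own computation shows that $uxu^{-1}=[u,x]\,x=([u,x]\,y)\,z$ lies in $G_{\sigma}$ iff $[u,x]\,y\in H^0_{\sigma}$, i.e.\ iff $y\in(\ad\theta)(u)\,H^0_{\sigma}$, since $[u,x]=\bigl((\ad\theta)(u)\bigr)^{-1}$. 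Now Lemma~\ref{le217} gives $H^0=(\ad\theta)^2(H^0)\,H^0_{\sigma}$, so $y=(\ad\theta)(u)\,v$ with $u\in(\ad\theta)(H^0)\subseteq[G,H^0]$ and $v\in H^0_{\sigma}$; then $uxu^{-1}=vz\in G_{\sigma}$. That one line is the entire content of the ``matching'' you describe: no $\sigma$-eigenspace decomposition of $\fr h$ and no identification $\sigma|Z=(\iota_x|Z)_s$ is needed. This is in fact the paper's whole proof.

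The induction on $\dim H^0$ with the reduction to $\sigma(x)x^{-1}\in Z(H^0)$ is superfluous: you introduce it only to secure the hypothesis $[\theta,\sigma]\in\iota(H^0)$ of Lemma~\ref{le217}, but that hypothesis holds from the outset. With $x=yz$ as above, $\sigma(x)x^{-1}=\sigma(y)\,\sigma(z)z^{-1}\,y^{-1}=\sigma(y)y^{-1}\in H^0$ (because $\sigma$ fixes $z$ and leaves the characteristic subgroup $H^0$ of $H$ invariant), hence $[\sigma,\theta]=\iota_{\sigma(x)x^{-1}}\in\iota(H^0)$ directly. Dropping the reduction also removes the delicate side issues it creates, in particular the unipotence of $\sigma\circ\iota_{u_1xu_1^{-1}}^{-1}$ after replacing $x$: your argument that a group ``generated by unipotent transformations'' consists of unipotent elements after passing to the Zariski closure is not valid in general (think of $\mathrm{SL}_2$), and would need the triangularity input of \cite{Wan}\,(2.2),(2.3) as in Lemma~\ref{le27} and Remark~\ref{rem28} to be made precise. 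Your base case $\dim H^0=0$ is fine but, again, never needed.
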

\begin{proof}
By Lemma\;\ref{le22}\,(ii), $G = H^0 G_{\sigma}$\,. Write $x = y z$ with
$y \in H^0, z \in G_{\sigma}$\,. Put $\theta = \iota_x$\,. It follows that
\,$[\sigma, \theta] = \iota_{\sigma (x) x^{-1}} = \iota_{\sigma (y)y^{-1}}
\in \iota (H^0)$. By Lemma\;\ref{le217}, there exist
\,$u \in \ad \theta(H^0)\subseteq [G, H^0],\ v \in G_{\sigma} \cap H^0 =
H^0_{\sigma}$
\,\,such that \,$y = \ad \theta (u)\,v = [u, x]^{-1} v$\,. Then
\,$u xu^{-1} = [u, x] x = v z \in G_{\sigma}$\,. 
\end{proof}
\begin{Lem}	 \label{le219}	  % L.2.21
For \,$\beta, \Cal C$ as in Proposition\;\ref{pro215}, 
$\sigma \in \Cal C,\ x \in G$, the following statements are equivalent:
\begin{minipage}[t]{5cm}
\begin{enumerate}
\item[(i)] $s(x) = \sigma$.\vspace{1mm}
\item[(ii)] $x \in G_{\sigma}\,,\ \beta (x) = \sigma$\,.
\end{enumerate}
\end{minipage}\vspace{-1mm}
\end{Lem}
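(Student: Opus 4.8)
The plan is to play the two conditions in (i) and (ii) against the explicit form of $\beta$ given in Proposition~\ref{pro215}\,(iii) and the coset description of the fibres of $s$ from Remark~\ref{rem28}. Recall: $\beta(h\ell)=s(\ell)$ whenever $h\in H^0$ and $\ell\in L$; $L=G_{\Cal C}$ and $G=H^0L$ by Proposition~\ref{pro215}\,(i); by Remark~\ref{rem28}, for $y,y'\in G$ one has $s(y)=s(y')$ if and only if $y(y')^{-1}\in N\cap G_{s(y')}$; and $s(y)=\tau$ forces $\tau(y)=y$ (noted in \ref{di25}). Also $H^0\subseteq H\subseteq N$, so $H^0\cap G_\sigma\subseteq N\cap G_\sigma$, while $\ell\in L=G_{\Cal C}\subseteq G_\sigma$ for every $\sigma\in\Cal C$.

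To prove (i) $\Rightarrow$ (ii): assume $s(x)=\sigma$. Then $\sigma(x)=x$, so $x\in G_\sigma$. Write $x=h\ell$ with $h\in H^0$, $\ell\in L$. Since $\sigma(\ell)=\ell$, we get $\sigma(h)=\sigma(x)\sigma(\ell)^{-1}=x\ell^{-1}=h$, hence $\ell x^{-1}=h^{-1}\in H^0\cap G_\sigma\subseteq N\cap G_\sigma$. Applying Remark~\ref{rem28} with base point $x$ (for which $s(x)=\sigma$ is the hypothesis) yields $s(\ell)=\sigma$, and therefore $\beta(x)=\beta(h\ell)=s(\ell)=\sigma$.

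To prove (ii) $\Rightarrow$ (i): assume $x\in G_\sigma$ and $\beta(x)=\sigma$. Writing $x=h\ell$ as above, $s(\ell)=\beta(x)=\sigma$, so $\sigma(\ell)=\ell$, and again $\sigma(h)=\sigma(x)\sigma(\ell)^{-1}=x\ell^{-1}=h$, whence $x\ell^{-1}=h\in H^0\cap G_\sigma\subseteq N\cap G_\sigma$. Now Remark~\ref{rem28} with base point $\ell$ (for which $s(\ell)=\sigma$ was just derived) gives $s(x)=\sigma$.

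The only point deserving a moment's care is keeping track of which element serves as the base point in Remark~\ref{rem28} in each implication --- $s(x)$ is the known value in the first, $s(\ell)$ in the second --- but since that relation is symmetric in $y$ and $y'$, nothing goes wrong; the lemma is essentially a bookkeeping consequence of the construction of $\beta$ together with the coset form of the fibres of $s$, and I do not expect a genuine obstacle. \emph{Alternatively}, (ii) $\Rightarrow$ (i) can be obtained from uniqueness of the Jordan decomposition in $\Aut_1(G)$: $\sigma(x)=x$ shows that $\iota_x$ commutes with $\sigma$, and since $\beta(x)\iota_x^{-1}=\sigma\iota_x^{-1}$ is unipotent by Proposition~\ref{pro215}\,(iii), the identity $\iota_x=\sigma\circ(\iota_x\sigma^{-1})$ exhibits the commuting semisimple and unipotent factors of $\iota_x$ in $\Aut_1(G)$, so $\sigma=(\iota_x)_s=s(x)$.
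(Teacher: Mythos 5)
Your argument is correct and uses exactly the same ingredients as the paper's proof (the coset description of the fibres of $s$ from Remark\;\ref{rem28} together with the structure of $\beta$ from Proposition\;\ref{pro215}); the paper merely compares $x$ to a fixed $y\in L$ with $s(y)=\sigma$ and invokes $\ker\beta=N$, whereas you compare $x$ to its own $L$-component $\ell$ in the decomposition $G=H^0L$, which is a cosmetic difference. Your alternative derivation of (ii)\,$\Rightarrow$\,(i) via uniqueness of the Jordan decomposition in $\Aut_1(G)$ is also valid, though the paper does not take that route.
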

\begin{proof}
By Proposition\;\ref{pro215}, $\sigma = s(y)$ for some
\,$y \in L \subseteq G_{\sigma}$\,. By Remark\;\ref{rem28}, $s(x) = \sigma$
\,iff \,$x \in (N \cap G_{\sigma})\,y$\,. Since by
Proposition\;\ref{pro215}\,(iii) \,$\ker\beta = N$, this proves our claim.
\end{proof}\noindent
For $\alpha \in \Aut(G),\ u \in G$, we write
$u \alpha u^{-1} = \iota_u\circ\alpha\circ \iota_{u^{-1}}$.
\begin{Pro}	  \label{pro220}     % Prop.2.22
Let $\Cal C$ be the subgroup of $\Cal S$ constructed in
Proposition\;\ref{pro215}.
If $\Cal C_1$ is any commuting subset of $\Cal S$, there
exists $u \in [G,H^0]$ such that \,\,$u \Cal C_1 u^{-1} \subseteq \Cal C$\,.
Putting \,$\Cal C_2 = \Cal{C} \cap \Cal C_1$\,, one can take
\,$u \in [G_{\Cal C_2}, H^0 \cap G_{\Cal C_2}]$\,.
\\
In particular,
\,$\Cal S\, =\, \bigcup\, \{u \Cal C u^{-1} : u \in [G,H^0]\,\}$.
\end{Pro}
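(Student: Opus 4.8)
The plan is to prove the main assertion by induction on $\dim H^0$; the concluding formula $\Cal S=\bigcup\{u\Cal C u^{-1}:u\in[G,H^0]\}$ then follows at once, since for $\sigma\in\Cal S$ the main assertion applied to $\Cal C_1=\{\sigma\}$ gives $u\in[G,H^0]$ with $u\sigma u^{-1}\in\Cal C$, i.e.\ $\sigma\in u^{-1}\Cal C u$, while conversely $u\Cal C u^{-1}\subseteq\Cal S$ because $\Cal S$ is $\iota(G)$-invariant (by \ref{di25}, $u\,s(x)\,u^{-1}=s(uxu^{-1})$). By Lemma~\ref{le214}\,(ii) we may replace $\Cal C_1$ by $s(L_{\Cal C_1})$, which only enlarges $\Cal C\cap\Cal C_1$ and hence does not affect the refined location of $u$; thus we assume from the outset that $\Cal C_1=s(L_1)$ with $L_1:=L_{\Cal C_1}$ a maximal nilpotent subgroup and $\Cal C_1$ an abelian group. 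For $\dim H^0=0$, Lemma~\ref{le22}\,(ii) forces every semisimple $\theta\in\Aut_1(G)$ to be $\id$, so $\Cal S=\{\id\}$ and there is nothing to prove.

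The engine of the induction is a single-element alignment. Given $\sigma\in\Cal C_1$, write $\sigma=s(x)$ and put $\tau=\beta(x)\in\Cal C$. Since $\tau$ is semisimple in $\Aut_1(G)$ and $\tau\circ\iota_x^{-1}=\beta(x)\,\iota_x^{-1}$ is unipotent (Proposition~\ref{pro215}\,(iii)), Lemma~\ref{le218} yields $u_1\in[G,H^0]$ with $u_1xu_1^{-1}\in G_\tau$; as $\beta$ is a homomorphism into the abelian group $\Cal C$ we get $\beta(u_1xu_1^{-1})=\beta(x)=\tau$, so Lemma~\ref{le219} gives $s(u_1xu_1^{-1})=\tau$, i.e.\ $u_1\sigma u_1^{-1}=\tau\in\Cal C$. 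Hence, after conjugating $\Cal C_1$ by this $u_1\in[G,H^0]$, we may assume that $\Cal C\cap\Cal C_1$ contains a nontrivial element whenever $\Cal C_1\neq\{\id\}$. This preliminary conjugation is only needed when $\Cal C\cap\Cal C_1=\{\id\}$, in which case $G_{\Cal C\cap\Cal C_1}=G$ and so $u_1$ stays inside the group $[G_{\Cal C\cap\Cal C_1},H^0\cap G_{\Cal C\cap\Cal C_1}]=[G,H^0]$ demanded by the refined statement.

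Now suppose $\Cal C_2:=\Cal C\cap\Cal C_1\neq\{\id\}$ and set $G_2:=G_{\Cal C_2}$, $H_2:=H\cap G_2$. Both $\Cal C$ and $\Cal C_1=s(L_1)$ leave $G_2$ invariant (they are abelian and contain $\Cal C_2$), by Remark~\ref{rem28} the pair $G_2,H_2$ satisfies the hypotheses of \ref{di21}, and choosing a nontrivial $\tau\in\Cal C_2$, Lemma~\ref{le22}\,(ii) gives $\dim H_2^0\le\dim H^0_\tau<\dim H^0$. Writing $L=G_{\Cal C}$ for the maximal nilpotent subgroup of Proposition~\ref{pro215}, one has $L\subseteq G_2$ and $L_1\subseteq G_2$, both still maximal nilpotent there. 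By Corollary~\ref{cor29} the inner automorphisms $\iota_\ell$ with $\ell\in L$ commute with $s(L)=\Cal C$ and those with $\ell\in L_1$ commute with $s(L_1)=\Cal C_1$; hence $L$ and $L_1$ normalize $G_2$, and Corollary~\ref{cor26} then yields $\Cal C|G_2=s_{G_2}(L)$ and $\Cal C_1|G_2=s_{G_2}(L_1)$. Here $(G_2)_{\Cal C|G_2}=G_2\cap G_{\Cal C}=L$, and Lemma~\ref{le212} applied inside $G_2$ shows that $\Cal C|G_2$ is a \emph{maximal} commuting subset of $\Cal S_{G_2}$. By the inductive hypothesis for $G_2$ (with its Proposition~\ref{pro215}-subgroup $\Cal C^{(2)}$) there are $w_1,w_2\in[G_2,H_2^0]$ with $w_1(\Cal C_1|G_2)w_1^{-1}\subseteq\Cal C^{(2)}$ and $w_2(\Cal C|G_2)w_2^{-1}\subseteq\Cal C^{(2)}$; maximality of $\Cal C|G_2$ makes the second inclusion an equality, so $w:=w_2^{-1}w_1\in[G_2,H_2^0]$ satisfies $w(\Cal C_1|G_2)w^{-1}\subseteq\Cal C|G_2$.

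The final step is to lift this back to $G$, and I would do it through fixed-point subgroups rather than through the automorphisms themselves. By Lemma~\ref{le213}, $\Cal C_1\subseteq\Cal C$ is equivalent to $G_{\Cal C}\subseteq G_{\Cal C_1}$, and the same characterization applies inside $G_2$ to $\Cal C|G_2$ and $\Cal C_1|G_2$. From $w(\Cal C_1|G_2)w^{-1}\subseteq\Cal C|G_2$ and Lemma~\ref{le213} in $G_2$ one obtains $L=(G_2)_{\Cal C|G_2}\subseteq w(G_2\cap G_{\Cal C_1})w^{-1}$, hence $w^{-1}Lw\subseteq G_{\Cal C_1}$; applying $w(\cdot)w^{-1}$ and Lemma~\ref{le213} in $G$ then gives $w\Cal C_1 w^{-1}\subseteq\Cal C$. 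Since $w\in[G_2,H_2^0]=[G_{\Cal C_2},H^0\cap G_{\Cal C_2}]\subseteq[G,H^0]$, and in the separate case above the extra factor $u_1$ lies in $[G,H^0]$ as well, the composite conjugator has the asserted form; the refined location follows by keeping track that every factor stays inside $[G_{\Cal C_2},H^0\cap G_{\Cal C_2}]$. I expect the main obstacle to be precisely this transfer, together with the verification that $\Cal C$ and $\Cal C_1$ restrict to subsets of $\Cal S_{G_2}$ of the right shape: one should \emph{not} attempt to show the restricted automorphisms coincide on all of $G$ (there is no control on the nontrivial weight spaces of $\Cal C_2$ in $H^0$), but instead argue purely with fixed-point subgroups and the characterizations of Lemmas~\ref{le212}--\ref{le213}.
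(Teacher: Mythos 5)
Your proof is correct, but its global organization differs from the paper's. The core alignment step is the same in both: given $\sigma=s(x)\in\Cal C_1$, use $\tau=\beta(x)\in\Cal C$, Proposition\;\ref{pro215}\,(iii), Lemma\;\ref{le218} and Lemma\;\ref{le219} to produce $u_1\in[G,H^0]$ with $u_1\sigma u_1^{-1}=\tau$. Where you diverge is in how this step is iterated. The paper first treats \emph{finite} $\Cal C_1$ by a greedy loop: at each stage it picks $\sigma_1\in\Cal C_1\setminus\Cal C_2$, moves $x_1$ into $G_{\Cal C_2}$ via Corollary\;\ref{cor211}, applies Lemma\;\ref{le218} to the restriction to $G_{\Cal C_2}$ (which is what yields the refined location of $u$ and keeps $\Cal C_2$ pointwise fixed), and terminates because $u\Cal C_1u^{-1}\cap\Cal C$ strictly grows; the infinite case is then reduced to the finite one by choosing a finite $\Cal C_1'\subseteq\Cal C_1$ with $G_{\Cal C_1'}=G_{\Cal C_1}$ and invoking Lemma\;\ref{le213}. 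You instead normalize $\Cal C_1$ to $s(L_{\Cal C_1})$ once, perform a single alignment to make $\Cal C_2$ nontrivial, and then recurse on $\dim H^0$ inside $G_2=G_{\Cal C_2}$, using Lemma\;\ref{le212} to see that $\Cal C|G_2$ is maximal commuting in $\Cal S_{G_2}$ (so that conjugating both restrictions into the canonical $\Cal C^{(2)}$ of $G_2$ aligns them), and transferring back to $G$ purely through fixed-point subgroups and Lemma\;\ref{le213} — correctly avoiding any claim that restriction to $G_2$ is injective on $\Cal S$. Your route buys a uniform treatment of finite and infinite $\Cal C_1$ and makes the descent mechanism (the dimension drop $\dim H^0_\tau<\dim H^0$ for $\tau\neq\id$ semisimple) explicit, at the cost of the extra bookkeeping needed to verify that $\Cal C|G_2=s_{G_2}(L)$, $\Cal C_1|G_2=s_{G_2}(L_1)$ via Corollaries\;\ref{cor29} and \ref{cor26}, and that the inductive hypothesis applies to the pair $(G_2,H\cap G_2)$; the paper's loop is shorter because it never needs a full inductive hypothesis at the level of $G_{\Cal C_2}$, only the restriction of Lemma\;\ref{le218} there.
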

\begin{proof}
First, we assume that $\Cal C_1$ is finite. If
$\Cal C_2 =\Cal C_1$\,, there is nothing to prove. So, take
$\sigma_1 \in \Cal C_1 \setminus  \Cal C_2$ and $x_1 \in G$
with $\sigma_1=s(x_1)$ \,(which entails $x_1 \in G_{{\sigma}_1}$).
Since $\sigma_1$ commutes with $\Cal C_2\,,\ G_{{\sigma}_1}$ is
invariant under $\Cal C_2$ and
the restrictions of the transformations are semisimple by
Corollary\;\ref{cor26}. Hence by Corollary\;\ref{cor211},
\;$G_{\sigma_1} =
(H^0\cap G_{\sigma_1})\,(G_{\Cal C_2} \cap G_{\sigma_1})$ \
(if $\Cal C_2 = \emptyset$, we put $G_{\Cal C_2} = G$). By
Remark\;\ref{rem28}, it follows that we may assume that
$x_1 \in G_{\Cal C_2}$\,. Consider $\beta $ as in
Proposition\;\ref{pro215} and put $\sigma = \beta  (x_1)$. Then 
$\sigma \in \Cal{C}$, hence $G_{\Cal C_2}$ is $\sigma$-invariant. Applying
Lemma\;\ref{le218} \linebreak to $G_{\Cal C_2}$
and the restriction of $\sigma$ \,(use also
Proposition\;\ref{pro215}\,(iii)\,), there exists \linebreak
$u \in [G_{\Cal C_2}, H^0 \cap G_{\Cal C_2}]$ such that
$u x_1 u^{-1} \in G_{\sigma}$\,. Then $u \in [G, H^0]$
and by Lemma\;\ref{le219}, $u \sigma_1 u^{-1} = s(u x_1 u^{-1}) =\sigma$\,.
Since $u \in G_{\Cal C_2}$, we have 
$u \sigma' u^{-1} = \sigma'$ for $\sigma' \in \Cal C_2$\,. Thus
$u \Cal C_1 u^{-1} \cap \Cal C$ strictly contains $\Cal C_2$
and, repeating this argument, we can reach our goal after finitely many steps.
\\
In the general case, there exists by Corollary\;\ref{cor211} a finite subset
$\Cal C_1'$ of $\Cal C_1$ such that
$G_{\Cal C_1'}= G_{\Cal C_1}$\,.
By the special case treated above, we may assume that
$\Cal C_1' \subseteq \Cal C$\,. Then
$G_{\Cal C}\subseteq  G_{\Cal C_1}$\,.
Take $\sigma \in \Cal C_1$\,, then $\sigma$ is the identity on
$G_{\Cal C_1}$\,, hence by Lemma\;\ref{le213}, 
$\sigma \in s(L_{\Cal C}) = \Cal C$ \;(Proposition\;\ref{pro215}\,(i)\,). Thus
$\Cal C_1 \subseteq \Cal C$\,.
\end{proof}

\begin{Rems}	  \label{rem221}      % Rem.2.23
(a) \ In Lemma\;\ref{le212}, the implication (i)$\Rightarrow$(ii) holds
for general subsets $\Cal C$ of $\Aut(G)$ (same proof). Furthermore, 
by Corollary\;\ref{cor29}, the elements of $s(G_{\Cal C})\cup \Cal C$ always
commute with those of $s(L_{\Cal C})$.
\vskip1mm
\item[(b)] With some further arguments the following conditions give other
characterizations of the elements $\sigma \in s(L_{\Cal C})$ \;
($\Cal C$ as in Lemma\;\ref{le213}):\vspace{-1.5mm}
\begin{enumerate}
\item[(iv)] $\sigma \in \Cal S$, \,it commutes with 
$s(L_{\Cal C})$ \ and \
$\sigma (t) = t \text{ \,for all } t \in G_{\Cal C} \cap H^0$.
\item[(v)] $\sigma \in \Cal S,\ \sigma (t) = t$ for all
$t \in L_{\Cal C}$\,.
\end{enumerate}\vspace{-1.5mm}
In particular, it follows that $s(L_{\Cal C})$ is a maximal commuting
subset (group) in 
$\{\sigma \in \Cal S : \,\sigma|\,G_{\Cal C} \cap H^0 = \id\,\}$.
\item[(c)] In Lemma\;\ref{le213}, the implications
(i)$\Rightarrow$(ii), \,(i)$\Rightarrow$(iii)$\Rightarrow$(ii) hold for
arbitrary subsets $\Cal C$ of $\Aut_H (G)$ \,(same proof).
\item[(d)] $s(G_{\Cal C})$ is not a group, unless
$G_{\Cal C} = L_{\Cal C}$\,,
i.e. $G_{\Cal C}$ is nilpotent ($\Cal C$ any subset
of $\Aut_H(G)$\,). Indeed, assume that $s(G_{\Cal C})$ is a group, take
$\sigma \in s (G_{\Cal C})$. If $x \in G_{\Cal C}$\,,\linebreak
then by \ref{di25},
$\iota_x \circ \sigma \circ \iota_{x^{-1}} \in s(G_{\Cal C})$, hence 
$\iota_{\sigma (x) x^{-1}} = [\iota_x, \sigma] \in s (G_{\Cal C})$. Since
\linebreak
$\sigma (x) x^{-1}\in H^0,\ \iota_{\sigma (x) x^{-1}}$
is unipotent, hence it must be the identity. Consequently
$\sigma (x) x^{-1} \in Z(G)$ and by Lemma\;\ref{le216}, we get 
$\sigma (x) = x \text{ for all } x \in G_{\Cal C}$. Then
Lemma\;\ref{le213}\,(ii) implies $\sigma \in s(L_{\Cal C})$, i.e., 
$s(G_{\Cal C}) = s(L_{\Cal C})$. Now take $x \in G_{\Cal C}$, then by
Remark\;\ref{rem28}, there exists $y \in L_{\Cal C}$ such that
$yx^{-1} \in N \cap G_{\Cal C} \subseteq L_{\Cal C}$, hence
$x\in L_{\Cal C}$\,.

As a special case, if $\Cal C_0$ is a commuting subset of $\Cal S$ and
\,$\dim (G_{\Cal C_0} \cap H^0)$ is not minimal (compare
Proposition\;\ref{pro215}), then $G_{\Cal C_0} \neq L_{\Cal C_0}$\,, in
particular:  $s(G_{\Cal C_0})$ is not a group
\;(if $\Cal C$ is a maximal commuting set with
$\Cal C_0 \subseteq \Cal C \subseteq \Cal S$\,, then by
Proposition\;\ref{pro220},
$G_{\Cal C_0} \cap H^0 \neq G_{\Cal C} \cap H^0$ and by
Proposition\;\ref{pro215},
$L_{\Cal C} = G_{\Cal C} \subseteq G_{\Cal C_0}$\,, but by 
Lemma\;\ref{le214}\,(iii), $L_{\Cal C}$ is not strictly contained in
$L_{\Cal C_0}$).
\item[(e)] In Lemma\;\ref{le217}, the assumption
\,$[\theta,\sigma]\in\iota(H^0)$
\,can be replaced by assuming the existence of a normal series
\,$(e) = H_0 \vartriangleleft H_1\dots\vartriangleleft H_k = H^0$, where
$H_i$ are closed, connected and invariant under 
$\theta, \sigma$ and $[\theta, \sigma]$ induces the identity on
$H_i/H_{i-1}\linebreak  (i = 1,\dots,k)$. The same argument shows that
$H^0 = (\ad\,\sigma)(H^0)\,H_{\sigma}^0$ \,(for any semisimple automorphism
$\sigma$ of $H^0$), compare \cite{Lo3}\;L.\,5.4.\vspace{0mm plus.2mm}
\item[(f)] For
$G = \R \times \T,\ H = \R \times(0),\ \sigma (x,y) = (x,y + x)$,
one has $G_{\sigma} = \Z \times \T$, thus 
$G = H^0 G_{\sigma}$ but $\sigma \notin\Aut_H(G)$ and $G_{\sigma} \cap H^0$
is not connected (compare Remark\;\ref{rem28}). Alternatively,
one could take $H_1 = (e)$. Then $\sigma \in\Aut_{H_1} (G)$ but
$G \not= H_1^0 G_{\sigma}$\,.\vspace{0mm plus.4mm}
\item[(g)] For $L = L_{\Cal C}$ with $\Cal C$ as in Proposition\;\ref{pro215}
one can show that $N_G(L)=L$\,, i.e. $L$ is ``self-normalizing". If
$G$ is a connected solvable Lie group with Lie algebra $\fr g$\,, then $L$
is connected. If $\fr l$ denotes the Lie algebra of $L$\, then $\fr l$ is a
Cartan subalgebra of $\fr g$ and conversely. \cite{Br}\;3.1 gives an
explicit construction of the nil-shadow of $G$ \,(based on
\cite{DTR}\;Sec.\,3). His mapping $T$ coincides
with $\beta$ of our Proposition\;\ref{pro215} and his multiplication $*$
corresponds to the multiplication on $N'$ arising in our
Corollary\;\ref{cor215a}.
\item[(h)] The Example before Corollary\,\ref{cor26} shows that Jordan
decomposition does not always work for automorphisms of $\Z^3$ and by
duality one also gets counter-examples for $\T^3$. This limits the
possibility to weaken the assumptions of \ref{di21}. If $G$ is a
l.c. group such that $[G, G]$ and $G/G^0$ are nilpotent, let $K$ be the
group of compact elements in
${[G, G]}^-$ \,(in the compactly generated case, this is just the maximal
compact normal subgroup). If $K^0$ is central in $G$ (or more generally, if
$\iota_x|K^0$ is unipotent for all $x\in G$\,) one can extend most of
the results of this section (extending similarly the definition of
$\Aut_1(G)$\,). This applies in particular when $G$ is any connected (but not
necessarily simply connected) solvable l.c. group (then, by Iwasawa's theorem,
every compact normal subgroup of $G$ is contained in the centre,
\cite{HM}\;Th.\,9.82).
We will not make use of this generalization, but see also Remark\;\ref{rem37}
and Section\;4.
\vspace{-2mm plus .2mm}\end{Rems}
\begin{Ex}	 \label{ex222}		% Ex.2.24
Assume that $G_0\,, H_0$ satisfy the assumptions of \ref{di21} and put
\linebreak
$G = G_0 \times G_0\,,\ H = H_0\times H_0\,,\ \sigma (x,y) = (y,x)$.
Then $\sigma \in \Aut_H(G),\ d \sigma$ is semisimple, but
if $G_0 \neq H_0$\,, then $\sigma \notin \Aut_1(G)$. We have 
$G_{\sigma} = \{(x,x) : x\in G_0\},\linebreak
L_{\sigma} = \{(x,x) : x \in N_0\}$
\,(where $N_0 = \nil (G_0)$\,). Hence if 
$G_0 \neq H_0$\,, then $G \neq H G_{\sigma}$\,, i.e.,
Corollary\;\ref{cor211} does not hold in this case. $L_{\sigma}$ is not
maximal nilpotent (since it is strictly contained in $N_0 \times N_0$), i.e.,
Lemma\;\ref{le214}\,(iii) does not hold. If $\Cal C_0$~is a
subset of $\Aut_1(G_0)$ as in Proposition\;\ref{pro215} and for 
$\tau\in\Aut(G_0),\ \tilde\tau(x,y) = (\tau(x),y),\
\widetilde{\Cal C} = \{\sigma\}\cup\{\tilde\tau : \tau \in \Cal C_0\}$,
then $L_{\widetilde{\Cal C}} = \{(x,x) : x\in L_{\Cal C_0}\}$ and for
${\widetilde{\Cal C}}_1 = s(L_{\widetilde{\Cal C}})$, we get
$L_{\widetilde{\Cal C}_1} = \{(x,y) : x, y \in L_{\Cal C_0}\}$, thus
$\widetilde{\Cal C}_1$ is strictly contained in 
$s (L_{\widetilde{\Cal C}_1})$, i.e., Lemma\;\ref{le214}\,(ii) does not hold.
\end{Ex}
\smallskip
%============================
\section{Proofs of Theorem\;\ref{th1}\;and\;\ref{th2}} % Sec.3
\vspace{1.5mm plus .3mm}
\begin{proof}[{\bf Proof of Theorem\;\ref{th1}}]
Let $R$ be the (non-connected) radical, $N$ the nilradical of~$G$
\,(\cite{Lo2}\;Prop.\,3). By \cite{Lo2}\;Prop.\,5, there exists a closed
subgroup $G_1$ of $R$ with finite index and such that
\,$[G_1,G_1] \subseteq N\subseteq G_1$\,. Since $R/R^0$ is a discrete,
finitely generated
group of polynomial growth, it has a nilpotent subgroup of finite index.
Hence, we can assume in addition that $G_1/R^0$ is 
nilpotent and (by easy arguments as in \cite{Lo2}) that $G_1$ is normal in
$G$. It follows that $G_1, N$ satisfy the assumptions of 
\ref{di21}. Choose $\Cal C$ as in Proposition\;\ref{pro215} and put
\,$L_1 = L_{\Cal C} = (G_1)_{\Cal C}$\,. Then by Proposition\;\ref{pro215},
$L_1$~is nilpotent and $G_1 = N^0 L_1$\,. Let $L$ be the normalizer
of $L_1$ in $G$. Proposition~\ref{pro220}
implies \,$G = [G_1,N^0]\,L \subseteq N^0L$\,.
\\[.5mm]
We claim that $L \cap G_1 = L_1$\,. Put
\,$L' = L \cap G_1\,,\;L'' = L \cap N^0$. Then $L' = L_1 L '',$ $L_1$ and
$L''$ are nilpotent normal subgroups of $L$\,, hence $L'$ is nilpotent
(\cite{Ra}\;L.\,4.7). Since $L_1$ is maximal (Proposition\;\ref{pro215}), we
conclude that $L_1 = L'$, proving our claim. $L_1$ being nilpotent and
normal in
$L$\,, it follows that $\nil(L)\supseteq L_1$\,. Since
$L G_1 = G$\,, we have
\,$L/L_1 = L/(L \cap G_1) \cong L G_1/G_1 = G/G_1$ and this is compact
by (\cite{Lo2}\;Prop.\,4), finishing our proof.\vspace{-2mm}
\end{proof}
\begin{Rem}      \label{rem31}
The argument shows that in fact \,$G = N^0L$\,. The same proof works if $G$ is
a generalized $\overline{FC}$-group without non-trivial
compact normal subgroups under the additional assumption that $G/G^0$ has
polynomial growth \;(by the standard properties of 
\cite{Gu}, this assumption is equivalent to $R/R^0$ having polynomial
growth -- recall that $G^0/R^0$ is compact). In particular, 
the additional assumption is satisfied, if $N$ is connected
(by \cite{Lo2}\;Prop.\,5).

With some further efforts, it can be shown that Theorem\;\ref{th1}
\,(with $G = N^0L$)
is valid for arbitrary compactly generated Lie groups $G$ of polynomial
growth (if\linebreak
$P$~denotes the maximal compact normal subgroup of $N$ and $P^0$
is central in $G$\,, things are easier, using the generalizations mentioned
in Remark\,\ref{rem221}\,(h)\,).
However, it does not hold for arbitrary generalized $\overline{FC}$-groups
(see Example\;\ref{ex32} below). If $G$~is a generalized
$\overline{FC}$-group and a Lie group, one can show the existence of a
closed subgroup $L$ such that $L/\nil(L)$ \,is compact and 
$N L$ is an open subgroup of finite index in $G$ \;(in the discrete case,
i.e., $G$ is a finite extension of a polycyclic group, this is
\cite{Se}\;Cor.\,2,\;p.\,48, where $\nil(L)$ is called an almost-supplement
for $\nil(G)$\,).\vspace{-1mm}
\end{Rem}
\begin{Ex} \label{ex32}
The conclusion of Theorem\;\ref{th1} does not hold in general for discrete
torsion free polycyclic groups (in particular not for arbitrary
generalized $\overline{FC}$-groups). Take $A = \Z^n$, let $\alpha ,\beta$ be
two commuting automorphisms of $A$ such that 
\,$\im(\alpha - \id) + \im(\beta - \id) \neq A$ \,and choose $v_0 \in A$ not
belonging to the left side. We consider 
\,$G =(A\rtimes \Z)\rtimes \Z$ with the first action defined by $\alpha$ and
for the second one, the ``affine'' action arising from
$\beta$ on $A$ and $1\circ (0,1) = (v_0,1)$. Altogether,
$G \cong \{(v, k, l) : v \in A,\ k, l \in \Z\,\}$ and for $l, k' > 0$, the
multiplication is 
\,$(v, k, l)\,(v', k', l') = \Bigl(v + \alpha^k\bigl(\beta ^l (v') +
(\beta^{l-1} +\dots +\id)\circ(\alpha^{k'-1} +\dots +\id)\,(v_0)\bigr),
\linebreak k + k', l + l'\Bigr)$.
If for $(k, l) \neq (0,0)\ \;\alpha^k\circ\beta^l-\id$ \,is always injective,
it is easy to see that $N = A$ and any nilpotent subgroup
$B$ of $G$ with $B \nsubseteq A$ satisfies $B \cap A = (e)$. But the
choice of $v_0$ implies that $G$ cannot be written as 
$A \rtimes B$ for some subgroup $B$ of $G$\,.
\\
Explicitly, for $n = 4$, we have $A \cong\Z^2\otimes \Z^2$ and
$\alpha,\beta$ can be found as follows: 
$\alpha = \alpha_0\otimes\id,\ \beta = \id\otimes\alpha_0$\,, where
$\alpha_0-\id$ \,is not surjective and the eigenvalues of 
$\alpha_0$ are not roots of unity, e.g., take $\alpha_0$ given by the
matrix 
$\left( \begin{smallmatrix}
3 \ 1 \\
2 \ 1
\end{smallmatrix} \right)$. Here \,$\im(\alpha_0-\id)$ has index $2$ in $\Z$
and it turns out that 
\,$\{(v, k, l) : (k, l) \in 4\Z \times\Z\,\}$ splits, i.e.,
the conclusion of Theorem\;\ref{th1} holds for this subgroup.
\end{Ex}\vspace{1mm}

We add here a further structural property, partially extending
\cite{Lo2}\;Prop.\,6.\vspace{-2mm}
\begin{Pro}  \label{pro33}
Let $G$ be a generalized $\overline{FC}$-group without non-trivial compact
normal subgroups. Then the nilradical $N$ is a maximal
nilpotent subgroup of $G$\,.
\\
Thus, if $x \in G$ and the induced automorphism of $N$ is unipotent, then
$x \in N$. In particular, the centralizer $C_G(N)$ equals
the centre $Z(N)$.\vspace{-2mm}
\end{Pro}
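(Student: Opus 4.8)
The three conclusions are essentially one. If $M\supseteq N$ is a nilpotent subgroup and $x\in M$, then $\iota_x$ is unipotent on $M$, hence on the invariant subgroup $N$; conversely, the implication ``$\iota_x|N$ unipotent $\Rightarrow x\in N$'' makes every such $M$ lie in $N$, so $N$ is maximal nilpotent, and it also gives $C_G(N)\subseteq N$, whence $C_G(N)=N\cap C_G(N)=Z(N)$ (the inclusion $Z(N)\subseteq C_G(N)$ being trivial). So the whole content is the implication
\[
(\ast)\qquad x\in G,\quad \iota_x|N\ \text{unipotent}\ \Longrightarrow\ x\in N .
\]

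I would first establish $C_G(N)\subseteq N$. Note $N$ is torsion free, its maximal compact subgroup being characteristic in $N$, hence a trivial compact normal subgroup of $G$. Set $C=C_G(N)$; it is a closed normal subgroup of $G$, again a generalized $\overline{FC}$-group without non-trivial compact normal subgroups (its maximal compact normal subgroup is characteristic in $C$, hence normal in $G$), and $\nil(C)=Z(N)$: indeed $\nil(C)$ is characteristic in $C$, hence a nilpotent normal subgroup of $G$, so $\nil(C)\subseteq N\cap C=Z(N)$, while $Z(N)$ is an abelian normal subgroup of $C$. Since $C$ centralizes $N$ it centralizes $\nil(C)=Z(N)$; applying the proposition to $C$ -- which I would organise as an induction on the nilpotency class of $N$, the nilradical of $C$ being abelian (strictly simpler than a non-abelian $N$), and the case of abelian $N$ being the base, treated directly in the classical style of ``the Fitting subgroup is self-centralizing'' (a closed normal subgroup of $G$ lying strictly between $N$ and $C_G(N)$ together with the three-subgroups lemma would produce a nilpotent normal subgroup of $G$ strictly larger than $N$) -- one obtains $C\subseteq C_C(\nil(C))=Z(Z(N))=Z(N)\subseteq N$.

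For the rest of $(\ast)$ I would pass to the Malcev completion $N_{\R}$ (see \ref{di12}). The conjugation action of $G$ on $N$ extends to a continuous homomorphism $\varphi\colon G\to\Aut(N_{\R})$ with $\ker\varphi=C_G(N)=Z(N)$, and $\iota_x|N$ is unipotent exactly when $\varphi(x)$ is a unipotent automorphism of the simply connected nilpotent group $N_{\R}$. As a generalized $\overline{FC}$-group $G$ is amenable, so the linear group $\varphi(G)$ is virtually solvable, and its Zariski closure in the algebraic group $\Aut(N_{\R})$ has anisotropic (compact) semisimple part; hence the subgroup $U$ of $\varphi(G)$ generated by its unipotent elements lies in the unipotent radical of that closure and is itself unipotent. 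Consequently $\varphi^{-1}(U)$ is a normal subgroup of $G$, an extension of the unipotent -- hence nilpotent -- group $U$ by the abelian normal subgroup $Z(N)=\ker\varphi$ on which $\varphi^{-1}(U)$ acts unipotently (a unipotent automorphism of $N_{\R}$ restricts to one on the characteristic subgroup $Z(N)$); such an extension is nilpotent, so $\varphi^{-1}(U)\subseteq\nil(G)=N$, and since $\varphi(N)\subseteq U$ (inner automorphisms of the nilpotent group $N_{\R}$ are unipotent) we get $\varphi^{-1}(U)=N$ and $U=\varphi(N)$. For $x$ as in $(\ast)$ this yields $\varphi(x)\in U=\varphi(N)$, hence $x\in N\ker\varphi=N$. (When $G,N$ already satisfy the hypotheses of \ref{di21}, this part is immediate: by Remark \ref{rem28}, $N=\{y\in G:\iota_y\ \text{unipotent}\}$, and $\iota_x$, being unipotent on $N$ and the identity on the abelian group $G/N$ -- abelian since $N\supseteq[G,G]^-$ -- is unipotent on $G$; alternatively, $N=L_{\{\id\}}$ is maximal nilpotent by Lemma \ref{le214}(iii).)

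The step I expect to be the genuine obstacle is the structural groundwork. A general generalized $\overline{FC}$-group without compact normal subgroups need not satisfy the hypotheses of \ref{di21} (e.g.\ $\Z^2\rtimes\Z$ with a hyperbolic action, which is not even virtually nilpotent), so the splitting machinery of Section 2 is not directly available; one must instead make the inductive set-up above precise -- verifying that closed subgroups and quotients remain in the class, that ``no non-trivial compact normal subgroups'' is preserved at each stage, and that the inductive parameter genuinely decreases -- or run the self-centralizing argument by hand in the locally compact category, which is where \cite{Lo2}\;Prop.\,6 carries the bulk of the work that Proposition \ref{pro33} extends.
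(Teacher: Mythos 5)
Your reduction of the three claims to the single implication $(\ast)$ and the first half of your plan are reasonable in outline, but the second half contains a genuine gap: the step ``as a generalized $\overline{FC}$-group $G$ is amenable, so the linear group $\varphi(G)$ is virtually solvable''. Amenability of $G$ as a \emph{topological} group says nothing about the abstract group $\varphi(G)$, which is what Tits' alternative would have to be applied to. Concretely, $G=\R^3\rtimes SO(3)$ is a generalized $\overline{FC}$-group without non-trivial compact normal subgroups, with $N=\R^3$, and $\varphi(G)=SO(3)\subseteq\GL(3,\R)$ contains free subgroups; it is not virtually solvable and is its own Zariski closure. The statement you actually need --- that the unipotent elements of $\varphi(G)$ generate a unipotent group --- is true (in this example they generate $\{e\}$), but it is essentially equivalent to the proposition being proved, and the route you offer to it does not go through. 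Repairing it means showing that the reductive quotient of the Zariski closure of $\varphi(G)$ is anisotropic, i.e.\ a distality statement of the kind the paper only obtains \emph{after} Proposition\;\ref{pro33} (Corollary\;\ref{cor36}, Remark\;\ref{rem411}\,(a)), so as written the argument is circular or at best unsupported.

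The paper's proof avoids all of this by exploiting compactness twice. For the centralizer it shows that the radical of $Z_1=C_G(N)$ equals $Z(N)$ (via maximality of the radical of $G$), invokes \cite{Lo2}\;Prop.\,4 to get $Z_1/Z(N)$ compact, and then \cite{GM}\;Th.\,3.20 to conclude that ${[Z_1,Z_1]}^-$ is a compact normal subgroup of $G$, hence trivial; this is the concrete content of the ``base case'' you leave as an acknowledged obstacle, and your three-subgroups sketch would still need exactly these compactness inputs to terminate in the locally compact category. For maximality it takes a closed nilpotent $M\supseteq N$ and uses \cite{Lo2}\;Prop.\,6 to get $M/N$ \emph{compact}; the automorphisms of the torsion-free quotients $N_i/N_{i-1}$ of the ascending central series induced by $M$ then have relatively compact image and are unipotent, hence trivial, and the set of all elements of $G$ acting trivially on every $N_i/N_{i-1}$ is a nilpotent normal subgroup containing $M$, so it equals $N$. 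That compactness of $M/N$ is precisely the input your Zariski-closure argument tries to replace with the unjustified solvability claim.
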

\begin{proof}
We start with the statement on the centralizer $Z_1 = C_G(N)$. Clearly,
\linebreak $Z_1 \cap N = Z (N)$ and $Z_1$ is normal in $G$\,. Let
$R$ be the (non-connected) radical of $G$\,. Then it follows easily from
maximality of $N$ that $Z_1 \cap R \subseteq  N$ (otherwise, 
consider the last non-trivial term of the derived series of the solvable
group $(Z_1 \cap R) N/N$\,). Let $R_1$ be the radical of
$Z_1$\,. It is a characteristic subgroup, hence maximality of $R$ implies
$R_1 \subseteq R$ and it follows that $R_1 = Z(N)$. 
Then by \cite{Lo2}\;Prop.\,4 (applied to the generalized $\overline{FC}$-group
$Z_1$), $Z_1/Z(N)$ is compact. Thus $Z_1$ is a 
$Z$-group in the sense of \cite{GM}, in particular an $FC^-$-group and it
is compactly generated by \cite{Lo2}\;Prop.\,2. Then by 
\cite{GM}\;Th.\,3.20, ${[Z_1,Z_1]}^-$ is compact. Since $G$ has no
non-trivial compact normal subgroups, it follows that $Z_1$ is 
abelian, consequently $Z_1 = R_1 = Z(N)$.
\\
For the general case, assume that $M$ is a nilpotent subgroup of $G$
containing $N$\,. We may assume $M$ to be closed. Then by
\cite{Lo2}\;Prop.\,6, $M/N$ is compact. We use the ascending central series
\,$(e) = N_0 \subseteq \dots\subseteq N_k = N$\,, recall that 
$N_i/N_{i-1}$ is torsion free. For $x \in M$\,, we consider the automorphism
$\varphi_i(x)\in\Aut(N_i/N_{i-1})$ induced by 
$\iota_x\ (i = 1,\dots,k)$. $\varphi_i$ is a continuous homomorphism and
$N \subseteq\ker\varphi_i$\,, hence \,$\im\varphi_i$ is compact. Since
$\iota_x$
is unipotent on $N$, it follows that all $\varphi_i$ are trivial, thus
\,$[M,N_i]\subseteq  N_{i-1}$ for all $i$\,. Let $\Cal H$ be the
set of those continuous automorphisms of $N$ that induce the identity on
$N_i/N_{i-1}$ for $i = 1,\dots, k$\,. Then it is easy to see
that $\Cal H$ is a nilpotent normal subgroup of $\Aut(N)$. For $x\in G$\,,
let $\varphi (x)\in\Aut (N)$ be the restriction of $\iota_x$\,.
Then \,$H = \{x\in G : \varphi(x)\in\Cal H\}$ is a normal subgroup of~$G$
with $M\subseteq H$\,. By the first part of the proof, 
$\ker\varphi = C_G(N) = Z(N)$ and by the definition of $\Cal H$
(take $i = 1$)
\,$N_1 = Z(N) \subseteq Z(H)$. It follows that $H$ is nilpotent and
then maximality gives $H = N$\,, hence $M\subseteq N$\,.\vspace{-2mm}
\end{proof}
\begin{Rem} \label{rem34}
This need not be true when there are non-trivial compact normal subgroups.
Take e.g. a direct product of a compact semsimple group
and a nilpotent group.\vspace{-3mm}
\end{Rem}
\begin{Cor} \label{cor35}
If $G$ is as in Proposition\;\ref{pro33}, then $G_{\R}$ has no non-trivial
compact normal subgroup. $N_{\R}$ is the nilradical of $G_{\R}$\,.
\vspace{-3mm}\end{Cor}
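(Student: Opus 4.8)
The plan is to read both assertions off Proposition~\ref{pro33} together with the construction of $G_\R$ and $N_\R=H_\R$ in \ref{di21} applied with $H=N=\nil(G)$: there $G_\R$ is a Lie group, $N_\R$ is connected, simply connected, nilpotent, $N_\R\triangleleft G_\R$, $G$ is closed in $G_\R$, $G\cap N_\R=N$, $G_\R=GN_\R$ and $G_\R/N_\R\cong G/N$ (abelian); moreover the pair $G_\R,N_\R$ again satisfies the assumptions of \ref{di21}, so $\nil(G_\R)$ exists as the largest nilpotent normal subgroup of $G_\R$ (Remark~\ref{rem28}). First I would identify $\nil(G_\R)$: it contains $N_\R$, which is nilpotent and normal, while $\nil(G_\R)\cap G$ is a nilpotent subgroup of $G$, normalised by $G$ (as $\nil(G_\R)\triangleleft G_\R\supseteq G$) and containing $N=N_\R\cap G$; the maximality of $N$ among nilpotent subgroups of $G$ (Proposition~\ref{pro33}) then forces $\nil(G_\R)\cap G=N$. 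Writing any $x\in\nil(G_\R)\subseteq G_\R=GN_\R$ as $x=gn$ with $g\in G$, $n\in N_\R\subseteq\nil(G_\R)$ gives $g=xn^{-1}\in\nil(G_\R)\cap G=N$, hence $x\in N_\R$; thus $\nil(G_\R)=N_\R$.

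For the absence of compact normal subgroups, let $K\triangleleft G_\R$ be compact. By \ref{di12} the simply connected nilpotent Lie group $N_\R$ is torsion free, so it has no non-trivial compact subgroup (such a subgroup would contain a circle or a non-trivial finite group); hence $K\cap N_\R=(e)$ and $K$ embeds into $G_\R/N_\R\cong G/N$. Thus $K$ is a compact abelian Lie group, so $\Aut(K)$ is discrete (its restriction map to $K^0$ has finite kernel and lands in $\Aut(K^0)=\GL(\dim K^0,\Z)$); since $N_\R$ normalises $K$ and is connected, the conjugation homomorphism $N_\R\to\Aut(K)$ is trivial, i.e.\ $K$ centralises $N_\R$. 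Together with $K\cap N_\R=(e)$ this makes $KN_\R$ the internal direct product $K\times N_\R$, which is nilpotent, and $KN_\R$ is normal in $G_\R$; so $KN_\R\subseteq\nil(G_\R)=N_\R$ by the first part, forcing $K\subseteq K\cap N_\R=(e)$.

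Most steps are routine bookkeeping with the inclusions $N\subseteq N_\R\subseteq\nil(G_\R)$, $G\subseteq G_\R$, $G_\R=GN_\R$ and $G\cap N_\R=N$, together with the maximality supplied by Proposition~\ref{pro33}. The only points needing a word of care are the two discreteness facts used above — that a simply connected nilpotent Lie group has no non-trivial compact subgroup, and that $\Aut(K)$ has trivial identity component for a compact abelian Lie group $K$ — and the observation that $\nil(G_\R)$ is available at all: although $G_\R$ may itself possess the compact normal subgroups that $G$ avoids, the pair $G_\R,N_\R$ still meets the hypotheses of \ref{di21}. I do not expect any step to present a serious obstacle.
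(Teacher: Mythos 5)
Your proof is correct, and both halves land where the paper's proof does, but the execution of the compactness half is genuinely different. For the nilradical statement you and the paper argue identically: $N_\R\subseteq\nil(G_\R)$, maximality of $N$ in $G$ (Proposition~\ref{pro33}) forces $\nil(G_\R)\cap G=N$, and the decomposition $G_\R=GN_\R$ finishes it. For the absence of compact normal subgroups, the paper first derives $[P,N_\R]=(e)$ from $[P,N_\R]\subseteq P\cap N_\R$ (both subgroups being normal, and $N_\R$ torsion free), then writes $x=uv$ with $u\in N_\R$, $v\in G$, observes that $\iota_v=\iota_{u^{-1}}$ is unipotent on $N_\R$ and hence on $N$, and invokes the second assertion of Proposition~\ref{pro33} (unipotent action on $N$ implies membership in $N$) to get $P\subseteq N_\R$, hence $P=(e)$; notably this is done \emph{before} and independently of the nilradical identification. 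You instead prove the nilradical statement first and then dispose of a compact normal $K$ by showing $K$ centralizes $N_\R$ via the discreteness of $\Aut(K)$ for a compact abelian Lie group together with connectedness of $N_\R$, so that $K\times N_\R$ is a nilpotent normal subgroup which must sit inside $\nil(G_\R)=N_\R$. Both mechanisms for $[K,N_\R]=(e)$ are sound (the paper's normality argument is slightly more economical and does not need $K$ abelian), and your finish avoids re-using Proposition~\ref{pro33}'s unipotency criterion at the cost of making the two assertions of the corollary logically dependent; since there is no circularity, this is a perfectly acceptable trade.
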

\begin{proof}
We have $G_{\R} = N_{\R}\,G$ \,(see \ref{di21}). Assume that $P$ is a compact
normal subgroup of $G_{\R}$\,. Since $N_{\R}$ is normal in $G_{\R}$ and
torsion free, it follows that $[P,N_{\R}] = (e)$. Take $x\in P$\,,
then $x = uv$ with $u\in N_{\R}\,,\ v \in G$\,. Since
$\iota_v$ coincides with $\iota_{u^{-1}}$ on $N_{\R}$\,, we get that $\iota_v$
is unipotent on $N_{\R}$\,, hence also on $N$\,. Thus 
$v\in N$\,, resulting in \,$P\subseteq P\cap N = (e)$.
\\
It is easy to see that $G_{\R}$ is again a generalized $\overline{FC}$-group.
If $N_1$ denotes its nilradical, then $N_1\supseteq N_{\R}$
%\vadjust{\vskip-8mm}
and (by maximality) $N_1\cap G = N$, giving $N_1 = N_{\R}$\,.
\vspace{-2mm}\end{proof}
\begin{proof}[{\bf Proof of Theorem\;\ref{th2}}]\vspace{0mm plus .2mm}
\item[(a)] Assume that the nilradical $N$ is connected and that $G/N$ is
compact. Then $G$ is an almost connected Lie-group. Let $K$ be a
maximal compact subgroup. By \cite{Ho}\;Th.\,XV.3.7, we have \,$G = NK$\,.
Since $N$ is torsion free, $N\cap K$ is trivial, thus
\,$G\cong N\rtimes K$\,. Furthermore, $K\cap C_G(N)$ is easily seen to be
normal in $G$, hence it must also be trivial, proving 
faithfulness of the action of $K$. So we may take $\widetilde G = G$ in this
case (in fact, this argument just needs that $N$ is some connected
nilpotent normal subgroup for which $G/N$ is compact, but then it is not hard
to see that necessarily $N =\nil(G)$ holds).
\vspace{-.5mm plus .2mm}
\item[(b)] For the general case, it will be enough (using (a)\,) to show the
existence of a Lie group $\widetilde G$ without non-trivial compact 
normal subgroups, having $G$ as a closed subgroup such that \,$\widetilde G/G$
is compact, $\widetilde N = \nil(\widetilde G)$ is connected and
\,$\widetilde G/\widetilde N$ is compact. We take up the notations from the
Proof of Theorem\;\ref{th1} above. Put \,$K_1 = \overline{\Cal C}$\,. Then
$K_1$ is a compact abelian subgroup of $\Aut(G_1)$ \;(recall that
$G_1 = N^0L_1$ and each $\sigma\in\Cal C$ is the identity on $L_1$\,,
thus it suffices to consider the restrictions to $N^0$; let $\fr n$ be the
Lie algebra of $N^0$, then $\Aut(N^0)\cong\Aut(\fr n)$ --
using \cite{Va}\;Th.\,2.7.5 and \cite{Ho}\;Th.\,IX.1.2; by
\cite{Lo1}\;Th.\,1, for $x \in G_1$\,,
the eigenvalues of $d\iota_x$ have modulus $1$, hence the
same is true for the eigenvalues of $(d\iota_x)_s$ and this equals
$d\iota_{s(x)}$ by \ref{di25}\,). Put \,$G_2 = G_1\rtimes K_1$ and
$N_2 =\nil(G_2)$. By Corollary\;\ref{cor215a}, we have 
\,$G_2 \cong N_2\rtimes K_1$ and \,$G_2/G_1\ (\cong K_1)$ is compact.
\\[0mm plus .1mm]
For $x \in L$, we define
\,$\varphi(x)\,(y,\sigma) = (xyx^{-1},\iota_{x,1}\circ\sigma\circ
\iota_{x,1}^{-1})$, where $\iota_{x,1}$
denotes the restriction of $\iota_x$ to $G_1$ \,(compare \cite{Au}\;p.\,243).
Recall that $L_1\triangleleft L$ and \,$\Cal C = s(L_1)$\,,
thus \,$\iota_{x,1}\circ\Cal C\circ\iota_{x,1}^{-1} = \Cal C$\,.
Easy computations show that \,$\varphi(x)\in\Aut(G_2)$ and that
$\varphi$ is a homomorphism on $L$\,. For \,$x\in G_1\cap L\ \,(= L_1)$,
we get \,$\varphi(x) = \iota_{x,2}$\,, the corresponding inner automorphism
of $G_2$\,. Then for $u \in G_1\,,\ x\in L$\,, we extend the definition by
\,$\varphi(ux) = \iota_{u,2}\circ\varphi (x)$. Again one can 
check that \,$\varphi\!: G\,(= G_1L)\to\Aut(G_2)$\linebreak is well
defined and a continuous homomorphism. For $z\in G$, the 
restriction of $\varphi(z)$ to $G_1$ is $\iota_{z,1}$ and for
$z\in G_1$\,, we have $\varphi(z) = \iota_{z,2}$\,. This allows to 
apply Proposition\;\ref{pro14}, there exists a locally compact group
$G_3$ (in fact a Lie group) 
having $G_2\,,G$ as closed subgroups
with $G_2$ normal, $G_2\cap G = G_1\,,\  G_2\,G = G_3$
\,(consequently, $G_3=GK_1$). Since
$G/G_1$ is compact, it follows that $G_3/G_2$ is
compact and then that $G_3/N_2$ and $G_3/G$ are compact.
In particular,
by \cite{Gu}\;Th.\,I.4, $G_3$ has polynomial growth. Put
$N_3 = \nil(G_3)$\,. Then $G_2\triangleleft G_3$ implies
$N_2\subseteq N_3$\,. Let $P_3$ be the maximal compact normal
subgroup of $G_3$ (\cite{Lo2}\;Prop.\,1) and put \,$G_4 = G_3/P_3$\,. Then
$G\cap P_3$ is trivial and we get an embedding of $G$ into
$G_4$\,. For $N_4 = \nil(G_4)$, we have
$N_4\supseteq N_3P_3/P_3$\,.
Since $N_4$ is torsion free, we can
finish the construction by putting \,$\widetilde N = (N_4)_{\R}\,,\
\widetilde G = (G_4)_{\R} = G_4 (N_4)_{\R}$ \,(see \ref{di21}) which has
the required properties \,(it has no non-trivial compact normal subgroups by
Corollary\;\ref{cor35}; alternatively we could factor once 
more by the maximal compact normal\linebreak subgroup;
see also the comment to Corollary\;\ref{cor48} for further explanations).
\end{proof}
\vspace{-3.5mm plus .3mm}
\begin{Cor} \label{cor36}
Let $G$ be a compactly generated group of polynomial growth without
non-trivial compact normal subgroups. Then $G$ has a faithful
finite dimensional representation, i.e., for appropriate $n > 0$ there exists
an injective continuous homomorphism 
\,$\pi\!: G\to \GL (\R^n)$ such that $\pi(G)$ is closed and
$\pi(N)$
\,(where $N = \nil(G)$) consists of upper triangular unipotent matrices.
\\
Moreover, there exists such a faithful representation $\pi$ having the
additional property
that the eigenvalues of $\pi(x)$ are of modulus $1$ for all $x\in G$\,.
\vspace{-2mm}
\end{Cor}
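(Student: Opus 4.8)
The plan is to deduce Corollary~\ref{cor36} from Theorem~\ref{th2} together with the structure of the extension $\widetilde G = \widetilde N\rtimes K$. By Theorem~\ref{th2} we may embed $G$ as a closed subgroup of $\widetilde G = \widetilde N\rtimes K$ with $K$ compact, $\widetilde N$ connected, simply connected nilpotent, and $K$ acting faithfully on $\widetilde N$. Since a faithful representation of $\widetilde G$ restricts to a faithful one of $G$, and since $N = \nil(G) = G\cap\widetilde N$ (or at least $N\subseteq\widetilde N$, which is all we need for the unipotency claim), it suffices to produce a faithful finite-dimensional representation of $\widetilde G$ under which $\widetilde N$ goes to upper triangular unipotent matrices and every element has eigenvalues of modulus $1$.

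\textbf{Step 1: represent $\widetilde N$.} Let $\fr n$ be the Lie algebra of $\widetilde N$; it is nilpotent, so by Engel's theorem $\ad$ makes it act by nilpotent operators, and one builds a faithful representation of $\fr n$ by nilpotent upper triangular matrices (the standard construction: embed $\fr n$ in its universal enveloping algebra modulo a suitable ideal of finite codimension, or iterate central extensions). Since $\widetilde N$ is simply connected, this exponentiates to a faithful representation $\rho_0\!:\widetilde N\to\GL(\R^m)$ with image in the upper triangular unipotent group; here $\exp\!:\fr n\to\widetilde N$ is a homeomorphism by \ref{di12}, so $\rho_0$ is a closed embedding onto a unipotent subgroup.

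\textbf{Step 2: make it $K$-equivariant and assemble.} The compact group $K$ acts on $\widetilde N$, hence on $\fr n$; the subspace of $\fr n$ used to define $\rho_0$ need not be $K$-stable, but one can replace the representation space by a finite-dimensional $K$-invariant subspace of an appropriate space of functions (or by the direct sum of the $K$-translates of $\rho_0$), obtaining a faithful representation $\rho\!:\widetilde N\to\GL(\R^p)$ by upper triangular unipotent matrices together with a compatible orthogonal (by averaging) action of $K$ on $\R^p$; together these give a representation $\widetilde\rho\!:\widetilde G = \widetilde N\rtimes K\to\GL(\R^p)$. The kernel of $\widetilde\rho$ meets $\widetilde N$ trivially (as $\rho$ is faithful) and is therefore contained in a complement of $\widetilde N$; since $K$ acts faithfully on $\widetilde N$, an element of $\ker\widetilde\rho$ acting trivially must be $e$, so $\widetilde\rho$ is faithful. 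On $\widetilde N$ the image is unipotent; on $K$ the image is in the orthogonal group, so all eigenvalues have modulus $1$; for a general $x = nk$ the eigenvalues of $\widetilde\rho(x)$ are products of those arising from the $K$-part, hence of modulus $1$ (the unipotent factor contributes nothing to moduli). To get closed image, take the direct sum with the regular-type representation handling the semidirect product structure, or simply note that a continuous injective homomorphism from $\widetilde G$ (an almost connected Lie group of polynomial growth, hence with polynomial growth controlling properness) restricts to a proper map; more carefully, $\widetilde\rho(\widetilde N)$ is closed (unipotent subgroups are closed) and $\widetilde\rho(K)$ is compact, so $\widetilde\rho(\widetilde G) = \widetilde\rho(\widetilde N)\widetilde\rho(K)$ is closed. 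Restricting $\widetilde\rho$ to $G$ gives $\pi$ with $\pi(G)$ closed (a closed subgroup of a closed subgroup), $\pi$ faithful, $\pi(N)\subseteq\pi(\widetilde N)$ upper triangular unipotent, and all eigenvalues of modulus $1$.

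\textbf{The main obstacle} I expect is Step~2: arranging that the representation of $\widetilde N$ can be chosen $K$-equivariantly \emph{while still keeping it faithful, finite-dimensional, and unipotent on $\widetilde N$}, and then verifying carefully that the assembled representation of the semidirect product is injective (this is where faithfulness of the $K$-action on $\widetilde N$ is essential) and has closed image. The modulus-$1$ eigenvalue statement is then essentially automatic from the block structure (unipotent block from $\widetilde N$, orthogonal block from $K$), but one should be slightly careful that $G$ itself may not be normal in $\widetilde G$, so one works entirely inside $\widetilde G$ and only restricts at the very end.
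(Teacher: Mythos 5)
Your overall route is the paper's: reduce via Theorem~\ref{th2} to $\widetilde G=\widetilde N\rtimes K$, build a faithful unipotent representation of $\widetilde N$ compatible with the $K$-action, use faithfulness of the $K$-action on $\widetilde N$ to get injectivity on all of $\widetilde G$, and restrict to $G$ at the end. The equivariance problem you single out as the main obstacle in Step~2 is exactly what the Birkhoff embedding theorem (\cite{Wan}\;p.\,16, \cite{Au}\;p.\,239) settles: the representation on a finite-dimensional quotient of the universal enveloping algebra of $\fr n$ is canonically $\Aut(\widetilde N)$-equivariant, so one obtains directly a representation of $\widetilde N\rtimes\Aut(\widetilde N)$ that is faithful and unipotent on $\widetilde N$ and semisimple on semisimple automorphisms; your ad hoc substitutes (sums of $K$-translates, invariant function spaces) would need extra care to preserve faithfulness and unipotence simultaneously, but this part is repairable and is not where the real issue lies.

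The genuine gap is the modulus-$1$ claim for mixed elements. You assert that for $x=nk$ the eigenvalues of $\widetilde\rho(x)$ are ``products of those arising from the $K$-part'' and call this ``essentially automatic from the block structure''. There is no such block structure: $\widetilde\rho(n)$ and $\widetilde\rho(k)$ act on the same space, and the product of a unipotent matrix with an orthogonal matrix need not have eigenvalues of modulus $1$; for instance $\left(\begin{smallmatrix}1&3\\0&1\end{smallmatrix}\right)\left(\begin{smallmatrix}0&-1\\1&0\end{smallmatrix}\right)=\left(\begin{smallmatrix}3&-1\\1&0\end{smallmatrix}\right)$ has eigenvalues $(3\pm\sqrt5)/2$. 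What rescues the statement is the semidirect product structure, which you never invoke at this point: $k$ normalizes $\widetilde N$, so $\pi(\langle k\rangle\widetilde N)$ is a group whose image triangulizes over $\C$ by \cite{Wan}\;(2.2); in a common triangular basis the diagonal entries of $\pi(k)\pi(y)$ are products of a diagonal entry of $\pi(k)$ (of modulus $1$ by compactness of $K$) with one of $\pi(y)$ (equal to $1$ by unipotence). This simultaneous triangulization step is exactly how the paper concludes, and without it your eigenvalue claim is unsupported.
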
\noindent
In \cite{Ab1} a (real or complex) linear group is called {\it distal}, if
every eigenvalue of its elements has absolute value $1$.\vspace{-1.5mm}
\begin{proof}[Proof {\rm(Compare \cite{Wan}\;Th.\,3)}]
By Theorem\;\ref{th2}, we can assume that $G = \widetilde N\rtimes K$,
where $K$ is
compact, $\widetilde N$ is connected nilpotent torsion free and
the action of $K$ on $\widetilde N$ is faithful. Thus $K$ can be considered as
a closed subgroup of $\Aut(\widetilde N)$. Now we use the Birkhoff
embedding theorem (\cite{Wan}\;p.\,16, \cite{Au}\;p.\,239). It gives a
faithful representation $\pi$ of $\widetilde N$ by upper triangular unipotent
matrices together with a representation of $\Aut(\widetilde N)$ \,(where
$\pi(\sigma)$ is semisimple for $\sigma$ semisimple), combining to
a representation of $\widetilde N\rtimes\Aut(\widetilde N)$. Note that
in particular $\pi(K)$ acts faithfully on $\pi(\widetilde N)$. By compactness,
the (complex) eigenvalues of $\pi(x)$ have modulus 1 for $x\in K$\,. Fixing
$x\in K$\,, $\pi(\langle x\rangle \widetilde N)$ triangulizes (over $\C$) by
\cite{Wan}\;(2.2). Hence the eigenvalues of $\pi(xy)$ have modulus $1$ for
$y\in \widetilde N$\,.
\end{proof}\vspace{-3mm plus .3mm}
\begin{Rem} \label{rem37}
By some additional arguments, one can also prove an analogue of
Theorem\;\ref{th2} for
a generalized $\overline{FC}$-group $G$ without non-trivial compact normal
subgroups. One gets an embedding into some group
$\widetilde G = \widetilde N\rtimes S$ such that $\widetilde N$ is
connected, simply connected, nilpotent and $S$ is an almost connected
$SIN$-group (i.e., by \cite{GM}\;Th.\,2.9, $S\cong V\rtimes K$,
where $V\cong\R^n,\ K$ is compact and $K^0$ acts trivially on $V$\,) such
that the action of $S$ on $\widetilde N$ is semisimple.
$G$ becomes a closed subgroup, but one can no longer expect that
$\widetilde G/G$ is compact.  As in Corollary\;\ref{cor36}, one
gets again faithful finite dimensional representations.
\vspace{-1mm plus .4mm}

In \cite{Wi2}\;Th.\,3 (for discrete groups) an intermediate type of
embeddings is studied. By splitting only the compact part $K$\,, a
generalized $\overline{FC}$-group $G$ without non-trivial compact normal
subgroups can be embedded as a closed, co-compact subgroup into a
semidirect product $\widetilde S\rtimes K$ where $\widetilde S$ is a
connected, simply connected and super-solvable Lie group (i.e.,
$\widetilde S$ has a faithful representation by real triangular matrices),
$K$ compact, acting faithfully on $\widetilde S$\,. \cite{Wi2}\;Cor.\,4
gives for $G$~discrete a uniqueness result, similar to our Theorem\;\ref{th3},
for this type of embedding. This should extend to general $G$\,.
\vspace{-1mm plus .6mm}

Such embeddings are related to the semisimple splittings of \cite{Au}
(see also\linebreak \cite{Se}\;Ch.\,7; by considering closures in
the automorphism group, we arrive at somewhat bigger groups in the
non-discrete case). To point out the differences, note
that in Theorem\;\ref{th2} we get $\widetilde N = \nil(\widetilde G)$ to be
connected which entails that $GK$ can be a proper subset of $\widetilde G$
(not even a subgroup in general) and $G$ need not be normal in
$\widetilde G$. Since we want $K$ to be compact, $G\widetilde N$ will in
general be only a dense subgroup of 
$\widetilde G$. On the other hand, we do not require that $G/N$ is torsion
free. See also Remarks\;\ref{rem411} for further discussion.
\vspace{0mm plus .2mm}

In \cite{To} rather general splitting results are stated. But the handling
of the definitions is not always consistent and the presentation is rather
intransparent. Therefore, we have decided to rely on the earlier version of
Wang \cite{Wan} as a basis of our exposition. We have tried to avoid too much
use of results from algebraic groups (this might also give some shorter
arguments in Section\;4).\vspace{0mm plus .7mm}

With almost the same proof Theorem\;\ref{th2} extends to the case where only
$N$ (instead of $G$) has no non-trivial compact normal subgroup. But if
$C$\,, the maximal compact subgroup of $G$\,, is non-trivial, then for
any embedding as in  Theorem\;\ref{th2} (with $\widetilde N$~simply connected),
$C$ (that embeds into $K$\,, see also Proposition\;\ref{pro47}\,(c)\,) will
act trivially on $\widetilde N$\,. Thus the
action of $K$ on~$\widetilde N$ will no longer to be faithful.
If $G$ is a compactly generated l.c. group of polynomial growth satisfying
the assumptions of \ref{di21}, one can (after first passing to $G_\R=H_\R G$)
use the modification sketched in Remark\;\ref{rem215b} to get an embedding
(as a closed subgroup) into a group $\widetilde N\rtimes K$ where
$\widetilde N$ is simply connected, nilpotent, $K$ compact abelian. This contains
some further examples there $C$ is non-trivial.

More generally, using the
generalizations mentioned in Remark\,\ref{rem221}\,(h)\,), one can extend a
large part of the proof of Theorem\;\ref{th2} (up to $G_3$) to the case
where $G$ is a compactly generated l.c. group of polynomial growth with
maximal compact subgroup $C$ and there exists a closed normal subgroup $H$
such that $G/H$ is compact and $\iota_x|C$ is unipotent for all $x\in H$\,.
But there are examples of (non torsion free) compactly generated nilpotent
Lie groups that cannot be embedded into a connected nilpotent group. Hence
the last step of the argument will fail in general and this produces only
a certain analogue of the groups $G_{an}$ described in
Proposition\,\ref{pro38}.
\vspace{.4mm plus 1.5mm}

If $G$ is any compactly generated Lie group of polynomial
growth, the following properties can be shown to be equivalent:\vspace{-2.5mm}
\begin{enumerate}
\item[(a)] $G$ has a faithful (continuous) finite dimensional representation.
\vspace{.2mm}
\item[(b)] $G$ has a closed normal subgroup $H$ such that $G/H$ is compact
and $H$ has no non-trivial compact normal subgroup.
\item[(c)] $G$ has a closed normal subgroup $H$ such that $G/H$ is compact
and\linebreak ${[G^0H,H]}^{-\,0}$ is torsion free.
\item[(d)] $R$ (the non-connected radical of $G$) has a subgroup $R_1$ of
finite index such that ${[R_1,R_1]}^-$ is nilpotent and torsion free.
\end{enumerate}\vspace{-2.5mm}
Then the group $R_1$ in (d) can be chosen to be $G$-invariant
and the group $H$ in (b) can be found so that
$[R_1, R_1] \subseteq H \subseteq R$\,. There
exists a faithful finite dimensional representation $\pi$ such that $\pi (G)$
is closed and distal. There is also an embedding of $G$ as a closed subgroup
of some $\widetilde G\cong \widetilde N \rtimes K$ such that $K$ is compact,
$\widetilde N$ connected nilpotent torsion free, $\widetilde G/G$ compact.
But the action of $K$ on $\widetilde N$ need not be faithful and $N$ need not
be contained in $\widetilde N$ (recall that in case $\pi (N)$ consists of
unipotent transformations, $N$ must be torsion free).

A classical case where this is satisfied are finitely generated (discrete)
\pagebreak
groups of polynomial growth (and more generally, extensions of
polycyclic groups by finite groups, not necessarily torsion free). Here one
can even get a faithful representation by integer-valued matrices
(\cite{Se}\;Th.\,5,\;p.\,92).\vspace{-.5mm plus 1mm}

For a compactly generated Lie nilpotent group $G$ one gets that
$G$ has a faithful (continuous) finite dimensional representation iff
${[G,G]}^{-\,0}$ (the identity component of the topological commutator group)
is torsion free \,(this extends the
characterization of \cite{Ho}\;Th.\,XVIII.3.2 in the connected case; for
connected $G$ it follows that $[G,G]$ must already be closed, but this need
not be true in the non-connected case).
\vspace{-5.5mm plus .5mm}\end{Rem}
The construction used in the proof of Theorem\;\ref{th2} gives also a smaller
almost nilpotent extension.
\vspace{-2.5mm plus.5mm}
\begin{Pro}  \label{pro38}
Let $G$ be a compactly generated group of polynomial growth without
non-trivial compact normal subgroups.
\\
{\rm (i)} There exists a Lie group $G_{an}$ containing $G$ as a closed
subgroup such that \,$G_{an}/G\,,
G_{an}/N_{an}$ are compact \,(where 
$N_{an} = \nil (G_{an})$) and $G_{an} = \overline{N_{an}G}$\,.\\ %\linebreak
Furthermore, $G_{an}$ has no non-trivial compact normal subgroup,
$G \cap N_{an}=N,\linebreak\
[N_{an}, N_{an}] \subseteq N$ \,(in particular, $N$ is normal
in $G_{an}$) and there exists a compact connected abelian subgroup $K_1$ of
$G_{an}$ such that $G_{an} = GK_1$ and \,$K_1\cap C_{G_{an}}(G) = \{e\}$.
$N_{an}K_1\cap G$ is a $G_{an}$-invariant subgroup of finite index in $R$
(radical of $G$).
\\[1mm]
{\rm (ii)} If $G$ is almost connected, then $N_{an} = \widetilde N$ is
connected and $G_{an} = \widetilde G$ coincides with the group of
Theorem\;\ref{th2}.\vspace{-2.5mm}
\end{Pro}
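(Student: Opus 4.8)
The plan is to run the construction from the proof of Theorem~\ref{th2} once more, but to stop it before the concluding Malcev completion and, crucially, to adjoin only the identity component of the compact automorphism group that appears there. Keeping the notation of that proof ($R$, $N=\nil(G)$, the closed subgroup $G_1$ of finite index in $R$, normal in $G$, with $[G_1,G_1]\subseteq N\subseteq G_1$ and $G_1/R^0$ nilpotent, $L_1=L_{\mathcal C}=(G_1)_{\mathcal C}$, $\mathcal C=s(L_1)$ and $\beta\colon G_1\to\mathcal C$ from Proposition~\ref{pro215}), I would first set $K_1:=\overline{\mathcal C}^{\,0}$, a connected compact abelian subgroup of $\Aut(G_1)$. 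Here $\mathcal C^\circ:=\mathcal C\cap K_1$ is dense in $K_1$ and of finite index in $\mathcal C$, so $\beta^{-1}(\mathcal C^\circ)$ is a closed finite-index subgroup of $G_1$ on which $\beta$ takes values in the connected group $K_1$; one checks that Corollary~\ref{cor215a} still applies (after passing to this subgroup), yielding $G_2:=G_1\rtimes K_1$ with $N_2:=\nil(G_2)$ closed and normal, $G_2=N_2\rtimes K_1$, the quotient $G_2/N_2\ (\cong K_1)$ compact, and $[N_2,N_2]\subseteq\ker\beta=N$. The reason one may discard $\overline{\mathcal C}/K_1$ is that its elements are cosets $s(y)\,K_1$ with $y\in L_1\subseteq G$, and $s(y)$ differs from the inner automorphism $\iota_y$ only by a unipotent factor (see~\ref{di25}), so the finite part is already present inside $G$.

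Next, exactly as in the proof of Theorem~\ref{th2}, I would extend the inner action of $G$ on $G_1$ to a continuous homomorphism $\varphi\colon G\to\Aut(G_2)$ (using $L_1\triangleleft L$ and $\iota_{x,1}\circ\mathcal C\circ\iota_{x,1}^{-1}=\mathcal C$, hence $\iota_{x,1}\circ K_1\circ\iota_{x,1}^{-1}=K_1$), restricting to $\iota_{z,1}$ on $G_1$ and to $\iota_{z,2}$ for $z\in G_1$. Proposition~\ref{pro14} then produces a Lie group $G_3$ with $G_2\triangleleft G_3$, $G$ a closed subgroup, $G_2\cap G=G_1$ and $G_3=G_2G=GK_1$. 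Since $G_3$ has polynomial growth it has a maximal compact normal subgroup $P_3$; as $P_3\cap G$ is compact normal in $G$ it is trivial, so $G$ embeds as a closed subgroup of $G_{an}:=G_3/P_3$, which then has no non-trivial compact normal subgroup. Set $N_{an}:=\nil(G_{an})$ and continue to write $K_1$ for its image.

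It then remains to verify the listed properties. Compactness of $G_{an}/G$ and of $G_{an}/N_{an}$ follows from compactness of $G_3/G_2\cong K_1$, of $G/G_1$ and of $G_2/N_2\cong K_1$, pushed through the (proper, hence closed) quotient map by $P_3$. For $G_{an}=\overline{N_{an}G}$ one notes that for $y\in L_1$ with $s(y)\in\mathcal C^\circ$ the identity $[e,s(y)]=[y,\id]\,[y^{-1},s(y)]$ places $s(y)$ in $G_1N_2$, so $K_1=\overline{\mathcal C^\circ}\subseteq\overline{GN_2}$ and $G_3=GK_1\subseteq\overline{GN_2}$. The equality $G\cap N_{an}=N$ holds because $N=\nil(G_1)$ is invariant under $G$ (normal) and under $\mathcal C$ (characteristic in $G_1$), hence under $K_1$ and under $G_3=GK_1$, so $N\triangleleft G_3$ is nilpotent, $N\subseteq\nil(G_3)$, hence $N\subseteq N_{an}$ and $N\triangleleft G_{an}$; conversely $N_{an}\cap G$ is a nilpotent normal subgroup of $G$, so $N_{an}\cap G\subseteq\nil(G)=N$. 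The inclusion $[N_{an},N_{an}]\subseteq N$ comes from $[N_2,N_2]\subseteq N$ together with $[G_3,N_2]\subseteq N$ (compare Remark~\ref{rem215b}), which forces $[\nil(G_3),\nil(G_3)]\subseteq N$, and this descends. Faithfulness $K_1\cap C_{G_{an}}(G)=\{e\}$ follows from the eigenvalue argument in the proof of Theorem~\ref{th2} (showing $K_1$ acts faithfully already on $N^0\subseteq G$) together with $P_3\cap G=\{e\}$ and cocompactness of $G$ in $G_3$. Finally $N_{an}K_1$ contains the image of $N_2K_1=G_2$, so $N_{an}K_1\cap G$ contains the image of $G_2\cap G=G_1$, which, arguing as for $G_1\subseteq R$ in the proof of Theorem~\ref{th1}, is a $G_{an}$-invariant subgroup of finite index in $R$; this gives~(i). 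For~(ii), if $G$ is almost connected then $G^0$ has polynomial growth, so $G^0/\nil(G^0)$ is compact; hence $(N\cap G^0)/N^0$ is a discrete subgroup of a compact group, therefore finite, and since $N/N^0$ is torsion free (Remark~\ref{rem28}) we get $N=N^0$, i.e.\ $N$ connected, and $G/N$ compact. By the proof of Theorem~\ref{th2}\,(a), $G\cong N\rtimes K$ with $K$ compact acting faithfully and $N$ connected torsion-free nilpotent, hence simply connected, so $\widetilde G=G$; the construction above likewise terminates at $G_{an}=G$, whence $G_{an}=\widetilde G$.

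The step I expect to be the main obstacle is the very first one: justifying that one may adjoin only $K_1=\overline{\mathcal C}^{\,0}$ rather than all of $\overline{\mathcal C}$ — the disconnected part being inessential because it is realized by inner automorphisms up to unipotent factors — and making Corollary~\ref{cor215a} applicable in this slightly displaced situation, where $\mathcal C$ itself is not contained in the adjoined group (one must restrict to the finite-index subgroup $\beta^{-1}(\mathcal C^\circ)$ and control how the fixed-point sets change). A secondary technical point is arranging the passage to the quotient by $P_3$ so that it destroys none of the relations $G\cap N_{an}=N$, $[N_{an},N_{an}]\subseteq N$ and $K_1\cap C_{G_{an}}(G)=\{e\}$.
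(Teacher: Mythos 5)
Your treatment of part (i) follows essentially the same route as the paper: $G_{an}$ is the group $G_4=G_3/P_3$ from the proof of Theorem~\ref{th2}, with connectedness of $K_1$ arranged by passing to a finite-index subgroup of $G_1$ (your $\beta^{-1}(\Cal C^\circ)$ is exactly the paper's ``replace $G_1$ by a subgroup of finite index''). A few of your verifications are sketchier than they should be --- e.g.\ $[N_{an},N_{an}]\subseteq N$ needs an argument for why $\nil(G_3)$ is not larger than $N_2$ modulo $N$, and for the last assertion you only show that $N_{an}K_1\cap G$ \emph{contains} a finite-index subgroup of $R$, not that it \emph{is} one --- but the construction and the main ideas match the paper's.

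Part (ii), however, contains a genuine error. You assert that since $G^0$ has polynomial growth, $G^0/\nil(G^0)$ is compact, and you conclude $N=N^0$, $G/N$ compact, and finally $G_{an}=G=\widetilde G$. The first assertion is false: a connected l.c.\ group of polynomial growth need not be almost nilpotent --- this is precisely the phenomenon the whole paper is about (see the Introduction and Example~\ref{ex412}\,(a): for $G=\C^2\rtimes\R$ with $t\circ(z_1,z_2)=(e^{it\beta_1}z_1,e^{it\beta_2}z_2)$ one has $\nil(G)=\C^2$ and $G/\nil(G)\cong\R$ non-compact). Moreover the conclusion $G_{an}=G$ is false even when $G$ \emph{is} connected and almost nilpotent: for $G=\C\rtimes\R$ with $t\circ z=e^{it}z$, Example~\ref{ex412}\,(a) computes $G_{an}=\widetilde G=(\C\times\R)\rtimes\T\neq G$. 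Note that the statement to be proved is $G_{an}=\widetilde G$, not $G_{an}=G$; these differ in general because the construction adjoins the closure of $\Cal C$, which is typically strictly larger than the group of automorphisms realized inside $G$. The paper's argument for (ii) is different: for $G$ almost connected one may take $G_1=R^0$; then $L_1$ and hence $\Cal C$ and $K_1=\overline{\Cal C}$ are connected, Corollary~\ref{cor215a} shows $N_2=\nil(G_2)$ is connected (being homeomorphic to $G_1$), its image $N_2P_3/P_3$ in $G_4$ is connected and co-compact, and since $N_4$ is torsion free this forces $N_4=N_2P_3/P_3$ to be connected --- so the final Malcev-completion step $(N_4)_{\R}$ changes nothing and $G_4=\widetilde G$. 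You would need to replace your argument for (ii) by something along these lines.
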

\begin{proof}
We take $G_{an} = G_4$\,, as constructed in the proof of Theorem\;\ref{th2}.
Then (i)
follows (replacing $G_1$ by some subgroup of finite index, one can always 
achieve that $K_1$ is connected, see also Corollary\;\ref{cor48} and the
comment there).
\\
If $G$ is almost connected (since it is a Lie group, this means that $G/G^0$ 
is finite), we can take $G_1 = R^0$. Then Corollary\;\ref{cor215a} shows that
$N_2$ is connected. $N_2$~being co-compact in $G_3$\,, the same is true
for its image $N_2 P_3 / P_3$ in $G_4$\,. Since $N_2 P_3 / P_3$
is connected and $N_4$ torsion free, it follows (e.g. \cite{Ra}\;Rem.\,2.6)
that $N_4$ equals $N_2 P_3 / P_3$\,. Hence 
$\widetilde G = G_4$ (alternatively, one could use Corollary\;\ref{cor48};
then Proposition\;\ref{pro44}(a) implies that
$\widetilde N \cap G^0 K_1$
is a co-compact connected subgroup of~$\widetilde N$, hence
$\widetilde N \subseteq  G^0 K_1$). 
\end{proof}
\smallskip
%====================================
\section{Subgroups of semidirect products} % Sec.4
\smallskip
As a preparation for the proof of Theorem\;\ref{th3}, we start with two
technical
lemmas and then we collect some properties of subgroups of the semidirect
products that arise in Theorem\;\ref{th2} (in particular, consequences of the
assumption ``\,$\widetilde G/G$ compact"). This allows to give an explicit
description of the decomposition of Theorem\;\ref{th1}
(Corollary\;\ref{cor48}) and to
identify (Proposition\;\ref{pro47}) various constituents that came up in
\cite{Lo2}.

To make the induction arguments easier, we consider now also nilpotent Lie
groups $N$ that are not torsion free. Analogously to Definition\;\ref{def23},
$\sigma \in\Aut(N)$
is called {\it semisimple}, if the corresponding transformation $d\sigma$ of
the Lie algebra $\fr n$ of~$N$ is semisimple and $N = N^0 N_{\sigma}$\,.
If $N'$ is a closed normal
$\sigma$-invariant subgroup of~$N$, it is easy to see that the induced
automorphism on $N/N'$ is again semisimple.

\begin{Lem}	 \label{le41}
Let $N$ be a nilpotent Lie group, $\Cal K$ a connected subgroup of
$\Aut(N)$ consisting of semisimple transformations and let $N_1$ be a
closed connected $\Cal K$-invari\-ant subgroup of $N$. Then for
$\sigma\in\Cal K$, we have
\,$(\ad\sigma)(N) \cap N_1 = (\ad\sigma) (N_1)$.
\end{Lem}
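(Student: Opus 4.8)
The inclusion $(\ad\sigma)(N_1) \subseteq (\ad\sigma)(N) \cap N_1$ is immediate, since $N_1$ is $\sigma$-invariant and hence $\ad\sigma$-stable. For the reverse inclusion the plan is to pass to the Lie algebra level, where semisimplicity lets us split off eigenspaces cleanly, and then lift back via the exponential map. Let $\fr n$ be the Lie algebra of $N$, $\fr n_1$ the subalgebra corresponding to $N_1$. Since $\Cal K$ is connected and consists of semisimple automorphisms, each $d\sigma$ ($\sigma \in \Cal K$) is a semisimple linear operator on $\fr n$ leaving $\fr n_1$ invariant; the operator $d\sigma - \id$ then satisfies $\fr n = \ker(d\sigma - \id) \oplus \im(d\sigma - \id)$, and the same decomposition holds on $\fr n_1$ (the restriction of a semisimple operator to an invariant subspace is semisimple). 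Consequently $\im(d\sigma - \id) \cap \fr n_1 = \im\bigl((d\sigma - \id)|\fr n_1\bigr)$, because an element of $\fr n_1$ lying in $\im(d\sigma-\id)$ has zero component in $\ker(d\sigma-\id)$, hence is in the image of the restriction. This is the linear-algebra heart of the statement.

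The next step is to transfer this from $\fr n$ to $N$. Here I would use the description of $(\ad\sigma)(N)$ in the nilpotent case. Since $N$ is nilpotent (though not necessarily simply connected), work first on the universal cover or, more efficiently, argue inductively along the lower central series, where on each abelian quotient $(\ad\sigma)$ acts as $d\sigma - \id$ (additively) and $\exp$ intertwines $\ad\sigma$ on the group with $d\sigma - \id$ on the algebra modulo higher terms. Concretely: filter $N$ by the (closed, connected, characteristic, hence $\Cal K$-invariant) terms $N = C_0 \supseteq C_1 \supseteq \dots \supseteq C_m = (e)$ of the descending central series, with $C_j$ chosen so that $C_j \supseteq C_{j+1}$ and each quotient $C_j/C_{j+1}$ is a vector group; intersect with $N_1$ to get a compatible filtration of $N_1$. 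On each graded piece the claim reduces to the linear statement above applied to the (semisimple) induced operator. An element $x \in (\ad\sigma)(N) \cap N_1$ can then be corrected successively: its image in $C_0/C_1$ lies in the intersection of images, hence comes from $(\ad\sigma)(N_1)$ modulo $C_1$; multiplying by a suitable $(\ad\sigma)(n_1)^{-1}$ with $n_1 \in N_1$ pushes the obstruction into $C_1 \cap N_1$, and we iterate. Since $m$ is finite the process terminates and produces $x \in (\ad\sigma)(N_1)$.

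The main obstacle is the bookkeeping in this inductive descent, namely verifying that at each stage the relation ``$x \in (\ad\sigma)(N)$'' genuinely descends to the quotient as ``image of $\ad\sigma$ on the quotient'' and that the partial correction can be taken inside $N_1$. The subtlety is that $\ad\sigma$ is not a homomorphism, so one must be a little careful: use that modulo $C_{j+1}$ (an abelian quotient, central in $N/C_{j+1}$) the map $\ad\sigma$ is additive and agrees with $d\sigma - \id$ under $\exp$, and that $(\ad\sigma)(N) C_{j+1}/C_{j+1} = (\ad\sigma)(N/C_{j+1})$ by this same additivity together with surjectivity of $\exp$ for simply connected nilpotent groups (applied to the relevant quotients). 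Once this is set up, connectedness of $N_1$ guarantees that $\exp$ surjects onto $N_1^0 = N_1$ as well, so the corrections stay in $N_1$, and no torsion issues intervene because the graded pieces are vector groups. I expect the argument to run smoothly modulo this standard-but-delicate filtration argument; compare the analogous reasoning behind Lemma 2.2 and the treatment of $(\ad\theta)$-images in Section 2.
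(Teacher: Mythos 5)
Your Lie--algebra decomposition and the easy inclusion are fine, and for $N$ simply connected your filtration argument does work (it amounts to re-proving \cite{Wan}\,(5.6), which the paper's proof simply cites after lifting to the universal cover). The genuine gap is that you never confront the case where $N$ fails to be simply connected, which is exactly the case the lemma is needed for (it is applied in Proposition\;\ref{pro44} to quotients $\widetilde N/N'$ that may contain torus factors). If you filter $N$ itself by the descending central series, the graded pieces are of the form $\R^a\times\T^b$, not vector groups, so your linear-algebra step does not apply to them; if instead you pass to the universal cover $\pi\colon\widetilde N\to N$ with $\Gamma=\ker\pi$, then $\pi^{-1}(N_1)$ is disconnected, and an element of $(\ad\sigma)(N)\cap N_1$ lifts only to an element of $(\ad\tilde\sigma)(\widetilde N)\cap\Gamma\widetilde N_1$; the $\Gamma$-contribution must still be disposed of. That is precisely where the hypothesis that $\Cal K$ is \emph{connected} enters: the paper absorbs the vector subspace $\widetilde P$ spanned by $\Gamma$ into the fixed-point set because a connected group of automorphisms acts trivially on the discrete group $\Gamma$, hence on $\widetilde P$. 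Your argument never uses connectedness of $\Cal K$ in any essential way, and the statement is false without it, so something must be missing.

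A concrete counterexample with $\Cal K$ disconnected: let $N=\R^3/\Gamma$ with $\Gamma=\Z(1,1,0)+\Z(1,-1,0)$, and let $\sigma$ be induced by the diagonal matrix with entries $1,-1,-1$ (it preserves $\Gamma$, $d\sigma$ is semisimple, and $N=N^0N_\sigma$ trivially since $N$ is connected). Take $\Cal K=\{\id,\sigma\}$ and $N_1=\pi(\R e_1+\R e_3)$, a closed connected $\Cal K$-invariant subgroup. Then $(\ad\sigma)(N)=\pi(\R e_2+\R e_3)$ contains $\pi(e_2)$, which equals $\pi(e_1)\in N_1$ because $e_2-e_1=(-1,1,0)\in\Gamma$; but $(\ad\sigma)(N_1)=\pi(\R e_3)$, which does not contain $\pi(e_2)$ since $(0,1,0)\notin\Gamma$. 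So $(\ad\sigma)(N)\cap N_1\neq(\ad\sigma)(N_1)$ here. To repair your proof you must, as the paper does, reduce first to $N$ connected via $(\ad\sigma)(N)=(\ad\sigma)(N^0)$, lift to the universal cover, and then use connectedness of $\Cal K$ to trivialize the action on $\Gamma$ before your (otherwise correct) linear-algebra-plus-induction machinery can take over.
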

\begin{proof}
Semisimplicity implies \,$(\ad\sigma)(N) = (\ad\sigma)(N^0)$, thus we may
assume\linebreak
$N$~connected. Let $\widetilde N$ be the universal covering group of
$N$, \,$\pi\!: \widetilde N\to N$ the canonical projection,
$\Gamma = \ker \pi$,
\,$\widetilde N_1 = \pi^{-1} (N_1)^0$. Denote by $\widetilde{\Cal K}$
the group of lifted automorphisms $\tilde\sigma$\,. Consider $x \in N$ with
$\sigma (x)\,x^{-1} \in N_1$ and take $\tilde x \in \widetilde N$ with
$\pi (\tilde x) = x$\,. Then
\,$\tilde\sigma (\tilde x)^{-1} \tilde x^{-1} \in \pi^{-1} (N_1) =
\Gamma \widetilde N_1$\,.
Let $\widetilde P$ be the analytic subgroup of $\widetilde N$ generated
by~$\Gamma$ \,(we have $\Gamma \subseteq Z(\widetilde N)$;
if this is written additively, $\widetilde P$ is just the vector subspace
generated by $\Gamma$\,). Then $M = \widetilde P\widetilde N_1$ is an
analytic subgroup of $\widetilde N$,
\,$\tilde\sigma (\tilde x)\,\tilde x^{-1} \in M$.
By \cite{Wan}\;(5.6), we have \,$\tilde x = \tilde y\,z$ with
$\tilde y\in M$, $z\in\widetilde N_{\tilde\sigma}$\,.
Since $\widetilde{\Cal K}\;(\cong\Cal K)$
is connected, it has to be trivial on $\Gamma$,
hence also on $\widetilde P$, thus we can assume that
$\tilde y\in\widetilde N_1$\,. Put $y = \pi(\tilde y)$, then $y\in N_1$\,,
$\sigma(y)\,y^{-1} = \sigma(x)\, x^{-1}$.
\end{proof}

\begin{Cor} \label{cor42}
Let $N$, $\Cal K$ be as above.
\\
{\rm (a)} If $\sigma \in \Cal K$ and $N_1$ is any closed
$\Cal K$-ivariant subgroup, then $\sigma| N_1$ is again semi\-simple.
\\
{\rm (b)} If $\Cal K$ is commutative, then $N = N^0 N_{\Cal K}$\,.
\\
{\rm (c)} If $\Cal K$ is commutative, $N'$ is any closed normal
$\Cal K$-invariant subgroup, then\linebreak
$N_{\Cal K} N'/N'= (N/N')_{\Cal K}$\,.
\vspace{-2mm}
\end{Cor}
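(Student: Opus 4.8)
The plan is to derive all three parts from Lemma~\ref{le41}, the definition of semisimplicity recalled just before it, and the single observation that a \emph{connected} group of automorphisms acts trivially on every discrete quotient that occurs (for each coset the stabiliser is open, closed, and contains $\id$). For (a), semisimplicity of $d(\sigma|N_1)$ is immediate, since $\fr n_1\subseteq\fr n$ is $d\sigma$-invariant and the restriction of a semisimple operator to an invariant subspace is semisimple. For the group condition $N_1=N_1^0(N_1)_\sigma$: $N_1^0$ is characteristic in $N_1$, hence $\Cal K$-invariant, and $\Cal K$ (connected) acts trivially on $N_1/N_1^0$, so $\sigma(x)\,x^{-1}\in N_1^0$ for every $x\in N_1$; since also $\sigma(x)\,x^{-1}\in(\ad\sigma)(N)$, Lemma~\ref{le41} applied to the connected $\Cal K$-invariant subgroup $N_1^0$ gives $\sigma(x)\,x^{-1}=\sigma(y)\,y^{-1}$ with $y\in N_1^0$, whence $y^{-1}x\in N_1\cap N_\sigma=(N_1)_\sigma$ and $x\in N_1^0(N_1)_\sigma$.

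For (b), I would first treat a finite commuting family $\sigma_1,\dots,\sigma_k\in\Cal K$, proving $N=N^0N_{\langle\sigma_1,\dots,\sigma_k\rangle}$ by induction on $k$: from $N=N^0N_{\sigma_1}$ and the fact that $N_{\sigma_1}$ is closed and $\Cal K$-invariant (as $\Cal K$ is abelian), part~(a) gives $\sigma_2|N_{\sigma_1}$ semisimple, so $N_{\sigma_1}=N_{\sigma_1}^0(N_{\sigma_1})_{\sigma_2}$, and since $N_{\sigma_1}^0\subseteq N^0$ this yields $N=N^0N_{\{\sigma_1,\sigma_2\}}$; one iterates, each $N_{\langle\sigma_1,\dots,\sigma_j\rangle}$ being again closed and $\Cal K$-invariant. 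To reach all of $\Cal K$: the connected abelian Lie group $\Cal K$ has a finitely generated dense subgroup $\Cal K_1=\langle\sigma_1,\dots,\sigma_k\rangle$, and since $\{\tau\in\Cal K:\tau(x)=x\}$ is closed for each $x$ one has $N_{\Cal K_1}=N_{\Cal K}$, so the finite case applied to the $\sigma_i$ gives $N=N^0N_{\Cal K_1}=N^0N_{\Cal K}$.

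For (c), only the inclusion $(N/N')_{\Cal K}\subseteq N_{\Cal K}N'/N'$ needs proof. Given $x\in N$ with $\tau(x)\,x^{-1}\in N'$ for all $\tau\in\Cal K$, I must find $n\in N'$ with $xn\in N_{\Cal K}$. Picking a finitely generated dense $\Cal K_1=\langle\sigma_1,\dots,\sigma_k\rangle\subseteq\Cal K$, it suffices by density and continuity to obtain $xn\in N_{\Cal K_1}$, which I do by induction on $k$: by the inductive hypothesis for $\sigma_1,\dots,\sigma_{k-1}$ one may modify $x$ within its $N'$-coset so that $x\in M:=N_{\langle\sigma_1,\dots,\sigma_{k-1}\rangle}$; then $\sigma_k(x)\,x^{-1}\in N'\cap M=:N''$ (as $\sigma_k$ commutes with the other $\sigma_i$), and everything reduces to the single automorphism $\sigma_k|M$ of the nilpotent Lie group $M$ and its closed normal $\sigma_k$-invariant subgroup $N''$, where $\sigma_k|M$ is semisimple by part~(a) and $N''$ is invariant under the connected automorphism group of $M$ induced by $\Cal K$. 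It then remains to prove the single-automorphism statement: \emph{if $\Cal K$ is connected, $\sigma\in\Cal K$, and $N_1$ is a closed normal $\Cal K$-invariant subgroup of $N$, then $\sigma(x)\,x^{-1}\in N_1$ implies $x\in N_\sigma N_1$.} Using $N=N^0N_\sigma$ one reduces to $N$ connected; then, passing to $N/N_1^0$ and applying the connected case to $N_1^0$, one reduces further to the cases $N_1$ connected (which is precisely the intermediate step in the proof of Lemma~\ref{le41} --- lift to the universal cover and apply \cite{Wan}\;(5.6), using connectedness of $\Cal K$ to kill the action on $\pi_1$ and on the subgroup $\widetilde P$ it generates) and $N_1$ discrete and central; in the latter, lifting to the simply connected cover $\widetilde N$ one gets $\tilde c:=\tilde\sigma(\tilde x)\,\tilde x^{-1}\in\pi^{-1}(N_1)$, a discrete central $\widetilde{\Cal K}$-invariant subgroup, hence $\tilde\sigma(\tilde c)=\tilde c$, and since $\tilde c$ is central $\log(\tilde c\tilde x)=\log\tilde c+\log\tilde x$, so $\log\tilde c=(d\tilde\sigma-\id)(\log\tilde x)\in\im(d\tilde\sigma-\id)\cap\ker(d\tilde\sigma-\id)=0$ by semisimplicity, forcing $\tilde c=e$ and $\sigma(x)=x$.

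The hard part is exactly this last single-automorphism step inside (c): one has to carry out the covering-space argument of Lemma~\ref{le41} while allowing $N$ to be non-simply-connected and $N_1$ to be disconnected and not torsion free, and it is here that the hypotheses ``$\Cal K$ connected'' and ``$N'$ $\Cal K$-invariant'' really enter, forcing the relevant automorphisms to be trivial on the discrete subgroups and fundamental groups in play, which together with semisimplicity of $d\sigma$ collapses the obstruction. Parts (a), (b) and the two inductions are then routine given Lemma~\ref{le41}.
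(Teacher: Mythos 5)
Your argument is correct, and for parts (a) and (b) it is essentially the paper's own proof: (a) is the observation that connectedness of $\Cal K$ puts $\sigma(x)\,x^{-1}$ into $N_1^0$ followed by Lemma~\ref{le41}, and (b) is the finite induction via (a). (Your device for reducing to finitely many automorphisms --- a finitely generated dense subgroup of $\Cal K$ --- differs from the one the paper cites, namely the dimension\-/minimization argument of Corollary~\ref{cor211}; the latter needs no separability or closedness of $\Cal K$, but yours works in every situation where the corollary is actually applied.) Where you genuinely diverge is part (c), for which the paper only gestures at ``an easy induction argument''. Your single-automorphism lemma and its proof are correct --- $\pi^{-1}(N_1)$ is indeed closed, countable, hence discrete, and central; connected $\widetilde{\Cal K}$ fixes it pointwise; and $\im(d\tilde\sigma-\id)\cap\ker(d\tilde\sigma-\id)=0$ by semisimplicity kills $\tilde c$ --- but the entire excursion through the universal cover and the disconnected $N'$ is avoidable. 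The hypothesis of (c) gives $\tau(x)\,x^{-1}\in N'$ for \emph{all} $\tau\in\Cal K$, so the continuous map $\tau\mapsto\tau(x)\,x^{-1}$ sends the connected group $\Cal K$ into $N'$ with $\id\mapsto e$; its image therefore lies in $N'^{\,0}$, and Lemma~\ref{le41} applies directly with $N_1=N'^{\,0}$. Repeating this observation at each stage of the finite induction (restricting to $N_{\{\sigma_1,\dots,\sigma_j\}}$, whose restrictions are semisimple by (a), and replacing the relevant intersection by its identity component) completes (c) entirely within the connected setting --- this is exactly the trick the paper already uses in its one-line proof of (a). What your longer route buys is a sharper conclusion in the discrete central case ($\sigma(x)=x$ rather than merely $x\in N_\sigma N_1$), at the cost of essentially re-proving Lemma~\ref{le41} in greater generality.
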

\begin{proof}
(a): If $x \in N_1$\,, then $\sigma(x)\,x^{-1} \in N^0_1$ \,($\Cal K$ is
connected), hence by
Lemma\;\ref{le41}, $x = y\,z$  with $y \in N^0_1$\,, $z \in N_{\sigma}$\,.
\\
(b),(c): As in the proof of Corollary\;\ref{cor211}, there is a finite subset
$\Cal K_0$ of $\Cal K$ such that $N_{\Cal K} = N_{\Cal K_0}$\,. 
Then an easy induction argument (see also \cite{Wan}\;(8.8)\,)
proves our claim.\vspace{-2mm}
\end{proof}

\begin{Lem}	 \label{le43}
Let $\Cal G$ be a triangular group of automorphisms of $N = \R^n$ and
assume that $\Cal G$ is nilpotent and that the closure $\Cal K$ of
\,$s(\Cal G)$ \,(the semisimple parts) is connected. Then any closed
$\Cal G$-invariant subgroup $H$ of $N$ is $\Cal K$-invariant.\vspace{-.5mm} 
\end{Lem}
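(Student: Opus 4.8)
The plan is to peel off the vector--subspace part of $H$, reduce to the case where $H$ is a genuine (discrete) lattice, and then exploit that the eigenvalues of the elements of $\Cal G$ are algebraic units.

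First I would note that $H^0$ (the identity component of $H$, a linear subspace of $N$) is characteristic in $H$, and that $W:=\operatorname{span}_{\R}H$ is $\Cal G$-invariant; since each $g_s$ is a polynomial in $g$, both $H^0$ and $W$ are invariant under $s(\Cal G)$, and hence (stabilisers of subspaces being closed in $\Aut(N)$) under $\Cal K$. Thus $\Cal K$ acts on $W/H^0$, in which $H/H^0$ is a \emph{discrete} lattice of full rank; and it suffices to show that $\Cal K$ acts \emph{trivially} on $W/H^0$, for then $\sigma h-h\in H^0\subseteq H$ and $\sigma H^0=H^0$ for all $\sigma\in\Cal K$ and $h\in H$, whence $\sigma H=H$. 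Writing $\Gamma$ for the image of $\Cal G$ in $\Aut(W/H^0)$ and $\Lambda$ for the lattice $H/H^0$, the task reduces to showing: a nilpotent group $\Gamma$ of automorphisms of $\R^{b}$ that preserves a discrete lattice $\Lambda$ of full rank and whose semisimple parts have connected closure consists only of unipotent transformations. The hypotheses survive the reduction: Jordan decomposition is compatible with invariant subquotients, so $(\bar g)_s$ is the image of $g_s$; hence $s(\Gamma)$ is the image of $s(\Cal G)$, and $\overline{s(\Gamma)}$ is the closure of a continuous image of the connected group $\Cal K$, so it is again connected.

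For the reduced statement I would use that $\Cal G$ is triangular, so $[\Cal G,\Cal G]$ consists of unipotent transformations, and apply Lemma\;\ref{le27} to conclude that $s(\Cal G)$, and hence its image $s(\Gamma)$, is a commuting family. A commuting family of transformations with real eigenvalues is simultaneously diagonalisable over $\R$, and connectedness of $\overline{s(\Gamma)}$ then forces the diagonal entries to be positive. Fixing a basis that diagonalises $s(\Gamma)$, I would write $s(\gamma)=\operatorname{diag}(\epsilon_1(\gamma),\dots,\epsilon_b(\gamma))$ with all $\epsilon_i(\gamma)>0$ and set $\Lambda_s=\{(\log\epsilon_1(\gamma),\dots,\log\epsilon_b(\gamma)):\gamma\in\Gamma\}\subseteq\R^{b}$; under $\operatorname{diag}(t_i)\mapsto(\log t_i)$ the group $\overline{s(\Gamma)}$ corresponds to $\overline{\Lambda_s}$, which is therefore connected.

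The decisive point, and the step I expect to be the real work, is that $\Lambda_s$ is \emph{discrete}. Since $\Gamma$ preserves the discrete full-rank lattice $\Lambda$, in a lattice basis each $\gamma$ is an integer matrix, so $\{\epsilon_1(\gamma),\dots,\epsilon_b(\gamma)\}$ is the root multiset of the monic characteristic polynomial $\chi_\gamma\in\Z[x]$. For any $C>0$ there are only finitely many monic degree-$b$ polynomials in $\Z[x]$ all of whose roots lie in $(0,e^{C}]$, since then their coefficients are bounded in absolute value by $2^{b}e^{bC}$; hence $\{v\in\Lambda_s:v_i\le C\ \text{for all }i\}$ is finite. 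Consequently a convergent sequence in $\Lambda_s$ lies eventually in a finite set and is eventually constant, so $\Lambda_s$ is closed and discrete; being closed it equals its closure, which is connected, so it has at most one point, and since $0\in\Lambda_s$ this forces $\Lambda_s=\{0\}$. Thus $s(\gamma)=\id$, i.e.\ $\gamma$ is unipotent, for every $\gamma\in\Gamma$, so $\overline{s(\Gamma)}=\{\id\}$ and $\Cal K$ acts trivially on $W/H^0$; with the first paragraph this proves the Lemma. In this argument nilpotency of $\Cal G$ is used only to make $s(\Cal G)$ commuting (hence simultaneously diagonalisable), triangularity only to keep the eigenvalues real, and connectedness of $\Cal K$ is precisely what upgrades ``discrete'' to ``trivial''; the only delicate bookkeeping is to confirm that connectedness of $\overline{s(\cdot)}$ is preserved on passing to $W/H^0$.
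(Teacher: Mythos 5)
Your reduction to a full--rank lattice $\Lambda=H/H^0$ in $W/H^0$, the bookkeeping about Jordan decomposition passing to the subquotient, and the finiteness argument for monic integer polynomials with bounded roots are all fine. The gap is the sentence ``triangularity only to keep the eigenvalues real''. In this paper ``triangular'' is Wang's notion from \cite{Wan}\;(2.2), i.e.\ simultaneous triangulability over $\C$ (compare the proof of Corollary\;\ref{cor36}, where the triangulation is explicitly taken over $\C$), so the elements of $\Cal G$ are real matrices whose eigenvalues are in general non-real. This is not a peripheral case: in the one place the Lemma is applied (step $(\gamma)$ of the proof of Proposition\;\ref{pro44}, for part (d)), $\Cal K$ is essentially the image of a compact connected torus acting by rotations, so every non-identity element of $\Cal K$ has non-real eigenvalues of modulus $1$; a group with such a $\Cal K$ cannot be triangulated over $\R$. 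Your argument takes logarithms of eigenvalues assumed to be positive reals; run in the complex setting it controls only the moduli, i.e.\ it shows at best that every eigenvalue of every $s(\gamma)$ has modulus $1$. That does not give $s(\gamma)=\id$, and your $\Lambda_s$ no longer determines $s(\gamma)$, so the proof does not close.

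The statement you reduced to is still true and your scheme can be completed, but it needs an extra arithmetic input that is absent from your write-up: once every eigenvalue of $\gamma\in\GL(b,\Z)$ has modulus $1$, Galois-stability of the root set of the characteristic polynomial together with Kronecker's theorem forces every eigenvalue to be a root of unity of order $m$ with $\phi(m)\le b$; hence $s(\Gamma)$ is a finite group, and connectedness of its closure then kills it. The paper's own proof avoids eigenvalue arithmetic altogether: it passes to the $\Q$-span $H_{\Q}$, produces a nonzero vector of $H$ fixed by the unipotent part $\Cal N$ of $\Cal G\Cal K$ (a Kolchin-type fixed-point argument over $\Q$), uses that a connected group acting on a discrete group acts trivially to conclude $N_{\Cal N}\cap H\subseteq N_{\Cal K}$, and then splits off a $\Cal K$-invariant rational complement to iterate. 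You should either switch to that route or add the Kronecker step to yours.
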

\begin{proof}
By Lemma\;\ref{le27}, $\Cal K$ is commutative and centralizes $\Cal G$\,.
If $\Cal N$ denotes the set of unipotent transformations in $\Cal{GK}$\,, 
then $\Cal N$ is a subgroup.
For $\sigma \in \Cal G$, $s(\sigma)$ is a polynomial in $\sigma$\,, hence
the result follows immediately when $H$ is connected. Thus (passing to
$N/H^0$), we may assume
that $H$ is discrete and that it generates $N$ as a real vector space.
\\[-.2mm]
First, we claim that $N_{\Cal K} \cap H$ is non-trivial. For
$\sigma \in \Cal G$, we consider its restriction to the $\Q$-vector space
$H_{\Q}\ (= \Q H)$. It has a Jordan decomposition (for the base field~$\Q$,
\cite{Bo1}\;VII,\,Th.\,1,\,p.\,42)
and by uniqueness, the real extensions of the components have to coincide with
$\sigma_s\,, \sigma_u$\,. Thus $H_{\Q}$ is $s(\Cal G)$-invariant and
$\Cal N$-invariant. $\Cal N$~is still triangular and it follows easily
that $N_{\Cal N}\cap H_{\Q}$ is non-trivial, hence $N_{\Cal N} \cap H$ is
non-trivial. Since $N_{\Cal N}\cap H$ is $\Cal G$-invariant and
$\Cal N$ acts trivially on this group, it is also $\Cal K$-invariant.
$\Cal K$ being connected, $N_{\Cal N} \cap H$ discrete, it follows that
$\Cal K$ acts trivially on $N_{\Cal N}\cap H$, i.e.,
$N_{\Cal N} \cap H \subseteq N_{\Cal K}$\,.
\\
As observed in the proof of Corollary\;\ref{cor42}, there is a finite subset
$\Cal K_0$ of $\Cal K$ such that $N_{\Cal K_0} = N_{\Cal K}$\,. For 
$\sigma \in \Cal K$\,, the projection to $N_{\sigma}$ obtained from the
primary
decomposition is a rational polynomial in $\sigma$ (\cite{Va}\;Th.\,3.1.1).
Combined, this gives a projection $p$ to $N_{\Cal K}$ which is a rational
polynomial in elements of $\Cal K$\,. Put $M = (\id-p) (N)$\,, defining 
a complementary subspace for $N_{\Cal K}$\,, invariant under
$\Cal G$ and $\Cal K$\,. Since $H_{\Q}$ is $p$-invariant, it follows that 
$(\id-p)(H_{\Q})\subseteq H_{\Q}$\,, hence \,$H_1 = (\id-p)(H_{\Q}) \cap H$
generates $M$ as a real vector space. Thus, if $M$ would be non-trivial, the
same would be true for $H_1$ and the argument above would imply that
$N_{\Cal K} \cap H_1$ is non-trivial which is impossible. It follows that
$N_{\Cal K} = N$\,, finishing the proof.\vspace{-.3mm}
\end{proof}

\begin{Pro}  \label{pro44}
Let $\widetilde N$ be a compactly generated nilpotent Lie group, $K$ an
abelian connected locally compact group with a continuous semisimple action
on~$\widetilde N$. Let $G$ be a subgroup of
\,$\widetilde G = \widetilde N \rtimes K$ such that
$N = G \cap \widetilde N$ is closed in $\widetilde N$\,, $\widetilde G/G$ is
compact and \,$\widetilde N G$ dense in $\widetilde G$\,. Then the following
properties hold.
\\
{\rm (a)} Put
\,$\widetilde M =
\{x \in \widetilde N\! :\,k \circ x = x \text{ \,for all } k \in K\}$\,,
$\widetilde L = \widetilde M \times K$\,, $L = G \cap \widetilde L$\,.
\\
Then $G$ and $N$ are $K$-invariant, \;$G = N^0L$\,,
\,$\widetilde G = N^0\widetilde L$\,,
\,$\widetilde N = N^0\widetilde M$ \;and
\,$\widetilde N/(\widetilde N \cap GK)\;
(\cong \widetilde M/(\widetilde M\cap GK)\,)$ is compact.
\\
{\rm (b)} If \,$Z(\widetilde N)\mspace{2mu}GK$ is dense in $\widetilde G$,
then
\,$[\widetilde G,\widetilde G] \subseteq N$ \;(in particular, \,$G$ is normal
in~$\widetilde G$\,, $\widetilde M/(\widetilde M\cap N)$ is abelian).
\\[-.3mm]
{\rm (c)} If $H$ is a connected, closed $G$-invariant subgroup of
$\widetilde N$ and $\widetilde N$ is torsion free, then $H$ is normal in
\pagebreak $\widetilde G$\,.
\\
{\rm (d)} If $H$ is any closed $G$-invariant subgroup of $\widetilde N$\,,
then $H$ is $K$-invariant \,(in particular,
if \,$Z(\widetilde N)\mspace{2mu}GK$ is
dense in $\widetilde G$\,, then $H$ is normal in $\widetilde G$\,). 
\vspace{-2mm}
\end{Pro}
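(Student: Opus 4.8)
The four parts of Proposition~\ref{pro44} should be proved in sequence, since the later parts rely on the structure provided by (a). The recurring theme is that the assumption ``$\widetilde N G$ dense in $\widetilde G$'' forces the $K$-action on the ``directions transverse to $\widetilde M$'' to be, in a suitable sense, as small as the $G$-translates allow, so that the compact group $\overline{G\widetilde N}=\widetilde G$ cannot be bigger than $N^0\widetilde L$. For (a), the plan is: first observe that $K$ acts on $\widetilde N$ semisimply, so by Corollary~\ref{cor42}(b) we have $\widetilde N = N^0\cdot?$\,; more precisely, since $K$ is abelian and connected and acts semisimply, $\widetilde N = \widetilde N^0 \widetilde N_K$ where $\widetilde N_K = \widetilde M$; this gives $\widetilde N = \widetilde N^0\widetilde M$, and since $\widetilde N^0 = \widetilde N$ when $\widetilde N$ is connected (or more carefully $\widetilde N^0 = N^0$ after intersecting appropriately — one must be a little careful since only $N$, not $\widetilde N$, is assumed to have $N^0$ with good properties, but $\widetilde N^0$ is the relevant connected group and $N^0 \subseteq \widetilde N^0$ with $\widetilde N^0/N^0$ compact). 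The key point to extract is $N^0 = \widetilde N^0$: this follows because $\widetilde N/N$ is compact (being a closed subgroup of the compact $\widetilde G/G$ via the natural embedding $\widetilde N/N \hookrightarrow \widetilde G/G$), hence $\widetilde N^0/N^0$ is a compact connected nilpotent Lie group that is also simply connected-ish... actually the clean way: $\widetilde N/N$ compact nilpotent forces $N \supseteq \widetilde N^0$ up to finite index issues — I would instead argue $N^0$ is cocompact in $\widetilde N$, so $N^0\widetilde M$ has finite... no: $N^0\widetilde M \supseteq N^0 \widetilde N_K$ and $\widetilde N = \widetilde N^0\widetilde N_K$, and $\widetilde N^0 = \overline{N^0}$-issues aside, $N^0$ cocompact connected in $\widetilde N^0$ forces $N^0 = \widetilde N^0$ since a proper connected cocompact subgroup of a connected nilpotent Lie group would have smaller dimension, contradicting cocompactness. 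Hence $\widetilde N = N^0\widetilde M$.

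**$K$-invariance and $G = N^0L$.** Next I would establish $K$-invariance of $G$ (hence of $N = G\cap\widetilde N$). Since $\widetilde N G$ is dense in $\widetilde G$ and $\widetilde N$ is normal, conjugation by elements of $G$ preserves $\widetilde N$; the subtle claim is that $G$ itself is normalized by $K$. Here is the idea: write $\widetilde G/G$ compact; the closure $\overline{G}$ is a closed subgroup with $\widetilde G/\overline G$ compact and $\widetilde N\overline G = \widetilde G$; since $K$ is connected, the map $k \mapsto kGk^{-1}$ from $K$ into the (discrete, by properness) space of conjugates... this needs care. A cleaner route: decompose $g \in G$ as $g = n \ell$ with $n \in \widetilde N$, $\ell \in \widetilde L = \widetilde M\times K$ using $\widetilde G = \widetilde N\widetilde L$ (from $\widetilde N = N^0\widetilde M$); show the $\widetilde N$-component lies in $N^0$. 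The point: $\widetilde N G$ dense $\Rightarrow$ the image of $G$ in $\widetilde G/\widetilde N = K$ is dense; but this image is also closed-cocompact; combined with $K$ abelian connected, the image is all of $K$. Now for $g \in G$ with image $k \in K$, and any $k' \in K$, pick $g' \in G$ with image $k'$; then $g'gg'^{-1}g^{-1} \in G \cap \widetilde N = N$, which expresses the ``cocycle'' of the $K$-action restricted to $G$; a Malcev-completion / averaging argument over the compact $K$ (as in the splitting technique, cf. the use of \cite{Wan}\;(5.1)) then lets one modify $g$ by an element of $N^0$ to land in $L$. This yields $G \subseteq N^0 L$, and $N^0 L \subseteq G$ is clear since $N^0 \subseteq N \subseteq G$ and $L \subseteq G$; so $G = N^0 L$, and $K$-invariance of $G$ follows because $k\circ(N^0 L) = N^0(k\circ L) = N^0 L$ as $K$ acts trivially on $\widetilde M$ and commutes with itself. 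Finally $\widetilde N/(\widetilde N\cap GK)$: since $GK \supseteq N^0 K$ and $\widetilde N = N^0\widetilde M$, we get $\widetilde N\cap GK \supseteq N^0(\widetilde M\cap GK)$, reducing compactness of the quotient to compactness of $\widetilde M/(\widetilde M\cap GK)$, which follows from $\widetilde G/GK$ being a quotient of the compact $\widetilde G/G$ and $\widetilde M \hookrightarrow \widetilde G \to \widetilde G/GK$ having image $\widetilde M/(\widetilde M\cap GK)$.

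**Parts (b), (c), (d).** For (b), using $\widetilde G = N^0\widetilde L$ one computes $[\widetilde G,\widetilde G]$: commutators of $N^0$ with itself lie in $[\widetilde N,\widetilde N] \subseteq \widetilde N$; commutators involving $K$ produce terms of the form $(k\circ x)x^{-1} = (\operatorname{ad}k)(x)$ for $x \in N^0$. Under the hypothesis that $Z(\widetilde N)GK$ is dense, a symmetric argument to (a) (now localized at the centre) shows $(\operatorname{ad}k)(\widetilde N) \subseteq N$ for all $k \in K$, whence $[\widetilde G,\widetilde G] \subseteq [\widetilde N,\widetilde N]\cdot\langle(\operatorname{ad}k)(\widetilde N)\rangle \subseteq N$; normality of $G$ in $\widetilde G$ is then immediate since $G \supseteq [\widetilde G,\widetilde G]$ together with $G$ being $K$-invariant and $\widetilde N$-invariant (as $[\widetilde N, G]\subseteq[\widetilde G,\widetilde G]\subseteq N \subseteq G$). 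For (c), with $\widetilde N$ torsion free and $H$ connected closed $G$-invariant: $H$ is $N^0$-invariant, hence $[H,N^0]\subseteq H$; and $G = N^0L$ with $L$ acting, so one must upgrade $G$-invariance to $K$-invariance, which is exactly what (d) provides, so I would prove (d) first or in parallel. For (d): this is where Lemma~\ref{le43} enters — the $G$-action on $\widetilde N$ restricts to a triangular nilpotent action on the relevant sections (by \cite{Wan}\;(2.2)/(2.3) applied to $N$, whose inner automorphisms are unipotent on $\widetilde N$, together with the semisimple $K$-part), the closure of the semisimple parts $s(G|\widetilde N)$ is $K$ (or contains $K$) and is connected, so Lemma~\ref{le43} says every closed $G$-invariant subgroup is $\overline{s(G)}\supseteq K$-invariant. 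I expect the main obstacle to be (a), specifically the step showing $G = N^0 L$ — i.e. that the ``transverse'' $\widetilde N$-component of $G$ can be absorbed into $N^0$; this is the one place where the density hypothesis $\widetilde N G$ dense must be combined with the semisimplicity of the $K$-action and a Malcev-type splitting/averaging, and getting the topology (closedness of $L$ in $\widetilde L$, the homeomorphism claims) right requires the same care as in the proof of Lemma~\ref{le24}. Everything downstream is then bookkeeping with the decomposition $\widetilde G = N^0\widetilde L$.
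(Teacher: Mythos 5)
Your plan has a genuine gap at the very step you yourself identify as the crux, namely $\widetilde N=N^0\widetilde M$ in (a). You derive it from the claim that $\widetilde N/N$ is compact (so that $N^0=\widetilde N^0$, or that $N^0$ is cocompact in $\widetilde N^0$). This is false: the image of $\widetilde N$ in $\widetilde G/G$ is $\widetilde NG/G$, which is dense but in general not closed, so $\widetilde N/N$ does not embed as a closed (hence compact) subspace of $\widetilde G/G$. The paper makes this explicit in Corollary\;\ref{cor410}\,(c) ($\widetilde N/N$ is compact iff $G/N$ is), and the first example of \ref{ex412}\,(a), $G=\C\rtimes\Z\subseteq(\C\times\R)\rtimes K$, has $N^0=\C\subsetneq\C\times\R=\widetilde N^0$ while $\widetilde M=\R$; the identity $\widetilde N=N^0\widetilde M$ holds there but not for your reason. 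The same error recurs in your treatment of the $K$-component: the image of $G$ in $K\cong\widetilde G/\widetilde N$ is dense but need not be closed, so it is not all of $K$ (in the example it is $\{\alpha^n\}$); and $K$ is only assumed connected abelian locally compact, not compact, so no averaging over $K$ is available.

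The missing idea is the base case on which the whole proposition rests: assuming $G\cap\widetilde N$ and $\widetilde M$ are both trivial (and $\widetilde N\cong\R^n$), the graph of $G$ over its dense projection $K'\subseteq K$ is a crossed homomorphism $c\colon K'\to\widetilde N$; commutativity of $K$ gives the symmetry $x\circ c(y)-c(y)=y\circ c(x)-c(x)$, triviality of $\widetilde M$ produces $x_0\in K'$ with $x_0\circ v-v$ invertible, and one concludes that $c$ is a coboundary, i.e.\ $G$ is conjugate to a subgroup of $K$ --- contradicting compactness of $\widetilde G/G$. This is where density of $\widetilde NG$ and cocompactness of $G$ actually enter. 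The paper then runs an induction on the nilpotency class of $\widetilde N$ (quotienting by $Z(\widetilde N)$ and by ${[\widetilde N,\widetilde N]}^-$, via Corollary\;\ref{cor42}\,(c)), and --- contrary to your ordering --- proves (c) and (d) \emph{before} (a) and (b), because establishing $\widetilde N=N^0\widetilde M$ requires first knowing that $N$ is normal in $\widetilde G$ (a consequence of (d)) so that one can pass to $\widetilde N/N$ and apply the base case there. Your sketches of (d) via Lemma\;\ref{le43} and of (b) via controlling $(\ad k)(\widetilde N)$ point in the right direction, but without the cocycle argument and the induction they cannot be completed.
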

\noindent
By a semisimple action on $\widetilde N$ (denoted by $\circ$), we mean that
each transformation shall be semisimple (see also Lemma\;\ref{le46}).
We do not require $G$ to be
closed. By \cite{HR}\;(5.24\,b), compactness of $\widetilde G/G$ is equivalent
to compactness of the Hausdorff space $\widetilde G/\overline G$\,.
\vspace{-2mm}
\begin{proof}
$(\alpha)$ \;First, we assume that $\widetilde N$ is abelian and that
\,$G \cap\widetilde N$ and $\widetilde M$ are trivial. We claim that this
implies $\widetilde N$ to 
be trivial. Connectedness of $K$ easily implies (use the dual action) that any
continuous action on a compact abelian group is trivial. Hence $\widetilde N$
must be torsion free and, replacing $\widetilde N$ by $\widetilde N_{\R}$\,,
we can assume that $\widetilde N$ is connected, i.e.,
$\widetilde N\cong \R^n$, written additively.
Let \,$K' = \widetilde N G\cap K$ be the projection of $G$ to~$K$\;%
$(\cong \widetilde G/\widetilde N)$. Triviality of $G \cap \widetilde N$
gives a mapping \,$c\!: K' \to \widetilde N$
such that $G = \{\,c(x)\,x : x \in K'\}$ and $c$ is a crossed homomorphism,
i.e., $c (xy) = c(x) + x \circ c(y)$ for $x,y \in K'$. Then
commutativity of
$K$ leads to \,$x \circ c(y) - c (y) = y \circ c(x) - c(x)$. Triviality of
$\widetilde M$ implies that for each $v \in \widetilde N\setminus \{0\}$
there exists $x \in K$ such
that $x \circ v \neq v$\,. Assume that $\widetilde N$ is non-trivial, i.e.,
$n > 0$\,. Considering the root space decomposition for the extended action of
$K$ on $\C^n$, it is easy to see (using that $K$ is connected) that there
exists $x_0 \in K$ for which all roots are different from~1, i.e., 
$\alpha (v) = x_0 \circ v - v$ is an isomorphism on $\widetilde{N}$
(compare \cite{Bo2}\;p.\,28). By assumption, $K'$ is a dense subgroup of $K$,
hence we can assume that $x_0 \in K'$. Putting
\,$v_0 = -\alpha^{-1}\bigl(c(x_0)\bigr)$, it follows that
\,$c(x) = v_0 - x \circ v_0 \text{ for all } x \in K'$. This would imply
that \,$G = v_0 K' (-v_0)$ is conjugate to $K'$, contradicting compactness
of \,$\widetilde G/G$.
\vspace{-1.5mm plus .2mm}
\item[$(\beta)$] \;Now we assume just that \,$\widetilde{N}\cap G$ is
trivial and claim
that $\widetilde M = \widetilde N$ and that $GK$ is abelian. Observe that if
$N'$ is a closed $\widetilde G$-invariant subgroup of $\widetilde N$, we can
consider \,$\widetilde G_1 = (\widetilde N/N') \rtimes K$ and then the image
$G_1$ of $G$ in this quotient
satisfies again the assumptions of the Proposition (recall that
$\widetilde{N}\cap G$ is trivial). First, if $\widetilde N$
is abelian, we take $N' = \widetilde M$\,. Then by
Corollary\;\ref{cor42}\,(c), $(\widetilde N/N')_K$
is trivial, hence $(\alpha)$ implies $\widetilde M = \widetilde N$\,. In the
general case, we consider $N' = {[\widetilde N, \widetilde N]}^-$. Then
the abelian case (and again Corollary\;\ref{cor42}\,(c)\,) gives
\,$\widetilde N = {[\widetilde N,\widetilde N]}^-\,\widetilde M$\,. But it is
well known (using induction on the nilpotency-class) that this implies
$\widetilde M = \widetilde N$\,. Clearly
\,$[\widetilde G, \widetilde G]\subseteq \widetilde N$, hence triviality of
$\widetilde{N}\cap G$ implies that $G$ is abelian, thus $GK$
is abelian \;(in fact, a little further argument shows that
\,${[\widetilde N,\widetilde N]}^-$ must be compact in this case).
\\[.8mm plus .7mm]
In steps $(\gamma),(\delta)$ the Proposition will be proved by
induction on
the nilpotency-class~$n$ of $\widetilde N$. The result is trivial when
\pagebreak
$\widetilde N$ is trivial, hence we assume now that (a) holds for
$G_1, \widetilde G_1, \widetilde N_1$\,, when $\widetilde N_1$ has
nilpotency-class smaller than $n$\,.
\vspace{0mm plus .4mm}
\item[$(\gamma)$] \;We will now prove (c) and (d). Consider
$N' = Z(\widetilde N),\ \widetilde N_1 = \widetilde N/N'$ and
$\overline G_1$ the closure of the image of $G$ in
$\widetilde G_1 = \widetilde N_1 \rtimes K$\,. The action of
$\widetilde G$ on $\widetilde N$ by inner automorphisms induces an action of
$\widetilde G_1$ on $\widetilde N$
and on $\tilde{\fr n}$ (the Lie algebra of $\widetilde N$). The inductive
assumption gives a decomposition \,$\overline G_1 = N_1^0L_1$ with 
\,$L_1 \subseteq \widetilde M_1 \times K$\,. For $x = y \, z \in L_1$
with $y \in \widetilde M_1\;(\subseteq \widetilde N/N'),\ z \in K$, the
corresponding operators on~$\tilde{\fr n}$ implement
(by uniqueness) the Jordan decomposition for the
operators from $L_1$ (see also Corollary\;\ref{cor45}).
\\[-.2mm]
Assume that $H$ is connected with Lie algebra $\fr h$\,. Then $\fr h$ is
$L_1$-invariant, hence it is also invariant under the semisimple parts.
Thus $\fr h$ is $K'$-invariant, where $K'$ denotes the projection of $L_1$
to $K$. Since $K'$ contains the projection of $G$ to $K$, it follows that
$\fr h$ is $K$-invariant. This proves (d) when $H$ is connected.
\\[.3mm plus .5mm]
For (c) obvserve that by (a) (for $\widetilde G_1$), \,\,$\overline G_1$ is
$K$-invariant, hence $\bigl(Z(\widetilde N)\mspace{2mu}GK\bigr)^-$~is a
subgroup of
$\widetilde G$, \,$\fr h$ is invariant under this subgroup and
\,$\widetilde N\,\cap\bigl(Z(\widetilde N)\mspace{2mu}GK\bigr)^-$~must
be co-compact in $\widetilde N$ \,(since $\widetilde G/G$ is compact).
Then $\fr h$ is
$\widetilde N$-invariant by \cite{Ra}\;Th.\,2.3 Cor.2 (after extending the
action of $\widetilde N$ on $\tilde{\fr n}$ to $\widetilde N_{\R}$\,, using
\cite{Ra}\;Th.\,2.11). This proves (c).
\\[0mm plus.5mm]
For the general case of (d) we first assume that $\widetilde N$ 
is torsion free. Then by (c),\linebreak
$H^0$~is normal.
Replacing $\widetilde N$ by $\widetilde N/H^0$, we can assume that $H$ is
discrete.
In addition (replacing $\widetilde N$ by $\widetilde N_{\R}$), we
may assume that $\widetilde N$ is connected. Let $D$ be the additive
subgroup of $\tilde{\fr n}$ generated by $\log H$\,. By
\cite{Ra}\;Th.\,2.12
(see the detailed version on p.\,34), $D$ is discrete and clearly
$G$-invariant. Considering
$\widetilde N_1 = \widetilde N/Z(\widetilde N)$ as above, it follows from
Lemma~\ref{le43} (and the inductive assumption) that $D$ is $K$-invariant.
$K$~connected
implies that $K$ acts trivially on $D$, hence it is trivial on $H$.
\\[.6mm plus.6mm]
If $\widetilde N$ is not torsion free, let $P$ be the maximal compact normal
subgroup. Then
(passing to $\widetilde N/P$), it follows that $PH$ is $K$-invariant. By
Corollary\;\ref{cor42}\,(a), $K$ acts semisimply on $PH$ and $P$\,.
Since $P^0$ is abelian, $K$ acts trivially on $P^0$.
$PH$ is isomorphic to a quotient of $P \rtimes H$, hence
$(PH)^0 = P^0 H^0=H^0 P^0$.
For $x\in H$ we have by  Corollary\,4.2(b), $x=y\,x_0$ with
$y\in H^0,\,k\circ x_0=x_0$ for all $k\in K$\,. Then we get
$(k\circ x)x^{-1}=(k\circ y)y^{-1}$, hence $k\circ x\in H$\,.
\vspace{-1mm plus .5mm}
\item[$(\delta)$] \;Next, we prove (a) and (b). First, we assume that
\,$Z(\widetilde N)\mspace{2mu}GK$ is dense in $\widetilde G$\,. By~(d),
$N$ is normal in $\widetilde G$\,. Thus, taking
\,$N' = N$\,, $\widetilde G_1 = \widetilde N/N' \rtimes K$, it follows from
$(\beta)$ and Corollary\;\ref{cor42}\,(c) that
\,$\widetilde N = N\widetilde M$ and that ($G_1$ denoting
the image of $G$ in~$\widetilde G_1$) \,$G_1K$ is abelian. This implies
that $\widetilde G_1$ is abelian, thus
\,$[\widetilde G, \widetilde G] \subseteq N$, proving (b).
Furthermore, by  (a) and (b) of Corollary\;\ref{cor42},
$N = N^0(\widetilde M\cap N)$, thus
$\widetilde N = N^0\widetilde M$\,.\pagebreak
\\[-1mm plus.2mm]
In the general case, it follows from the induction hypothesis (see
$(\gamma)$\,)
that \,$\widetilde G_2 = {\bigl(Z (\widetilde N)\mspace{2mu}GK\bigr)}^-$ is a
$K$-invariant subgroup of $\widetilde G$\,, containing $G$\,. Put
\,$\widetilde N_2 = \widetilde G_2 \cap \widetilde N$, then
$\widetilde G_2 = \widetilde N_2 \rtimes K$ and from the
special case above, we get that $G$ and $N$ are $K$-invariant and that
$\widetilde N_2 \subseteq N^0\widetilde M$\,. Now, if $\widetilde N$
is torsion free, we
can (passing to $\widetilde N_{\R}$) assume that it is connected as well.
By (c), $N^0$ is normal in $\widetilde N$, hence $N^0\widetilde M$ is
a connected subgroup of $\widetilde N$\,. $\widetilde N_2$ is clearly
co-compact in $\widetilde N$, hence by \cite{Va}\;Th.\,3.18.2,
\,$N^0\widetilde M = \widetilde N$\,. If $\widetilde N$
is not torsion free, let $P$ be its maximal compact normal subgroup. Then
the previous argument, applied to $\widetilde N/P$ \,(and combined with
Corollary\;\ref{cor42}\,(c)\,), gives
\,$\widetilde N = (PN)^0\mspace{1mu}\widetilde M$\,.
As noted in $(\gamma)$, $(PN)^0=N^0P^0$ and $P\subseteq\widetilde M$\,, leading
again to $\widetilde N = N^0\widetilde M$\,. The remaining properties follow
easily.\vspace{-2mm}
\end{proof}

\begin{Cor} \label{cor45}
Let $G, \widetilde G$ be as in Proposition\;\ref{pro44} and assume that $G$ is
closed in~$\widetilde G\,,\ N = G \cap \widetilde N$ torsion free, $K$ a Lie
group. Then $G, N\:(=\!\!H)$ satisfy the assumptions of \ref{di21}. $L$ is
nilpotent, $\iota_G(GK)\subseteq \Aut_1(G)$\,. For $x = y\,z \in L$ with
$y \in \widetilde M,\;z \in K$, we have
\,$\iota_G(y) = \iota_G(x)_u\,,\ \iota_G(z) = \iota_G(x)_s$\,.
\\
Put $K' = K \cap \widetilde N G$. Then \,$\Cal C_0 =\Cal C = \iota_G(K')$
satisfy the properties of Proposition\;\ref{pro215}. One gets
\,$L = G_{\Cal C}= L_{\Cal C}$\,, $\Cal C = s(L)$ \,and
\,$\overline{\Cal C}=\iota_G(K)$.
\vspace{-2.5mm}
\end{Cor}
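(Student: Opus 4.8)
The plan is to verify the six assertions in the order listed, throughout exploiting the decompositions $G=N^0L$, $\widetilde G=N^0\widetilde L$, $\widetilde N=N^0\widetilde M$ and the $K$-invariance of $G$ and $N$ from Proposition~\ref{pro44}(a), together with the crucial observation that, $K$ being abelian and fixing $\widetilde M$ pointwise, $\widetilde L=\widetilde M\times K$ is a \emph{direct} product. So I would first check the hypotheses of \ref{di21} for the pair $(G,N)$, i.e.\ with $H=N$: $G$ is a Lie group as a closed subgroup of the Lie group $\widetilde G=\widetilde N\rtimes K$; since $\widetilde G/\widetilde N\cong K$ is abelian, $[G,G]\subseteq\widetilde N\cap G=N$, so $[G,G]^-\subseteq N$; $N$ is torsion free by hypothesis, and nilpotent and compactly generated as a closed subgroup of the compactly generated nilpotent group $\widetilde N$; and $G/G^0$ is nilpotent because $L\subseteq\widetilde L=\widetilde M\times K$ is nilpotent (a direct product of the nilpotent $\widetilde M$ and the abelian $K$), so $G/N^0$, hence its quotient $G/G^0$, being quotients of $L$, are nilpotent. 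This simultaneously yields the assertion that $L$ is nilpotent.

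Secondly, for $x=yz\in L$ with $y\in\widetilde M$, $z\in K$: the key point is $\iota_z(y)=z\circ y=y$ and $z$ central in $K$, whence $\iota_z\circ\iota_y\circ\iota_z^{-1}=\iota_{z\circ y}=\iota_y$, so $\iota_y$ and $\iota_z$ commute, and each commutes with $\iota_x=\iota_y\circ\iota_z$. One then checks $\iota_y,\iota_z\in\Aut_1(G)$: both fix $G/N$ pointwise (since $[GK,G]\subseteq\widetilde N\cap G=N$); $\iota_y$, a restriction of an inner automorphism of the nilpotent group $\widetilde N$ to the invariant subgroup $N$, is unipotent on $N/N^0$ hence on $G$; $\iota_z$ is trivial on $N/N^0$ (as $N=N^0(\widetilde M\cap N)$ and $K$ fixes $\widetilde M$) and semisimple in the sense of Definition~\ref{def23} ($d\iota_z$ acts on $\fr g\subseteq\tilde{\fr n}\oplus\fr k$ as the semisimple operator on $\tilde{\fr n}$ plus the identity on $\fr k$, and $L\subseteq G_{\iota_z}$ gives $G=N^0L\subseteq N^0G_{\iota_z}$). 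Uniqueness of the Jordan decomposition in $\Aut_1(G)$ (see \ref{di25} and Lemma~\ref{le24}) then forces $\iota_G(x)_u=\iota_y$, $\iota_G(x)_s=\iota_z$, so $s(x)=\iota_z$ where $z$ is the $K$-component of $x$. For $\iota_G(GK)\subseteq\Aut_1(G)$, write $g\in GK$ as $g=n\,y\,z\,k$ with $n\in N^0$, $y\in\widetilde M$, $z,k\in K$; then $\iota_g|_{N/N^0}=\iota_n|_{N/N^0}\circ\iota_y|_{N/N^0}\circ\iota_z|_{N/N^0}\circ\iota_k|_{N/N^0}=\iota_y|_{N/N^0}$ (the first factor trivial because $N^0\triangleleft N$, the last two trivial as above, $\iota_y$ unipotent), and $\iota_g$ fixes $G/N$ pointwise, so $\iota_g\in\Aut_1(G)$.

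Thirdly I would identify $\Cal C:=s(L)$. By the second step $s(L)$ is precisely the set of $\iota_z$ with $z$ a $K$-component of an element of $L$, and the projection of $L$ to $K$ equals that of $G$ (as $N^0\subseteq\widetilde N$), namely $K'=K\cap\widetilde NG$; so $\Cal C=\iota_G(K')$, a commuting subset of $\Cal S$, and one may take $\Cal C_0=\Cal C$. Since $x=(n_x,k_x)\in G$ satisfies $\iota_z(x)=x$ iff $z\circ n_x=n_x$, density of $K'$ in $K$ (from $\widetilde NG$ dense) together with continuity gives $G_{\Cal C}=G\cap(\widetilde M\rtimes K)=G\cap\widetilde L=L$, hence $L_{\Cal C}=\nil(G_{\Cal C})=\nil(L)=L$; thus $L=G_{\Cal C}=L_{\Cal C}$ and $s(L)=\Cal C$. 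As $G_{\Cal C}=L_{\Cal C}$ for the commuting set $\Cal C\subseteq\Cal S$, the contrapositive of the special case in Remark~\ref{rem221}(d) shows $\dim(G_{\Cal C}\cap N^0)$ is minimal, so $\Cal C_0$ does satisfy the hypotheses, hence all the conclusions, of Proposition~\ref{pro215}. Finally $\iota_G(K')$ is dense in $\iota_G(K)$ by continuity, while $\iota_G(K)$ is closed (its elements being semisimple with eigenvalues of modulus $1$, so that $\overline{\Cal C}$ is compact, as in the proof of Theorem~\ref{th2}), giving $\overline{\Cal C}=\iota_G(K)$.

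I expect the main obstacle to be the bookkeeping in the Jordan-decomposition step: certifying that $\iota_y=\iota_G(x)_u$, $\iota_z=\iota_G(x)_s$ really is the decomposition in $\Aut_1(G)$, which rests entirely on the commutation $\iota_z\iota_y=\iota_y\iota_z$ coming from $z\circ y=y$, and on checking carefully that $\iota_z$ is semisimple in the precise sense of Definition~\ref{def23}. The second delicate point is the passage ``$G_{\Cal C}=L_{\Cal C}\Rightarrow\dim(G_{\Cal C}\cap N^0)$ minimal'' via Remark~\ref{rem221}(d), which is exactly what certifies that the set $\Cal C_0=\iota_G(K')$ is a legitimate input to Proposition~\ref{pro215}.
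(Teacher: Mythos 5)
Your proof follows essentially the same route as the paper's (very terse) argument and fills in correctly the details that the paper leaves as ``easy'': the verification of the hypotheses of \ref{di21} via $[G,G]\subseteq \widetilde N\cap G=N$ and $G/N^0\cong L/(L\cap N^0)$ with $L\subseteq\widetilde M\times K$ nilpotent; the identification of the Jordan decomposition of $\iota_G(x)$ for $x=yz\in L$, resting exactly where it should, namely on $z\circ y=y$ (so $\iota_y,\iota_z$ commute), on $\iota_y$ being unipotent, on $\iota_z$ being semisimple in the sense of Definition~\ref{def23} checked through Lemma~\ref{le22}\,(ii) with $G=N^0L\subseteq N^0G_{\iota_z}$, and on the uniqueness statement of \ref{di25}; the computation $G_{\Cal C}=G\cap\widetilde L=L=L_{\Cal C}$ from density of $K'$ in $K$; and the appeal to Remark~\ref{rem221}\,(d) to certify minimality of $\dim(G_{\Cal C_0}\cap N^0)$, which is precisely the reference the paper gives for the maximality of $\Cal C$.

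The one step that does not hold up is your justification of $\overline{\Cal C}=\iota_G(K)$. First, there is no hypothesis here forcing the eigenvalues of the elements of $\Cal C$ to have modulus $1$: that fact came from polynomial growth via \cite{Lo1} in the proof of Theorem~\ref{th2}, and neither Proposition~\ref{pro44} nor Corollary~\ref{cor45} assumes polynomial growth of $G$ (the action of $K$ is only assumed semisimple). Second, and more importantly, even if $\overline{\Cal C}$ were compact this would not make the subgroup $\iota_G(K)$ closed: a continuous image of the connected abelian Lie group $K$ can wind densely in a torus of automorphisms (e.g.\ $K=\R$ acting on $\C^2$ by $t\circ(z_1,z_2)=(e^{it}z_1,e^{i\alpha t}z_2)$ with $\alpha$ irrational and $G=\widetilde G$). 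What density of $K'$ and continuity of $z\mapsto\iota_G(z)$ actually give is the inclusion $\iota_G(K)=\iota_G(\overline{K'})\subseteq\overline{\iota_G(K')}=\overline{\Cal C}$; the reverse inclusion needs $\iota_G(K)$ to be closed in $\Aut(G)$, which is automatic when $K$ is compact (the situation in every application of the corollary, e.g.\ in Corollary~\ref{cor48} and in the proof of Theorem~\ref{th3}) but requires a separate argument, or an added hypothesis, in the generality in which the corollary is stated. You should either restrict to $K$ compact at this point or replace the equality by the inclusion you can actually prove.
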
\noindent
Taking $G = \widetilde G$, similar
statements hold for $\widetilde G$ when $\widetilde N$ is torsion free.
As before, $\iota_G(x)$ denotes the restriction of the
inner automorphism
$\iota_{\widetilde G}(x)$ to $G$\,.\vspace{-1mm}
\begin{proof}
\,$L < \widetilde M\times K$ is clearly nilpotent, $[G,G]\subseteq N$ and
$G/N^0 \cong L/(L \cap N^0)$ \,(by Proposition\;\ref{pro44}\,(a)\,). The
remaining properties are easy. $L = L_{\Cal C} = G_{\Cal C}$ \,follows from
density of $K'$ in $K$. Maximality of $\Cal C$ in $s(G)$ follows from
Remark\;\ref{rem221}\,(d).
\end{proof}

\begin{Cor} \label{cor45a}
Let \,$G, \widetilde G$ be as in Proposition\;\ref{pro44} and assume that
$\widetilde N$ is torsion free.
Then
\;$[\widetilde N,\widetilde N]\cap[G,G]\
(\subseteq\,[\widetilde N,\widetilde N]\cap N)$ is a co-compact subgroup
(not necessarily closed) of
$[\widetilde N,\widetilde N]$\,.
\vspace{-1mm}\end{Cor}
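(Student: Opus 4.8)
The plan is to reduce to the case that $\widetilde N$ is connected and simply connected, and then to read off $[G,G]$ from the decomposition in Proposition~\ref{pro44}, separating the ``unipotent'' directions of $\widetilde N$ (which sit inside $N^0$ and are governed by the $K$-action) from the ``semisimple'' directions (which are carried by $\widetilde M$ and are governed by the classical fact that a co-compact subgroup of a nilpotent Lie group has co-compact commutator subgroup). Throughout, ``co-compact (not necessarily closed)'' is read, via \cite{HR}\;(5.24\,b), as compactness of the Hausdorff quotient by the closure.

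\emph{Reduction.} By \ref{di12} the torsion-free $\widetilde N$ is a closed co-compact subgroup of its Malcev completion $\widetilde N_{\R}$; the $K$-action extends uniquely, stays semisimple, and $G,\,\widetilde N_{\R}\rtimes K$ still satisfy the hypotheses of Proposition~\ref{pro44} (here $G\cap\widetilde N_{\R}=G\cap\widetilde N=N$, $\widetilde N_{\R}/\widetilde N$ is compact, and $\widetilde N_{\R}G$ is dense since its image in $K$ already contains the dense image of $G$). Since $[G,G]\subseteq[\widetilde G,\widetilde G]\subseteq\widetilde N$ forces $[G,G]\subseteq N$, one has $[\widetilde N_{\R},\widetilde N_{\R}]\cap[G,G]=[\widetilde N,\widetilde N]\cap[G,G]$, and as $[\widetilde N,\widetilde N]$ is co-compact in its Malcev completion $[\widetilde N_{\R},\widetilde N_{\R}]$, co-compactness over $\widetilde N_{\R}$ and over $\widetilde N$ are equivalent. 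So assume from now on that $\widetilde N$ is connected, simply connected.

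\emph{Structure and the two pieces.} By Proposition~\ref{pro44}(a), $G=N^0L$, $\widetilde N=N^0\widetilde M$; one checks $N=N^0(N\cap\widetilde M)$ and that the projection $L_{\widetilde M}$ of $L\subseteq\widetilde M\times K$ to $\widetilde M$ equals $\widetilde M\cap GK$, hence is co-compact in $\widetilde M$; by Proposition~\ref{pro44}(c), $N^0\lhd\widetilde G$. Since $K$ is abelian and fixes $\widetilde M$ pointwise, a direct computation in $\widetilde N\rtimes K$ gives $[x,x']=[y,y']$ for $x=(y,z),x'=(y',z')\in L$, so $[L,L]=[L_{\widetilde M},L_{\widetilde M}]\subseteq\widetilde M$, and, using $N^0\lhd\widetilde G$, that $[G,G]$ is generated by $[N^0,G]\subseteq N^0$ together with $[L_{\widetilde M},L_{\widetilde M}]$. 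Now apply $q\colon\widetilde N\to\widetilde N/N^0$ (a Lie-group quotient; $q(N)$ is closed because $N^0\subseteq N$, and $K$ acts trivially on $\widetilde N/N^0=q(\widetilde M)$): then $q([\widetilde N,\widetilde N])=[\widetilde N/N^0,\widetilde N/N^0]$ and $q([G,G])=[q(L_{\widetilde M}),q(L_{\widetilde M})]$ with $q(L_{\widetilde M})$ co-compact in $\widetilde N/N^0$, so by the classical fact (a short induction on nilpotency class, cf.\ \cite{Ra}\;Ch.\,II) $q([G,G])$ is co-compact in $q([\widetilde N,\widetilde N])$. Inside $N^0$ the $K$-action is semisimple, so --- as in step~$(\alpha)$ of the proof of Proposition~\ref{pro44} and in Corollary~\ref{cor42} --- the $(\ad K)$-directions of $N^0$ are exhausted by commutators $[n,x]$ with $n\in N^0$ and $x\in L$ of nontrivial $K$-part, while the remaining directions of $[\widetilde N,\widetilde N]\cap N^0$ come from $[N^0,N^0]$ and $[N^0,N\cap\widetilde M]$, which are co-compact in the corresponding subgroups again by the classical fact together with $N=N^0(N\cap\widetilde M)$; this yields that $[\widetilde N,\widetilde N]\cap[G,G]\cap N^0$ is co-compact in $[\widetilde N,\widetilde N]\cap N^0$.

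\emph{Gluing, and the main obstacle.} One now combines the two pieces along $1\to[\widetilde N,\widetilde N]\cap N^0\to[\widetilde N,\widetilde N]\to q([\widetilde N,\widetilde N])\to1$. The delicate point --- the one I expect to need the most care --- is that the \emph{image} of $[\widetilde N,\widetilde N]\cap[G,G]$ in $q([\widetilde N,\widetilde N])$ must be co-compact, not merely that of $[G,G]$: given $g\in[G,G]$ one has to move $g$, modulo $N^0$, back into $[\widetilde N,\widetilde N]$ by an element of $[G,G]$, absorbing the (bounded, by the previous step) discrepancy into $[\widetilde N,\widetilde N]\cap N^0\cap[G,G]$. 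With this surjectivity-up-to-compact and the $N^0$-part both established, a standard extension argument --- if $A\subseteq B$ with $A\cap N^0$ co-compact in $B\cap N^0$ and the image of $A$ co-compact in the image of $B$, then $A$ is co-compact in $B$, all read via closures and \cite{HR}\;(5.24\,b) --- finishes the proof.
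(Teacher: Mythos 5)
Your overall skeleton --- reduce to $\widetilde N$ connected and simply connected, use $G=N^0L$ and $\widetilde N=N^0\widetilde M$ from Proposition~\ref{pro44}\,(a), observe that $[L,L]=[L_{\widetilde M},L_{\widetilde M}]\subseteq[\widetilde M,\widetilde M]\cap[G,G]$, and invoke the classical fact that the commutator subgroup of a co-compact subgroup of a nilpotent Lie group is co-compact in the commutator subgroup --- is the same as the paper's. The genuine gap is your treatment of the $N^0$-directions. You split along $1\to[\widetilde N,\widetilde N]\cap N^0\to[\widetilde N,\widetilde N]\to q([\widetilde N,\widetilde N])\to 1$ and claim that $[\widetilde N,\widetilde N]\cap[G,G]\cap N^0$ is co-compact in $[\widetilde N,\widetilde N]\cap N^0$. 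But (i) your description of $[\widetilde N,\widetilde N]\cap N^0$ as consisting of $(\ad K)$-directions together with $[N^0,N^0]$ and $[N^0,N\cap\widetilde M]$ omits $[\widetilde M,\widetilde M]\cap N^0$ (one has $[\widetilde N,\widetilde N]\cap N^0=([\widetilde M,\widetilde M]\cap N^0)\,[\widetilde N,N^0]$, and co-compactness of $[L,L]$ in $[\widetilde M,\widetilde M]$ does not pass to the intersection with a possibly ``irrational'' subgroup such as $N^0$); (ii) the assertion that the $(\ad K)$-directions of $N^0$ are ``exhausted by commutators $[n,x]$ with $x\in L$'' is precisely the nontrivial point and is only gestured at --- it amounts to the identity $[\widetilde G,N^0]=[G,N^0]$; and (iii) the fiber statement you set out to verify is in any case stronger than needed and need not follow from the corollary itself, since co-compactness does not restrict to arbitrary closed connected subgroups (compare $\Z^2\subseteq\R^2$ and an irrational line).

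The paper sidesteps all of this by factoring out a better normal subgroup. It first proves $[\widetilde G,N^0]=[G,N^0]$: by Proposition~\ref{pro44}\,(c) the connected $G$-invariant group $[G,N^0]$ is normal in $\widetilde G$, and after factoring by it $N^0$ becomes central in $G$, hence (by the same co-compactness argument via \cite{Ra} as in the proof of \ref{pro44}\,(c)) central in $\widetilde G$. Consequently $[\widetilde N,N^0]\subseteq[\widetilde G,N^0]=[G,N^0]\subseteq[G,G]\cap[\widetilde N,\widetilde N]$, so one may factor by $[\widetilde N,N^0]$ \emph{without losing anything}: the kernel lies entirely inside the subgroup whose co-compactness is to be proved, so no fiber condition has to be checked. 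In the quotient $N^0$ is central in $\widetilde N$, hence $[\widetilde N,\widetilde N]=[\widetilde M,\widetilde M]$ exactly, and the classical fact applied to the co-compact subgroup $\widetilde M\cap LK$ of $\widetilde M$ finishes the proof. To repair your write-up, replace the splitting along $N^0$ by the splitting along $[\widetilde N,N^0]$ and supply the argument for $[\widetilde G,N^0]=[G,N^0]$.
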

\begin{proof}
Proposition\;\ref{pro44}\,(b) gives \,$[GK, GK] \subseteq N$\,.
If $N^0$ is central in $G$\,, it follows similarly as in the proof of
Proposition\;\ref{pro44}\,(c) that
$N^0$ is central in $\widetilde G$\,. In the general case (using that
$[G, N^0]$ is normal in $\widetilde G$ by Proposition\;\ref{pro44}\,(c)\,),
this implies that $[\widetilde G, N^0]=[G, N^0]$ holds.
Thus we can factor by
$[\widetilde N,N^0]$\,. Then $N^0$ is
central in $\widetilde N$\,. By Proposition\;\ref{pro44}\,(a), we get that
\,$[\widetilde N,\widetilde N]=[\widetilde M,\widetilde M]$ \;and that
\,$\widetilde M \cap GK=\widetilde M \cap LK$ is co-compact in
$\widetilde M$\,. Since $L\subseteq \widetilde M\times K$, it follows
that \,$[L,L]=[LK,LK]=[\widetilde M \cap LK,\widetilde M \cap LK]$ is
co-compact in $[\widetilde M,\widetilde M]$,
giving the desired conclusion.\vspace{-1mm}
\end{proof}

\begin{Lem}	 \label{le46}	     % L.4.7
Let $\widetilde N$ be a compactly generated torsion free nilpotent Lie group,
$K$~a connected compact Lie group with a continuous action on $\widetilde N$,
put \,$\widetilde G = \widetilde N\rtimes K$. Then the following holds.
\\
{\rm (i)} The action of $K$ is semisimple \,(as defined after
Proposition\;\ref{pro44})\,.
\\
{\rm (ii)} If \,$G$ is a closed subgroup of $\widetilde G$ such that
\,$\widetilde N G$ is dense in $\widetilde G$, then
\,$[K,K] \subseteq \widetilde N G^0$\,.
\end{Lem}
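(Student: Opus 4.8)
The plan is to prove the two assertions separately: (i) by an averaging argument combined with the vanishing of a cocycle, and (ii) by a reduction that feeds on (i), on the closedness of $G$, and on the nilpotency of $\widetilde N$ (this is also roughly why (ii) is stated: it lets one reduce situations as in Proposition~\ref{pro44} to abelian $K$).

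For (i): first, $K$ acts on the Lie algebra $\tilde{\fr n}$ of $\widetilde N$ (equivalently of $\widetilde N^0$) through the differentials, giving a continuous homomorphism $K\to\GL(\tilde{\fr n})$ with relatively compact image; averaging an inner product produces a $K$-invariant one, so each $d\sigma$ with $\sigma\in K$ is orthogonal for it, hence semisimple. Next, $\widetilde N/\widetilde N^0$ is a compactly generated discrete group, so finitely generated, so $\Aut(\widetilde N/\widetilde N^0)$ is countable and discrete; the induced continuous homomorphism from the connected group $K$ is therefore trivial, i.e. $\sigma(x)x^{-1}\in\widetilde N^0$ for all $\sigma\in K$, $x\in\widetilde N$. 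Finally, fix $\sigma\in K$ and $x\in\widetilde N$ and set $T=\overline{\langle\sigma\rangle}$, a compact abelian subgroup with $\widetilde N_\sigma=\widetilde N_T$ (by continuity of the action); the map $t\mapsto t(x)x^{-1}$ is a continuous $1$-cocycle $T\to\widetilde N^0$, and since $T$ is compact and $\widetilde N^0$ is connected, simply connected and nilpotent (torsion free, as a subgroup of $\widetilde N$), every such cocycle is a coboundary — by induction on the nilpotency class, the abelian case being handled by averaging the cocycle over $T$. Hence $t(x)x^{-1}=t(w)w^{-1}$ for some $w\in\widetilde N^0$, so $w^{-1}x\in\widetilde N_T=\widetilde N_\sigma$ and $x\in\widetilde N^0\widetilde N_\sigma$; thus $\widetilde N=\widetilde N^0\widetilde N_\sigma$, which with the first step makes $\sigma$ semisimple, and as $\sigma$ is arbitrary the action of $K$ is semisimple.

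For (ii): write $\pi\colon\widetilde G\to\widetilde G/\widetilde N=K$ for the projection. Since $\widetilde N=\ker\pi$, the assertion $[K,K]\subseteq\widetilde NG^0$ is equivalent to $[K,K]\subseteq\pi(G^0)$, and $\pi(G)$ is dense in $K$. Put $\bar S=\overline{\pi(G^0)}$; as $\pi(G)$ is dense and normalizes $\pi(G^0)$, $\bar S$ is a closed connected normal subgroup of $K$. One first checks $[\bar S,\bar S]\subseteq\pi(G^0)$: the derived group $S'=[\pi(G^0),\pi(G^0)]$ is the connected subgroup of $K$ with Lie algebra $[\fr s,\fr s]$, $\fr s$ being the Lie algebra of $\pi(G^0)$; as $[\fr s,\fr s]$ is a compact semisimple Lie algebra it integrates to a compact, hence closed, subgroup $S'\subseteq\pi(G^0)$, and $\bar S/S'$ carries a dense abelian image of $\pi(G^0)$, so it is abelian and $[\bar S,\bar S]\subseteq S'\subseteq\pi(G^0)$. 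Consequently, once $[K,K]\subseteq\bar S$ is known, normality of $\bar S$ together with perfectness of $[K,K]$ forces $[\bar S,\bar S]=[K,K]$, whence $[K,K]\subseteq\pi(G^0)$; so everything reduces to showing that $K/\bar S$ is abelian.

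For that I would pass to $\widetilde G/\widetilde N^0\bar S$. Since $\bar S$ is connected it acts trivially on, and centralizes, $\widetilde N/\widetilde N^0$ in this quotient, so $\widetilde N^0\bar S$ is a closed normal subgroup and $\widetilde G/\widetilde N^0\bar S\cong(\widetilde N/\widetilde N^0)\times(K/\bar S)$, a direct product; write $\psi$ for the quotient map. Then $\psi(G^0)=\{e\}$, so $\psi(G)$ is a quotient of $G/G^0$ — in particular a countable group — with dense projection to $K/\bar S$ and with projection to $\widetilde N/\widetilde N^0$ lying in a finitely generated nilpotent subgroup; moreover, since $G$ is closed in the Lie group $\widetilde G$ one has $\widetilde G/\widetilde G^0\cong\widetilde N/\widetilde N^0$, so $G/(G\cap\widetilde G^0)$ embeds into the nilpotent group $\widetilde N/\widetilde N^0$. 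Feeding this in (after an induction on $\dim\widetilde G$ in which one passes to $\widetilde G^0=\widetilde N^0\rtimes K$ and $G\cap\widetilde G^0$, keeping the action semisimple by (i), so as to reduce to connected $\widetilde N$), one obtains that $K/\bar S$ carries a dense image of a nilpotent group; and a compact connected Lie group possessing a dense nilpotent subgroup has nilpotent, hence (being reductive) abelian, Lie algebra, so $K/\bar S$ is abelian. The genuinely delicate point, which I would treat with care in that induction, is exactly this last reduction — controlling the image of $G$ rather than of $G^0$, i.e. relating $\pi(G\cap\widetilde G^0)$ to $\pi(G^0)$ and securing the density hypothesis after passing to $\widetilde G^0$.
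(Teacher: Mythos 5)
Your proof of part (i) is correct and follows essentially the same route as the paper: semisimplicity of $d\sigma$ on $\tilde{\fr n}$ from compactness (averaging an inner product), triviality of the action on the discrete quotient $\widetilde N/\widetilde N^0$ from connectedness of $K$, and then the decomposition $\widetilde N=\widetilde N^0\widetilde N_\sigma$; the paper obtains this last step by quoting the mechanism of Lemma~\ref{le22}\,(iii)$\Rightarrow$(i) (ultimately Wang's (5.1)/(5.6)), whereas your cocycle--coboundary argument over the compact group $T=\overline{\langle\sigma\rangle}$ is a self-contained and correct substitute.

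Part (ii), however, has a genuine gap, and it sits exactly where you flag the ``delicate point''. Your reduction to showing that $K/\bar S$ is abelian is fine, but the only structural control you extract on the group that maps densely into $K/\bar S$ is the embedding $G/(G\cap\widetilde G^0)\hookrightarrow\widetilde N/\widetilde N^0$. That controls $G$ only modulo the \emph{open} subgroup $G\cap\widetilde G^0$, and after your own reduction to $\widetilde N$ connected (so $\widetilde G=\widetilde G^0$) it says nothing at all: the group that actually projects densely onto $K/\bar S$ is then the countable discrete group $(G\cap\widetilde G^0)/G^0=G/G^0$, for which no nilpotency is available from these considerations. Density of a countable image cannot conclude by itself, since a compact connected semisimple group such as $SU(2)$ contains dense finitely generated free subgroups; so some genuine input forcing $G/G^0$ to be virtually nilpotent (or at least amenable/distal) is indispensable. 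The paper supplies it by observing that $\widetilde G$, hence the closed subgroup $G$ and its quotient $G/G^0$, has polynomial growth (\cite{Gu}\;Th.\,I.3,\,I.4), so that Gromov's theorem makes $G/G^0$ virtually nilpotent after passing to a finite-index subgroup; only then does the argument in $[K,K]/Z_1$ (a dense image of a nilpotent group in a compact semisimple group must be finite, hence trivial) go through. Without this step --- or an equivalent Auslander--Wang--Zassenhaus-type discreteness argument, which is what the paper's reference to \cite{Ri}\;Th.\,4.3 alludes to --- your proof of (ii) does not close.
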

\begin{proof}
(i): Connectedness of $K$ implies that the action on
$\widetilde N/\widetilde N^0$ is trivial. For $x \in K$, the restricted
transformation on $\widetilde N^0$ is semisimple (by compactness). Then as in
the step (iii)$\Rightarrow$(i) of the proof of Lemma\;\ref{le22},
semisimplicity on $\widetilde N$ follows easily.
\\[-.5mm]
(ii): This is related to a theorem of Auslander, Wang and Zassenhaus
(compare the proof of \cite{Ri}\;Th.\,4.3). $\widetilde G$ has polynomial
growth, hence
the same is true for $G$ and $G/G^0$ (\cite{Gu}\;Th.\,I.3,\,I.4). Thus by
Gromov's theorem (passing to a nilpotent subgroup of finite index), we can
assume that $G/G^0$ is nilpotent. Put
\,$K' = \linebreak(\widetilde N G) \cap K\,,\
K'' = (\widetilde N G^0)\cap K$\,. We have
\,$K = [K,K]\,Z(K)^0$ and \,$Z_1 =\linebreak\ [K,K]\cap Z(K)^0$
is finite (\cite{HM}\;Cor.\,6.16). Let
\,$\varphi\!: K \to K/Z(K) \cong [K,K] /Z_1$ be the quotient mapping.
$G^0$ maps continuously onto $K''$, hence 
$\varphi (K'')$ is an analytic subgroup of the semisimple group \,$[K,K]/Z_1$
and $K'$-invariant. Since $K'$ is dense in $K$, it follows (considering the
Lie algebra)
that $\varphi (K'')$ is normal and that it is closed (\cite{Va}\;Th.\,4.11.6).
$K'/K''$ is isomorphic to a quotient of \,$G/G^0$, hence it is nilpotent.
$[K, K]$
being semisimple, it follows that $\varphi (K'')$ has finite index in
$\varphi (K')$, hence $\varphi (K')$ is closed as well and dense, giving 
\,$\varphi (K') = \varphi (K'') = [K,K]/Z_1$, thus
\,$[K, K] \subseteq K'' Z_1$\,. $K''$ being an analytic subgroup of $K$, we
arrive (considering the Lie algebra) at \,$[K,K] \subseteq K''$\,.
\vspace{-.5mm}
\end{proof}

\begin{Pro}  \label{pro47}    % Prop.4.8
Let $\widetilde N$ be a compactly generated torsion free nilpotent Lie group,
$K$ a compact Lie group with a continuous action on $\widetilde N$. Let $G$
be a closed subgroup of \,$\widetilde G = \widetilde N\rtimes K$ such that
\,$\widetilde N G$ is dense in $\widetilde G$ and \,$\widetilde G/G$ compact.
Put \,$N = G \cap \widetilde N$. Then the following properties hold.
\\[-.1mm]
{\rm (a)} If $R_K$ denotes the (non-connected) radical of $K$, then
\,$R = G \cap (\widetilde N \rtimes R_K)$ is the radical of $G$\,.
\\[-.3mm]
{\rm (b)} For \,$K_1 = Z(K^0)^0$, the groups
\,$\widetilde G_1 = \widetilde N\rtimes K_1\,,\
G_1 = G\cap\widetilde G_1$ satisfy the assumptions of
Proposition\,\ref{pro44}. Every
connected semisimple subgroup of $\widetilde G$ is contained in~$G$
\,(in particular, $[K^0, K^0]$ is a Levi subgroup of $G^0$).
$N^0$ is a normal subgroup of $\widetilde{G}$ \,and if
$\widetilde N$ is connected, then $N_{\R}$ is normal as well.
\\ 
{\rm (c)} Assume that the action of $K$ is faithful. Then $G$ has no
non-trivial compact normal subgroups and $N$ is the nilradical of $G$\,. If
$x \in\widetilde G$ normalizes $N$ and acts unipotently on $N$,
then $x \in \widetilde N$\,. We have
\,$C_{\widetilde G}(G)= Z(\widetilde G)\subseteq Z(\widetilde N)$\,.
\\[.1mm] 
{\rm (d)} Assume that $K$ is connected. Then \,$K\cap\mspace{1mu} G$ is a
maximal compact subgroup of~$G$\,. If $C$ is any compact subgroup of
$\widetilde G$\,, there exists $n \in N^0$ such that
\,$n\,Cn^{-1} \subseteq K$,
\,$G$~is $C$-invariant,\pagebreak \,$[G^0C,\widetilde N] \subseteq N^0$.\vspace{-1mm}
\\
{\rm (e)} Assume that $K$ is abelian. Then $[G,G]$ \,($\subseteq N$) is
co-compact in $[\widetilde G, \widetilde G]^-$. Thus if $\widetilde N$ is
connected,
\;$[\widetilde N, \widetilde N] \subseteq [\widetilde G, \widetilde G] =
({[G,G]}^-)_{\R}\subseteq N_{\R}$\,.
\end{Pro}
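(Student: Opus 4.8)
The plan is to carry out everything inside the decomposition furnished by Proposition~\ref{pro44}, which applies because the $K$-action is semisimple by Lemma~\ref{le46}(i): so $G=N^0L$, $\widetilde G=N^0\widetilde L$, $\widetilde N=N^0\widetilde M$ with $\widetilde M=\{x\in\widetilde N:k\circ x=x\ \forall k\in K\}$, $\widetilde L=\widetilde M\times K$, $L=G\cap\widetilde L$, with $\widetilde M/(\widetilde M\cap GK)$ compact, $G$ and $N$ $K$-invariant, and $G$ compactly generated (co-compact in the compactly generated $\widetilde G$), hence a generalized $\overline{FC}$-group of polynomial growth. For (a), $\widetilde N\rtimes R_K$ is a closed solvable normal subgroup of $\widetilde G$ (as $R_K$ is characteristic in $K$), so $R=G\cap(\widetilde N\rtimes R_K)$ is a closed solvable normal subgroup of $G$; for maximality note that $G/R$ maps continuously and injectively, with dense image, into $\widetilde G/(\widetilde N\rtimes R_K)\cong K/R_K$, which is compact semisimple, so the closure of any solvable normal subgroup of $G/R$ is a solvable normal subgroup of $K/R_K$, hence trivial. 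The assertions in (b) that $N^0$ is normal in $\widetilde G$ and, when $\widetilde N$ is connected, that $N_{\R}$ is normal, both come from Proposition~\ref{pro44}(c): $N^0$ is a connected closed $G$-invariant subgroup of the torsion-free $\widetilde N$ (being characteristic in the normal $N$), and $N_{\R}$ is the connected subgroup whose Lie algebra is the $\operatorname{Ad}(G)$-invariant real span of $\log N$.

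For the remaining part of (b), I would first verify the hypotheses of Proposition~\ref{pro44} for $(G_1,\widetilde G_1)$ with $\widetilde G_1=\widetilde N\rtimes K_1$, $K_1=Z(K^0)^0$: the point is density of the projection of $G_1$ to $K_1$ and compactness of $\widetilde G_1/G_1$, and here one uses Lemma~\ref{le46}(ii), i.e.\ $[K,K]\subseteq\widetilde N G^0$, to split off the semisimple direction $K/K_1$ and reduce both statements to the toral direction $K_1$, where they follow from $\widetilde N G$ dense and $\widetilde G/G$ compact. For the statement that every connected semisimple subgroup $S\subseteq\widetilde G$ lies in $G$: a connected nilpotent normal subgroup of the perfect group $S$ is central, hence trivial, so $S\cap\widetilde N=\{e\}$ and $S$ is the graph of a crossed homomorphism from the compact group $q(S)$ into the nilpotent group $\widetilde N$; such a cocycle is a coboundary (average over Haar measure, induct on the nilpotency class), so after conjugating $S$ by an element of $\widetilde N$ one may assume $S\subseteq[K^0,K^0]$, the Levi factor of $\widetilde G^0$; and $[K^0,K^0]\subseteq G$ again by Lemma~\ref{le46}(ii). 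This ``compact cocycle is a coboundary, plus Lemma~\ref{le46}(ii)'' step — essentially the Auslander--Wang--Zassenhaus phenomenon — is the main obstacle of the whole proposition; the recurring technical difficulty is that ``$\widetilde N G$ dense'' has to be converted into co-compactness and normality statements by combining it with ``$\widetilde G/G$ compact''.

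For (c), assume the $K$-action faithful. If $P\trianglelefteq G$ is compact then $P\cap\widetilde N=\{e\}$ (a torsion-free group has no non-trivial compact subgroup), so $P\cong\overline P:=q(P)\subseteq K$; since $N^0\subseteq G$ normalises $P$, a short computation in the crossed-homomorphism description of $P$ (inducting on the nilpotency class of $\widetilde N$ via $\widetilde N/[\widetilde N,\widetilde N]$) shows $\overline P$ acts trivially on $N^0$, hence — as it also fixes $\widetilde M$ and $\widetilde N=N^0\widetilde M$ — trivially on $\widetilde N$, so $\overline P=\{e\}$ by faithfulness. Then $N$ is nilpotent and normal in $G$, so $N\subseteq\nil(G)$; conversely $G,N$ satisfy the assumptions of \ref{di21} by Corollary~\ref{cor45}, so $\nil(G)=\{x\in G:\iota_x\text{ unipotent}\}$ (Remark~\ref{rem28}), and for such $x$ the element $q(x)\in K$ is both unipotent and (by compactness) semisimple on $\widetilde N^0$, hence trivial there, and trivial on $\widetilde N/\widetilde N^0$ as well, so $q(x)=e$ and $x\in N$: thus $N=\nil(G)$. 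The statement on $x\in\widetilde G$ normalising $N$ and acting unipotently on it follows by a Jordan-decomposition-relative-to-$\widetilde N$ argument (conjugate $x$ by an element of $\widetilde N$ to push its unipotent part into $\widetilde N$, then use semisimplicity of the $K$-action and faithfulness); and $C_{\widetilde G}(G)=Z(\widetilde G)\subseteq Z(\widetilde N)$ follows because $C_{\widetilde G}(\widetilde N)=Z(\widetilde N)$ by faithfulness (a unipotent element of $\widetilde N$ and a semisimple element of $K$ inducing inverse automorphisms of $\widetilde N$ must both be trivial), while $C_{\widetilde G}(G)$ centralises $N^0$ and $L$, which by Proposition~\ref{pro44}(a) suffices to centralise $\widetilde N$ and hence the dense set $\widetilde N G$.

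For (d), with $K$ connected: a compact subgroup $C\subseteq\widetilde G$ has trivial image in the discrete torsion-free group $\widetilde G/\widetilde G^0\cong\widetilde N/\widetilde N^0$, so $C\subseteq\widetilde G^0=\widetilde N^0\rtimes K$; by conjugacy of maximal compact subgroups (the unipotent radical $\widetilde N^0$ being simply connected) some $v\in\widetilde N^0$ satisfies $vCv^{-1}\subseteq K$, and using $\widetilde N^0=N^0(\widetilde M\cap\widetilde N^0)$ together with the fact that $K$-fixed elements centralise $K$ one refines this to $n\in N^0$ with $nCn^{-1}\subseteq K$; since $K$ normalises $G$ (Proposition~\ref{pro44}(a)) this also gives that $G$ is $C$-invariant, and $[G^0C,\widetilde N]\subseteq N^0$ follows from $N^0\trianglelefteq\widetilde G$ plus the $K$-computation; taking $C$ inside $G$ shows $K\cap G$ is maximal compact in $G$. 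Finally for (e), with $K$ abelian: $[\widetilde G,\widetilde G]\subseteq\widetilde N$ so $[G,G]\subseteq\widetilde N\cap G=N$, and Corollary~\ref{cor45a} gives $[\widetilde N,\widetilde N]\cap[G,G]$ co-compact in $[\widetilde N,\widetilde N]$; it remains to control the image of $[G,G]$ in the abelian group $[\widetilde G,\widetilde G]^-/[\widetilde N,\widetilde N]^-\subseteq\widetilde N/[\widetilde N,\widetilde N]^-$, where the displacement formula $[vk,v'k']\equiv(k-1)\bar v'-(k'-1)\bar v$ together with density of $q(G)$ in $K$ (specialising $v'k'\in N$) shows this image is dense in that of $[\widetilde G,\widetilde G]$, and one upgrades density to co-compactness using $\widetilde G/G$ compact. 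The displayed chain in the connected case is then immediate: $[\widetilde N,\widetilde N]\subseteq[\widetilde G,\widetilde G]$ directly, $[\widetilde G,\widetilde G]^-$ is a connected (hence, inside the simply connected $\widetilde N$, simply connected) closed subgroup in which $[G,G]$ is co-compact, so it equals the Malcev completion $({[G,G]}^-)_{\R}$, and $({[G,G]}^-)_{\R}\subseteq N_{\R}$ by monotonicity of the Malcev completion since ${[G,G]}^-\subseteq N$.
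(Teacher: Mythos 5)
Your overall architecture (Lemma\;\ref{le46} $\to$ Proposition\;\ref{pro44} $\to$ Corollaries\;\ref{cor45},\;\ref{cor45a}) matches the paper's, but the foundation is set up incorrectly and two key steps are asserted rather than proved. First, Proposition\;\ref{pro44} requires $K$ to be \emph{abelian and connected}; you invoke it in your opening paragraph for the full compact Lie group $K$ of Proposition\;\ref{pro47}, and you then lean on the resulting decomposition $\widetilde N=N^0\widetilde M$ with $\widetilde M$ the fixed points of \emph{all} of $K$ (explicitly in your arguments for (c) and (d)), together with $K$-invariance of $G$ and $N$. That decomposition is false in general: in Example\;\ref{ex412}\,(c) one has $K\cong\Z_2^2$ acting on $\widetilde N=\R^2$ with $N=\Z^2$ discrete, so $N^0$ and the $K$-fixed set are both trivial while $\widetilde N$ is not. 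The paper's proof of (b) exists precisely to legitimize the reduction: it passes to $K^0$, then to $K_1=Z(K^0)^0$, applies Proposition\;\ref{pro44} only to $(G_1,\widetilde G_1=\widetilde N\rtimes K_1)$, and separately derives $[K^0,\widetilde N]\subseteq N^0$ and the $K$-invariance of $G$ in (d) from $G=G_1[K,K]$. Second, your claim that $[K^0,K^0]\subseteq G$ follows ``again by Lemma\;\ref{le46}(ii)'' is not what that lemma gives: it yields only $[K,K]\subseteq\widetilde N G^0$, i.e.\ surjectivity of the projection $G^0\to[K^0,K^0]$. Upgrading this to actual containment is the nontrivial part, done in the paper by showing that a Levi subgroup $C$ of $G^0$ satisfies $\varphi(C)=[K^0,K^0]$, conjugating $C$ into $[K^0,K^0]$ by an element of $\widetilde N^0$ (using the previously established $\widetilde N$-invariance of $G^0$ to stay inside $G^0$), and concluding $C=[K^0,K^0]$.

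Third, in (c) you pass from ``$\iota_x$ unipotent on $N$'' to ``$q(x)$ acts unipotently, hence trivially, on $\widetilde N^0$'' with no argument. Since $N$ need not be co-compact in $\widetilde N$ (Corollary\;\ref{cor410}\,(c): $\widetilde N/N$ is compact only when $G/N$ is), unipotency on $N$ controls the action only on $N_{\R}$, not on $\widetilde N$. The paper needs a genuine chain here: extend unipotency to $N_{\R}$, use \cite{Wan}\,(2.2),(2.3) to conclude the $K$-component $z$ centralizes $N_{\R}$, deduce $z\in C_K(K_1)$ from faithfulness of $K_1$ on $N$, show $[z,G_1K_1]\subseteq N_{\R}$, and only then apply Proposition\;\ref{pro44}\,(a) and \cite{Ra}\;Th.\,2.11 plus semisimplicity to get that $z$ centralizes all of $\widetilde N$. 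Your treatment of (a), of the normality statements in (b) via Proposition\;\ref{pro44}\,(c), and the broad outline of (d) and (e) are consistent with the paper, but as written the proposal does not close these three gaps.
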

\begin{proof}
(a) is easy.
\\
To prove (b), we first consider
\,$\widetilde G_2 = \widetilde N\rtimes K^0,\
G_2 = G \cap \widetilde G_2$\,. The subgroup $K^0$ has finite index in
$K$, hence \,$\widetilde G/\widetilde G_2$\,, $G/G_2$ are finite and it
follows easily that $G_2\,,\widetilde G_2$ satisfy again the assumptions of
the Proposition.
By Lemma\;\ref{le46}\,(ii) \,(and since\linebreak
$G^0_2 = G^0$),
\,$[K^0,K^0] \subseteq \widetilde N G^0\subseteq \widetilde G_2$\,.
It follows \,(since $K^0 = [K^0,K^0] K_1$\,, \cite{HM} Cor.\,6.16)
that \,$\widetilde G_1\,G_2 = \widetilde G_2$
and that \,$\widetilde N G_1$ is dense in $\widetilde G_1$\,. Then 
\,$G_2/G_1 \cong \widetilde G_2/\widetilde G_1$ is compact which
implies that $\widetilde G/G_1$ and also $\widetilde G_1/G_1$ are
compact.
This gives the assumptions of Proposition\;\ref{pro44}. Putting
\,$\widetilde M = \{x \in \widetilde N\! :\, k\circ x = x \text{ \,for all }
k \in K_1\}$, it follows from Proposition\;\ref{pro44}\,(a) that
\,$\widetilde N = N^0\widetilde M$\,. Clearly \,$[G_2,G_1] \subseteq N$\,,
thus \,$[G^0,G_1] \subseteq N^0$.
If $H$ is a connected, closed $G$-invariant subgroup of
$\widetilde N$ then by Proposition\;\ref{pro44}\,(c), $H$ is normal in
$\widetilde G_1$\,, thus it is $\widetilde G_1G$-invariant, hence $H$ is
normal in $\widetilde G$\,. This applies to $N^0$ and $N_{\R}$
(which is $G$-invariant by \cite{Ra}\;Th.\,2.11). 
By connectedness, the action of $K_1$ on
\,$\widetilde N/N^0 \cong \widetilde M/(N^0 \cap \widetilde M)$ is trivial.
Thus \,$[\widetilde G_2,K_1] \subseteq N^0$
and then \,$[G^0,G_1 K_1] \subseteq N^0$. By Proposition\;\ref{pro44}\,(a),
%\linebreak
$\widetilde M \cap G_1 K_1$ is a co-compact subgroup of
$\widetilde M$.
Applying \cite{Ra}\;Th.\,2.11 to the image of this subgroup in
$\widetilde M/(N^0\cap\widetilde M)$\,, it follows that the induced action
of $G^0$ on $\widetilde M/(N^0\cap\widetilde M)$ is trivial, thus
\,$[G^0, \widetilde N] \subseteq N^0\ (\subseteq  G^0)$.
\\[1mm]
Let \,$\varphi\!:\widetilde G_2 \to K^0 \
(\cong \widetilde G_2/\widetilde N)$ be the quotient mapping and let $C$
be a Levi subgroup of $G^0$. We have shown
that \,$\varphi (G^0)\supseteq [K^0, K^0]$\,. Considering the Lie algebras of
$G^0$ and $K^0$, it follows that $\varphi$ maps the connected radical of $G^0$
to $Z(K^0)^0$, consequently $\varphi (C) = [K^0,K^0]$\,.
Clearly, $[K^0,K^0]$ is a Levi subgroup of~${\widetilde G_2}^0$\,.
By \cite{Va}\;Th.\,3.18.13, there exists 
$x \in \widetilde N^0$ such that $\iota_x(C)\subseteq [K^0,K^0]$\,. We
have shown above that $G^0$ is $\widetilde N$-invariant, hence %\linebreak 
$\iota_x(C)\subseteq G^0$ and we may assume that $C\subseteq [K^0, K^0]$\,.
Then \,$C = \varphi (C) = [K^0,K^0] \subseteq G^0$ and again by
\cite{Va}\;Th.\,3.18.13, 
any Levi subgroup of $\widetilde G^0_2$ is contained in
$N^0[K^0,K^0]\subseteq G^0$. Note further that \,$[K^0,K^0] \subseteq G^0$
implies \,$\bigl[[K^0,K^0], \widetilde N\bigr] \subseteq N^0$, hence
\,$[K^0,\widetilde N]\subseteq N^0$.
\\[0mm plus 1mm]
For (e), we can factor by ${[G,G]}^-$ ($\subseteq\widetilde N$\,, being
normal in $\widetilde G$ as above). Then $G$~is abelian. By
Corollary\;\ref{cor45a},
$\widetilde N$ is abelian. Easy calculations (using that $K$ is abelian)
show that $G\cap\,\widetilde M\!\rtimes\!K$ acts trivially on $G_1K_1$\,,
hence (recall that $\widetilde M \cap\,G_1 K_1$ is co-compact in
$\widetilde M$) by \cite{Ra}\;Th.\,2.11 it acts trivially on
$\widetilde M$ and it follows that\linebreak
$\widetilde G$~is abelian.
\\[0mm plus .5mm]
Next, we come to (d). Here we assume $K^0 = K$\,, thus
\,$\widetilde G_2 = \widetilde G$\,. We can (replacing $\widetilde N$ by
$\widetilde N_{\R}$) also
assume that $\widetilde N$ is connected and then $\widetilde G$ is connected
as well. We have shown above that $[G^0K,\widetilde N]\subseteq N^0$. If
$C$ is a compact subgroup of $G$, it follows from \cite{Ho}\;Th.\,XV.3.1
(clearly $K$ is a maximal compact subgroup of $\widetilde G$) that there
exists $x\in \widetilde N$ such that $xCx^{-1} \subseteq K$\,. Then
$[K, \widetilde N] \subseteq N^0$ implies $C \subseteq N^0K$.
Repeating the argument with $N^0 \rtimes K$ instead of $\widetilde G$\,,
it follows that we can assume that $x \in N^0$. Then
$x C x^{-1} \subseteq  K \cap G$\,. If (as above)
$\varphi$ denotes the projection to $K$, we have
\,$\varphi (C) = \varphi (xCx^{-1})$
and maximality of $K \cap G$ follows. $C \subseteq N^0K$ gives 
\,$[G^0C, \widetilde N] \subseteq N^0$. \;$G_1$ is $K_1$-invariant by
(b) and Proposition\;\ref{pro44}\,(a). Furthermore, since $K$ is connected,
$K = [K,K]\,K_1$\,, hence (b) implies
\,$G = G_1[K,K]$ and it follows that $G$ is $K$-invariant and also
invariant under  $x K x^{-1} \text{ for any } x \in G$\,.
\\[.4mm plus 1.5mm]
Finally, we prove (c). Again,
we can assume that $\widetilde N$ is connected.
Then by (b), $N_{\R}$~is normal in $\widetilde G$\,.
Assume that $x \in\widetilde G$ normalizes $N\,,\ x = y\,z$ with %\linebreak
$y \in \widetilde N\,,\ z \in K$. If $x$ acts unipotently
on $N$, then the same is true on $N_{\R}$ \;(considering the commutator
series of $N_{\R}$ and using \cite{Ra}\;Th.\,2.3\;Cor.\,1, this can be reduced
to the abelian case which is easy). From \cite{Wan}\;(2.2),(2.3) \,(applied to the
automorphisms of $N_{\R}$ defined by the elements of
$\langle x\rangle \widetilde N$\,), we conclude that
$\iota_z$ is both semisimple and unipotent on $N_{\R}$ and it follows that
$z$ centralizes $N_{\R}$\,. $C_{\widetilde G} (N_{\R})$ is normal in
$\widetilde G$. By (b) and Proposition\;\ref{pro44}\,(a),
$K_1 = Z (K^0)^0$ is faithful on $N$, thus
$K_1 \cap C_{\widetilde G} (N_{\R})$ is trivial and we get ($K_1$ is
normal in $K$) that $z \in C_K (K_1)$\,.
By (e), we have
\,$[\widetilde G_1, \widetilde G_1] \subseteq N_{\R}$\,.
Put $\widetilde G_3=\widetilde N\rtimes C_K (K_1)$. Then
$[\widetilde G_3\cap G\,,G_1]\subseteq N$\,.
By assumption, $\widetilde N G$ is dense in $\widetilde G$ and since 
$\widetilde N C_K (K_1)\ (\supseteq\widetilde N K^0)$ is open, it follows that
\,$\widetilde G_3 = \widetilde G_1(\widetilde G_3\cap G)$\,.
This gives $[\widetilde G_3,G_1]\subseteq N_{\R}$\,. Then $z \in C_K (K_1)$
implies $[z,G_1K_1] \subseteq N_{\R}$\,. Now Proposition\;\ref{pro44}\,(a) and
\cite{Ra}\;Th.\,2.11 give 
$[z, \widetilde N] \subseteq N_{\R}$\,. Since $\iota_z$ is semisimple on
$\widetilde N$, it follows that $z$ centralizes $\widetilde N$, thus
(by faithfulness) $z = e$\,. This proves that $x \in \widetilde N$\,.
\\[1mm plus .7mm]
It follows that $C_{\widetilde G}(G)\subseteq C_{\widetilde G}(N)\subseteq
\widetilde N$.
Take $x \in C_{\widetilde G}(G)\cap N^0$. Applying
Corollary\;\ref{cor26} to the automorphisms defined by $G_1$\, (and using
Corollary\;\ref{cor45}) , it follows that $x$
commutes with $K_1 \cap (G_1\widetilde N)$ which
is dense in $K_1$\,. We get that $x$ commutes with $K_1$\,. By
Proposition\;\ref{pro44}\,(d), $C_{\widetilde G}(G)$ is $K_1$-invariant,
$[\widetilde N,K_1]\subseteq N^0$ by Proposition\;\ref{pro44}\,(a).
Then semisimplicity (using Lemma\;\ref{le46} and Corollary\;\ref{cor42}\,(a)) 
implies that $C_{\widetilde G}(G)$ commutes with $K_1$\,.
By Proposition\;\ref{pro44}\,(a),
$\widetilde N/(\widetilde N \cap G_1 K_1)$ is compact, hence by
\cite{Ra}\;Th.\,2.11,
$C_{\widetilde G}(G)\subseteq Z(\widetilde N)$. Since $G\widetilde N$ is
dense in
$\widetilde G$, we arrive at $C_{\widetilde G}(G)\subseteq Z(\widetilde G)$\,.
\\[-.2mm plus .3mm]
Next, assume that $C$ is a compact normal subgroup of $G$\,. Clearly
$C \cap \widetilde N$ must be trivial, thus $C$ centralizes $N$\,.
It follows that \,$C \subseteq \widetilde N$, hence $C$ is trivial.
\\[.8mm plus.3mm]
Let $N_1$ be the nilradical of $G$\,.
Clearly $N_1 \supseteq N$. Take
$x \in N_1$\,. Then $x$ normalizes $N$ and acts unipotently, hence
$x\in\widetilde N$\,. 
Thus $x \in \widetilde N \cap G = N$, proving that $N_1 = N$\,.\pagebreak 
\end{proof}
\vspace{-3mm plus .5mm}
\begin{proof}[{\bf Proof of Theorem\;\ref{th3}}]
\item[$(\alpha)$] \,In $(\alpha) - (\gamma)$, we assume that
$j'(G)\widetilde N'$ is dense in $\widetilde G'$.
This is no restriction
(reducing $K'$) when proving existence of $\Phi$ \,(that the same is true
concerning uniqueness will be seen in $(\delta)$\,).
Furthermore, we assume in the steps $(\alpha) - (\gamma)$ that $K, K'$ are
connected abelian and $K$ acts faithfully on $\widetilde N$. Then by
Proposition\;\ref{pro44}\,(a), $j(G)$ is
$K$-invariant. Since $j$ induces a topological isomorphism between $G$
and $j(G)$, we get a continuous homomorphism
\,$\alpha\!: K \to \Aut (G)$ satisfying 
$j\bigl(\alpha (k) (x)\bigr) = k\,j(x)\,k^{-1} \text{ for all } k \in K\,,\,x \in G$\,.
By Proposition\;\ref{pro44}\,(a), the action of $K$ on $j(G)$ is faithful
\,(since it is faithful on $\widetilde N$; alternatively one can use
Proposition\;\ref{pro47}\,(c)\,), hence $\alpha$ is injective. Similarly, we
get \,$\alpha'\!: K' \to \Aut(G)$\,. We claim that there exists
$n \in N^0$ such that \,$\alpha' (K') = n\,\alpha (K)\,n^{-1}$
\,(as explained before Proposition\;\ref{pro220}, this notation is shorthand
for \,$\iota_n \circ \alpha (K) \circ \iota_n^{-1}$). The claim will be
proved in $(\beta)$ below. Then, replacing
$j'$ by $j'' = j' \circ \iota_n^{-1}$ \,(which replaces $\alpha' (k)$ by
$\alpha'' (k) = \iota_{n} \circ \alpha' (k) \circ \iota_n^{-1}
\text{ for } k \in K'$),
it will be enough to show existence of $\Phi$ under the additional
assumption $\alpha(K) = \alpha'(K')$\,. This will be done in $(\gamma)$.
In $(\delta)$ we will prove uniqueness of $\Phi$ for general $K, K'$ connected
and $K$ acting faithfully, then in $(\epsilon)$ uniqueness for general
$K,K'$ will be shown.
Finally, existence for general $K, K'$ will be treated in $(\varphi)$ and
also the question of surjectivity and injectivity.
\vspace{.5mm plus .9mm}
\item[$(\beta)$] \,We assume that $K, K'$ are connected and abelian,
$K$ acts faithfully on $\widetilde N$. Put
\,$\Cal C_0 = \alpha (K\cap\,\widetilde N j(G))$\,,
\,$\overline{\Cal C} = \alpha (K)$\,,
\,$\Cal C'_0 = \alpha' (K'\cap\mspace{1mu}\widetilde N'j'(G))$\,,
\,$\overline{\Cal C'} = \alpha' (K')$\,.
\,Then Corollary\;\ref{cor45} takes us to the setting of Section 2
\;($j$ transfers $\Cal C$ to
\,$\iota_{j(G)} (K)$, similarly for $\Cal C_0$ and for $j'$).
By Proposition\;\ref{pro220} \,(note that Proposition\;\ref{pro47}\,(c)
implies $j^{-1}(\widetilde N)=j'^{-1}(\widetilde N')=\nil(G)$, i.e., both
instances of Corollary\;\ref{cor45} refer to the same subgroup
$H=N$ of $G$\,), there exists $n\in N^{0}$ such that
\,$\Cal C_0' = n\,\Cal C_0\, n ^{-1}$.
By assumption, $K\cap\,\widetilde N j(G)$ \,(resp.
$K'\cap\mspace{1mu}\widetilde N'j'(G)$) is dense in $K$ (resp. $K'$),
it follows that
\,$\alpha (K) = n\,\alpha'(K')\,n^{-1}$ (actually, $\overline{\Cal C}$
coincides with the closure of $\Cal C_0$).
\vspace{.6mm plus .5mm}
\item[$(\gamma)$] \,Now we prove  existence of the extension $\Phi$ under the
assumption that $K, K'$ are connected abelian and
\,$\alpha(K)=\alpha'(K')$\,. First, 
we want to show that this implies \,$j^{-1}(K) \subseteq {j'}^{-1}(K')$ and
that \,$j'\circ j^{-1} (k) = {\alpha'}^{-1} \circ \alpha (k)$
holds for $k \in K \cap j(G)$\,.
\\[2.5mm]
Take $x \in j^{-1} (K)$\,. Then \,$\iota_x = \alpha (j(x))\in\alpha'(K')$\,.
Put \,$k' = {\alpha'}^{-1}(\iota_x)$\,, then ${k'}^{-1}j'(x)$ centralizes
$j'(G')$\,, hence by Proposition\;\ref{pro47}\,(c),
${k'}^{-1}j'(x) \in Z (\widetilde G')$\,. Thus
\,$j'(x) \in Z(\widetilde G')K'$\,. Since $j^{-1}(K)$ is compact and 
$Z(\widetilde G')\subseteq \widetilde N'$ is torsion free, it follows that
$j'(x)\in K'$, proving \,$j^{-1}(K)\subseteq {j'}^{-1}(K')$\,. Since
\,$\alpha'(j'(x)) = \iota_x$\,,
the second formula follows easily.
\\[0mm plus.2mm]
For $k \in K\,,\;x \in G$\,, we define
\,$\Phi\bigl(j(x)\,k\bigr) = j'(x)\,{\alpha'}^{-1}\!\circ\!\alpha(k)$\,. Then
it follows from the properties above that
\,$\Phi\!:j(G)K\to j'(G)K'$ is well defined and (see the
definition of $\alpha, \alpha'$) that it is a homomorphism. Furthermore,
$\Phi (K) \subseteq K'$. Closedness of $j(G)$ implies that $j(G)K$ is
isomorphic to a quotient of $G \rtimes K$ \,(\cite{HR}\;Th.\,5.21) and
continuity of $\Phi$ follows.
$(j(G)K)\cap\widetilde N$ is a nilpotent normal subgroup of $j(G)K$\,.
Applying Proposition\;\ref{pro47}\,(c) to $\Phi(j(G)K)$ \,(see also
$(\delta)$ below), it follows that
$\Phi(j(G)K\cap\widetilde N)\subseteq\nil(\Phi(j(G)K))=
\Phi(j(G)K)\cap\widetilde N'$.
By Proposition\;\ref{pro44}\,(a),
$(j(G)K)\cap\widetilde N$ is a co-compact subgroup of $\widetilde N$.
By \cite{Ra}\;Th.\,2.11,
\,$\Phi|\bigl(j(G)K \cap \widetilde N\bigr)$ has a unique extension to a
continuous homomorphism \,$\widetilde N\to \widetilde N'$ (again
denoted by $\Phi$). Uniqueness of the extension implies 
\,$\Phi (knk^{-1}) = \Phi(k)\Phi(n)\Phi(k^{-1})$ for
$k \in K,\;n \in \widetilde N$
and then $\Phi$ extends further to a homomorphism
\,$\widetilde G \to \widetilde G'$.
\vspace{.5mm plus.9mm}
\item[$(\delta)$] \,We claim that \,$\Phi(\widetilde N)\subseteq\widetilde N'$
holds for any \,$\Phi,\widetilde G,\widetilde G'$ as in Theorem\;\ref{th3}.
We have \,$\Phi(j(G))=j'(G)$.
Since $j(G)$ is co-compact in $\widetilde G$ and $j'(G)$ is closed, it
follows that $\Phi(\widetilde G)$ is closed in $\widetilde G'$.
Thus (reducing $K'$ temporarily) it also satisfies the assumptions of
Proposition\;\ref{pro47} and Proposition\;\ref{pro47}\,(c) gives
\,$\nil\bigl(\Phi(\widetilde G)\bigr)=\Phi(\widetilde G)\cap\widetilde N'$.
Since (again by Proposition\;\ref{pro47}\,(c)\,)
\,$\widetilde N'=\nil(\widetilde G')$, we conclude that
\,$\Phi(\widetilde N)\subseteq\nil\bigl(\Phi(\widetilde G)\bigr)
\subseteq\widetilde N'$, proving our claim. \,It follows that
$\Phi(\widetilde G)$ is contained in the closure of $\widetilde N'j'(G)$.
Thus we can always assume that $\widetilde N'j'(G)$ is dense in
$\widetilde G'$ when proving uniqueness.
\\[0mm plus .1mm]
Next, we prove uniqueness of the extension for $K$ connected,
$j'(G)\widetilde N'$ dense in $\widetilde G'$
and $K$ acting faithfully on $\widetilde N$.
Assume that 
\,$\Phi_1, \Phi_2\!: \widetilde N\rtimes K \to
\widetilde N' \rtimes K'$ are continuous group homomorphisms satisfying
$\Phi_i \circ j = j'$ for $i = 1,2$. If
$x \in \widetilde G$ normalizes $j(G)$\,, then $\iota_{\Phi_1(x)}$ coincides
with $\iota_{\Phi_2(x)}$
on $j'(G)$. Put \,$\Psi (x) = \Phi_2(x)^{-1}\Phi_1(x)$. Then $\Psi(x)$
commutes with $j'(G)$, hence Proposition\;\ref{pro47}\,(c) implies,
$\Psi(x)\in Z(\widetilde G') \subseteq \widetilde N'$. By
Proposition\;\ref{pro47}\,(d), $j(G)$ is $K$-invariant. It follows that
\,$\Psi\!: j(G)K\to Z(\widetilde G')$ is
a continuous group homomorphism and by assumption, $j(G)\subseteq \ker\Psi$\,.
Since $\widetilde N'$ is torsion free, we get that $\Psi$ must be
trivial, hence \,$\Phi_1, \Phi_2$ coincide on $K$\,.
\\[0mm plus 1.3mm]
By Proposition\;\ref{pro44}\,(a),
$j(G)K$ contains a co-compact subgroup of $\widetilde N$, thus by 
\cite{Ra}\;Th.\,2.11 \,(recall that
$\Phi_i(\widetilde N)\subseteq\widetilde N'$) \,$\Phi_1, \Phi_2$ coincide
on $\widetilde N$\,, proving that \linebreak
$\Phi_1 = \Phi_2$\,.
\vspace{-1mm plus 1mm}
\item[$(\epsilon)$] \,Now, we prove uniqueness of the extension for general
$K,K'$. Consider \,$\Phi_1, \Phi_2$ as in ($\delta$).
Let $K_{\widetilde N}$ be the kernel of the action of $K$ on
$\widetilde N$\,.
This is a compact normal subgroup of
$\widetilde G$, hence \,$\Phi_i(K_{\widetilde N})$ is a compact normal
subgroup of $\Phi_i(\widetilde G')$. By faithfulness of the action on
$\widetilde N'$, it follows from Proposition\;\ref{pro47}\,(c) (applied to
$G=\Phi_i(\widetilde G')$) that $\Phi_i(K_{\widetilde N})$ must be trivial,
hence $K_{\widetilde N}\subseteq \ker\Phi_i$ holds for $i=1,2$.
\\[1.mm plus .6mm]
Passing to $K/K_{\widetilde N}$ (and composing $j$ with the quotient
mapping), we can now assume that $K$ acts faithfully on $\widetilde N$.
Put
\,$G_2=j^{-1}(\widetilde N \rtimes K^0)\cap
{j'}^{-1}(\widetilde N\rtimes K'^0)$\,.
Then $G_2$ is a closed subgroup of
$G$ with finite index. It follows that $\widetilde N j(G_2)$ is dense in
$\widetilde N \rtimes K^0$ \,(a connected group has no proper closed
subgroups of finite index) and similarly for $j'(G_2)$\,. Thus, we can
apply $(\delta)$ and conclude that $\Phi_1, \Phi_2$ coincide on
$\widetilde N \rtimes K^0$\,. Density of $\widetilde N j(G)$ in
$\widetilde G$ implies that
\,$\widetilde G = (\widetilde N \rtimes K^0)\,j(G)$\,, consequently
$\Phi_1 = \Phi_2$\,.
\vspace{-.5mm plus 1.1mm}
\item[$(\varphi)$] \,We show existence of the extension for general $K, K'$.
Consider $K_{\widetilde N}$ as in $(\epsilon)$.
Similarly we get $j'(j^{-1}(K_{\widetilde N})\cap G)=\{e\}$\,,
hence $K_{\widetilde N}\cap j(G)=\{e\}$\,. Thus we can pass to
$K/K_{\widetilde N}$ \,and assume that $K$ acts faithfully on $\widetilde N$.
Put 
\,$K_1 = Z(K^0)^0,\;K'_1 = Z({K'}^0)^0,\;
G_1 = j^{-1}(\widetilde N \rtimes K_1),\;
G'_1 = {j'}^{-1}(\widetilde N\rtimes K'_1),\;G_2 = G_1\cap G'_1$\,.
$G_1, G'_1$ are closed normal subgroups of $G$\,.
We have $R^0_K = K_1$ (\cite{Va}\;Th.\,4.11.7), hence by
Proposition\;\ref{pro47}\,(a), $G_1$ has finite index in the radical
$R$ of $G$ and the same is true for $G'_1$\,. It follows that $G_1/G_2$
is finite and from Proposition\;\ref{pro47}\,(b), we get (similarly as in
$(\epsilon)$\,) that
$\bigl(\widetilde N j(G_2)\bigr) \cap K_1$ is dense in $K_1$\,, analogously
for $\bigl(\widetilde N'j'(G_2)\bigr)\!\cap\!K'_1$\,. Thus we can apply the
connected abelian case (\,($\alpha$)-($\gamma$)\,) to $G_2$ with $K, K'$
replaced by $K_1, K'_1$\,. This gives a homomorphism
\,$\Phi\!:\widetilde N\rtimes K_1\to\widetilde N'\rtimes K'_1$
satisfying \,$\Phi \circ j = j'$ on $G_2$\,. Uniqueness of the extension
(shown in $(\epsilon)$) implies that 
$\Phi\bigl(j(x)\,y\,j(x)^{-1}\bigr) = j'(x)\,\Phi(y)\,j'(x)^{-1}$ for all
$x \in G,\;y \in \widetilde N\rtimes K_1$\,. Now take $x \in G_1$\,,
then $j(x)\in\widetilde N\rtimes K_1$ and it follows that
\,$z = \Phi\bigl(j(x)\bigr)^{-1} j'(x)$ commutes with
$\Phi(\widetilde N \rtimes K_1)\supseteq j'(G_2)$\,.
hence by Proposition\;\ref{pro47}\,(c), $z \in \widetilde N'$. This implies
$j'(x)\in\widetilde N \rtimes K'_1$\,, hence $x \in G'_1$\,. This 
shows that $G_2=G_1\subseteq G'_1$\,.
\\[.1mm plus .2mm]
It follows from density of $\widetilde N j(G)$ in $\widetilde G$ and
Proposition\;\ref{pro47}\,(b) that
$\widetilde G =\linebreak
(\widetilde N \rtimes K_1)\,j(G)$\,. On
$j(G)$ we put $\Phi = j' \circ j^{-1}$. Then, by the properties
above, the two definitions of $\Phi$ agree on 
$j(G)\cap(\widetilde N \rtimes K_1)$ %\vspace*{\fill}\pagebreak
and they can be combined to give a continuous homomorphism 
\,$\widetilde G_1 \to \widetilde G'_1$\,.
\\[0mm plus .5mm]
By construction, we always have $K_{\widetilde N}\subseteq \ker\Phi$
\,(see also $(\epsilon)$\,) and by $(\delta)$,
$\Phi(\widetilde G)\subseteq {\bigl(\widetilde N'j'(G)\bigr)}^-$.
If $K$ acts faithfully and $\widetilde N'j'(G)$ is dense in
$\widetilde G'$, we can interchange the
r\^oles of $\widetilde G$ and $\widetilde G'$ and in the usual manner, it
follows from uniqueness of the extension that $\Phi$ is an isomorphism.
For the general case, this implies
\,$\Phi(\widetilde G)= {\bigl(\widetilde N'j'(G)\bigr)}^-$
and \,\,$\ker\,\Phi= K_{\widetilde N}$\,.
\end{proof}
\begin{Cor} \label{cor48}	% Cor.4.9
Let $G,\widetilde G,G_1, K_1$ be as in Proposition\;\ref{pro47}\,(b),
put \,$\widetilde M = \{x\in\widetilde N\! : %\linebreak
k \circ x = x \text{ \,for all } k \in K_1\},\
L = G \cap (\widetilde M \rtimes K),\ L_1 = G_1 \cap L$\,. Then the
following properties hold:
\\
$G_1$ is normal in $G$\,, $G/G_1$ is compact,
$[G_1, G_1] \subseteq  N \subseteq G_1\,,\ G = N^0L\,,\linebreak
G_1 = N^0L_1\,,\ L/L_1$ is compact, $L_1$ is nilpotent.\vspace{-2mm}
\end{Cor}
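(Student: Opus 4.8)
The plan is to deduce everything from Propositions~\ref{pro44}\,(a) and~\ref{pro47}\,(b); the one point needing care is the bookkeeping between the group $L = G\cap(\widetilde M\rtimes K)$ of the statement, which is built from the full compact group $K$, and the analogous group occurring in Proposition~\ref{pro44}\,(a), which is built from $K_1 = Z(K^0)^0$.

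First I would dispose of the routine assertions. Since $K_1$ is characteristic in $K$, the subgroup $\widetilde G_1 = \widetilde N\rtimes K_1$ is normal in $\widetilde G$, so $G_1 = G\cap\widetilde G_1$ is normal in $G$; likewise $\widetilde M$ is $K$-invariant (using normality of $K_1$ in $K$), hence $\widetilde M\rtimes K$ is a closed subgroup of $\widetilde G$ and $L,L_1$ are well defined and closed. From $\widetilde N\subseteq\widetilde G_1$ we get $N = G\cap\widetilde N\subseteq G_1$, and commutativity of $K_1$ gives $[\widetilde G_1,\widetilde G_1]\subseteq\widetilde N$, whence $[G_1,G_1]\subseteq G\cap\widetilde N = N\subseteq G_1$. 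For compactness of $G/G_1$: by Proposition~\ref{pro47}\,(b) the pair $(G_1,\widetilde G_1)$ satisfies the hypotheses of Proposition~\ref{pro44}, so $\widetilde G_1/G_1$ is compact; since $\widetilde G/\widetilde G_1\cong K/K_1$ is compact, so is $\widetilde G/G_1$, and as $G$ is closed in $\widetilde G$ the set $G/G_1$ is closed in $\widetilde G/G_1$, hence compact.

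The heart of the argument is to apply Proposition~\ref{pro44}\,(a) to $(G_1,\widetilde G_1)$ with $K_1$ in the role of $K$. Its ``$\widetilde M$'' then coincides with our $\widetilde M$, and since $K_1$ fixes $\widetilde M$ pointwise one has $(\widetilde N\rtimes K_1)\cap(\widetilde M\rtimes K) = \widetilde M\rtimes K_1 = \widetilde M\times K_1$; because $G_1\subseteq\widetilde N\rtimes K_1$, this gives $L_1 = G_1\cap L = G_1\cap(\widetilde M\times K_1)$, which is precisely the ``$L$'' of Proposition~\ref{pro44}\,(a) for this pair. Hence that proposition yields $G_1 = N^0L_1$ and $\widetilde N = N^0\widetilde M$ at once. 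Combining the latter with $N^0\subseteq N\subseteq G$ gives $G = N^0L$: for $g = (\tilde n,k)\in G$ write $\tilde n = am$ with $a\in N^0,\ m\in\widetilde M$; then $(m,k) = a^{-1}g\in G\cap(\widetilde M\rtimes K) = L$, so $g = a\,(m,k)\in N^0L$. As $N^0\subseteq G_1$ this also gives $G = G_1L$. Finally, $L_1 = G_1\cap(\widetilde M\times K_1)$ is a subgroup of the direct product of the nilpotent group $\widetilde M$ (a closed subgroup of the nilpotent Lie group $\widetilde N$) and the abelian group $K_1$, hence nilpotent; and $L\hookrightarrow G\to G/G_1$ is onto (by $G = G_1L$) with kernel $L\cap G_1 = L_1$, so the induced continuous bijection $L/L_1\to G/G_1$ is a topological isomorphism by \cite{HR}\;Th.\,5.29 ($L$ being $\sigma$-compact as a closed subgroup of $\widetilde G$), and therefore $L/L_1$ is compact.

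The only genuine subtlety is the one flagged at the outset: Proposition~\ref{pro44} is available only for $K_1$, so one must check that the $L$ of the statement is linked to the $L_1$ of that proposition through the identity $\widetilde N = N^0\widetilde M$ (and, less importantly, that $\widetilde M$ is still $K$-invariant). Everything else is bookkeeping with the results already established in Section~4.
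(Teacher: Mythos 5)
Your proof is correct and follows essentially the same route as the paper: both apply Proposition\;\ref{pro44}\,(a) to the pair $(G_1,\widetilde G_1)$ (as licensed by Proposition\;\ref{pro47}\,(b)) to get $\widetilde N = N^0\widetilde M$ and $G_1=N^0L_1$, deduce $G=N^0L$ and $G=G_1L$, obtain nilpotency of $L_1$ from $L_1\subseteq\widetilde M\times K_1$, and identify $L/L_1\cong G/G_1$ topologically (the paper cites \cite{HR}\;Th.\,5.33 via closedness of $G_1L$, you invoke the open mapping theorem \cite{HR}\;Th.\,5.29 — an immaterial difference). You merely spell out the bookkeeping that the paper dismisses as ``clear''.
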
\noindent
Thus $L$ satisfies the properties of Theorem\;\ref{th1}. We will exemplify
the constructions in step\,$(b)$ of the proof of Theorem\;\ref{th2} for
this choice of $G_1$\,. By Corollary\;\ref{cor45},
$\overline{\Cal C}=\iota_{G_1}(K_1)$, thus
\,$G_2=G_1\rtimes K_1\subseteq (\widetilde N\rtimes K_1)\rtimes K_1$\,.
Since $K_1$ is abelian, we can interchange the $K_1$-components and use
the representation
\,$G_2=\{(x,\sigma_1,\sigma_2):\linebreak\,(x,\sigma_2)\in G_1\,,
\sigma_1\in K_1\}\subseteq\widetilde N\rtimes(K_1\times K_1)\subseteq
\widetilde N\rtimes(K_1\rtimes K)$, where the action of $K_1\rtimes K$ on
$\widetilde N$ is
given by $(\sigma_1,\sigma_2)\circ x=(\sigma_1\,\sigma_2)\circ x$.
Then $N_2=\{(x,\sigma^{-1},\sigma):\,(x,\sigma)\in G_1\,\}$.
Embedding $G$ to $\{(x,e,\sigma):\,(x,\sigma)\in G\}$ this produces the
action of $G$ on $G_2$ defined in the proof of Theorem\;\ref{th2}
and one can take $G_3=\{(x,\sigma_1,\sigma_2):\,(x,\sigma_2)\in G\,,\linebreak
\sigma_1\in K_1\}$. It
is not hard to see that $P_3=\{(e,\sigma,\sigma^{-1}):\,\sigma\in K_1\}$
\,(the kernel of the action of $K_1\rtimes K$). It follows that $G_4$ can
be identified with the subgroup  \,$G\,K_1$ of $\widetilde N\rtimes K$\,,
and then $N_4$ corresponds to $GK_1\cap\widetilde N$\,.
\begin{proof}
$G_1,\widetilde G_1$ satisfy the assumptions of Proposition\;\ref{pro44}
\,(see also the proof of
Proposition\;\ref{pro47}\,(b)). By Proposition\;\ref{pro44}\,(a), 
$\widetilde N = N^0\widetilde M$ which implies \,$G= N^0L$\,,
$G_1 = N^0L_1$\,. In particular, $G_1L = G$ is closed, giving 
\,$L/L_1 \cong G/G_1$ \,(\cite{HR}\;Th. 5.33).
$L_1 \subseteq \widetilde M \times K_1$ is nilpotent.
The remaining properties are clear.\vspace{1mm}
\end{proof}\noindent
Next, we describe some special cases of Theorem\;\ref{th2}.\vspace{-1mm}
\begin{Cor} \label{cor410}
Let $G, \widetilde{G}$ be as in Theorem\;\ref{th2},
with $\widetilde N G$ dense in $\widetilde G$\,.
\\[.2mm]
{\rm (a)} The following properties are equivalent
\\[.3mm]\hbox{ \ }
{\rm (i)} $K$ is abelian \quad \
{\rm (ii)} $[G, G] \subseteq N$\quad\
{\rm (iii)} the action of $G$ on $(\fr n_{\R})_{\C}$ triangulizes.
\\[2mm]
{\rm (b)} The following properties are equivalent \vspace{-1mm}
\begin{enumerate}
\item[(i)] $G,N$ satisfy the assumptions of \ref{di21}.
\item[(ii)] $K$ is abelian and acts trivially on $N/N^0$.
\item[(iii)] $G$ acts unipotently on $N/N^0$ and the action of $G$ on
$\fr n_{\C}$ triangulizes.
\item[(iv)] $K$ is abelian and $G/G^0$ is nilpotent.
\item[(v)] $K$ is abelian and with $\widetilde M$ as in Corollary\;\ref{cor48}
one has $\widetilde M = \{x\in\widetilde N\! :\linebreak
k \circ x = x \text{ \,for all } k \in K\}$.\vspace{-.8mm}
\end{enumerate}
{\rm (c)} $G/N$ is compact if and only if $\widetilde N/N$ is compact
\;(equivalently: \,$\widetilde N=N_{\R}$\,).
\\[1mm]
{\rm (d)} $\widetilde N$ is abelian if and only if $N$ is an
${FC}^-_{G\,}$-group
and there exists an abelian subgroup $H$ of $G$ such that $NH$ is closed and
$G/(NH)$ is compact.\vspace{-2mm}
\end{Cor}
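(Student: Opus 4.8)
The plan is to run through (a)--(d) in turn, leaning on the structural description already established: $N=\nil(G)=G\cap\widetilde N$ and $G=N^{0}L$ with $L=G\cap(\widetilde M\rtimes K)$ (Proposition~\ref{pro44}(a), Corollary~\ref{cor48}), the faithfulness of $K$ not only on $\widetilde N$ but on $N_{\R}$ (an element of $K$ centralizing $N_{\R}$ fixes $N$ pointwise, hence acts unipotently on $N$, hence lies in $\widetilde N$ by Proposition~\ref{pro47}(c), hence is trivial), the semisimplicity of the $K$-action (Lemma~\ref{le46}(i)), and the standard translation ``$G/N^{0}$ nilpotent $\iff$ $\iota_{G}(G)$ acts unipotently on $N/N^{0}$'' recorded in~\ref{di21}. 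The one device used repeatedly: since $N_{\R}$ is normal in $\widetilde G$ (Proposition~\ref{pro47}(b)), its Lie algebra $\fr n_{\R}$ is a $\widetilde G$-invariant subspace of $\widetilde{\fr n}$; intersecting it with the ascending central series $\fr z_{1}\subseteq\fr z_{2}\subseteq\cdots$ of $\widetilde{\fr n}$ (characteristic, hence $\widetilde G$-invariant) yields a $\widetilde G$-invariant filtration $\fr m_{i}=\fr n_{\R}\cap\fr z_{i}$ on whose successive quotients $\widetilde N$ acts trivially, so that on each $\fr m_{i}/\fr m_{i-1}$ the group $\widetilde G$ acts through $\widetilde G/\widetilde N\cong K$ by a semisimple action.

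For (a): (i)$\Rightarrow$(ii) because $K$ abelian gives $[\widetilde G,\widetilde G]\subseteq\widetilde N$, hence $[G,G]\subseteq\widetilde N\cap G=N$; (ii)$\Rightarrow$(i) because $[G,G]\subseteq N$ makes $G/N$ abelian, and its image in $\widetilde G/\widetilde N\cong K$ is dense (as $\widetilde N G$ is dense) and abelian, so the compact group $K$ is abelian. For (i)$\Leftrightarrow$(iii) I use the filtration above: if $K$ is abelian, its semisimple action on each $(\fr m_{i}/\fr m_{i-1})_{\C}$ is by commuting semisimple operators, hence diagonalizable, so each such quotient admits a $\widetilde G$-invariant full flag; concatenating these over $i$ produces a $\widetilde G$-invariant full flag of $(\fr n_{\R})_{\C}$, so in particular $G$ acts triangularly --- this is (iii). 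Conversely, (iii) restricts to a triangular action of $\iota_{G}(G)$ on each $(\fr m_{i}/\fr m_{i-1})_{\C}$; since there $G$ acts through the dense subgroup $\pi(G)\subseteq K$, its closure $\iota(K)$ acts as a triangularizable compact group, hence (by complete reducibility the module is a sum of characters) acts abelianly; as $K$ is compact, $(\fr n_{\R})_{\C}$ is the direct sum of these quotients as a $K$-module, so $\iota(K)$ is abelian on $(\fr n_{\R})_{\C}$, and faithfulness of $K$ on $N_{\R}$ forces $K$ abelian, i.e.~(i).

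For (b): condition (i) unwinds (with $H=N$, which is automatically compactly generated, nilpotent, torsion free) to ``${[G,G]}^{-}$ nilpotent and torsion free, and $G/G^{0}$ nilpotent'', and by part~(a) ``${[G,G]}^{-}$ nilpotent and torsion free'' is equivalent to $K$ abelian (if $K$ abelian then $[G,G]\subseteq N\subseteq\widetilde N$, so ${[G,G]}^{-}\subseteq\widetilde N$; conversely ${[G,G]}^{-}$ nilpotent gives $[G,G]\subseteq N\subseteq\widetilde N$); hence (i)$\Leftrightarrow$(iv). The remaining equivalences are bookkeeping once $K$ is abelian: then $[G,G]\subseteq N$ gives $[G,G^{0}]\subseteq N^{0}$, so $G/N^{0}$ is a central extension of $G/G^{0}$ and is nilpotent iff $G/G^{0}$ is; by the translation of~\ref{di21} this is unipotency of the $\iota_{G}(G)$-action on $N/N^{0}$, and since the image of the (compact) group $G/N^{0}\twoheadrightarrow\pi(G)\subseteq K$ in the discrete group $\Aut(N/N^{0})$ is finite, unipotency there means triviality, i.e.~$K$ acts trivially on $N/N^{0}$ --- chaining (ii)$\Leftrightarrow$(iii)$\Leftrightarrow$(iv). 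Finally, when $K$ is abelian one has $\widetilde M=\{x\in\widetilde N:k\circ x=x\ \forall k\in K^{0}\}$, so (v) asserts this equals $\{x\in\widetilde N:k\circ x=x\ \forall k\in K\}$; granting that, $L\subseteq\widetilde M\rtimes K=\widetilde M\times K$ is abelian, so $G=N^{0}L$ has abelian $G/N^{0}$, hence (iv); the converse deduces triviality of the $K/K^{0}$-action on $\widetilde M$ from nilpotency of $G/G^{0}$ by the same finiteness argument inside $L$.

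For (c): if $G/N$ is compact then $N$ is cocompact in $\widetilde G$, and $\widetilde N$ is a closed union of $N$-cosets, so $\widetilde N/N$ is closed in the compact space $\widetilde G/N$, hence compact; conversely if $\widetilde N/N$ is compact then $\widetilde N G/G\cong\widetilde N/N$ is compact, so $\widetilde N G$ is closed, hence (being dense) all of $\widetilde G$, whence $G/N\cong\widetilde G/\widetilde N\cong K$ is compact; and $\widetilde N/N$ compact is equivalent to $N_{\R}=\widetilde N$, since $N$ is always cocompact in the connected group $N_{\R}\subseteq\widetilde N$ and a cocompact connected subgroup of a simply connected nilpotent Lie group is the whole group. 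For (d), in the direction ``$\widetilde N$ abelian $\Rightarrow$'': for $x\in G$, $\nu\in\widetilde N$ one has $x\nu x^{-1}=\pi(x)\circ\nu$, so $G$-orbits in $N$ lie in $K$-orbits, hence in relatively compact sets, i.e.~$N$ is an $FC^{-}_{G}$-group, while an abelian $H$ is obtained by lifting a cocompact abelian subgroup of $\pi(G)$ (one exists because $\pi(G)=G/N$ has polynomial growth with dense closure $K$, forcing $K^{0}$ to be a torus) to an abelian subgroup of $G$ --- the $K^{0}$-action being semisimple with eigenvalues of modulus one is of finite order on the discrete part of $N$ (Kronecker), and a cocycle-straightening via $H^{1}(K^{0},\,\cdot\,)=0$ disposes of the rest, with $NH$ cocompact since $NH/N$ has finite index in $\pi(G)$. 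The reverse direction is where I expect the real work: $N$ being $FC^{-}$, compactly generated (\cite{Lo2}\,Prop.\,2), nilpotent and torsion free forces ${[N,N]}^{-}$ compact (\cite{GM}\,Th.\,3.20), hence trivial, so $N$ and $N_{\R}$ are abelian, and the remaining point --- that the non-abelian part of $\widetilde N$ must then vanish --- should come from an Auslander--Wang--Zassenhaus type rigidity argument in the spirit of Lemma~\ref{le46}(ii), using the abelian almost-complement $H$ together with the $FC^{-}$ condition to keep all $\widetilde N$-conjugates occurring in the $G$-action bounded, which is impossible in a non-abelian simply connected nilpotent group.
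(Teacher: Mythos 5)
Parts (a) and (c) of your proposal are correct; in (a) your route (iii)$\Rightarrow$(i) --- complete reducibility of the compact group $K$ on $(\fr n_{\R})_{\C}$ plus the fact that a compact group of triangular matrices injects into the diagonal --- is a legitimate alternative to the paper's detour through Proposition~\ref{pro33}, and your elaboration of (c) is exactly what the paper leaves as ``immediate''. The genuine problems are in (b) and (d).

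In (b), the step from ``$\iota_G(G)$ acts unipotently on $N/N^0$'' to ``$K$ acts trivially on $N/N^0$'' is not justified. Your finiteness claim is false as stated: the action of $G/N^0$ on $N/N^0$ does not factor through $\pi(G)\subseteq K$, since $N$ acts on $N/N^0$ by inner automorphisms and the inner automorphism group of $N/N^0$ is infinite whenever $N/N^0$ is non-abelian (e.g.\ a discrete Heisenberg group); so the image of $G$ in $\Aut(N/N^0)$ need not be finite. What is finite is the image of the compact group $K$, but to conclude triviality you must first know that each $k\in\pi(G)$ acts \emph{unipotently} on $N/N^0$, and $\iota_{\pi(g)}=\iota_n^{-1}\circ\iota_g$ is a product of two unipotent automorphisms, which is not unipotent without a common triangularization. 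This is precisely where the paper inserts (a)\,(iii): the $G$-action on $(\fr n_{\R}/\fr n)_{\C}$ triangulizes, $G$ acts unipotently there, and \cite{Wan}\,(2.3) then forces the closure $K$ to act trivially on $\fr n_{\R}/\fr n$, hence on $N/N^0\subseteq N_{\R}/N^0$. The gap is fillable, but only by the argument you omitted. (A smaller slip: in your discussion of (v), $L\subseteq\widetilde M\times K$ is nilpotent, not abelian, since $\widetilde M$ need not be abelian; the conclusion ``$G/N^0$ nilpotent'' still stands.)

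In (d) the forward direction is over-engineered and incomplete: the ``cocycle-straightening via $H^1(K^0,\cdot)=0$'' is never carried out, and ``$NH/N$ has finite index in $\pi(G)$'' does not by itself yield $G/(NH)$ compact. The paper simply takes $H=L_1$ from Corollary~\ref{cor48}: when $\widetilde N$ is abelian, $L_1\subseteq\widetilde M\times K_1$ is abelian, $NL_1=G_1$ is closed and $G/G_1$ is compact. More seriously, the converse --- which you yourself flag as ``where I expect the real work'' --- is only a heuristic, and an appeal to ``Auslander--Wang--Zassenhaus rigidity'' does not substitute for a proof. The paper's actual argument is: $N$ is abelian (a unipotent inner automorphism with relatively compact orbits is trivial); $G_1K_1\cap\widetilde N$ centralizes $N$ by Corollary~\ref{cor45} and is co-compact in $\widetilde N$ by Proposition~\ref{pro44}\,(a), and since the centralizer of $N$ in the torsion-free $\widetilde N$ is connected, \cite{Ra}\,Th.\,2.1(4) makes $N$ central in $\widetilde N$; then $\widetilde N=[K,N^0]\,\widetilde M'$ splits off the fixed-point set $\widetilde M'$ of $K$, the hypothesis on $H$ produces a co-compact abelian subgroup of $\widetilde M'$, and the same centralizer argument forces $\widetilde M'$, hence $\widetilde N$, to be abelian. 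None of this is present in your sketch.
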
\noindent
As before, $\fr n$ denotes the Lie algebra of $N$, $\fr n_{\R}$ that of the
Malcev completion $N_{\R}$ and $\fr n_{\C}, (\fr n_{\R})_{\C}$ 
denote the complexifications of $\fr n, \fr n_{\R}$\,. The proof will show
that in (d) one can take $H=L_1$ (the group of Corollary\;\ref{cor48}).
Furthermore, the proof of (d) shows that $N$ is an ${FC}^-_{G\,}$-group
iff it is central in $\widetilde N$ and this is equivalent to $N^0$ being
central in $\widetilde N$\,.\vspace{-2mm}
\begin{proof}
(a) (i)$\Rightarrow$(ii) is trivial.
\\
(ii)$\Rightarrow$(iii): the action of $N$ on $\fr n_{\R}$ is
clearly unipotent, thus the same is true on $(\fr n_{\R})_{\C}$ and
(iii) follows from \cite{Wan}\;(2.2).
\\
(iii)$\Rightarrow$(ii): If $x \in [G,G]$, it follows from (iii) that the
automorphism of $N_{\R}$ induced by $\iota_x$ is unipotent, hence the
same is true on $N$. By Proposition\;\ref{pro33}, this implies $x \in N$
(alternatively, one could use (c) of Proposition\;\ref{pro47}).
\\
(ii)$\Rightarrow$(i): If (ii) holds, then the image of $G$ in
$\widetilde G/\widetilde N\ (\cong K)$ is abelian and by assumption, it is
dense.
\\
(b) (i)$\Rightarrow$(iii) follows from nilpotency of $G/N^0$ and using
again \cite{Wan}\;(2.2).
\\
(iii)$\Rightarrow$(ii): The action of $[G,G]$ on $N$ is unipotent, hence
again by Proposition\;\ref{pro33}, $[G,G] \subseteq N$ and from (a) it
follows that $K$ is abelian. $N^0$ is normal by Proposition\;\ref{pro44}\,(c)
combined with Proposition\;\ref{pro47}\,(b). By (a), the action of $G$ on
$(\fr n_{\R})_{\C}$ triangulizes, hence also that on
$(\fr n_{\R}/\fr n)_{\C}$\,. $G$ acts unipotently on $\fr n_{\R}/\fr n$,
hence by \cite{Wan}\;(2.3),
$K$ acts trivially on $\fr n_{\R}/\fr n$ and the same is true on
$N/N^0 \subseteq N_{\R}/N^0$.
\\
(ii)$\Rightarrow$(i): By (a), we have $[G, G]\subseteq N$\,, thus $G/N$
is abelian. Since the action on $N/N^0$ is unipotent, it follows
(\cite{War}\;9.3) that $G/N^0$ is nilpotent (if $K$ is connected one can also
apply Corollary\;\ref{cor45}). \
(iv),(v) are shown similarly.
\\[0mm plus.2mm]
(c) This follows immediately from compactness of \,$\widetilde G/G$ and
$\widetilde G/\widetilde N$.
\\[0mm plus.2mm]
(d) If $\widetilde N$ is abelian, then obviously $N$ is an
$FC^-_G$-group. The subgroup $L_1$ ($\subseteq \widetilde M\times K_1$)
of Corollary\;\ref{cor48} is abelian
as well and it satisfies $G_1 = NL_1$ and $G/G_1$ is compact.
\\[-.2mm]
For the converse, we can (passing to a subgroup of finite index) assume that
$K$ is connected. $N$ is $K_1$-invariant by Proposition\;\ref{pro44}\,(d).
If $N$ is an $FC^-_G$-group, then $N$ must be abelian
(there are no non-trivial unipotent inner automorphisms) and
$L_1 K_1\,\cap \widetilde N$ acts trivially on $N$ by
Corollary\;\ref{cor45}. Thus 
$G_1 K_1 \cap \widetilde N$ commutes with $N$. Since $\widetilde N$
is torsion free and nilpotent, the centralizer of $N$ is a connected subgroup
of $\widetilde N$ (see also Remark\;\ref{rem28}).
By Proposition\;\ref{pro44}\,(a), $G_1 K_1 \cap \widetilde N$ is co-compact
in $\widetilde N$, hence
(\cite{Ra}\;Th.\,2.1(4)\,), $N$ is central in $\widetilde N$\,. Put 
$N'=[K,N]=[K,N^0]\,,
\;\widetilde M' =\{x\in\widetilde N\!: k\circ x = x \text{ for all }k\in K\}$,
\ $\widetilde L' = \widetilde M'\times K$\,.
Then $N'$ is a closed normal subgroup of
$\widetilde G$\,,\;$N' \cap \widetilde M'$ is trivial and
$\widetilde N = N'\widetilde M'$ by \cite{Lo3}\;L.\,5.4 and
Proposition\;\ref{pro47}\,(d) \,(in particular $N'=[K,\widetilde N]$). Thus
\,$\widetilde G=N'\rtimes\widetilde L'$. Let \,$H' = N'H\,\cap\widetilde L'$
be the projection of $H$ to 
$\widetilde L'$. Then $H'$ is an abelian subgroup of $G$ and
\,$(\widetilde M' \cap N)H'$ is co-compact in $\widetilde L'$ and
abelian (observe that $\widetilde M'\cap N$ is central in $\widetilde G$).
Consequently,
$H'' =\bigl((\widetilde M' \cap N)H'K\bigr) \cap \widetilde M'$
\,(i.e., the projection of $(\widetilde M' \cap N)H'$ to $\widetilde M'$) is
a co-compact abelian subgroup of \,$\widetilde M'$. As above, it follows that
$\widetilde M'$ must be
abelian and this implies that $\widetilde N$ is abelian.
\end{proof}\vspace{-2mm plus .2mm}
\begin{Rems} \label{rem411}
(a) \ We want to relate our results to the notions of \cite{Ra}. Let $G$
be a compactly generated group of polynomial growth without non-trivial compact
normal subgroups and let $\pi$ be a continuous faithful
representation of $G$ on $\R^n$.
Denote by $\widetilde G$ the Zariski-closure of
$\pi(G)$ in $\GL(n,\R)$. Let $\widetilde{N}$ be the unipotent radical of
$\widetilde G$\,. Then we have a ``Levi decomposition" (in the sense of
algebraic groups)
\,$\widetilde G = \widetilde N \rtimes K$, where $K$ is a maximal reductive
subgroup of $\widetilde G$ \,(see \cite{Ra}\;p.\,11, \cite{Ab1}\;p.\,296).
Then \,(putting as before $N = \nil(G)$\,) one can show that the following
properties are equivalent:\vspace{-1.3mm}
\begin{enumerate}
\item[(i)] \;$\pi(N)$ consists of unipotent matrices,\ $\pi(G)$ is closed
(for the
Euclidian topology of $\GL(n,\R)$\,) and distal.
\item[(ii)] \;$\pi(N)$ consists of unipotent matrices,\ $\pi(G)$ is closed
(Euclidian topology) and $K$ is compact.
\end{enumerate}
If this holds, it follows that $\widetilde G/\pi(G)$ is compact. Furthermore,
if the action of $K$ on $\widetilde N$ is faithful \,(i.e.,
$C_{\widetilde G}(\widetilde N)\subseteq\widetilde N$\,),
then (i) and (ii) are equivalent to\vspace{-.5mm}
\begin{enumerate}
\item[(iii)] \;$\widetilde G$ is an algebraic hull of $G$ \,(as defined
in \cite{Ra}\;Def.\,4.39).
\end{enumerate}
(Be aware that in \cite{Au}\;p.\,228 the term algebraic hull is used in a
much wider sense).\vspace{2mm plus .3mm}

Thus, in the case of a faithful action, $\widetilde G$ coincides with the
groups considered in Theorem\;\ref{th2}\,and\,\ref{th3}.
To be precise: \,\cite{Ra} considers complex
algebraic groups \,(i.e., the
Zariski closure in $\GL(n,\C)$\,), thus our $\widetilde G$ is the ``real
algebraic hull", i.e., the set of real points of the algebraic hull
in the sense of \cite{Ra}.
In particular, it follows from Theorem\;\ref{th3} that all algebraic hulls
(in the sense of \cite{Ra}) are
isomorphic \,(this has also been shown in \cite{Ra}\;L.\,4.41). Since we are
dealing with groups of polynomial growth, one can
show (similarly as in the proof of \cite{Ra}\;L.\,4.36, using a corresponding
definition of the ``rank" for generalized $\overline{FC}$-groups)
that the condition
``$\pi(G)$ is closed" of (i),(ii) is equivalent to ``$\pi$ is full" in
the sense of \cite{Ra}\;Def.\,4.37, i.e., $\dim(\widetilde N)=\rk(G)$\,.
\vspace{0mm plus .3mm}

The representation (coming from the Birkhoff embedding theorem) that was used
in the proof of Corollary\;\ref{cor36} has the properties leading to (iii).
But
in general, there are also faithful finite dimensional representations of $G$
which satisfy (i) and~(ii), but $K$ does not act faithfully on
$\widetilde N$ (see Examples\;\ref{ex412}\,(d)\,). Let $K_{\widetilde N}$ be
the kernel of the action of $K$ on
$\widetilde N$. Then $K_{\widetilde N}$ is normal in $\widetilde G$ and
by Theorem\;\ref{th3}, $\widetilde G/K_{\widetilde N}$ is isomorphic (as a
locally compact group) to the algebraic hull of $G$.\vspace{.2mm plus .5mm}

In the case of discrete generalized $\overline{FC}$-groups (i.e., finite
extensions of polycyclic groups) another construction of the algebraic hull
(using Hopf algebras and working on arbitrary fields of characteristic zero)
has been described in \cite{Do} (see L.\,4.1.2, Prop.\,4.2.2,\;4.3.2). A
more explicit version in terms of a ``basis" of the group has been given in
\cite{Sa}.
\vspace{.2mm plus .7mm}
\item[(b)] In general, there are further almost nilpotent groups lying between
$G$ and $\widetilde G$\,. The group $G_{an}$ of Proposition\;\ref{pro38} is a
co-compact extension of $G$ that is almost nilpotent and has no non-trivial
compact normal subgroup. For $K_1$ one can take that of
Proposition\;\ref{pro47}\,(b) and for
a given hull $\widetilde G$, the group $G K_1$ does not depend on the
choice of $K$. But in
general, $G$ need not be $K_1$-invariant (in particular, $G$~need not be
normal in $G_{an}$) and $G_{an}$ need not split into a semidirect product of a
nilpotent group and a compact group. $G_{an}$ need not be a minimal almost
nilpotent extension of $G$ \,(see Examples\;\ref{ex412}\,(a),(f)\,).
\vspace{0mm plus .4mm}

When $G$ is connected, simply connected and solvable,
$\widetilde G = G_{an}$ coincides with
the semisimple splitting of \cite{Au}\;p.\,237, $\widetilde N$ is called the
nil-shadow of $G$ (in the notation of \cite{Au}: \ $\widetilde G = R_S\,,\
\widetilde N = M_R\,,\ K_1 = T_R$\,, where $R = G$\,).
Hence in the general case of our Theorem\;\ref{th2}, we call
$\widetilde N$ the {\it connected nil-shadow} of $G$\,. As mentioned in
Remark\;\ref{rem221}\,(g) this coincides with the notions of \cite{Au} and
\cite{Br} for connected, simply connected, solvable Lie groups.

In \cite{Ab2}\;Th.\,3.6, an arbitrary connected Lie group $G$ of
polynomial growth is embedded (as a closed normal subgroup) into a
connected Lie group $H$ such that $H/G$ is compact and $H$ has a co-compact
normal subgroup $M_0$ that is connected and nilpotent.
But in general $M_0$ need
not be simply connected, even when $G$ has no non-trivial compact normal
subgroups (for $G$ solvable with $[G,G]^-$ torsion free, $H$ coincides with
the group $G'$ of Corollary\;\ref{cor215a}, $M_0=N'$, see also
Remark\;\ref{rem215b}).
Thus, this does not always coincide with our algebraic hull.
\vspace{0mm plus .2mm}

In the non-connected case, one can consider splittings where the nilpotent
factor is not necessarily connected. This is related to the ``discrete
semisimple splitting" mentioned in \cite{Au}\;p.\,253,
see also \cite{Se}\;p.\,141.
Let $N^K$ be the closed $K$-invariant subgroup of $\widetilde N$ generated by
$GK \cap \widetilde N$\,.
Then $G \subseteq N^K \rtimes K$ \,(and $N^K$ is minimal to get such
a splitting
for given $K$). But in general, $N^K$ depends on the choice of $K$\,. One can 
show that it is always possible to choose $K$ so that $N^K \rtimes K$ is a
finite extension of $G_{an}$\,. But in general there is no uniqueness result 
corresponding to Theorem\;\ref{th3} \,(see Examples\;\ref{ex412}\,(c);
this aspect is somehow concealed in the
formulation of \cite{Au}\;p.\,254;
compare also \cite{Se}\;Th.\,3,\;p.\,147).\vspace{0mm plus .2mm}

In \cite{Mo}\;Sec.\,2, another construction of the nil-shadow based on
representative functions \,(and working for an arbitrary generalized
$\overline{FC}$-group $G$ without non-trivial compact normal subgroups)
is given. If $\pi$ is any continuous finite dimensional representation
of $G$ and $\widetilde N_{\pi}$ denotes the unipotent radical of the
real Zariski closure of $\pi(G)$\,, then $\widetilde N_{\pi}$ is a
quotient of the nil-shadow $\widetilde N$\,, but the reductive part can
become arbitrarily large (unless $G/\nil(G)$ is finite),
\vspace{1mm plus .5mm} compare the Examples\;\ref{ex412}\,(d).
\item[(c)] \,\cite{Wi2}\;Ex.\,2.3 shows that Proposition\;\ref{pro47}\,(c)
need not hold when $\widetilde N$ is replaced by a general connected, simply
connected and {\it solvable} group.
\item[(d)] \,In \cite{Br}\;Th.\,1.2, it is shown that if $G$ is a compactly
generated l.c.\;group of polynomial growth having no non-trivial compact
normal subgroup, then there exists a co-compact closed subgroup $H$ that
can be embedded (as a closed subgroup) into a connected, simply connected,
solvable Lie group $S$\,. The proof (given in \cite{Br}\;7.1) reduces it
in several steps to a corresponding embedding theorem (\cite{Wan}\;Th.\,3)
for $\Cal S$-groups. He calls $S$ a ``Lie shadow" of $G$\,. It is
necessarily of polynomial growth, but in general not unique
(see \cite{Br}\;p.\,671). It follows from our Theorem\;\ref{th3} that the
algebraic hull of $S$ contains the algebraic hull of $H$ which is
contained in the algebraic hull of $G$\,. In particular, the nil-shadow
of $S$ must coincide with the connected nil-shadow of $G$ (fixing also
the dimension of $S$).
\end{Rems}
\begin{Exs}	\label{ex412}
{\bf (a)} \,We start with the examples given in \cite{Lo2}\;1.4.3. For
\,$G = \C\rtimes\Z$ with the action \,$n\circ z= \alpha^n z$\,, where
$|\alpha| = 1$ and $\alpha$ is not a root of unity, we get
\,$\widetilde G = (\C \times \R) \rtimes K$ with
$K = \{\beta\in\C\!: |\beta| = 1\}\ (= K_1),\
\beta \circ (z,t) = (\beta z,t)$
and the embedding
$(z,n) \mapsto (z,n, \alpha\,n),\ \widetilde N = \C \times \R\,,\
\widetilde M = \R\,,\ L = \Z\,,\ G_{an} = (\C \times \Z) \rtimes K$\,.
\\
For $G = \C^2 \rtimes \R$ with
\,$t \circ (z_1, z_2) = (e^{it\beta_1} z_1, e^{it\beta_2} z_2)$\,,
we get \,$\widetilde G = (\C^2 \times \R) \rtimes K\ (= G_{an})$,
where $K\,(= K_1)$ denotes the closure of 
\,$\{(e^{it\beta_1},e^{it\beta_2}): t \in \R\},\ \,
(\gamma_1,\gamma_2) \circ (z_1,z_2, t) =
(\gamma_1 z_1, \gamma_2 z_2, t)$ and (writing
$\mathbf z = (z_1, z_2)$\,) the embedding
$(\mathbf z,t) \mapsto (\mathbf z,t,(e^{it\beta_1},e^{it\beta_2})),
\linebreak \widetilde N = \C^2 \times \R$\,.
\\
Similarly, for $G = \R^n \rtimes \Z$
with the action $n \circ v = A^n v$\,, where $A \in \GL(n,\R)$ and all
eigenvalues of $A$ have modulus $1$. We consider the multiplicative Jordan
decomposition $A = A_s A_u$\,. We get
\,$\widetilde G = (\R^n \rtimes \R) \rtimes K$\,, where $K$ denotes the
closure of $\{A^n_s\!: n \in \Z\}$ and the actions
are \,$t \circ v = e^{tB}v$ with $B = \log A_u\,,\ C \circ (v,t) = (Cv,t)$
for $C \in K$\,. The embedding is given by 
$(v,n) \mapsto (v,n,A^n_s),\ \widetilde N = \R^n \rtimes \R$\,.
If no root of unity is an eigenvaulue of $A$\,, then %\linebreak
$\widetilde M = \R\,,\ L = \Z$\,.
Otherwise, $\widetilde M$ includes the eigenspaces of $A_s$ for the roots
of unity and if one of these eigenvalues is different from 1, the action of
$K$ on $\widetilde M$ is non-trivial. If $K^0\,(= K_1)$ is non-trivial
(i.e., $A$ has at least one eigenvalue that is not a root of unity), then 
$N = \R^n,\ G_{an} \subseteq  (\R^n \rtimes \Z) \rtimes K$\,, but if
$K^0 \neq K$ \,(e.g., $A$ has also an eigenvalue that is a root of unity
different from~$1$), the inclusion is proper and $G_{an}$ does not split.
\\[.3mm plus .4mm]
Similarly, for $G = \R^n\rtimes\R$\,. For example, in the case
$G = \C \rtimes \R$ with $t \circ z = e^{it}z$\,, one has
\,$\widetilde G = (\C \times \R) \rtimes K\ (= G_{an})$ with
$K = \{\beta \in \C\!: |\beta| = 1\},\ \beta \circ (z,t) = (\beta z,t)$ and
the embedding 
$(z,t) \mapsto (z,t,e^{it}),\
\widetilde N = \C \times \R\,,\ \widetilde M = \R\,,\ L = \R\,,\
N = \C \times 2 \pi \Z$\,. Thus $G$ is almost nilpotent, but
$G_{an} \neq G$\,, i.e., $G_{an}$ is not minimal.
\vspace{1mm plus .8mm}
\item[\bf (b)]  An example where the action of
$K$ on $\widetilde N$ is not faithful\vspace{.1mm}
(notation of Theorem\;\ref{th3}): take
\,$G = \R\,,\ \widetilde G = \R\times K$ with
$K = \R/\Z\,,\ j(t) = (t, t + \Z)$\,. Here $\widetilde{G}/j(G)$ is compact,
but $\widetilde G$ is not 
isomorphic to the algebraic hull of $G$ \,(which coincides with $G$).
\\[.6mm plus .5mm]
An example where $j(G)$ is not closed: take
\,$G = \Z^2\,,\ \widetilde G = \R\,,\
j(n,m) = n\alpha + m\beta$ \,where $\alpha, \beta \in \R$ are
$\Q$-linearly independent. Then
$j(G)$ is dense in $R$, but not closed, and the algebraic hull of $G$
is $G_{\R} = \R^2$.
\\[.1mm plus .3mm]
These examples can also be used to show that in Remark\;\ref{rem411}\,(a) the
assumptions \,$\pi(N) \subseteq  \widetilde N$ and \,$\pi(G)$ closed cannot be
dropped.
\vspace{.5mm plus .7mm}
\item[\bf (c)] For $G$ almost nilpotent, one has \,$\widetilde N=N_{\R}$ by
Corollary\;\ref{cor410}\,(c), and conversely.
To get examples for the discrete case (where $G$ is a finite
extension of a nilpotent group), put 
$\widetilde N = \R^2,\ \alpha_1 (x_1,x_2) = (-x_1,x_2),\
\alpha_2(x_1,x_2) = (x_1 - x_2),\ K\ (\cong \Z^2_2)$ the subgroup
of $\GL(2,\R)$ generated by $\alpha_1,\alpha_2\,,\ \,
\widetilde G = \widetilde N \rtimes K\,,\ N = \Z^2$ and $G$ shall be the
subgroup of $\widetilde G$ generated by $N$
and $((0,0), \alpha_1)\,,\,((\frac12,0), \alpha_2)$.
$\widetilde N/N$ being compact, it follows that
$\widetilde N\cong N_{\R}\,,\ N = G \cap \widetilde N$
and $\widetilde G$ is the algebraic hull of $G$\,.
Since $K$ is discrete, we have $G_{an}=G$.
Here, $N$ and $G$ are
$K$-invariant, $N^K = \frac12\Z\times\Z$\,. For $\mu = (0,1) \in N\,,\
K^{\mu} = \mu K \mu^{-1}$,
one gets $N^{K^{\mu}} = \{(x, y) \in (\frac12 \Z)^2 : x + y \in \Z\,\}$
and it is easy to see ($N^{K^{\mu}}$ does not split into cyclic
$K^{\mu}$-invariant
subgroups) that $N^{K^{\mu}}\!\rtimes K^{\mu}$ is not isomorphic to
$N^K\!\rtimes K$\,. Thus there are non-isomorphic discrete splittings. $G$ has
index $2$ in both extensions.
\\[0mm plus .4mm]
Observe (using \cite{Ho}\;Th.\,XV.3.1) that for every compact subgroup $C$
of $\widetilde G$ there exists $\mu \in \widetilde{N}$ such that
$\mu^{-1}C\mu\subseteq K$\,, in particular, $K^{\mu}\ (\mu \in \widetilde{N})$
gives all maximal compact subgroups of $\widetilde G$\,.
\\[0mm plus .3mm]
For further examples, consider $\widetilde N = \mathsf{H}\times \R$\,, where
$\mathsf{H}$ denotes the three-dimensional real Heisenberg group. Explicitly,
$\widetilde N = \R^4$
topologically, with multiplication\linebreak
$(x_1, x_2, t_1, t_2)\:
(x'_1, x'_2, t'_1, t'_2) =
(x_1 + x'_1,x_2 + x'_2, t_1 + t'_1 - x_2 x'_1, t_2 + t'_2)$.
Let $N$ be the (discrete) subgroup generated by
$(1,0,0,0)\,,\,(0,1,0,\frac14)\,,\,(0,0,\frac12,\frac12)$.
Writing $\mathbf v = (x_1,x_2,t_1,t_2)$, we get
$N = \{\,\mathbf v: \,x_1,x_2,2 t_1, 4 t_2 \in \Z\,,\
4 t_2 - 4 t_1 - x_2 \equiv 0 \pmod 4 \}$.
Consider $\alpha \in \Aut(\widetilde N)$ defined by 
$\alpha (\mathbf v) = (x_1, -x_2, -t_1, t_2),\
K = \langle\alpha\rangle,\
\widetilde G = \widetilde N\rtimes K$\,. Finally, let $G$ be the subgroup of
$\widetilde G$ generated by $N$ and $((\frac12,0,0,0),\alpha)$. Since
$\alpha (0,1,0,\frac14) \notin N$, we get that $N$ and $G$ are not
$\alpha$-invariant. 
Hence they are not $K$-invariant and the same can be shown if $K$ is replaced
by a conjugate group $\mu K \mu^{-1}\ (\mu \in \widetilde N)$. In a similar
way, one can construct examples where $N$ is \linebreak
$K$-invariant but $G$ is not $K$-invariant.
\\[0mm plus .4mm]
When $K$ is abelian (see Corollary\;\ref{cor410}\,(a)), one can show similar
statements as in \cite{Se}\;Th.\,1,\,p.\,143. Put 
$\widetilde M' =
\{x\in \widetilde N\!: k\circ x = x \text{ for all } k \in K\},\
M' = N^K \cap \widetilde M'$. Proposition\;\ref{pro47}\,(e) implies
$\widetilde N = N_{\R}\widetilde M'$. One can choose $K$ such that $N$ and $G$
are $K$-invariant and $N^K = NM'$ if and only if there exists a
nilpotent subgroup $L'$ of $G$ such that
$G = NL'$ and $N$ is $s(L')$-invariant \,(where as in \ref{di25},
$s(x) \in \Aut(N_{\R})$ denotes the semisimple part of the
automorphism $\iota_x$ of $N_{\R}$).
However, even under these stronger assumptions there is no uniqueness in
general. Similarly as above, one can construct non-isomorphic splittings 
$N^K \rtimes K$ of this type.
\\[0mm plus .5mm]
As mentioned before, \cite{Se} and \cite{Au} assumed that $G/N$ is torsion
free. But it is easy to modify the examples above to meet this requirement.
For example, the first one came from an action of $\Z^2_2$ on $\R^2$ (in
fact on $\Q^2$). This gives rise to a faithful action of $\Z^2$ on
$\R^2 \times \Z^4$ when combining with a faithful action of $\Z^2$ on $\Z^4$
by semisimple matrices (of course, this leads outside the scope of groups of
polynomial growth).
\vspace{1mm plus 1.2mm}
\item[\bf (d)] On faithful representations. In (b), we mentioned examples
concerning the conditions in (i),\,(ii) of Remark\;\ref{rem411}\,(a). Now
we consider the first example of~(a), \,$G = \C \rtimes \Z$\,. A natural
choice of a faithful representation would be
$\pi(z,n) =
\begin{pmatrix}\alpha^n & z\\0& 1 \end{pmatrix}
\vspace{-1mm}\in \GL(2,\C)\ \,(\subseteq \GL(4,\R)\,)$. But
$\pi(G)$ is not closed, the (real) Zariski closure gives
$\Bigl\{
\begin{pmatrix}\beta & z\\0 & 1\end{pmatrix}
\!: \beta, z \in \C,\ |\beta| = 1\Bigr\}\;\cong \C \rtimes K$
\vspace{-1.2mm} with 
$K = \{\beta \in \C\!: |\beta| = 1\}$\,.
Write
$\alpha = \alpha_1^2$\,, take $r\in\R$ with $\lvert r\rvert\neq 0,1$ and put 
\vspace{.5mm}$\pi_r(z,n) =
\begin{pmatrix}(r\alpha_1)^{n}& z \\ 0 & (r/\alpha_1)^n\end{pmatrix}
\linebreak\in \GL(2,\C)\ \,(\subseteq \GL(4,\R)\,)$.\vspace{.3mm}
Now $\pi_r(G)$ is closed but not distal, the (real) Zariski closure gives
$\Bigl\{
\begin{pmatrix}\beta & z\\0 & \gamma\end{pmatrix}
\!: \beta,\gamma,z \in \C,\ \beta\,\gamma\in\R^*\Bigr\}\;
\cong \C \rtimes K\times \R^*$
\vspace{-1mm}
(with $\R^*=\R\setminus\{0\}$ non-compact\vspace{-6mm}).
$$\hspace{-2.4cm}\text{Put}\qquad\qquad\pi_{alg}(z,n) =
\begin{pmatrix}\alpha^n & z&0&0\\0& 1&0&0\\0&0&1&n\\0&0&0&1 \end{pmatrix}
\vspace{-1.5mm}\in \GL(4,\C)\ \,(\subseteq \GL(8,\R)\,)\,.
$$
$\pi_{alg}(G)$ satisfies all the properties (i)-(iii) of
Remark\;\ref{rem411}\,(a). The (real) Zariski closure (which gives the
algebraic hull, isomorphic to the version in (a)\,) is\vspace{-2.5mm}
$$\Biggl\{\
\begin{pmatrix}\beta & z&0&0\\0& 1&0&0\\0&0&1&t\\0&0&0&1 \end{pmatrix}
\!: \beta, z \in \C,\ |\beta| = 1,\ t\in\R\,\Biggr\}\,.\vspace{-2.5mm}
$$
To get an example of
a faithful representation of $G$ satisfying (i),(ii), but not (iii), of
Remark\;\ref{rem411}\,(a),
take (with $\alpha_1$ as above) \quad $\pi'(z,n) =$
\\[1mm]
$\begin{pmatrix}\alpha_1^n & z&0&0\\0& \alpha_{1}^{-n}&0&0\\0&0&1&n\\0&0&0&1
\end{pmatrix}\,,$ giving\vspace{.5mm}
\;$\Biggl\{\ \!
\begin{pmatrix}\beta & z&0&0\\0& \beta^{-1}&0&0 \\ 0&0&1&t \\ 0&0&0&1
\end{pmatrix}
\!: \beta, z \in \C,\ |\beta| = 1,\ t\in\R\,\Biggr\}$ \linebreak
as (real) Zariski closure\,.  Then the corresponding action of
\,$K \cong \{\beta \in \C\!: |\beta| = 1\}$ on \,
$\widetilde N \cong \C\times\R$ \,is \,$\beta \circ (z,t) = (\beta^{2}z,t)$\,.
Thus $K$ does not act faithfully.
\\
The last example excludes a possible converse in
Proposition\;\ref{pro47}\,(c)\,:
when $G$ has no non-trivial compact normal subgroups, $K$ need not act
faithfully on $\widetilde N$.
\vspace{1mm}
\item[\bf (e)] In Corollary\;\ref{cor410}\,(d), the condition of the existence
of an abelian almost-\-supplementary group $H$ cannot be dropped (i.e., for
$\widetilde N$
to be abelian, it is not enough that $N$ is an ${FC}^-_{G\,}$-group). Let
$\alpha, \beta \in \R$ be $\Q$-linearly independent. Consider
($\mathsf H_{\Z}$ denotes the discrete Heisenberg group) the group
$G = \mathsf H_{\Z}\ltimes \C$ 
given by $\Z^{3} \times \C$ topologically, with multiplication
\;
$(k, l, m, z)\:(k', l', m', z') = 
(k + k', l + l',\linebreak m + m' + l\,k',e^{i(k'\alpha + l'\beta)}z + z')$.
Then $N = \{(0,0,m,z)\! : m \in \Z,\, z \in \C\}$ is an 
${FC}^-$-group (observe that the action of $\mathsf{H}_{\Z}$ on $\C$ is
semisimple). But (similarly as in (a)\,)
\,$\widetilde N = (\mathsf H_{\Z})_{\R} \times \C\ (=\mathsf H\times\C),\
K \cong \{\gamma \in \C\!: |\gamma| = 1\},\
\widetilde M = (\mathsf H_{\Z})_{\R}$\,, thus
$\widetilde N$ is not abelian. In the notation of Corollary\;\ref{cor48},
one has $L = \mathsf H_{\Z}$\,.
\vspace{1mm}
\item[\bf (f)] In the notation of Proposition\;\ref{pro47}\,(b),
$G$ need not be $K_1$-invariant (but, as mentioned earlier,
$G_{an} = G\,K_{\!1}$ is always a group).
Let $G = \C \rtimes \Z$ be the first example of (a) and define
$\sigma \in \Aut(\widetilde G \times \widetilde G)$ by
\,$\sigma (x,y) = (y,x)$. Let
$W\ (\cong \Z_2)$ be the subgroup generated by $\sigma$ and put
$\widetilde G' = (\widetilde G \times \widetilde G) \rtimes W$. Let 
$j\!: G \to \widetilde G$ denote the embedding and consider the
subgroup $G'\ (\,\cong (G\times G)\rtimes W)$ of $\widetilde G$ generated by
$j(G)\times j(G)$ and $\sigma$. Then $K' = (K \times K) \rtimes W$ gives
a compact component for $\widetilde G',\ (K')^0 = K \times K$ and taking 
$x \in K$ such that $x^2 \notin \{\alpha^n : n \in \Z\},\ G'$ is not
invariant under the inner automorphism of $\widetilde G'$ defined by
$(x,x)\in K'$. Similarly for subgroups conjugate to $K'$.
\\
Taking $\mu \in \widetilde M \times \widetilde M$ such that
$\sigma(\mu)\,\mu^{-1} \notin \Z^2$, one gets
\,$\mu K' \mu^{-1} \cap G' = K \times K$\,. It follows that
$\mu K' \mu^{-1} \cap G'$ is not a maximal compact subgroup of $G'$, hence
the corresponding statement
of Proposition\;\ref{pro47}\,(d) does not extend to the non-connected case.
\end{Exs}

%=============================Literatur====================================


\begin{thebibliography}{1}

\bibitem[Ab1]{Ab1}
H.~Abels, 
\emph{Distal affine transformation groups},
J. Reine Angew. Math. \textbf{299/300} (1978), 294--300.
\vskip0,3cm
\bibitem[Ab2]{Ab2}
H.~Abels, 
\emph{Which groups act distally?},
Ergodic Theory Dynam. Systems \textbf{3} (1983), 167--185.
\vskip0,3cm
\bibitem[Au]{Au}
L.~Auslander,
\emph{An exposition of the structure of solvmanifolds}. 
I.~Algebraic~theory, Bull. Amer.~Math.~Soc. 
\textbf{79} (1973), 227--261.
\vskip0,3cm
\bibitem[Ba]{Ba}
G.~Baumslag, 
\emph{Lecture notes on nilpotent groups}, Reg.~Conf.~Ser.~Math.,
Amer.~Math.~Soc., Providence, 1971.
\vskip0,3cm
\bibitem[Bo1]{Bo1}
N.~Bourbaki, \emph{\'El\'ements de math\'ematique} Alg\`ebre, 
Ch. 4 \`a 7, Masson, Paris, 1981.
\vskip0,3cm
\bibitem[Bo2]{Bo2}
N.~Bourbaki, \emph{\'El\'ements de math\'ematique}. Groupes et alg\`ebres
de Lie, Ch. 7 et 8, Hermann, Paris, 1975.
\vskip0,3cm
\bibitem[Br]{Br}
E. Breuillard, \emph{Geometry of locally compact groups of polynomial growth
and shape of large balls}, Groups Geom. Dyn. \textbf{8} (2014), 669--732.
\vskip0,3cm
\bibitem[DTR]{DTR}
N.~Dungey, A. F. M. ter Elst, D. W. Robinson,
\emph{Analysis on Lie groups with polynomial growth},
Birkhäuser Boston, Inc., Boston, MA, 2003.
\vskip0,3cm
\bibitem[Do]{Do}
S.~Donkin, \emph{Polycyclic groups, Lie algebras and algebraic groups},
J. Reine Angew. Math. \textbf{326} (1981), 104--123.
\vskip0,3cm
\bibitem[GM]{GM}
S.~Grosser, M.~Moskowitz,
\emph{Compactness conditions in topological groups},
J. Reine Angew. Math. \textbf{246} (1971), 1--40.
\vskip0,3cm
\bibitem[Gr]{Gr}
M.~Gromov,
\emph{Groups of polynomial growth and expanding maps}, Inst.
Hautes \'Etudes Sci. Publ. Math. \textbf{53} (1981), 53--78.
\vskip0,3cm
\bibitem[Gu]{Gu}
Y.~Guivarc'h, \emph{Croissance polynomiale et p\'eriodes des fonctions
harmoniques}, Bull.~Soc. Math. France \textbf{101} (1973), 333--379.
\vskip0,3cm
\bibitem[Ho]{Ho}
G.~Hochschild, \ \emph{The structure of Lie groups}, \
Holden--Day, \ San\,Francisco--London--Amsterdam, 1965.
\vskip0,3cm
\bibitem[HM]{HM}
K.~H.~Hofmann, S.~A.~Morris, \emph{The Structure of
Compact Groups}, Walter~de~Gruyter, Berlin--New~York,
1998.
\vskip0,3cm
\bibitem[HR]{HR}
E.~Hewitt, K.~A.~Ross, \emph{Abstract harmonic analysis I},
Springer, Berlin, 1979.
\vskip0,3cm
\bibitem[Lo1]{Lo1}
V.~Losert,
\emph{On the structure of groups with polynomial growth},
Math.~Z. \textbf{195}
(1987),  109--117.
\vskip0,3cm
\bibitem[Lo2]{Lo2}
V.~Losert, \emph{On the structure of groups with polynomial growth II},
J.~London~Math.~Soc. (2)\;\textbf{63} (2001), 640--654.
\vskip0,3cm
\bibitem[Lo3]{Lo3}
V.~Losert, \emph{On the center of group
$C^*$-algebras}, J. Reine Angew. Math. \textbf{554} (2003), 105--138.
\vskip0,3cm
\bibitem[Ma]{Ma}
A. I. Mal\'{}cev, \emph{On a class of homogeneous spaces},
Izv. Akad. Nauk SSSR Ser. Mat. \textbf{13} (1949). 9--32.
\vskip0,3cm
\bibitem[Mo]{Mo}
G.~D.~Mostow, \emph{Some applications of representative functions to
solvmanifolds}, 
Amer. J. Math. \textbf{93} (1971), 11--32.
\vskip0,3cm
\bibitem[Ra]{Ra}
M.S.~Raghunathan, \emph{Discrete Subgroups of Lie Groups},
Ergeb. Math.
Grenzgeb.\,68, Springer, Berlin-Heidelberg-New York, 1972.
\vskip0,3cm
\bibitem[Ri]{Ri}
N.~W.~Rickert, \emph{Some properties of locally compact
groups}, J.~Austral.~Math.~Soc. \textbf{7} (1967), 433--454.
%\newpage
\vskip0,3cm
\bibitem[Sa]{Sa}
M.~du\,Sautoy, \emph{Polycyclic groups, analytic groups and algebraic groups}, 
Proc.~London Math. Soc. (3)\;\textbf{85} (2002), 62--92.
\vskip0,3cm
\bibitem[Se]{Se}
D.~Segal, \emph{Polycyclic groups},
Cambridge University Press, Cambridge, 1983.
\vskip0,3cm
\bibitem[To]{To}
R.~Tolimieri, \emph{Structure of solvable Lie groups}, 
J.~Algebra \textbf{16} (1970), 597--625.
\vskip0,3cm
\bibitem[Va]{Va}
V.~S.~Varadarajan, \emph{Lie Groups, Lie Algebras and
Their Representations}, Springer,
Berlin--\linebreak Heidelberg--New~York, 1984.
\vskip0,3cm
\bibitem[Wan]{Wan}
H.-C.~Wang, \emph{Discrete subgroups of solvable Lie groups I}, 
Annals of Math. \textbf{64} (1956), 1--20.
\vfill
%\vskip0,7cm
\bibitem[War]{War}
R.~B.~ Warfield, \emph{Nilpotent groups}, Lecture Notes Math.\,513, Springer,
Berlin-Heidelberg-New York, 1976.
\vskip0,3cm
\bibitem[Wi]{Wi2}
B. Wilking, \emph{Rigidity of group actions on solvable Lie groups},
Math. Ann. \textbf{317} (2000), 195--237.
\end{thebibliography}
\end{document}